\theoremstyle{plain}
\newtheorem{thm}[equation]{Theorem}
\newtheorem{lem}[equation]{Lemma}
\newtheorem{prop}[equation]{Proposition}
\newtheorem{cor}[equation]{Corollary}
\theoremstyle{definition}
\theoremstyle{remark}
\newtheorem{remark}[equation]{Remark}
\numberwithin{equation}{subsection}
\def\sheafHom{\mathcal{H} \hspace{-1pt} \mathit{om}}
\newcommand{\nc}{\newcommand}
\newcommand{\rc}{\renewcommand}
\nc{\Aut}{{	\operatorname{Aut}	}}
\nc{\codim}{{	\operatorname{codim}	}}
\nc{\Ob}{{	\operatorname{Ob}	}}
\nc{\PGL}{{	\operatorname{PGL}	}}
\nc{\supp}{{	\operatorname{supp}	}}
\nc{\tr}{{	\operatorname{tr}	}}
\nc{\Rep}{{	{\cal{R}}ep		}}
\nc{\one}{{	\mbox{\bf{1}}		}}
\nc{\iso}{	\overset{\sim}{\lra}	}
\nc{\nen}{\newenvironment}
\nc{\pr}{\protect}
\nc{\nn}{{\newline}}
\nc{\np}{{\newpage}}	
\nc{\lab}{	\label}
\nc{\npp}{{	\newpage\setcounter{page}{0}	}}
\nc{\setpa}{		\setcounter{part}		}
\nc{\setse}{		\setcounter{section}	}
\nc{\setsus}{		\setcounter{subsection}		}
\nc{\setsss}{		\setcounter{subsubsection}	}
\nc{\setpage}{		\setcounter{page}	}
\nc{\nfd}{ $$\text{ This version is preliminary and approximate, 
		             it is not for distribution. }$$	}
\nc{\noi}{{\noindent}}
\nc{\pf}{{	\noindent {\em Proof.}		}}
\nc{\epf}{ \fbox{\bf QED}	}
\nc{\heart}{{\tiny \cen{\tiny $\heartsuit $ }	}} 
\nc{\cont}{\tableofcontents}
\nc{\sbr}{{	\smallpagebreak	}}
\nc{\mbr}{{	\medpagebreak	}}
\nc{\bbr}{{	\bigpagebreak	}}
\nc{\bbb}{ 	\boldsymbol 	}  
\nc{\bib}{		}
\rc{\b}{ 	\big         			}  
\nc{\lam}[1]{{ 	\text{\large $#1$	}	}}  
\nc{\smm}[1]{{ 	\text{\small $#1$	}	}}  
\nc{\fom}[1]{{ 	\text{\footnotesize $#1$	}	}}  
\nc{\tinm}[1]{{ \text{\tiny $#1$	}	}}  
\nc{\bu}{ \bullet         }  			
\nc{\bbu}{ \aa{\bbb \bullet}         }  	
\nc{\bus}{{	^\bullet	}}	 	
\nc{\bui}{{	_\bullet	}}	 	
\nc{\bem}{{	\begin{em}	}}
\nc{\eem}{{	\end{em} 	}}
\nc{\bbox}{{	\blackbox	}}	
\nc{\bx}{	\boxed	}		
\nc{\tbx}[1]{{\boxed{\tx{#1}}}}		
\nc{\mmbox}[1]{{	\mbox{$#1$}	}}	
\nc{\tbox}[1]{{		\mbox{\tx{#1}}	}}
\nc{\ot}{		\leftarrow			}
\nc{\tto}{		\longrightarrow			}
\nc{\ott}{		\longleftarrow			}
\nc{\too}[1]{{		\aa{#1}\rightarrow			}}
\nc{\oot}[1]{{		\aa{#1}\leftarrow			}}
\nc{\ttoo}[1]{{		\aa{#1}\longrightarrow			}}
\nc{\oott}[1]{{		\aa{#1}\longleftarrow			}}
\nc{\Too}[2]{{		\aa{#1}{\bb{#2}\rightarrow}		}}
\nc{\ooT}[2]{{		\aa{#1}{\bbb{#2}\leftarrow}		}}
\nc{\TToo}[2]{{		\aa{#1}{\bb{#2}\longrightarrow}		}}
\nc{\ooTT}[2]{{		\aa{#1}{\bbb{#2}\longleftarrow}		}}
\nc{\toot}[2]{{		\aa{#1}{\bb{#2}\rightleftarrows}	}}
\nc{\ttoot}[2]{{	\aa{#1}{\bb{#2}\rightleftrightarrows}	}}
\nc{\ra}{{	\rightarrow		}}
\nc{\laa}{{	\leftarrow	}}	
\nc{\lra}{{\longrightarrow}}
\nc{\lr}{{\leftrightarrow}}     	
\nc{\lrs}{{\rightleftarrows}}     	
\nc{\imp}{{\Rightarrow}}        	
\nc{\impp}{{\Leftarrow}}        	
\nc{\eq}{{\Leftrightarrow}}        	
\nc{\impl}{{\Longrightarrow}}        	
\nc{\imppl}{{\Longleftarrow}}        	
\nc{\eql}{{\Longleftrightarrow}}        	
	\nc{\Ra}{{\Rightarrow}}         	
	\nc{\LRa}{{\Leftrightarrow}}        	
\nc{\inj}{{\pr	\hookrightarrow	}}    		
\nc{\injj}{{\pr	\hookleftarrow	}}    		
\nc{\sur}{{	\twoheadrightarrow	}}	
\nc{\surr}{{	\twoheadleftarrow	}}	
\nc{\mm}{{	\mapsto		}}     		
\nc{\mmm}{{	\leftarrow\shortmid }}		
\nc{\ainj}[1]{{\aa{#1}{\pr\hookrightarrow}	}}    	
\nc{\ainjj}[1]{{\aa{#1}{\pr\hookleftarrow}	}}    	
\nc{\asur}[1]{{	\aa{#1}\twoheadrightarrow	}}	
\nc{\asurr}[1]{{\aa{#1}\twoheadleftarrow	}}	
\nc{\amm}[1]{{	\aa{#1}\mapsto		}}     	
\nc{\ammm}[1]{{	\aa{#1}\leftarrow\shortmid }}	
\nc{\va}{{\uparrow}}              		
\nc{\syp}[1]{	^{ (#1) }		} 	
\nc{\up}[1]{	^{ (#1) }		} 	
\nc{\lp}[1]{	_{ (#1) }		}	
\nc{\hp}[1]{	^{ [#1] }		}	
\nc{\cle}{\preceq}		
\nc{\cl}{\prec}			
\nc{\cge}{\succeq}		
\nc{\cg}{\succ}			
\nc{\bb}{	\pr\underset 	}           
\rc{\aa}{ 	\pr\overset 	}            
\nc{\indd}{{ ${} \ \ \ \ \  \ \        {} $	}}	
\nc{\inddd}{{ 	\indd\indd			}}	
\nc{\nnd}{{ 	\nn  \indd 			}}	
\nc{\nndb}{{ 	\nn  \indd $\bullet$		}}	
\nc{\bce}{	\begin{center}	}
\nc{\ece}{	\end  {center}	}
\nc{\cen}[1]{	\begin{center}	{\em  #1}	\end  {center}	}
\nc{\bss}{{\backslash}}           		
\nc{\barr}{ 	\overline 	}      		
\nc{\ud}{	\underline	}		
\nc{\ti}{\tilde}              
\nc{\tii}{\widetilde}         
\nc{\hatt}{\widehat}				
\nc{\hata}{{	\bbb{ \hat{} }		}}	
\nc{\ch}{\check}              			
\nc{\cha}{{ 	\bbb{ \check{} }	}}      
\nc{\sub}{{	\subseteq	}}         
\nc{\subb}{{	\supseteq	}}         
\nc{\nsub}{{	\nsubseteq	}}         
\nc{\nsubb}{{	\nsupseteq	}}         %
\nc{\nin}{{	\notin	}}
\nc{\lb}{\langle}             				
\nc{\rb}{\rangle}
\nc{\lB}{	\left(	}             			
\nc{\rB}{	\right)	}
\nc{\BBl}{{	\bbb{ \left( \right.}	}}             	
\nc{\BBr}{{	\bbb{ \left. \right)}	}}
\nc{\Pa}[2]{ {\lb} #1 {,} #2 {\rb} }				
\nc{\cD}[1]{ \tx{ $$\CD {#1} \endCD $$ }  }		
\nc{\mat} {		\left(		\matrix	}	
\nc{\emat}{		\endmatrix	\right)	}
\nc{\sm} {		\left(		\smallmatrix	}	
\nc{\esm}{		\endsmallmatrix	\right)	}
\nc{\smat} {		\left(		\smallmatrix	}	
\nc{\esmat}{		\endsmallmatrix	\right)	}
\nc{\matr} {		\left[		\matrix	}	
\nc{\ematr}{		\endmatrix	\right]	}
\nc{\smr} {		\left[		\smallmatrix	}	
\nc{\esmr}{		\endsmallmatrix	\right]	}
\nc{\smatr} {		\left[		\smallmatrix	}	
\nc{\esmatr}{		\endsmallmatrix	\right]	}
\nc{\imat} {		\left.		\matrix	}	
\nc{\eimat}{		\endmatrix	\right.	}
\nc{\ism} {		\left.		\smallmatrix	}	
\nc{\eism}{		\endsmallmatrix	\right.	}
\nc{\ca}{		\left\{		\smallmatrix	}	
\nc{\eca}{		\endsmallmatrix	\right\}	}
\nc{\Ca}{		\left\{		\matrix		}	
\nc{\Eca}{		\endmatrix	\right.		}	
\nc{\eCa}{		\endmatrix	\right\}	}	
\nc{\com}{	\begin{diagram}	}
\nc{\ecom}{	  \end{diagram}	}
\nc{\tab}{	\begin{tabular}		}
\nc{\etab}{	\end{tabular}		}	
\nc{\hl}{{	\hline			}}
\nc{\Eq}{	\begin{equation}	}
\nc{\Eeq}{	\end{equation}	}
\nc{\aln}{	\begin{align}	}
\nc{\ealn}{	\end{align}	}
\nc{\Rpart}{	\rc{\thepart}{\Roman{part}}	}
\nc{\Apart}{	\rc{\thepart}{\arabic{part}}	}
\nc{\rref}[2]{\ref{#1}.\ref{#2}}
\nc{\pa}[1]{ 	\part{#1}		}
\nc{\se}[1]{ 	\section{\bf#1}		}
\nc{\ses}[1]{ 	\section*{\bf#1}		}
\nc{\sus}{ 	\subsection		}
\nc{\sss}{ 	\subsubsection		}
\nc{\Lem}{ 	\subsection{Lemma}		}
\nc{\slem}{ 	\subsubsection*{Lemma}		}
\nc{\sublem}{ 	\subsubsection{ Sublemma}	}
\nc{\ssublem}{ \subsubsection*{ Sublemma}	}
\nc{\Lemm}{ 	\subsection{Lemma}		}
\nc{\lemm}{ 	\subsubsection{Lemma}		}
\nc{\slemm}{ 	\subsubsection*{Lemma}		}
\nc{\sublemm}{ 	\subsubsection{ Sublemma}	}
\nc{\ssublemm}{ \subsubsection*{ Sublemma}	}
\nc{\Pro}{ 	\subsection{Proposition}	}
\nc{\pro}{ 	\subsubsection{Proposition}	}
\nc{\spro}{ 	\subsubsection*{Proposition}	}
\nc{\Cor}{ 	\subsection{Corollary}		}
\nc{\scor}{ 	\subsubsection*{Corollary}	}
\nc{\Corr}{ 	\subsection{Corollary}		}
\nc{\corr}{ 	\subsubsection{Corollary}	}
\nc{\scorr}{ 	\subsubsection*{Corollary}	}
\nc{\Theo}{ 	\subsection{Theorem}		}		
\nc{\theo}{ 	\subsubsection{Theorem}		}
\nc{\stheo}{ 	\subsubsection*{Theorem}	}
\nc{\pretheo}{ 	\subsubsection{Pretheorem}	}
\nc{\rem}{ 	\subsubsection{Remark}		}
\nc{\srem}{ 	\subsubsection*{Remark}	}
\nc{\rems}{ 	\subsubsection{Remarks}		}
\nc{\srems}{ 	\subsubsection*{Remarks}	}
\nc{\Def}{ 	\subsection{Definition}		}
\nc{\ddef}{ 	\subsubsection{Definition}	}
\nc{\comm}{ 	\subsubsection{Comment}		}
\nc{\scomm}{ 	\subsubsection*{Comment}	}
\nc{\comms}{ 	\subsubsection{Comments}		}
\nc{\scomms}{ 	\subsubsection*{Comments}	}
\nc{\claim}{ 	\subsubsection{Claim}		}
\nc{\sclaim}{ 	\subsubsection*{Claim}	}
\nc{\nota}{ 	\subsubsection{Notation}	}
\nc{\sconj}{ 	\subsubsection*{Conjecture}	}
\nc{\ex}{ 	\subsubsection{Example}		}
\nc{\sex}{ 	\subsubsection*{Example}	}
\nc{\exs}{ 	\subsubsection{Examples}	}
\nc{\sexs}{ 	\subsubsection*{Examples}	}
\nc{\Ex}{ 	\subsection{Example}		}
\nc{\sEx}{ 	\subsection*{Example}	}
\nc{\Exs}{ 	\subsection{Examples}	}
\nc{\sExs}{ 	\subsection*{Examples}	}
\nc{\que}{ 	\subsubsection{Question}	}
\nc{\ques}{ 	\subsubsection{Questions}	}
\nc{\sque}{ 	\subsubsection*{Question}	}
\nc{\sques}{ 	\subsubsection*{Questions}	}
\nc{\bi}{	\begin{itemize}\item		}
\rc{\i}{	\item			}
\nc{\ei}{ \end{itemize}	} 
\nc{\ben}{	\begin{enumerate}\item		}
\nc{\een}{	\end{enumerate}			}
\nc{\ftt}[1]{{\footnote{#1}}}
\nc{\fttt}[1]{{$^($\footnote{#1}$^)$}}
\nc{\bftt}[1]{\footnote{#1}}
\nc{\f}[1]{ \fbox{$ $}\footnote{ \fbox{!}#1 }\fbox{$ $}		}
\nc{\Ao}{{	\A^1	}}
\nc{\Po}{{	\P^1	}}
\nc{\h}{{	\hslash	}}	
\nc{\All}{{	\forall		}}
\nc{\Exx}{{	\exists 	}}
\nc{\yy}{\infty}                       
\nc{\ys}{{  \frac{\infty}{2}  }}
\nc{\ii}{{i\in I}}
\nc{\ww}{{w\in W}}
\nc{\SES}[5]{{	0 @>>> {#1} @>{#2}>> {#3} @>{#4}>> {#5} @>>> 0	}}
\nc{\Ses}[3] {{	0 @>>> {#1} @>>>     {#2} @>>>     {#3} @>>> 0	}}
\nc{\pl}{{\oplus}}              		
\nc{\tim}{{\times}}             
\nc{\btim}{{\boxtimes}}
\nc{\ltim}{\ltimes}                  	%
\nc{\rtim}{\rtimes}			%
\nc{\ltr}{\triangleleft}        %
\nc{\rtr}{\triangleright}       %
\nc{\ten}{{	\otimes		}}            
\nc{\Lten}{{	\aa{L}\otimes	}}            
\nc{\Ltim}{{	\aa{L}\times	}}            
\nc{\Rcap}{{	\aa{R}\cap	}}            
\nc{\tenA}{	\bb{A}\ten	}
\nc{\tenB}{	\bb{B}\ten	}
\nc{\tenZ}{	\bb{\Z}\ten	}
\nc{\tenR}{	\bb{\R}\ten	}
\nc{\tenC}{	\bb{\C}\ten	}
\nc{\tenk}{	\bb{\k}\ten	}
\nc{\bten}{{\boxtimes}}         		
\nc{\con}{{ @>{\protect\cong}>> }}  	
\nc{\conl}{{ 	@>{\cong}>>	}}  	
\nc{\conn}{{    @<{\cong}<<  	}}  	
\nc{\Con}{{	\equiv		}}	
\nc{\appr}{{	\sim		}}	
\nc{\eqr}{{	\sim		}}	
\nc{\equi}{{	\sim		}}	
\nc{\fra}{ 	\frac	}     	
\nc{\ffr}[2]{{ 	\text{\footnotesize $\frac{#1}{#2}$	}	}}  
\nc{\ha}{{ \frac{1}{2} }}     		
	\nc{\half}{{ \frac{1}{2} }}    	
\nc{\ci}{{\circ}}               
\nc{\cd }{{\cdot}}            	
\nc{\cddd}{{\cdot\cdot\cdot}}	
\nc{\ox}{{	\OO_X		}}               
\nc{\omx}{{	\om_X		}}               
\nc{\Omx}{{	\Om_X^1		}}               
\nc{\qcoh}{{	q\CC oh		}}               %
\nc{\xt}{{	X_*(T)		}}
\nc{\Xt}{{	X^*(T)		}}
\nc{\cfm}{{	co\fm		}}	
\nc{\cupp}{\bigcup}             
\nc{\capp}{\bigcap}
\nc{\pll}{\bigoplus}
\nc{\pii}{\prod}                
\nc{\ppii}{\bigprod}            
\nc{\cci}{\sqcup}              
\nc{\ccii}{\bigsqcup}
\nc{\wwe}{\bigwedge}            
\nc{\cce}{\bigcoprod}           
\nc{\aaa}{	\stackerel	}	
\nc{\edd}{{ \end{document}	}} 
\nc{\tx}{	\text		}		
\nc{\df}{{ \protect\overset{ \text{def}}= 	}}		
\nc{\dff}{{ \ \df\				}}		
\nc{\inv}{{ {}^{-1}      }}			
\nc{\thh}{	^{\text{th}}	}                     	
\nc{\st}{	^{\text{st}}	}                     	
\nc{\nd}{	^{\text{nd}}	}                     	
\nc{\rd}{	^{\text{rd}}	}                     	
\nc{\pmo}{{ 	\pm 1		}}
\nc{\mpo}{{ 	\mp 1		}}
\nc{\htt}{  \text{ht}}				
\nc{\emp}{{   \emptyset}}      			
\nc{\cowe}{{	\vee	}}			
\nc{\we}{{\wedge}}				
\nc{\wee}{{	\aa{\bullet}\wedge	}}		
\nc{\wetwo}{{     \pr\overset{2}\wedge       }}	
\nc{\limp}{{	\pr\underset {\leftarrow} \lim		}}	
\nc{\Limp}{{	\pr\underset {\leftarrow} {\bbb\lim}	}}	
\nc{\limi}{{	\pr\underset {\rightarrow}\lim		}}      
\nc{\Limi}{{	\pr\underset {\rightarrow}{\bbb\lim}	}}	
\nc{\llim}[1]{	 \bb{#1}\lim        	}   
\nc{\llimp}[1]{ \bb{#1}{ \pr\underset {\leftarrow} \lim       } }
\nc{\LLimp}[1]{ \bb{#1}{ \pr\underset {\leftarrow} {\bbb\lim} } } 	
\nc{\llimi}[1]{ \bb{#1}{ \pr\underset {\rightarrow}\lim       } }
\nc{\LLimi}[1]{ \bb{#1}{ \pr\underset {\rightarrow}{\bbb\lim} } }	
\nc{\ppp}{{ {\Bbb P}^1 }}            		
\nc{\ppn}{{ {\Bbb P}^n }}            		
\nc{\pt}{	{ \text{pt} }	}		
\nc{\qlb}{{ \barr{{\Bbb Q}_l} }}      		
\nc{\ffq}{{  {\Bbb F}_q  }}           		
\nc{\ffp}{{  {\Bbb F}_p  }}           		
\nc{\tw}{   {}^{(1)}	}		
\nc{\Ab}{{ 	\AA b 		}}      		%
\nc{\Set}{{ 	\SS et 		}}      		%
\nc{\Top}{{ 	\TT op 		}}      		%
\nc{\Pic}{{ 	\tx{Pic}	}}      		%
\nc{\del}{{\partial }}
\nc{\delb}{{\partial }}
\nc{\dd}[2]{	\fra{d{#1}}{d{#2}}		}
\nc{\ddel}[2]{	\fra{\del{#1}}{\del {#2}}	}
\nc{\Spec}{{ 	\text{Spec}      		}} 
\nc{\Specf}{{ 	\text{Specf}      		}} 
\nc{\Spf}{{ 	\text{Spf}      		}} 
\nc{\hk}{{     \text{hyperk{\"a}hler} 	}}
\nc{\susy}{{\text{supersymmetry}}}
\nc{\ie}{{,\ \     \text{i.e.,}\ \ 	}}
\nc{\iif}{{\ \     \text{if}\ \ 	}}
\nc{\aand}{{\ \ \  \text{and}\ \ \ 	}}
\nc{\hence}{{\ \ \ \text{hence}\ \ \ 	}}
\nc{\while}{{\ \ \ \text{while}\ \ \ 	}}
\nc{\with}{{\ \ \  \text{with}\ \ \ 	}}
\nc{\oor}{{\ \     \text{or}\ \ 	}}
\nc{\foor}{{\ \     \text{for}\ \ 	}}
\nc{\suchthat}{{\ \     \text{such that}\ \ 	}}
\nc{\Coker}{{\operatorname{Coker}}}
\rc{\Im}{{ 	\text{Im} 	}}
\nc{\rank}{{	\ \text{rank} 	}}
\nc{\Res}{{	\  \text{Res}   }}
\nc{\End}{{	\text{End}	}}
\nc{\HHom}{{	\text{$\HH$om}	}}
\nc{\RHHom}{{	\text{R$\HH$om} }}
\nc{\RGa}{{	\text{R$\Ga$}	}}
\nc{\EEnd}{{	\text{$\EE nd$}	}}
\nc{\AAut}{{	\text{$\AA ut$}	}}
\nc{\Der}{{	\text{Der}	}}
\nc{\ord	}{{ \text{ord} }}			
\nc{\divv	}{{ \text{div} }}			
\nc{\Lie	}{{ \text{Lie} }}
\nc{\timA} {{   \pr\underset{A}\tim             }}
\nc{\timB} {{   \pr\underset{B}\tim             }}
\nc{\timC} {{   \pr\underset{C}\tim             }}
\nc{\timG} {{   \pr\underset{G}\tim             }}
\nc{\timH} {{   \pr\underset{H}\tim             }}
\nc{\timN} {{   \pr\underset{N}\tim             }}
\nc{\timP}{{    \pr\underset{P}\tim             }}
\nc{\timQ}{{    \pr\underset{Q}\tim             }}
\nc{\timS} {{   \pr\underset{S}\tim             }}
\nc{\timT} {{   \pr\underset{T}\tim             }}
\nc{\timU} {{   \pr\underset{U}\tim             }}
\nc{\timV} {{   \pr\underset{V}\tim             }}
\nc{\timX} {{   \pr\underset{X}\tim             }}
\nc{\timY} {{   \pr\underset{Y}\tim             }}
\nc{\timZ} {{   \pr\underset{Z}\tim             }}
\nc{\ab}{{       ^{\text{ab}}   		}}
\nc{\af}{{       ^{\text{aff}}  		}}
\nc{\cod}{\text{codim}}	
\rc{\AA}{{\mathcal A}}
\nc{\BB}{{\mathcal B}} 
\nc{\CC}{{\mathcal C}}
\nc{\DD}{{\mathcal D}}
\nc{\EE}{{\mathcal E}}
\nc{\FF}{{\mathcal F}}
\nc{\GG}{{\mathcal G}}
\nc{\HH}{{\mathcal H}}
\nc{\II}{{\mathcal I}}
\nc{\JJ}{{\mathcal J}}
\nc{\KK}{{\mathcal K}}
\nc{\LL}{{\mathcal L}}
\nc{\MM}{{\mathcal M}}
\nc{\NN}{{\mathcal N}}
\nc{\OO}{{\mathcal O}}
\nc{\PP}{{\mathcal P}}
\nc{\QQ}{{\mathcal Q}}
\nc{\RR}{{\mathcal R}}
\rc{\SS}{{\mathcal S}}
\nc{\TT}{{\mathcal T}}
\nc{\UU}{{\mathcal U}}
\nc{\VV}{{\mathcal V}}
\nc{\WW}{{\mathcal W}}
\nc{\ZZ}{{\mathcal Z}}
\nc{\XX}{{\mathcal X}}
\nc{\YY}{{\mathcal Y}}
\nc{\A}{{\Bbb A }}
\nc{\B}{{\Bbb B}}
\nc{\C}{{\mathbb C}}
		\nc{\cc}{{\Bbb C}}
\nc{\Cs}{{\Bbb C^*}}
		\nc{\cs}{{\Bbb C^*}}
		\nc{\ccs}{{\Bbb C^*}}
\nc{\D}{{\Bbb D}}
\nc{\E}{{\Bbb E}}
\nc{\F}{{\Bbb F}}
\nc{\G}{{\Bbb G}}
	\nc{\hH}{{\Bbb H}}
\nc{\I}{{\Bbb I}}
\nc{\J}{{\Bbb J}}
\nc{\K}{{\Bbb K}}
	\nc{\lL}{{\Bbb L}}
\nc{\M}{{\Bbb M}}
\nc{\N}{{\Bbb N}}
	\nc{\oO}{{\Bbb O}}
	\nc{\pP}{{\Bbb P}}      
\nc{\Q}{{\Bbb Q}}
\nc{\R}{{\Bbb R}}
	\nc{\sS}{{\Bbb S}}
\nc{\T}{{\Bbb T}}
\nc{\U}{{\Bbb U}}
\nc{\V}{{\Bbb V}}
\nc{\W}{{\Bbb W}}
\nc{\Z}{{\mathbb Z}}
\nc{\X}{{\Bbb X}}
\nc{\Y}{{\Bbb Y}}
\let\P\pP
\nc{\fA}{{\frak A}}
\nc{\fB}{{\frak B}}
\nc{\fC}{{\frak C}}
\nc{\fD}{{\frak D}}
\nc{\fE}{{\frak E}}
\nc{\fF}{{\frak F}}
\nc{\fG}{{\frak G}}
\nc{\fH}{{\frak H}}
\nc{\fI}{{\frak I}}
\nc{\fJ}{{\frak J}}
\nc{\fK}{{\frak K}}
\nc{\fL}{{\frak L}}
\nc{\fM}{{\frak M}}
\nc{\fN}{{\frak N}}
\nc{\fO}{{\frak O}}
\nc{\fP}{{\frak P}}
\nc{\fQ}{{\frak Q}}
\nc{\fR}{{\frak R}}
\nc{\fS}{{\frak S}}
\nc{\fT}{{\frak T}}
\nc{\fU}{{\frak U}}
\nc{\fV}{{\frak V}}
\nc{\fW}{{\frak W}}
\nc{\fZ}{{\frak Z}}
\nc{\fX}{{\frak X}}
\nc{\fY}{{\frak Y}}
\nc{\fa}{{\frak a}}
\nc{\fb}{{\frak b}}
\nc{\fc}{{\frak c}}
\nc{\fe}{{\frak e}}
\nc{\ff}{{\frak f}}
\nc{\fgg}{{\frak g}}
\nc{\fh}{{\frak h}}
\nc{\fiI}{{\frak i}}  
	\nc{\ffi}{{\frak i}}  
\nc{\fj}{{\frak j}}
\nc{\fk}{{\frak k}}
\nc{\fm}{{\frak m}}
\nc{\fn}{{\frak n}}
\nc{\fo}{{\frak o}}
\nc{\fp}{{\frak p}}
\nc{\fq}{{\frak q}}
\nc{\fr}{{\frak r}}
\nc{\fs}{{\frak s}}
\nc{\ft}{{\frak t}}
\nc{\fu}{{\frak u}}
\nc{\fv}{{\frak v}}
\nc{\fw}{{\frak w}}
\nc{\fz}{{\frak z}}
\nc{\fx}{{\frak x}}
\nc{\fy}{{\frak y}}
\nc{\al}{{\alpha }}
\nc{\be}{{\beta }}
\nc{\ga}{{\gamma }}
\nc{\de}{{\delta }}
\nc{\ep}{{\varepsilon }}
\nc{\vap}{{\epsilon }}
\nc{\ze}{{\zeta }}
\nc{\et}{{\eta }}
\rc{\th}{{\theta }}
\nc{\vth}{{\vartheta }}
\nc{\io}{{\iota }}
\nc{\ka}{{\kappa }}
\nc{\la}{{\lambda }}
\nc{\vpi}{{	\varpi		}}
\nc{\vrho}{{	\varrho		}}
\nc{\si}{{	\sigma 		}}
\nc{\ups}{{	\upsilon 	}}
\nc{\vphi}{{	\varphi 	}}
\nc{\om}{{	\omega 		}}
\nc{\Ga}{{\Gamma }}
\nc{\De}{{\Delta }}
\nc{\nab}{{\nabla}}
\nc{\Th}{{\Theta }}
\nc{\La}{{\Lambda }}
\nc{\Si}{{\Sigma }}
\nc{\Ups}{{\Upsilon }}
\nc{\Om}{{\Omega }}
\nc{\Aa}{{	\text{A}	}}
\nc{\Bb}{{	\text{B}	}}
\nc{\Cc}{{	\text{C}	}}
\nc{\Dd}{{	\text{D}	}}
\nc{\Ee}{{	\text{E}	}}
\nc{\Ff}{{	\text{F}	}}
\nc{\Gg}{{	\text{G}	}}
\nc{\Hh}{{	\text{H}	}}
\nc{\Ii}{{	\text{I}	}}
\nc{\Jj}{{	\text{J}	}}
\nc{\Kk}{{	\text{K}	}}
\nc{\Ll}{{	\text{L}	}}
\nc{\Mm}{{	\text{M}	}}
\nc{\Nn}{{	\text{N}	}}
\nc{\Oo}{{	\text{O}	}}
\nc{\Pp}{{	\text{P}	}}
\nc{\Qq}{{	\text{Q}	}}
\nc{\Rr}{{	\text{R}	}}
\nc{\Ss}{{	\text{S}	}}
\nc{\Tt}{{	\text{T}	}}
\nc{\Uu}{{	\text{U}	}}
\nc{\Vv}{{	\text{V}	}}
\nc{\Ww}{{	\text{W}	}}
\nc{\Zz}{{	\text{Z}	}}
\nc{\Xx}{{	\text{X}	}}
\nc{\Yy}{{	\text{Y}	}}
\nc{\bGa}{{	\bbb{\Ga}	}}
\nc{\bA}{{	\bbb{A}		}}
\nc{\bB}{{	\bbb{B}		}}
\nc{\bE}{{	\bbb{E}	}}
\nc{\bF}{{	\bbb{F}	}}
\nc{\bG}{{	\bbb{G}	}}
\nc{\bH}{{	\bbb{H}	}}
\nc{\bI}{{	\bbb{I}	}}
\nc{\bJ}{{	\bbb{J}	}}
\nc{\bK}{{	\bbb{K}	}}
\nc{\bL}{{	\bbb{L}	}}
\nc{\bM}{{	\bbb{M}	}}
\nc{\bN}{{	\bbb{N}	}}
\nc{\bO}{{	\bbb{O}	}}
\nc{\bP}{{	\bbb{P}	}}
\nc{\bQ}{{	\bbb{Q}	}}
\nc{\bR}{{	\bbb{R}	}}
\nc{\bS}{{	\bbb{S}	}}
\nc{\bT}{{	\bbb{T}	}}
\nc{\bU}{{	\bbb{U}	}}
\nc{\bV}{{	\bbb{V}	}}
\nc{\bW}{{	\bbb{W}	}}
\nc{\bX}{{	\bbb{X}	}}
\nc{\bY}{{	\bbb{Y}	}}
\nc{\bZ}{{	\bbb{Z}	}}
\nc{\ba}{{	\bbb{a}	}}
			\nc{\bbbb}{{	\bbb{b}	}}
\nc{\bc}{{	\bbb{c}	}}
\nc{\bd}{{	\bbb{d}	}}
			\nc{\bbe}{{	\bbb{e}	}}
			\nc{\bbf}{{	\bbb{f}	}}
\nc{\bg}{{	\bbb{g}	}}
\nc{\bh}{{	\bbb{h}	}}
			\nc{\bbi}{{	\bbb{i}	}}
\nc{\bj}{{	\bbb{j}	}}
			\nc{\bbk}{{	\bbb{k}	}}
\nc{\bl}{{	\bbb{l}	}}
\nc{\bm}{{	\bbb{m}	}}
\nc{\bn}{{	\bbb{n}	}}
\nc{\bo}{{	\bbb{o}	}}
\nc{\bp}{{	\bbb{p}	}}
\nc{\bq}{{	\bbb{q}	}}
\nc{\br}{{	\bbb{r}	}}
\nc{\bs}{{	\bbb{s}	}}
\nc{\bt}{{	\bbb{t}	}}
			\nc{\bbbu}{{	\bbb{u}	}}
\nc{\bv}{{	\bbb{v}	}}
\nc{\bw}{{	\bbb{w}	}}
\nc{\bxx}{{	\bbb{x}	}}
\nc{\by}{{	\bbb{y}	}}
\nc{\bz}{{	\bbb{z}	}}
\nc{\sA}{{\mathsf A}}
\nc{\sB}{{\mathsf B}}
\nc{\sC}{{\mathsf C}}
\nc{\sD}{{\mathsf D}}
\nc{\sE}{{\mathsf E}}
\nc{\sF}{{\mathsf F}}
\nc{\sG}{{\mathsf G}}
\nc{\sH}{{\mathsf H}}
\nc{\sI}{{\mathsf I}}
\nc{\sJ}{{\mathsf J}}
\nc{\sK}{{\mathsf K}}
\nc{\sL}{{\mathsf L}}
\nc{\sM}{{\mathsf M}}
\nc{\sN}{{\mathsf N}}
\nc{\sO}{{\mathsf O}}
\nc{\sP}{{\mathsf P}}
\nc{\sQ}{{\mathsf Q}}
\nc{\sR}{{\mathsf R}}
\rc{\sS}{{\mathsf S}}
\nc{\sT}{{\mathsf T}}
\nc{\sU}{{\mathsf U}}
\nc{\sV}{{\mathsf V}}
\nc{\sW}{{\mathsf W}}
\nc{\sX}{{\mathsf X}}
\nc{\sY}{{\mathsf Y}}
\nc{\sZ}{{\mathsf R}}
\nc{\sa}{{\mathsf a}}
\rc{\sb}{{\mathsf b}}
\rc{\sc}{{\mathsf c}}
\nc{\sd}{{\mathsf d}}
\nc{\sg}{{\mathsf g}}
\nc{\sh}{{\mathsf h}}
\nc{\sj}{{\mathsf j}}
\nc{\sk}{{\mathsf k}}
\nc{\sn}{{\mathsf n}}
\nc{\so}{{\mathsf o}}
\nc{\sq}{{\mathsf q}}
\nc{\sr}{{\mathsf r}}
\nc{\su}{{\mathsf u}}
\nc{\sv}{{\mathsf v}}
\nc{\sw}{{\mathsf w}}
\nc{\sx}{{\mathsf x}}
\nc{\sy}{{\mathsf y}}
\nc{\sz}{{\mathsf z}}
\nc{\toc}{{ 	\small{\tableofcontents} }}
\nc{\addl}{	\addcontentsline{toc}{subsection}	}
\def\sheafHom{\mathcal{H} \hspace{-1pt} \mathit{om}}
\newcommand{\bD}{\mathbb{D}}
\newcommand{\uC}{\underline{\mathbb{C}}}
\newcommand{\uD}{\underline{\mathbb{D}}}
\newcommand{\bbY}{\mathbb Y}
\newcommand{\bbX}{\mathbb X}
\newcommand{\calA}{\mathcal{A}}
\newcommand{\calB}{\mathcal{B}}
\newcommand{\calD}{\mathcal{D}}
\newcommand{\calE}{\mathcal{E}}
\newcommand{\calF}{\mathcal{F}}
\newcommand{\calG}{\mathcal{G}}
\newcommand{\calH}{\mathcal{H}}
\newcommand{\calM}{\mathcal{M}}
\newcommand{\calO}{\mathcal{O}}
\newcommand{\calZ}{\mathcal{Z}}
\newcommand{\wcalN}{\widetilde{\mathcal{N}}}
\newcommand{\wfrakg}{\widetilde{\mathfrak{g}}}
\newcommand{\frakg}{\mathfrak{g}}
\newcommand{\frakt}{\mathfrak{t}}
\newcommand{\frakb}{\mathfrak{b}}
\newcommand{\frakK}{\mathfrak{K}}
\newcommand{\frakp}{\mathfrak{p}}
\newcommand{\lotimes}{{\stackrel{_L}{\otimes}}}
\newcommand{\rcap}{{\stackrel{_R}{\cap}}}
\newcommand{\Gm}{{\mathbb{G}}_{\mathbf{m}}}
\newcommand{\Hom}{{\rm Hom}}
\newcommand{\Ext}{{\rm Ext}}
\newcommand{\Coh}{\mathsf{Coh}}
\newcommand{\rk}{{\rm rk}}
\newcommand{\op}{{\rm op}}
\newcommand{\aff}{{\rm aff}}
\newcommand{\coH}{\mathsf{H}}
\newcommand{\coHc}{\widehat{\mathsf{H}}}
\newcommand{\Kth}{\mathsf{K}}
\newcommand{\Co}{\mathrm{(Comp)}}
\newcommand{\BC}{\mathrm{(BC)}}
\newcommand{\Fourier}{\mathfrak{Fourier}}
\newcommand{\Koszul}{\mathbf{Koszul}}
\newcommand{\uRR}{\underline{\mathrm{RR}}}
\newcommand{\oRR}{\overline{\mathrm{RR}}}
\newcommand{\can}{\mathsf{can}}
\newcommand{\resK}{\mathbf{res}}
\newcommand{\resH}{\mathfrak{res}}
\newcommand{\pdiK}{\mathbf{pdi}}
\newcommand{\pdiH}{\mathfrak{pdi}}
\newcommand{\dual}{\mathbf{D}}
\newcommand{\bfi}{\mathbf{i}}
\newcommand{\simto}{\xrightarrow{\sim}}
\newcommand{\Haff}{\mathcal{H}_{\aff}}
\newcommand{\gHaff}{\overline{\mathcal{H}}_{\aff}}
\newcommand{\hgHaff}{\widehat{\overline{\mathcal{H}}}_{\aff}}
\begin{document}

\begin{abstract}
In this paper we prove that the \emph{linear Koszul duality} isomorphism for convolution algebras in $\Kth$-homology of \cite{MR3} and the \emph{Fourier transform} isomorphism for convolution algebras in Borel--Moore homology of \cite{EM} are related by the Chern character. So, Koszul duality appears as a categorical upgrade of Fourier transform of constructible sheaves. This result explains the connection between the categorification of the Iwahori--Matsumoto involution for graded affine Hecke algebras in \cite{EM} and for ordinary affine Hecke algebras in \cite{MR3}.
\end{abstract}

\title[Linear Koszul duality and Fourier transform]{Linear Koszul duality and Fourier transform \\ for convolution algebras}

\author{Ivan Mirkovi\'c}
\address{University of Massachusetts, Amherst, MA.}
\email{mirkovic@math.umass.edu}

\author{Simon Riche}
\address{Universit{\'e} Clermont Auvergne, Universit{\'e} Blaise Pascal, Laboratoire de Math{\'e}matiques, BP 10448, F-63000 Clermont-Ferrand, France -- CNRS, UMR 6620, LM, F-63178 Aubi{\`e}re, France.}
\email{simon.riche@math.univ-bpclermont.fr}

\maketitle

\section*{Introduction}

\subsection{}

This article is a sequel to \cite{MR, MR2, MR3}. It links 
two kinds of ``Fourier'' transforms
prominent in mathematics, the Fourier transform for constructible sheaves
and the Koszul duality.
This is  done in a particular situation which is of interest 
in representation theory, namely the context of convolution algebras.

\subsection{Chern character map}

Our geometric setting consists of two vector subbundles $F_1,F_2$ of a trivial vector bundle $X\times V$ over a (smooth and proper) complex algebraic variety $X$. We consider the fiber product $F_1\times_V F_2$ as well as the dual object -- the 
fiber product
$F_1^\perp \times_{V^*} F_2^\perp$ of 
orthogonal complements of $F_1$ and $F_2$
inside the dual vector bundle $X\times V^*$.
The \emph{linear Koszul duality} mechanism from \cite{MR, MR2, MR3}
is a geometric version of the standard Koszul duality between  
graded modules over the symmetric algebra of a vector space and 
graded modules over the exterior algebra of the dual vector space. 
Here, this  formalism provides an equivalence of categories of
equivariant coherent sheaves
on the derived fiber products $F_1\, \aa{R}\tim_V\, F_2$ 
and  $F_1^\perp \, \aa{R}\tim_{V^*}\, F_2^\perp$ (in the sense of dg-schemes).
In particular we get an isomorphism of 
equivariant $\Kth$-homology groups
of algebraic varieties $F_1\times_V F_2$ and $F_1^\perp \times_{V^*} F_2^\perp$.\footnote{
Note that $\Kth$-homology does not distinguish the derived fiber product
from the usual fiber product of varieties, see \cite{MR3}.
}
On the other hand, the Fourier transform for constructible sheaves provides
an isomorphism of equivariant Borel--Moore homologies of
fiber products
$F_1\times_V F_2$ and $F_1^\perp \times_{V^*} F_2^\perp$, see \cite{EM}.

Our main result shows that the maps in $\Kth$-homology and in
Borel--Moore homology are related by the Chern character
map (the ``Riemann--Roch map'')
from equivariant $\Kth$-homology to (completed) equivariant Borel--Moore homology.\footnote{
For simplicity we work under a technical assumption
on $F_i$'s which is satisfied in all known applications.
}
In this way, linear
Koszul duality appears as a categorical upgrade
of the 
topological Fourier transform.

\subsection{Convolution algebras}

In Representation Theory the above
setting provides a geometric construction of
algebras. Indeed,
when 
$F_1=F_2=:F$ 
then
the equivariant $\Kth$-homology and Borel--Moore homology of $F \times_V F$ 
have structures of convolution algebras; for simplicity in this introduction we denote these 
$A_{\Kth}(F)$ and $A_{\mathsf{BM}}(F)$.
The Chern character provides
a map of algebras
$A_{\Kth}(F) \to \widehat{A}_{\mathsf{BM}}(F)$
from the $\Kth$-homology algebra to a completion of the Borel--Moore homology
algebra \cite{CG, Ka}.
This gives a strong relation between their representation theories:
one obtains results
on the representation theory
of the (more interesting) 
algebra $A_{\Kth}(F)$ through the relation to
the representation theory of the  algebra 
$A_{\mathsf{BM}}(F)$ which is
more accessible.\footnote{ 
The reason is the powerful machinery of perverse sheaves that
one can use in the topological setting, see \cite{CG}. 
}
In this setting, the maps 
\[
\imath_{\Kth} \colon A_{\Kth}(F) \simto A_{\Kth}(F^\perp), \qquad
\imath_{\mathsf{BM}} \colon A_{\mathsf{BM}}(F) \simto A_{\mathsf{BM}}(F^\perp)
\]
induced respectively by linear Koszul duality and 
by Fourier transform
are isomorphisms of algebras. 

An important example of this mechanism
appears  in the study of affine Hecke algebras, see \cite{KL, CG}.
The Steinberg variety $Z$ of a complex
connected reductive algebraic group $G$ (with simply connected derived subgroup)
is of the above form $F \times_V F$ where 
the space $X$ is 
the flag variety $\BB$ of $G$, the vector space $V$ is the dual $\frakg^*$ of the Lie algebra $\frakg$ of $G$, and $F$ is the
cotangent bundle $T^*\BB$. 
The $G \times \Gm$-equivariant $\Kth$-homology and Borel--Moore homology of the Steinberg 
variety $Z$ are then known to be realizations of the
\emph{affine Hecke algebra} $\Haff$
of the dual reductive group $\check{G}$ (with equal parameters) and of 
the corresponding \emph{graded affine Hecke algebra} $\gHaff$.
In this case the dual version 
$F^\perp \times_{V^*} F^\perp$ turns out to be
another -- homotopically equivalent -- version of the Steinberg variety 
$Z$.
Therefore, $\imath_{\Kth}$ and $\imath_{\mathsf{BM}}$ are automorphisms of 
 $\Haff$ and  $\gHaff$, respectively. In fact these are
(up to minor ``correction factors'') 
geometric realizations of 
the Iwahori--Matsumoto involution
of $\gHaff$
(see \cite{EM})
and  $\Haff$
(see \cite{MR3}).
The Chern character map can also be identified, in this case, with (a variant of) a morphism constructed (by algebraic methods) by Lusztig~\cite{LuAff}.
So, in this situation, Theorem \ref{thm:LKDFourier} 
explains the relation between
results of
\cite{MR3} 
and
\cite{EM}.

\subsection{Character cycles and characteristic cycles}

In \cite{Kas}, Kashiwara introduced for a group $G$ acting on a space $X$
an invariant   of a $G$-equivariant
constructible sheaf $\FF$ on $X$.
This is an element
$\mathrm{ch}_G(\FF)$ 
of the Borel--Moore homology of the stabilizer space
$G_X := \{(g,x)\in G\tim X \mid g \cdot x=x\}$. 
He ``linearized'' this construction to an element
$\mathrm{ch}_\frakg(\FF)$ of the Borel--Moore homology of the analogous 
stabilizer space
$\frakg_X$ for the Lie algebra $\frakg$ of the group $G$.
Under some assumptions (that put one in the above geometric setting)
he proved that the characteristic cycle of
$\FF$ is the image of $\mathrm{ch}_\frakg(\FF)$ under a Fourier transform map
in Borel--Moore homology (see \cite[\S 1.9]{Kas}). This work is 
the origin of papers on Iwahori--Matsumoto involution \cite{EM}
and
linear Koszul duality
\cite{MR}. From this point of view, the present paper is a part of the effort to 
categorify Kashiwara's character cycles.

\subsection{Conventions and notation}

In the body of the paper we will consider many morphisms involving $\Kth$-homology and Borel--Moore homology. We use the general convention that morphisms involving only $\Kth$-homology are denoted using bold letters, those involving only Borel--Moore homology are denoted using fraktur letters, and the other ones are denoted using ``sans serif'' letters.

If $X$ is a complex algebraic variety endowed with an action of a reductive algebraic group $A$, we denote by $\Coh^A(X)$ the category of $A$-equivariant coherent sheaves on $X$. If $Y \subset X$ is an $A$-stable closed subvariety we denote by $\Coh_Y^A(X)$ the subcategory consisting of sheaves supported set-theoretically on $Y$; recall that $\calD^b \Coh_Y^A(X)$ identifies with a full subcategory in $\calD^b \Coh^A(X)$. When considering $\Gm$-equivariant coherent sheaves, we denote by $\langle 1 \rangle$ the functor of tensoring with the tautological $1$-dimensional $\Gm$-module.

\subsection{Organization of the paper}

In Section \ref{sec:definitions} we define all our morphisms, and state our main result (Theorem \ref{thm:LKDFourier}). In Section~\ref{sec:convolution-algebras} we study more closely the case of convolution algebras, and even more closely the geometric setting for affine Hecke algebras; in this case we make all the maps appearing in Theorem \ref{thm:LKDFourier} explicit. In Sections \ref{sec:compatibility-Fourier} and \ref{sec:compatibility-others} we prove some compatibility statements for our constructions, and we apply these results in Section \ref{sec:proof} to the proof of Theorem \ref{thm:LKDFourier}. Finally, Appendix \ref{sec:appendix} contains the proofs of some technical lemmas needed in other sections.

\subsection{Acknowledgements} 

We thank Roman Bezrukavnikov for useful conversations, and the referee for helpful suggestions and for insisting on making the application to Hecke algebras more explicit.

I.M.~was supported by NSF Grants.
S.R.~was supported by ANR Grants No.~ANR-09-JCJC-0102-01, ANR-2010-BLAN-110-02 and ANR-13-BS01-0001-01.

\section{Definitions and statement}
\label{sec:definitions}

\subsection{Equivariant homology and cohomology}
\label{ss:homology-cohomology}

If $A$ is a complex linear algebraic group acting on a complex algebraic variety $Y$, we denote by $\calD^A_{\mathrm{const}}(Y)$ the $A$-equivariant derived category of constructible complexes on $Y$ with complex coefficients, see \cite{BL}. Let $\uC_Y$, respectively $\uD_Y$, be the constant, respectively dualizing, sheaf on $Y$. These are objects of $\calD^A_{\mathrm{const}}(Y)$. We also denote by $\bD_Y \colon \calD^A_{\mathrm{const}}(Y) \simto \calD^A_{\mathrm{const}}(Y)^{\mathrm{op}}$ the Grothendieck--Verdier duality functor.

If $M$ is in $\calD^A_{\mathrm{const}}(Y)$, the $i$-th equivariant cohomology of $Y$ with coefficients in $M$ is by definition
\[
\coH^i_A(Y, M) := \Ext^i_{\calD^A_{\mathrm{const}}(Y)}(\uC_Y,M).
\]
In particular, the equivariant cohomology and Borel--Moore homology of $Y$ are defined by
\[
\coH_A^i(Y) := \coH_A^i(Y,\uC_Y),
\quad \coH_i^A(Y) := \coH_A^{-i}(Y,\uD_Y).
\]
We will also use the notation
\begin{align*}
\coH_A^\bullet(Y) \ := \ \bigoplus_{i \in \mathbb{Z}} \, \coH^i_A(Y), \quad & \coHc_{A}^\bullet(Y) \ := \ \prod_{i \in \mathbb{Z}} \, \coH^i_A(Y), \\
\coH^A_\bullet(Y) \ := \ \bigoplus_{i \in \mathbb{Z}} \, \coH_i^A(Y), \quad & \coHc^{A}_\bullet(Y) \ := \ \prod_{i \in \mathbb{Z}} \, \coH_i^A(Y).
\end{align*}
(By construction of the equivariant derived category, see~\cite[\S 2.2]{BL}, these definitions coincide -- up to grading shift -- with the definitions used e.g.~in~\cite{LuCus1, EG, BZ} using some ``approximations'' of $EA$.)
Note that with our conventions, one can have $\coH_i^A(Y) \neq 0$ for $i<0$. We will use the general convention that we denote by the same symbol an homogeneous morphism between vector spaces of the form $\coH^A_\bullet(\cdot)$ or $\coH_A^\bullet(\cdot)$ and the induced morphism between the associated vector spaces $\coHc^A_\bullet(\cdot)$ or $\coHc_A^\bullet(\cdot)$.

The vector spaces $\coH_A^\bullet(Y)$ and $\coH^A_\bullet(Y)$ have natural gradings, and most morphisms between such spaces that will occur in this paper will be homogeneous. We will sometimes write a morphism e.g.~as $\coH_A^\bullet(Y) \to \coH_A^{\bullet+d}(Y')$ to indicate that it shifts degrees by $d$.

There exists a natural (right) action of the algebra $\coH_A^\bullet(Y)$ on $\coH^A_\bullet(Y)$ induced by composition of morphisms in $\calD^A_{\mathrm{const}}(Y)$; it extends to an action of the algebra $\coHc_A^\bullet(Y)$ on $\coHc^A_\bullet(Y)$.

We will also denote by $\Kth^A(Y)$ the $A$-equivariant $\Kth$-homology of $Y$, i.e.~the Grothendieck group of the category of $A$-equivariant coherent sheaves on $Y$.

We will frequently use the following classical constructions. If $Z$ is another algebraic variety endowed with an action of $A$, and if $f \colon Z \to Y$ is a proper $A$-equivariant morphism, then there exist natural ``proper direct image'' morphisms
\[
\pdiK_f \colon \Kth^A(Z) \to \Kth^A(Y), \qquad \text{resp.} \qquad \pdiH_f \colon \coH^A_\bullet(Z) \to \coH^A_\bullet(Y),
\]
see \cite[\S 5.2.13]{CG}, resp.~ \cite[\S 2.6.8]{CG}.\footnote{Only \emph{non-equivariant} Borel--Moore homology is considered in~\cite{CG}. However, the constructions for equivariant homology are deduced from these, since the equivariant homology of $Y$ can be described in terms of ordinary homology of various spaces of the form $U \times^A Y$ where $U$ is an ``approximation'' of $EA$, see e.g.~\cite[\S 2.8]{EG1}. \label{fn:equiv-homology}} Each of these maps satisfies a projection formula; in particular for $c \in \coH_A^\bullet(Y)$ and $d \in \coH^A_\bullet(Z)$ we have
\begin{equation}
\label{eqn:proj-formula-H}
\pdiH_f(d \cdot f^*(c)) = \pdiH_f(d) \cdot c,
\end{equation}
where $f^* \colon \coH_A^\bullet(Y) \to \coH_A^\bullet(Z)$ is the natural pullback morphism.

On the other hand, if $Y$ is smooth, $Y' \subset Y$ is an $A$-stable smooth closed subvariety, and $Z \subset Y$ is a not necessarily smooth $A$-stable closed subvariety, then we have ``restriction with supports'' morphisms
\[
\resK \colon \Kth^A(Z) \to \Kth^A(Z \cap Y'), \qquad \text{resp.} \qquad \resH \colon \coH^A_\bullet(Z) \to \coH^A_{\bullet-2\dim(Y)+2\dim(Y')}(Z \cap Y')
\]
associated with the inclusion $Y' \hookrightarrow Y$,
see \cite[p.~246]{CG}, resp.~\cite[\S 2.6.21]{CG}. (The definition of the second morphism is recalled in \S\ref{ss:restriction-with-supports}.) Note that the morphism $\resH$ satisfies the formula
\begin{equation}
\label{eqn:res-cohomology}
\resH(c \cdot d) = \resH(c) \cdot i^*(d)
\end{equation}
for $c \in \coH^A_\bullet(Z)$ and $d \in \coH^\bullet_A(Z)$, where $i \colon Z \cap Y' \hookrightarrow Z$ is the embedding and $i^*$ is the pullback in cohomology as in~\eqref{eqn:proj-formula-H}. (In the non-equivariant setting, this follows from~\cite[Equation~(2.6.41)]{CG} and the definition of $\resH$ in~\cite[\S 2.6.21]{CG}; the equivariant case follows using the remark in Footnote~\ref{fn:equiv-homology}.)

Finally, if $E \to Y$ is an $A$-equivariant vector bundle, then we have the Thom isomorphism
\[
\coH^A_\bullet(E) \cong \coH^A_{\bullet-2\rk(E)}(Y).
\]

\subsection{Fourier--Sato transform}
\label{ss:Fourier-transform}

Let again $A$ be  a complex linear algebraic group, and let $Y$ be an $A$-variety. If $r \colon E \to Y$ is an $A$-equivariant (complex) vector bundle, we equip it with an $A \times \Gm$-action where  $t \in \Gm$ acts by multiplication by $t^{-2}$ along the fibers of $r$. We denote by $E^\diamond$ the $A \times \Gm$-equivariant dual vector bundle (so that $t \in \Gm$ acts by multiplication by $t^2$ along the fibers of the projection to $Y$), and by $E^*$ the dual $A$-equivariant vector bundle, which we equip with a $\Gm$-action where  $t \in \Gm$ acts by multiplication by $t^{-2}$ along the fibers. We denote by ${\check r}\colon E^* \to X$ the projection. 

The Fourier--Sato transform defines an equivalence of categories
\begin{equation}
\label{eqn:fourier1}
\mathfrak{F}_E\colon \calD^{A \times \Gm}_{\mathrm{const}}(E) \ \xrightarrow{\sim} \ \calD^{A \times \Gm}_{\mathrm{const}}(E^{\diamond}).
\end{equation}
This equivalence is constructed as follows (see \cite[\S 3.7]{KS}; see also \cite[\S 2.7]{AHJR} for a reminder of the main properties of this construction). Let $Q:=\{(x,y) \in E \times_Y E^\diamond \mid \mathrm{Re}(\langle x,y \rangle) \leq 0\}$, and let $q\colon Q \to E$, ${\check q} \colon Q \to E^\diamond$ be the projections. Then we have
\[
\mathfrak{F}_E:={\check q}_! q^*.
\]
(This equivalence is denoted $(\cdot)^\wedge$ in \cite{KS}; it differs by a cohomological shift from the equivalence $\mathbb{T}_E$ of \cite{AHJR}.)

Inverse image under the automorphism of $A \times \Gm$ which sends $(g,t)$ to $(g,t^{-1})$ establishes an equivalence of categories
\begin{equation}
\label{eqn:fourier2}
\calD^{A \times \Gm}_{\mathrm{const}}(E^\diamond) \ \xrightarrow{\sim} \ \calD^{A \times \Gm}_{\mathrm{const}}(E^*),
\end{equation}
see~\cite[Chap.~6]{BL}. We will denote by
\[
\calF_E \colon \calD^{A \times \Gm}_{\mathrm{const}}(E) \xrightarrow{\sim} \calD^{A \times \Gm}_{\mathrm{const}}(E^*)
\]
the composition of \eqref{eqn:fourier1} and \eqref{eqn:fourier2}.

Let $F \subset E$ be an $A$-stable subbundle, and denote by $F^\bot \subset E^*$ the orthogonal to $F$. Then one can consider the constant sheaf $\uC_F$ as an object of $\calD^{A \times \Gm}_{\mathrm{const}}(E)$. (Here and below, we omit direct images under closed inclusions when no confusion is likely.) Similarly, we have the object $\uC_{F^\bot}$ of $\calD^{A \times \Gm}_{\mathrm{const}}(E^*)$. The following result is well known; we reproduce the proof for future reference.

\begin{lem}
\label{lem:fourier-F}
There exists a canonical isomorphism
\[
\calF_E(\uC_F) \cong \uC_{F^\bot}[-2\mathrm{rk}(F)].
\]
\end{lem}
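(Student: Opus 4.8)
The plan is to compute $\calF_E(\uC_F)$ directly from the definition $\calF_E = \eqref{eqn:fourier2}\circ\mathfrak{F}_E$. The second equivalence \eqref{eqn:fourier2} is inverse image along an automorphism of $A\times\Gm$: it leaves underlying varieties and constructible complexes unchanged (only the $\Gm$-part of the equivariant structure is relabelled) and introduces no cohomological shift, so it sends $\uC_{F^\bot}$ to $\uC_{F^\bot}$. Hence it suffices to produce a \emph{canonical} isomorphism $\mathfrak{F}_E(\uC_F)\cong \uC_{F^\bot}[-2\rk(F)]$ in $\calD^{A\times\Gm}_{\mathrm{const}}(E^\diamond)$. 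By definition $\mathfrak{F}_E(\uC_F)={\check q}_!\,q^*\uC_F={\check q}_!\,\uC_{q^{-1}(F)}$, where $q^{-1}(F)=\{(x,y)\in F\times_Y E^\diamond\mid \mathrm{Re}\langle x,y\rangle\le 0\}\subset Q$. All the spaces and maps in sight ($Q$, $q$, ${\check q}$, the closed subbundle $F^\bot\subset E^\diamond$, \dots) are $A\times\Gm$-equivariant — the pairing $\langle x,y\rangle$ has $\Gm$-weight zero with the chosen normalizations — so it is enough to carry out the construction after forgetting equivariance and observe at the end that the resulting isomorphism is equivariant.

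First I would show that $\mathfrak{F}_E(\uC_F)$ is supported set-theoretically on $F^\bot$. By proper base change for ${\check q}_!$, the stalk at a point $y$ lying over $s\in Y$ is $R\Gamma_c$ of the fibre $\{x\in F_s\mid \mathrm{Re}\langle x,y\rangle\le 0\}$. If $y\notin F^\bot$ this is a closed half-space of the real vector space $F_s$ (the $\mathbb{R}$-linear functional $x\mapsto\mathrm{Re}\langle x,y\rangle$ being nonzero), hence homeomorphic to $\mathbb{R}^{2\rk(F)-1}\times\mathbb{R}_{\le 0}$, whose compactly supported cohomology vanishes; so the stalk is $0$. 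Next I would compute the restriction to $F^\bot$: in the cartesian square obtained by pulling ${\check q}$ back along the closed inclusion $i\colon F^\bot\hookrightarrow E^\diamond$, the relevant locus ${\check q}^{-1}(F^\bot)\cap q^{-1}(F)$ equals $F\times_Y F^\bot$ (the inequality $\mathrm{Re}\langle x,y\rangle\le 0$ is vacuous when $x\in F$ and $y\in F^\bot$), mapping to $F^\bot$ by the projection, i.e.\ the pullback of the bundle $F\to Y$. Proper base change then yields $i^*\mathfrak{F}_E(\uC_F)\cong \uC_{F^\bot}\otimes R\Gamma_c(\mathbb{C}^{\rk(F)})\cong \uC_{F^\bot}[-2\rk(F)]$, the last identification using the canonical orientation of the complex vector bundle $F$.

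Finally, since $\mathfrak{F}_E(\uC_F)$ is supported on the closed subvariety $F^\bot$, the adjunction unit $\mathfrak{F}_E(\uC_F)\to i_*i^*\mathfrak{F}_E(\uC_F)$ is an isomorphism (it is the identity on stalks over $F^\bot$ and $0=0$ off $F^\bot$); combining with the previous step gives the desired canonical isomorphism $\mathfrak{F}_E(\uC_F)\cong i_*\bigl(\uC_{F^\bot}[-2\rk(F)]\bigr)=\uC_{F^\bot}[-2\rk(F)]$, and equivariance is automatic as every step above is $A\times\Gm$-equivariant. The one point that needs genuine care is to produce the isomorphism at the level of the derived category rather than merely matching cohomology sheaves — this is exactly what the ``supported on $F^\bot$'' plus base-change plus $i_*i^*\simeq\mathrm{id}$ argument achieves — together with pinning down the normalization of $\mathfrak{F}_E$ so as to land on the shift $[-2\rk(F)]$ rather than another one. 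Alternatively, one can deduce the statement from the functoriality of the Fourier--Sato transform with respect to the morphism of vector bundles $F\hookrightarrow E$ applied to the zero-section computation $\mathfrak{F}_F(\uC_F)\cong \uC_Y[-2\rk(F)]$, but the hands-on route above is more self-contained.
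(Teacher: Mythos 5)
Your proposal is correct and follows essentially the same route as the paper: both reduce to $\mathfrak{F}_E$, kill everything away from $F^\bot$ using the vanishing of $H^\bullet_c$ of a closed half-space (the paper via the open--closed triangle on $Q_F$ with closed piece $F \times_Y F^\bot$, you via a stalkwise support argument plus the adjunction $M \simto i_*i^*M$), and identify the remaining piece as the pushforward of the constant sheaf along the rank-$\rk(F)$ complex vector bundle $F \times_Y F^\bot \to F^\bot$, which produces the shift $[-2\rk(F)]$. The only point to phrase more carefully is equivariance: rather than ``observing at the end'' that the isomorphism is equivariant, note that every map you construct (adjunction unit, base change, the bundle computation) is already defined in $\calD^{A \times \Gm}_{\mathrm{const}}$, and only the vanishing statements are checked after forgetting equivariance, which is legitimate because the forgetful functor is conservative.
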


\begin{proof}
It is equivalent to prove a similar isomorphism for $\mathfrak{F}_E$. For simplicity we denote $F^\bot$ by the same symbol when it is considered as a subbundle of $E^\diamond$.

By definition of $\mathfrak{F}_E$ we have a canonical isomorphism
\[
\mathfrak{F}_E(\uC_F) \cong {\check q}_{F!} \uC_{Q_F},
\]
where $Q_F:=q^{-1}(F) \subset Q$ and ${\check q}_F$ is the composition of ${\check q}$ with the inclusion $Q_F \hookrightarrow Q$. There is a natural closed embedding $i_F \colon F \times_Y F^\bot \hookrightarrow Q_F$; we denote by $U_F$ the complement and by $j_F \colon U_F \hookrightarrow Q_F$ the inclusion. The natural exact triangle $j_{F!} \uC_{U_F} \to \uC_{Q_F} \to i_{F*} \uC_{F \times_X F^\bot} \xrightarrow{+1}$ provides an exact triangle
\[
q_{F!} j_{F!} \uC_{U_F} \to q_{F!} \uC_{Q_F} \to q_{F_!} i_{F!} \uC_{F \times_X F^\bot} \xrightarrow{+1}.
\]
Using the fact that $\coH_c^\bullet(\mathbb{R}_{\geq 0}; \C)=0$,
one can easily check that $q_{F!} j_{F!} \uC_{U_F}=0$, so that the second map in this triangle is an isomorphism. Finally, $q_F \circ i_F \colon F \times_Y F^\bot \to E^\diamond$ identifies with the composition of the projection $F \times_X F^\bot \to F^\bot$ with the embedding $F^\bot \hookrightarrow E^\diamond$. We deduce a canonical isomorphism
\[
q_{F!} \uC_{Q_F} \cong \uC_{F^\bot}[-2 \mathrm{rk}(F)],
\]
which finishes the proof.
\end{proof}

We will mainly use these constructions in the following situation. Let $V$ be an $A$-module (which we will consider as an $A$-equivariant vector bundle over the variety $\pt:=\mathrm{Spec}(\C)$), and let $E:=V \times Y$, an $A$-equivariant vector bundle over $Y$. We denote by $p \colon E \to V$, ${\check p} \colon E^* \to V^*$ the projections. As above, let $F \subset E$ be an $A$-stable subbundle.

\begin{cor}
\label{prop:Fourier-F}
There exists a canonical isomorphism
\begin{equation}
\label{eqn:isom-Fourier-F}
\calF_V(p_! \uC_F) \cong {\check p}_! \uC_{F^\bot}[-2\mathrm{rk}(F)].
\end{equation}
\end{cor}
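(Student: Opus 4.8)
The plan is to reduce this to Lemma \ref{lem:fourier-F} by a base-change argument. The key point is that the Fourier--Sato transform commutes with proper (equivalently, here, with arbitrary) pushforward along the base, in the sense that if $\pi \colon Y \to \pt$ is the structure morphism and we view $E = V \times Y$ as the pullback of the bundle $V \to \pt$ along $\pi$, then $\calF_V \circ (\pi_E)_! \cong (\pi_{E^*})_! \circ \calF_E$, where $\pi_E \colon E \to V$ and $\pi_{E^*} \colon E^* \to V^*$ are the induced maps. First I would record this compatibility: it follows directly from the definition $\mathfrak{F}_E = {\check q}_! q^*$ together with the base-change isomorphisms for $!$-pushforward and $*$-pullback, since the incidence variety $Q_V \subset V \times V^*$ pulls back under $\pi$ to $Q_E \subset E \times_Y E^\diamond$, and $p = \pi_E$, ${\check p} = \pi_{E^*}$ are exactly the maps appearing in the statement. (One must also check that the $\Gm$-twists match, which is automatic since the $\Gm$-action on $E = V \times Y$ is induced from that on $V$.)

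Granting this, the argument is immediate: by Lemma \ref{lem:fourier-F} applied to the subbundle $F \subset E$ we have $\calF_E(\uC_F) \cong \uC_{F^\bot}[-2\mathrm{rk}(F)]$, and applying $(\pi_{E^*})_! = {\check p}_!$ to both sides and using the compatibility isomorphism on the left gives
\[
\calF_V \bigl( p_! \uC_F \bigr) \cong (\pi_{E^*})_! \, \calF_E(\uC_F) \cong {\check p}_! \uC_{F^\bot}[-2\mathrm{rk}(F)],
\]
which is \eqref{eqn:isom-Fourier-F}. The canonicity of the isomorphism in Lemma \ref{lem:fourier-F} and of the base-change isomorphisms ensures the resulting isomorphism is canonical.

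The main obstacle is formulating and verifying the base-change compatibility $\calF_V \circ p_! \cong {\check p}_! \circ \calF_E$ cleanly in the $A \times \Gm$-equivariant setting. The subtlety is not the non-equivariant statement (which is standard: see \cite[\S 3.7]{KS}) but making sure the equivariant structures and, crucially, the passage from $E^\diamond$ to $E^*$ via \eqref{eqn:fourier2} are tracked correctly through the base change; concretely, one checks that the square relating the incidence varieties $Q_E$ and $Q_V$ is Cartesian over $Y \to \pt$ and that $q$, ${\check q}$ are obtained from $q_V$, ${\check q}_V$ by the corresponding base change, so that proper base change applies termwise. An alternative, perhaps cleaner, route avoids the abstract base-change statement entirely and simply repeats the proof of Lemma \ref{lem:fourier-F} with $V$ replaced by $E$ and an extra $p_!$ inserted throughout: the exact triangle $j_{F!}\uC_{U_F} \to \uC_{Q_F} \to i_{F*}\uC_{F \times_Y F^\bot} \xrightarrow{+1}$ is pushed forward, the vanishing $\coH_c^\bullet(\mathbb{R}_{\geq 0};\C) = 0$ still kills the first term, and one is left with the composition $F \times_Y F^\bot \to F^\bot \hookrightarrow E^\diamond \xrightarrow{p} V^*$, which factors as $F \times_Y F^\bot \to F^\bot \xrightarrow{{\check p}} V^*$, yielding the claimed isomorphism after the twist \eqref{eqn:fourier2}. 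I would present whichever of these is shorter given the conventions already set up.
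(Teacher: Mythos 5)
Your proposal is correct and follows essentially the same route as the paper: the paper's proof consists precisely of the compatibility $\calF_V \circ p_! \cong {\check p}_! \circ \calF_E$ (quoted from \cite[Proposition 3.7.13]{KS}, with \cite[\S A.4]{AHJR} for the equivariant refinement) followed by an application of Lemma \ref{lem:fourier-F}. The only difference is that you sketch a proof of that compatibility by base change instead of citing it, which is fine but not needed.
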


\begin{proof}
By \cite[Proposition 3.7.13]{KS} (see also \cite[\S A.4]{AHJR}) we have a canonical isomorphism of functors
\[
\calF_V \circ p_! \cong {\check p}_! \circ \calF_E.
\]
In particular we deduce an isomorphism $\calF_V(p_! \uC_F) \cong {\check p}_! \calF_E(\uC_F)$. Then the result follows from Lemma \ref{lem:fourier-F}.
\end{proof}

\subsection{Equivariant homology as an $\Ext$-algebra}
\label{ss:equiv}

From now on we let $G$ be a complex connected reductive algebraic group, $X$ be a smooth and proper complex algebraic variety, and $V$ be a finite dimensional $G$-module. Let $E:=V \times X$, considered as a $G \times \Gm$-equivariant vector bundle as in \S\ref{ss:Fourier-transform}, and let $F_1,F_2$ be $G$-stable subbundles of the vector bundle $E$ over $X$. As in \S\ref{ss:Fourier-transform}, we denote by $p \colon E \to V$ the projection, and by $F_1^\bot,F_2^\bot \subset E^*$ the orthogonals to $F_1$ and $F_2$. Then there exists a canonical isomorphism
\begin{equation*}
\can_{F_1,F_2} \colon \coH^{G \times \Gm}_\bullet(F_1 \times_V F_2) \simto \Ext^{2\dim(F_2)-\bullet}_{\calD^{G \times \Gm}_{\mathrm{const}}(V)}(p_! \uC_{F_1}, p_! \uC_{F_2}).
\end{equation*}

Let us explain (for future reference) how this isomorphism can be constructed, following \cite{CG,LuCus2}. Consider the cartesian diagram
\[
\xymatrix@C=1.5cm{
E \times_V E \ar@{^{(}->}[r]^-{j} \ar[d]_-{\mu} & E \times E \ar[d]^-{p \times p} \\
V \ar@{^{(}->}[r]^-{\Delta} & V \times V
}
\]
where $\Delta$ is the diagonal embedding. Then in \cite[Equation (8.6.4)]{CG} (see also \cite[\S 1.15 and \S2.4]{LuCus2}) the authors construct a canonical and bifunctorial isomorphism
\[
\mu_* j^! (\mathbb{D}_E(A_1) \boxtimes A_2) \cong R\sheafHom_{\C}(p_! A_1, p_! A_2)
\]
for $A_1,A_2$ in $\calD^{G \times \Gm}_{\mathrm{const}}(E)$. Applying equivariant cohomology, we obtain an isomorphism
\begin{equation}
\label{eqn:morphisms-cohomology}
\Ext^{\bullet}_{\calD^{G \times \Gm}_{\mathrm{const}}(V)}(p_! A_1, p_! A_2) \cong \coH^\bullet_{G \times \Gm} \bigl(E \times_V E, j^!(\mathbb{D}_E(A_1) \boxtimes A_2 ) \bigr).
\end{equation}
Setting $A_1=\uC_{F_1}$, $A_2=\uC_{F_2}$ 
we deduce an isomorphism
\[
\Ext^{\bullet}_{\calD^{G \times \Gm}_{\mathrm{const}}(V)}(p_! \uC_{F_1}, p_! \uC_{F_2}) \cong \coH^\bullet_{G \times \Gm} \bigl(E \times_V E, j^!(\uD_{F_1} \boxtimes \uC_{F_2} ) \bigr).
\]
Let $a \colon F_1 \times F_2 \hookrightarrow E \times E$ be the inclusion, and consider the cartesian diagram
\[
\xymatrix@C=1.5cm{
F_1 \times_V F_2 \ar@{^{(}->}[r]^-{b} \ar@{^{(}->}[d]_-{k} & E \times_V E \ar@{^{(}->}[d]^-{j} \\
F_1 \times F_2 \ar@{^{(}->}[r]^-{a} & E \times E.
}
\]
Then using the base change isomorphism we obtain
\begin{multline*}
\coH^\bullet_{G \times \Gm} \bigl(E \times_V E, j^!(\uD_{F_1} \boxtimes \uC_{F_2} ) \bigr) \cong \coH^\bullet_{G \times \Gm} \bigl(E \times_V E, j^! a_*(\uD_{F_1} \boxtimes \uC_{F_2} ) \bigr) \\ 
\cong \coH^\bullet_{G \times \Gm} \bigl(E \times_V E, b_* k^!(\uD_{F_1} \boxtimes \uC_{F_2} ) \bigr) \cong \coH^\bullet_{G \times \Gm} \bigl(F_1 \times_V F_2, k^!(\uD_{F_1} \boxtimes \uC_{F_2} ) \bigr).
\end{multline*}
Now we use the canonical isomorphisms $\uC_{F_2} \cong \uD_{F_2}[-2\dim(F_2)]$ (since $F_2$ is smooth) and $k^! (\uD_{F_1} \boxtimes \uD_{F_2}) \cong k^! (\uD_{F_1 \times F_2}) \cong \uD_{F_1 \times_V F_2}$ to obtain the isomorphism $\can_{F_1,F_2}$.

\subsection{The $\Fourier$ isomorphism}
\label{ss:Fourier-isomorphism}

We continue with the setting of \S\ref{ss:equiv}, and denote by ${\check p} \colon E^* \to V^*$ the projection. Then we have canonical isomorphisms
\begin{align*}
\can_{F_1,F_2} \colon \coH^{G \times \Gm}_\bullet(F_1 \times_V F_2) \ & \simto \ \Ext^{2\dim(F_2)-\bullet}_{\calD^{G \times \Gm}_{\mathrm{const}}(V)}(p_! \uC_{F_1}, p_! \uC_{F_2}); \\
\can_{F_1^\bot,F_2^\bot} \colon \coH^{G \times \Gm}_\bullet(F_1^\bot \times_{V^*} F_2^\bot)  \ & \simto \ \Ext^{2\dim(F_2^\bot)-\bullet}_{\calD^{G \times \Gm}_{\mathrm{const}}(V^*)}({\check p}_! \uC_{F_1^\bot}, {\check p}_! \uC_{F_2^\bot}).
\end{align*}
On the other hand, through the canonical isomorphisms $\calF_V(p_* \uC_{F_i}) \cong {\check p}_* \uC_{F_i^\bot}[-2\mathrm{rk}(F_i)]$ for $i=1,2$ (see \eqref{eqn:isom-Fourier-F}), the functor $\calF_V$ induces an isomorphism
\[
\Ext^\bullet_{\calD^{G \times \Gm}_{\mathrm{const}}(V)}(p_! \uC_{F_1},p_! \uC_{F_2}) \xrightarrow{\sim} \Ext^{\bullet-2\mathrm{rk}(F_2) + 2 \mathrm{rk}(F_1)}_{\calD^{G \times \Gm}_{\mathrm{const}}(V^*)}({\check p}_! \uC_{F_1^\bot},{\check p}_! \uC_{F_2^\bot}).
\]
We denote by
\[
\Fourier_{F_1,F_2} \colon \coH^{G \times \Gm}_\bullet(F_1 \times_V F_2) \simto \coH^{G \times \Gm}_{\bullet+2\dim(F_2^\bot)-2\dim(F_1)}(F_1^\bot \times_{V^*} F_2^\bot)
\]
the resulting isomorphism. This isomorphism, considered in particular in \cite{EM}, was the starting point of our work on linear Koszul duality.

\subsection{Linear Koszul duality}
\label{ss:lkd}

Let us recall the definition and main properties of linear Koszul duality, following \cite{MR, MR2, MR3}. In this paper we will only consider the geometric situation relevant for convolution algebras, as considered in \cite[\S 4]{MR3}. However we will allow using two different vector bundles $F_1$ and $F_2$; the setting of \cite[\S 4]{MR3} corresponds to the choice $F_1=F_2$. 

We continue with the setting of \S\ref{ss:equiv}, and denote by $\Delta V \subset V \times V$ the diagonal copy of $V$. We will consider the derived category
\[
\calD^c_{G \times \Gm} \bigl( (\Delta V \times X \times X) \, \rcap_{E \times E} \, (F_1 \times F_2) \bigr)
\]
as defined in \cite[\S 3.1]{MR3}. By definition this is a subcategory of the derived category of $G\times\Gm$-equivariant quasi-coherent dg-modules over a certain sheaf of $\calO_{X \times X}$-dg-algebras on $X \times X$, which we will denote by $\calA_{F_1, F_2}$. Note that the derived intersection
\[
(\Delta V \times X \times X) \, \rcap_{E \times E} \, (F_1 \times F_2)
\]
is quasi-isomorphic to the derived fiber product $F_1 \, {\stackrel{_R}{\times}}_V \, F_2$ in the sense of \cite[\S 3.7]{BR}.

Similarly we have a derived category
\[
\calD^c_{G \times \Gm} \bigl( (\Delta V^* \times X \times X) \, \rcap_{E^* \times E^*} \, (F_1^\bot \times F_2^\bot) \bigr).
\]

We denote by $\omega_X$ the canonical line bundle on $X$. Then by \cite[Theorem 3.1]{MR3} there exists a natural equivalence of triangulated categories
\begin{multline*}
\frakK_{F_1,F_2} \colon \calD^c_{G \times \Gm} \bigl( (\Delta V \times X \times X) \, \rcap_{E \times E} \, (F_1 \times F_2) \bigr) \\
\xrightarrow{\sim} \calD^c_{G \times \Gm} \bigl( (\Delta V^* \times X \times X) \, \rcap_{E^* \times E^*} \, (F_1^\bot \times F_2^\bot) \bigr)^\op.
\end{multline*}
More precisely, \cite[Theorem 3.1]{MR3} provides an equivalence of categories
\begin{multline*}
\kappa_{F_1,F_2}\colon \calD^c_{G \times \Gm} \bigl( (\Delta V \times X \times X) \rcap_{E \times E} (F_1 \times F_2) \bigr) \\
\xrightarrow{\sim} \calD^c_{G \times \Gm} \bigl( (\overline{\Delta} V^\diamond \times X \times X) \rcap_{E^\diamond \times E^\diamond} (F_1^\bot \times F_2^\bot) \bigr)^\op
\end{multline*}
where $\overline{\Delta} V^\diamond \subset V^\diamond \times V^\diamond$ is the antidiagonal copy of $V^\diamond$. (The construction of \cite{MR3} depends on the choice of an object $\calE$ in $\calD^b \Coh^{G \times \Gm}(X \times X)$ whose image in $\calD^b \Coh(X \times X)$ is a dualizing object; here we take $\calE=\calO_X \boxtimes \omega_X[\dim(X)]$.) Then $\frakK_{F_1,F_2}$ is the composition of $\kappa_{F_1,F_2}$ with the natural equivalence
\[
\calD^c_{G \times \Gm} \bigl( (\overline{\Delta} V^\diamond \times X \times X) \, \rcap_{E^\diamond \times E^\diamond} \, (F_1^\bot \times F_2^\bot) \bigr) \xrightarrow{\sim}
\calD^c_{G \times \Gm} \bigl( (\Delta V^\diamond \times X \times X) \, \rcap_{E^\diamond \times E^\diamond} \, (F_1^\bot \times F_2^\bot) \bigr)
\]
(see \cite[\S 4.3]{MR3}) and the natural equivalence
\[
\calD^c_{G \times \Gm} \bigl( (\Delta V^\diamond \times X \times X) \, \rcap_{E^\diamond \times E^\diamond} \, (F_1^\bot \times F_2^\bot) \bigr) \xrightarrow{\sim}
\calD^c_{G \times \Gm} \bigl( (\Delta V^* \times X \times X) \, \rcap_{E^* \times E^*} \, (F_1^\bot \times F_2^\bot) \bigr)
\]
induced by the automorphism of $\Gm$ sending $t$ to $t^{-1}$.

Note that 
we have $\calH^0(\calA_{F_1, F_2})=(\pi_{F_1,F_2})_* \calO_{F_1 \times_V F_2}$, where $\pi_{F_1,F_2} \colon F_1 \times_V F_2 \to X \times X$ is the projection (which is an affine morphism). Hence, using~\cite[Lemma 5.1]{MR3} and classical facts on affine morphisms, one can canonically identify the Grothendieck group of the category $\calD^c_{G \times \Gm} \bigl( (\Delta V \times X \times X) \, \rcap_{E \times E} \, (F_1 \times F_2) \bigr)$ with $\Kth^{G \times \Gm}(F_1 \times_V F_2)$.
We have a similar isomorphism for $F_1^\bot$ and $F_2^\bot$; hence the equivalence $\frakK_{F_1,F_2}$ induces an isomorphism
\[
\Koszul_{F_1,F_2} \colon \Kth^{G \times \Gm}(F_1 \times_V F_2) \xrightarrow{\sim} \Kth^{G \times \Gm}(F_1^\bot \times_{V^*} F_2^\bot).
\] 

\subsection{Duality and parity conjugation in $\Kth$-homology}
\label{ss:duality}

To obtain a precise relation between the maps $\Fourier_{F_1,F_2}$ of~\S\ref{ss:Fourier-isomorphism} and $\Koszul_{F_1,F_2}$ of~\S\ref{ss:lkd} we will need two auxiliary maps in $\Kth$-homology.

Our first map has a geometric flavour, and is induced by Grothendieck--Serre duality. More precisely,
consider the ``duality'' equivalence
\[
\mathrm{D}_{F_1^\bot, F_2^\bot}^{G \times \Gm} \colon
\calD^b \Coh^{G \times \Gm}(F_1^{\bot} \times F_2^{\bot}) \to \calD^b \Coh^{G \times \Gm}(F_1^{\bot} \times F_2^{\bot})^\op 
\]
associated with the dualizing complex $\calO_{F_1^{\bot}} \boxtimes \omega_{F_2^\bot}[\dim(F_2^{\bot})]$,
which sends $\calG$ to
\[
R\sheafHom_{\calO_{F_1^{\bot} \times F_2^{\bot}}}(\calG, \, \calO_{F_1^{\bot}} \boxtimes \omega_{F_2^\bot})[\dim(F_2^{\bot})]
\]
(see e.g.~\cite[\S 2.1]{MR3} and references therein).
(Here, $\omega_{F_2^{\bot}}$ is the canonical line bundle on $F_2^\bot$, endowed with its natural $G \times \Gm$-equivariant structure.) This equivalence induces a (contravariant) auto-equivalence of the subcategory $\calD^b \Coh^{G \times \Gm}_{F_1^{\bot} \times_{V^*} F_2^{\bot}}(F_1^{\bot} \times F_2^{\bot})$, which we denote similarly. We denote by
\begin{equation*}
\dual_{F_1^{\bot}, F_2^{\bot}} \colon \Kth^{G \times \Gm}(F_1^{\bot} \times_{V^*} F_2^{\bot}) \xrightarrow{\sim} \Kth^{G \times \Gm}(F_1^{\bot} \times_{V^*} F_2^{\bot})
\end{equation*}
the induced automorphism at the level of Grothendieck groups.

Our second map is a ``correction factor'', with no interesting geometric interpretation. Namely, the direct image functor under the projection $\pi_{F_1^\bot, F_2^\bot} \colon F_1^\bot \times_{V^*} F_2^\bot \to X \times X$ (an affine morphism) induces an equivalence between $\Coh^{G \times \Gm}(F_1^\bot \times_{V^*} F_2^\bot)$ and the category of locally finitely generated $G \times \Gm$-equivariant modules over the $\calO_{X \times X}$-algebra $(\pi_{F_1^\bot,F_2^\bot})_* \calO_{F_1^\bot \times_{V^*} F_2^\bot}$. Since $\Gm$ acts trivially on $X \times X$, one can consider $(\pi_{F_1^\bot,F_2^\bot})_* \calO_{F_1^\bot \times_{V^*} F_2^\bot}$ as a graded $G$-equivariant $\calO_{X \times X}$-algebra, and this grading is concentrated in even degrees. Hence if $\calF$ is any module over this algebra, then we have $\calF=\calF^{\mathrm{even}} \oplus \calF^{\mathrm{odd}}$ where $\calF^{\mathrm{even}}$, resp.~$\calF^{\mathrm{odd}}$, is concentrated in even, resp.~odd, degrees. We denote by
\[
\bfi_{F_1^\bot, F_2^\bot} \colon \Kth^{G \times \Gm}(F_1^{\bot} \times_{V^*} F_2^{\bot}) \xrightarrow{\sim} \Kth^{G \times \Gm}(F_1^{\bot} \times_{V^*} F_2^{\bot})
\]
the automorphism which sends the class of a module $\calF=\calF^{\mathrm{even}} \oplus \calF^{\mathrm{odd}}$ as above to $[\calF^{\mathrm{even}}] - [\calF^{\mathrm{odd}}]$.

\subsection{Reminder on the equivariant Riemann--Roch theorem}


Let us recall the definition and the main properties of the ``equivariant Riemann--Roch morphism'' for a complex algebraic variety, following~\cite{EG}. (See also~\cite{BZ} for a more direct treatment, without much details.) Let $A$ be a complex linear algebraic group, acting on a complex algebraic variety $Y$. Then
we have a ``Riemann--Roch'' morphism
\[
\tau^A_Y \colon \Kth^A(Y) \to \coHc^A_\bullet(Y).
\]
More precisely, we define this morphism as the composition
\begin{equation}
\label{eqn:RRmorphism}
\Kth^A(Y) \longrightarrow \prod_{i \geq 0} \mathbb{Q} \otimes_{\mathbb{Z}} \mathsf{CH}^i_A(Y) \longrightarrow \prod_{i \in \mathbb{Z}} \, \coH_i^A(Y) = \coHc^A_\bullet(Y),
\end{equation}
where $\mathsf{CH}^i_A(Y)$ is the $i$-th equivariant Chow group, see~\cite[\S 1.2]{EG}, the first arrow is the morphism constructed in~\cite[Section~3]{EG}, and the second morphism is induced by the ``equivariant cycle map'' of~\cite[\S 2.8]{EG1}.

\begin{remark}
It follows from~\cite[Theorem~4.1]{EG} that the first morphism in~\eqref{eqn:RRmorphism} induces an isomorphism between a certain completion of $\mathbb{Q} \otimes_{\mathbb{Z}} \Kth^A(Y)$ and $\prod_{i \geq 0} \mathbb{Q} \otimes_{\mathbb{Z}} \mathsf{CH}^i_A(Y)$. Hence, if the equivariant cycle map is an isomorphism, a similar claim holds for our morphism $\tau^A_Y$.
\end{remark}


Below we will use the following properties of the map $\tau^A_Y$, which follow from the main results of~\cite{EG}.

\begin{thm}[Equivariant Riemann--Roch theorem]
\label{thm:equivariantRR}
If $f \colon Y \to Y'$ is an $A$-equivariant proper morphism, then we have
\[
\tau^A_{Y'} \circ \pdiK_f = \pdiH_f \circ \tau^A_Y.
\]
\end{thm}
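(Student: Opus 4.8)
The plan is to split the verification according to the factorization of $\tau^A_Y$ recorded in~\eqref{eqn:RRmorphism}: write $\tau^A_Y = \mathrm{cl}_Y \circ \varrho_Y$, where $\varrho_Y \colon \Kth^A(Y) \to \prod_{i \geq 0} \mathbb{Q} \otimes_{\mathbb{Z}} \mathsf{CH}^i_A(Y)$ is the equivariant Riemann--Roch morphism of \cite[Section~3]{EG} and $\mathrm{cl}_Y$ is the map induced by the equivariant cycle map of \cite[\S 2.8]{EG1}. An $A$-equivariant proper morphism $f$ induces a proper pushforward $f_*^{\mathsf{CH}}$ on equivariant Chow groups, so it is enough to establish the two compatibilities $\varrho_{Y'} \circ \pdiK_f = f_*^{\mathsf{CH}} \circ \varrho_Y$ and $\mathrm{cl}_{Y'} \circ f_*^{\mathsf{CH}} = \pdiH_f \circ \mathrm{cl}_Y$, and then chain them.

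For the first compatibility I would appeal directly to the equivariant Riemann--Roch theorem of Edidin--Graham: covariance for proper morphisms is part of the main results of \cite{EG}. If one unwinds its proof, the mechanism is the usual ``approximation'' device: for each $N$ one picks an open subset $U$ in a representation of $A$ on which $A$ acts freely and whose complement has codimension $>N$, so that the low-degree parts of $\Kth^A(Y)$ and $\mathsf{CH}^\bullet_A(Y)$ (and likewise for $Y'$) are computed by the ordinary $\Kth$-theory and Chow groups of the quotients $Y_U:=(U\times Y)/A$ and $Y'_U:=(U\times Y')/A$; then $f$ induces a proper morphism $f_U \colon Y_U \to Y'_U$, and $\varrho$ is by construction the inverse limit over $N$ of the non-equivariant Riemann--Roch morphisms $\tau_{Y_U}$. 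One concludes by invoking the (Baum--Fulton--MacPherson) Riemann--Roch theorem for possibly singular quasi-projective schemes, which gives $\tau_{Y'_U} \circ (f_U)_* = (f_U)_* \circ \tau_{Y_U}$, and passing to the limit.

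For the second compatibility I would use the classical fact that the cycle map from Chow groups to Borel--Moore homology commutes with proper direct image: for a proper morphism $g \colon Z \to Z'$ of complex algebraic varieties one has $\mathrm{cl}_{Z'} \circ g_* = g_* \circ \mathrm{cl}_Z$. Applying this to the morphisms $f_U$ above and passing to the limit over approximations --- using that the equivariant cycle map and the equivariant pushforward $\pdiH_f$ are defined through these same spaces $Y_U$ (see Footnote~\ref{fn:equiv-homology}), and that $\pdiH_f$, being degree-preserving, extends to the completed groups $\coHc^A_\bullet$ --- yields the equivariant version. Composing, $\tau^A_{Y'} \circ \pdiK_f = \mathrm{cl}_{Y'} \circ f_*^{\mathsf{CH}} \circ \varrho_Y = \pdiH_f \circ \mathrm{cl}_Y \circ \varrho_Y = \pdiH_f \circ \tau^A_Y$.

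I expect the only genuine obstacle to be the bookkeeping in the approximation step: one must check that the pushforwards on the quotients $Y_U$ are compatible both as $U$ grows and as $N$ increases, so that they assemble into well-defined maps on the pro-limits, and that the resulting limit map on $\Kth^A$ still lands in (and is computed by) the completed homology $\coHc^A_\bullet$. This compatibility is, however, built into the constructions of \cite{EG} and \cite{EG1}, so no ingredient beyond those cited is needed.
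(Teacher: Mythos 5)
Your proposal is correct and follows essentially the same route as the paper: it splits $\tau^A$ according to the factorization~\eqref{eqn:RRmorphism}, invokes the covariance statement of~\cite{EG} (Theorem~3.1(b)) for the first arrow, and the classical compatibility of the cycle map with proper pushforward from~\cite{fulton} for the second, with the equivariant case handled by the approximation argument of Footnote~\ref{fn:equiv-homology}. The extra unwinding of the Edidin--Graham proof is not needed but does no harm.
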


\begin{proof}
By~\cite[Theorem~3.1(b)]{EG}, the first arrow in~\eqref{eqn:RRmorphism} is compatible with proper direct image morphisms (in the obvious sense). And by~\cite[p.~372]{fulton} the second arrow is also compatible with proper direct image morphisms, completing the proof. (More precisely, only the non-equivariant setting is considered in~\cite{fulton}, but the equivariant case follows, using the same arguments as in Footnote~\ref{fn:equiv-homology}.)
\end{proof}

If $F$ is an $A$-equivariant vector bundle over $Y$, then one can define its (cohomological) equivariant Chern classes in $\coH_A^\bullet(Y)$, and define a (cohomological) equivariant Todd class $\mathrm{td}^A(F) \in \coHc_A^\bullet(Y)$, see~\cite[Section~3]{EG} or~\cite[\S 3]{BZ} for similar constructions. This element is invertible in the algebra $\coHc_A^\bullet(Y)$.
If $Y$ is smooth, we denote by $\mathrm{Td}_Y^A$ the equivariant Todd class of the tangent bundle of $Y$.

The following
result can be stated and proved under much weaker assumptions, but only this particular case will be needed.

\begin{prop}
\label{prop:RR-res}
Let $Y$ be a smooth $A$-variety, and let $f \colon Z \hookrightarrow Y$ be the embedding of a smooth subvariety with normal bundle $N$. Then we have
\[
\resH_f \circ \tau^A_{Y}(x) = \bigl( \tau^A_Z \circ \resK_f(x) \bigr) \cdot \mathrm{td}^A(N)
\]
for any $x \in \Kth^A(Y)$, where $\resH_f \colon \coH^A_\bullet(Y) \to \coH^A_{\bullet-2\dim(Y)+2\dim(Z)}(Z)$ and $\resK_f \colon \Kth^A(Y) \to \Kth^A(Z)$ are the ``restriction with supports'' morphisms.
\end{prop}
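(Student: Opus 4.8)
The plan is to reduce the statement to the standard (non-equivariant, projective) Riemann--Roch theorem with supports, as found in \cite[Chap.~18]{fulton}, by decomposing the ``restriction with supports'' morphism into a self-intersection-type operation. First I would recall the construction of $\resH_f$ and $\resK_f$ from \cite[\S 2.6.21]{CG} and \cite[p.~246]{CG}: given the regular embedding $f \colon Z \hookrightarrow Y$ with normal bundle $N$, one has the deformation to the normal cone, and $\resK_f$ (resp.~$\resH_f$) is obtained by specializing a class on $Y$ to a class on $N$ and then using the Thom isomorphism $\Kth^A(N) \cong \Kth^A(Z)$ (resp.~$\coH^A_\bullet(N) \cong \coH^A_{\bullet - 2\rk(N)}(Z)$) in the reverse direction. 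Both the deformation-to-the-normal-cone maps and the Thom isomorphisms are compatible with $\tau^A$ up to an explicit Todd-class factor; the only place a nontrivial correction enters is the Thom isomorphism for $N$, and this correction is exactly $\mathrm{td}^A(N)$ by the usual Riemann--Roch computation for the zero-section embedding $Z \hookrightarrow N$.

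Concretely, the key steps, in order, are: (i) reduce to the equivariant setting via the ``approximation spaces'' $U \times^A Y$ as in Footnote~\ref{fn:equiv-homology}, so that it suffices to prove the analogous statement in ordinary $\Kth$-homology and Borel--Moore homology of (quasi-projective) varieties; (ii) use Theorem~\ref{thm:equivariantRR} (functoriality of $\tau$ under proper pushforward) together with the flat-pullback compatibility of $\tau$ — both consequences of \cite[Theorem~3.1]{EG} — to handle the specialization map in the deformation to the normal cone, which is built out of such operations; (iii) establish the Todd-twisted compatibility of $\tau$ with the Thom isomorphism of the vector bundle $N \to Z$, i.e.~that for the zero-section $s \colon Z \hookrightarrow N$ one has $\tau^A_N \circ s_* = s_* \bigl( \tau^A_Z(\cdot) \cdot \mathrm{td}^A(N)^{-1} \bigr)$ after applying the Thom isomorphisms — this is the Riemann--Roch formula for $s_*$ and follows from Theorem~\ref{thm:equivariantRR} applied to $s$ together with the standard identification of the relative Todd class of $s$ with $\mathrm{td}^A(N)$ restricted appropriately; (iv) assemble (ii) and (iii) to get the claimed identity $\resH_f \circ \tau^A_Y(x) = \bigl( \tau^A_Z \circ \resK_f(x) \bigr) \cdot \mathrm{td}^A(N)$, being careful about degree shifts ($\resH_f$ shifts by $-2\dim(Y)+2\dim(Z) = -2\rk(N)$, matching the codimension).

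The main obstacle I expect is purely bookkeeping rather than conceptual: matching the precise normalizations of $\resK$ and $\resH$ in \cite{CG} (which are defined via the refined Gysin maps / specialization, not literally as $s^* \circ \text{sp}$) with the form of the Riemann--Roch theorem in \cite{fulton} and \cite{EG}, and keeping the Todd class on the correct side and in the correct variety. One clean way to sidestep some of this is to factor $f$ through its graph or to use the fact that $\resK_f$ and $\resH_f$ only depend on the regular embedding up to the normal bundle data, so one may replace $Y$ by a tubular-neighbourhood-type model where $f$ is literally a zero-section; then the whole statement collapses to step (iii). I would adopt whichever of these two routes makes the citation to \cite{EG, fulton} most transparent, and relegate the verification that the two models give the same maps to the appendix if it is lengthy. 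Note that since all varieties here are quasi-projective with a linearizable action, there is no difficulty in applying the results of \cite{EG}.
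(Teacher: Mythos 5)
Your plan has two genuine problems. First, it rests on reading the definitions of $\resK_f$ and $\resH_f$ in \cite{CG} as ``specialize to the normal cone, then apply the Thom isomorphism,'' but that is not how they are defined: $\resK_f$ is the alternating sum of the $\Tor_i^{\calO_Y}(-,\calO_Z)$ (\cite[p.~246]{CG}) and $\resH_f$ is defined topologically/sheaf-theoretically (\cite[\S 2.6.21]{CG}, and \S\ref{ss:restriction-with-supports} above). Since $\tau^A_Y$ is by definition~\eqref{eqn:RRmorphism} the composition of the Edidin--Graham map to equivariant Chow groups with the cycle map, what is really needed is (a) the corresponding formula at the Chow level, which is exactly \cite[Theorem~3.1(d)]{EG} (the refined-Gysin compatibility that your deformation-to-the-normal-cone argument would re-derive at length), and (b) the fact that the cycle map intertwines Fulton's refined Gysin map with the restriction-with-supports maps of \cite{CG} and commutes with multiplication by Todd classes; (b) is precisely \cite[Example~19.2.1]{fulton} and \cite[Proposition~19.1.2]{fulton}, together with the approximation argument of Footnote~\ref{fn:equiv-homology}, and it cannot be sidestepped by a ``tubular-neighbourhood model'' (no such algebraic model exists, and $\resK_f$ is an algebraic operation). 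This bridge is the actual content of the paper's proof, and your proposal leaves it unaddressed while treating it as bookkeeping. In addition, the specialization map is not built only from proper pushforwards and flat pullbacks: it also involves the Gysin map for the divisor at infinity, so your step (ii) needs the Chern-class compatibilities of \cite[Theorem~3.1]{EG} as well.

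Second, the key identity displayed in your step (iii) is false as stated. Theorem~\ref{thm:equivariantRR} applied to the (proper) zero section $s \colon Z \hookrightarrow N$ gives $\tau^A_N \circ \pdiK_s = \pdiH_s \circ \tau^A_Z$ with \emph{no} Todd correction; the formula $\tau^A_N \circ s_* = s_* \bigl( \tau^A_Z(\cdot) \cdot \mathrm{td}^A(N)^{-1} \bigr)$ is the cohomological Grothendieck--Riemann--Roch formula for the Chern character, not a property of the homological map $\tau$, and applying the Thom isomorphism to $s_*\alpha$ yields $c_{\mathrm{top}}^A(N) \cdot \alpha$ rather than $\alpha$. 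What the zero-section case of the proposition actually uses is either the flat-pullback compatibility of $\tau$ (for classes $\pi^*\calG$, combined with $\resH_s \circ \pi^* = \mathrm{id}$ and~\eqref{eqn:res-cohomology}) or, for classes supported on $Z$, the Koszul-resolution identity $\mathrm{ch}(\lambda_{-1}N^\vee) = c_{\mathrm{top}}(N)\,\mathrm{td}(N)^{-1}$; neither follows from Theorem~\ref{thm:equivariantRR} ``applied to $s$.'' The route can be repaired, since all the needed ingredients are in \cite{EG} and \cite[Chapters~18--19]{fulton}, but as written the central step is incorrect and the decisive cycle-map compatibilities are missing.
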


\begin{proof}
A similar formula for the first arrow in~\eqref{eqn:RRmorphism} follows from~\cite[Theorem~3.1(d)]{EG}. To deduce our result we need to check that the equivariant cycle map commutes with restriction with supports and with multiplication by a Todd class. In the non-equivariant situation, the first claim follows from~\cite[Example~19.2.1]{fulton} and the second one from~\cite[Proposition~19.1.2]{fulton}. The equivariant case follows, using the same arguments
as in Footnote~\ref{fn:equiv-homology}.
\end{proof}

\begin{remark}
\label{rk:Todd}
Note that, in the setting of Proposition~\ref{prop:RR-res}, we have $f^* \mathrm{Td}^A_Y = \mathrm{Td}^A_Z \cdot \mathrm{td}^A(N)$, where $f^*$ is as in~\eqref{eqn:proj-formula-H}. (In fact, this formula easily follows from the compatibility of Chern classes with pullback and extensions of vector bundles.)
\end{remark}

Finally we will need the following fact, which follows from~\cite[Theorem~3.1(d)]{EG} applied to the projection $Y \to \mathrm{pt}$ (see also~\cite[Theorem~5.1]{BZ}).

\begin{prop}
\label{prop:tau-O-smooth}
If $Y$ is smooth, then
\[
\tau^A_Y(\calO_Y) = [Y] \cdot \mathrm{Td}^A_Y,
\]
where $[Y]$ is the equivariant fundamental class of $Y$ (i.e.~the image of the fundamental class in the Chow group from~\cite[\S 2.2]{EG1} under the cycle map).
\end{prop}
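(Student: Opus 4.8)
The plan is to prove the identity first in equivariant Chow groups, where it is essentially the Grothendieck--Riemann--Roch theorem for the smooth projection $\pi \colon Y \to \mathrm{pt}$, and only then transport it to Borel--Moore homology along the equivariant cycle map. Recall from~\eqref{eqn:RRmorphism} that $\tau^A_Y$ is, by construction, the composition of the Chow-valued Riemann--Roch morphism of~\cite[Section~3]{EG}, which I will denote $\mathsf{ch}^A_Y \colon \Kth^A(Y) \to \prod_{i \geq 0} \mathbb{Q} \otimes_{\mathbb{Z}} \mathsf{CH}^i_A(Y)$, with the equivariant cycle map of~\cite[\S 2.8]{EG1}. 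So it suffices to establish that $\mathsf{ch}^A_Y(\calO_Y) = \mathrm{Td}^A_Y \cdot [Y]_{\mathsf{CH}}$, where $[Y]_{\mathsf{CH}}$ denotes the equivariant fundamental class in the Chow group of~\cite[\S 2.2]{EG1}, and then apply the cycle map to both sides.

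For the Chow-level statement, I would use that, $Y$ being smooth, the projection $\pi \colon Y \to \mathrm{pt}$ is a flat (in fact smooth) local complete intersection morphism with relative tangent bundle $T_Y$, so that the associated equivariant Todd class is $\mathrm{Td}^A_Y$. Moreover $\pi^*$ sends the class $[\mathbb{C}] \in \Kth^A(\mathrm{pt})$ of the trivial module to $[\calO_Y]$, and sends the Chow fundamental class $[\mathrm{pt}] \in \mathsf{CH}^0_A(\mathrm{pt})$ to $[Y]_{\mathsf{CH}}$; and $\mathsf{ch}^A_{\mathrm{pt}}([\mathbb{C}]) = [\mathrm{pt}]$ by normalization. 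Feeding this into the ``pullback up to a Todd class'' compatibility of~\cite[Theorem~3.1(d)]{EG} --- the same statement already invoked in the proof of Proposition~\ref{prop:RR-res}, now applied to $\pi$ rather than to a regular embedding --- yields
\[
\mathsf{ch}^A_Y(\calO_Y) \ = \ \mathsf{ch}^A_Y(\pi^* [\mathbb{C}]) \ = \ \mathrm{Td}^A_Y \cdot \pi^* \bigl( \mathsf{ch}^A_{\mathrm{pt}}([\mathbb{C}]) \bigr) \ = \ \mathrm{Td}^A_Y \cdot [Y]_{\mathsf{CH}}.
\]

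It then remains to apply the equivariant cycle map. By definition it sends $[Y]_{\mathsf{CH}}$ to the equivariant fundamental class $[Y] \in \coH^A_\bullet(Y)$ appearing in the statement, and --- exactly as in the proof of Proposition~\ref{prop:RR-res}, by~\cite[Proposition~19.1.2]{fulton} together with the reduction of Footnote~\ref{fn:equiv-homology} --- it intertwines multiplication by the equivariant Todd class on the Chow side with multiplication by $\mathrm{Td}^A_Y$ on the homology side. Hence $\tau^A_Y(\calO_Y) = [Y] \cdot \mathrm{Td}^A_Y$, as desired. The one point that needs care is the bookkeeping around~\cite[Theorem~3.1(d)]{EG}: one must check that it is being used in the generality of an l.c.i.\ pullback (not merely a regular embedding) and that the relevant ``virtual relative tangent bundle'' of $\pi$ is genuinely $T_Y$ --- this is where smoothness of $Y$ enters. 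Alternatively one may simply quote~\cite[Theorem~5.1]{BZ}, which records this computation directly. There is no deeper difficulty.
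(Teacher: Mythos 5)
Your argument is correct and is essentially the paper's own proof: the statement is obtained by applying~\cite[Theorem~3.1(d)]{EG} to the projection $Y \to \mathrm{pt}$ (with the alternative reference~\cite[Theorem~5.1]{BZ}), exactly as you do. Your extra bookkeeping about transporting the Chow-level identity through the equivariant cycle map is a harmless spelling-out of what is already implicit in the definition~\eqref{eqn:RRmorphism} of $\tau^A_Y$.
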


\subsection{Riemann--Roch maps}
\label{ss:RRmaps}

Following \cite[\S 5.11]{CG}, we consider the ``bivariant Riemann--Roch maps''
\begin{align*}
\underline{\mathrm{RR}}_{F_1, F_2} \colon & \ \Kth^{G \times \Gm}(F_1 \times_V F_2) \to \coHc^{G \times \Gm}_\bullet(F_1 \times_V F_2), \\
\overline{\mathrm{RR}}_{F^{\bot}_1, F_2^{\bot}} \colon & \ \Kth^{G \times \Gm}(F_1^{\bot} \times_{V^*} F_2^{\bot}) \to \coHc^{G \times \Gm}_\bullet(F_1^{\bot} \times_{V^*} F_2^{\bot})
\end{align*}
defined by
\begin{align*}
\underline{\mathrm{RR}}_{F_1, F_2} (c) & =  \tau^{G \times \Gm}_{F_1 \times_V F_2}(c) \cdot \bigl(1 \boxtimes (\mathrm{Td}_{F_2}^{G \times \Gm})^{-1} \bigr),\\
\overline{\mathrm{RR}}_{F^{\bot}_1, F_2^{\bot}}(d) & = \tau^{G \times \Gm}_{F_1^\bot \times_{V^*} F_2^\bot}(d) \cdot \bigl( (\mathrm{Td}_{F_1^\bot}^{G \times \Gm})^{-1} \cdot \mathrm{Td}^{G \times \Gm}_X \boxtimes (\mathrm{Td}_X^{G \times \Gm})^{-1} \bigr).
\end{align*}
In the expression for $\underline{\mathrm{RR}}_{F_1,F_2}$, $1 \boxtimes (\mathrm{Td}_{F_2}^{G \times \Gm})^{-1}$ is considered as an element of $\coHc_{G \times \Gm}^\bullet(F_1 \times_{V} F_2)$ through the composition 
\[
\coHc_{(G \times \Gm)^2}^\bullet(F_1 \times F_2) \to \coHc_{G \times \Gm}^\bullet(F_1 \times F_2) \to \coHc_{G \times \Gm}^\bullet(F_1 \times_{V} F_2)
\]
where the first morphism is the restriction morphism associated with the diagonal embedding of $G \times \Gm$, and the second morphism is the pullback in equivariant cohomology. In the expression for $\overline{\mathrm{RR}}_{F_1^\bot, F_2^\bot}$, first we consider $\mathrm{Td}_X^{G \times \Gm}$ as an element of $\coHc^\bullet_{G \times \Gm}(E^*)$ using the Thom isomorphism $\coH^\bullet_{G \times \Gm}(E^*) \xrightarrow{\sim} \coH^\bullet_{G \times \Gm}(X)$; then the same conventions as above allow to consider $(\mathrm{Td}_{F_1^\bot}^{G \times \Gm})^{-1} \cdot \mathrm{Td}^{G \times \Gm}_X \boxtimes (\mathrm{Td}_X^{G \times \Gm})^{-1}$ as an element in $\coHc_{G \times \Gm}^\bullet(F_1^\bot \times_{V^*} F_2^\bot)$.

\subsection{Statement}
\label{ss:statement}

The main result of this paper is the following. 

\begin{thm}
\label{thm:LKDFourier}

Assume that the proper direct image morphism 
\begin{equation}
\label{eqn:morphism-thm}
\coH^{G \times \Gm}_\bullet(F_1^{\bot} \times_{V^*} F_2^{\bot}) \to \coH^{G \times \Gm}_\bullet(F_1^{\bot} \times_{V^*} E^*)
\end{equation}
induced by the inclusion $F_2^\bot \hookrightarrow E^*$ is injective. Then the following diagram commutes:
\[
\xymatrix@C=6cm{
\Kth^{G \times \Gm}(F_1 \times_V F_2) \ar[r]^-{\bfi_{F_1^{\bot}, F_2^{\bot}} \circ \dual_{F_1^{\bot}, F_2^{\bot}} \circ \Koszul_{F_1, F_2}} \ar[d]_-{\underline{\mathrm{RR}}_{F_1, F_2}} & \Kth^{G \times \Gm}(F_1^{\bot} \times_{V^*} F_2^{\bot}) \ar[d]^-{\overline{\mathrm{RR}}_{F_1^{\bot}, F_2^{\bot}}} \\
\coHc^{G \times \Gm}_\bullet(F_1 \times_V F_2) \ar[r]^-{\Fourier_{F_1,F_2}} & \coHc^{G \times \Gm}_\bullet(F_1^{\bot} \times_{V^*} F_2^{\bot}).
}
\]

\end{thm}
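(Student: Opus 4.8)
The plan is to prove the commutativity by a dévissage in the variable $F_2$ that reduces everything to the case $F_2=X$, the zero section of $E$ (so that $F_2^\bot=E^*$), together with an explicit computation in that case. The injectivity assumption on~\eqref{eqn:morphism-thm} is exactly what makes this reduction possible: to prove the theorem it suffices to show that the two composites $\coHc^{G\times\Gm}_\bullet(F_1\times_V F_2)\to\coHc^{G\times\Gm}_\bullet(F_1^\bot\times_{V^*}F_2^\bot)$ given by the two ways around the square agree after composing with the proper direct image $\pdiH_\iota$ along $\iota\colon F_1^\bot\times_{V^*}F_2^\bot\hookrightarrow F_1^\bot\times_{V^*}E^*$. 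I will show that each of the six maps in the square is compatible, in a suitable sense, with $\iota$ on the target and with ``restriction to the zero section of $F_2$'' on the source, so that composing with $\pdiH_\iota$ produces the analogous square for the pair $(F_1,X)$.

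\textbf{The Riemann--Roch maps.} Using Theorem~\ref{thm:equivariantRR} and the projection formula~\eqref{eqn:proj-formula-H}, I would first check that $\pdiH_\iota\circ\oRR_{F_1^\bot,F_2^\bot}=\oRR_{F_1^\bot,E^*}\circ\pdiK_\iota$ \emph{with no correction term}: the Todd twist defining $\oRR$ depends only on $F_1^\bot$ and $X$, not on $F_2^\bot$, and is pulled back from $X\times X$, so it passes through $\pdiH_\iota$ by the projection formula. On the source side, the relevant operation is restriction with supports $\resH$ (resp.~$\resK$ in $\Kth$-homology) for the smooth closed embedding $F_1\times X\hookrightarrow F_1\times F_2$, which induces a map $\coHc^{G\times\Gm}_\bullet(F_1\times_V F_2)\to\coHc^{G\times\Gm}_\bullet(F_1\times_V X)$. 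Here one uses a version with supports of Proposition~\ref{prop:RR-res}, together with Remark~\ref{rk:Todd} and~\eqref{eqn:res-cohomology}: the Todd discrepancy $\mathrm{td}(N)$ coming from $\resH$ and the change in the twist $1\boxtimes(\mathrm{Td}_{F_2})^{-1}$ of $\uRR$ upon restriction to the zero section cancel exactly, yielding $\resH\circ\uRR_{F_1,F_2}=\uRR_{F_1,X}\circ\resK$.

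\textbf{Compatibility of $\Fourier$, $\Koszul$, $\dual$, $\bfi$.} Next I would describe $\coH^{G\times\Gm}_\bullet(F_1\times_V F_2)$ as an $\Ext$-algebra via $\can_{F_1,F_2}$ and $\Fourier_{F_1,F_2}$ as $\calF_V$ acting on $\Ext$-groups (\S\ref{ss:equiv}, \S\ref{ss:Fourier-isomorphism}), identify~\eqref{eqn:morphism-thm} with post-composition by the canonical morphism $\check p_!\uC_{F_2^\bot}[-2\mathrm{rk}(F_2)]\to\check p_!\uC_{E^*}$, and invoke Corollary~\ref{prop:Fourier-F} to see that $\calF_V$ carries this morphism to the restriction-to-zero-section morphism $p_!\uC_{F_2}\to p_!\uC_X$; this gives $\pdiH_\iota\circ\Fourier_{F_1,F_2}=\Fourier_{F_1,X}\circ\resH$ (the content of Section~\ref{sec:compatibility-Fourier}). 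The corresponding statement on the $\Kth$-homology side, namely $\pdiK_\iota\circ\bigl(\bfi_{F_1^\bot,F_2^\bot}\circ\dual_{F_1^\bot,F_2^\bot}\circ\Koszul_{F_1,F_2}\bigr)=\bigl(\bfi_{F_1^\bot,E^*}\circ\dual_{F_1^\bot,E^*}\circ\Koszul_{F_1,X}\bigr)\circ\resK$, is the core of Section~\ref{sec:compatibility-others}: it is obtained by unwinding the construction of the equivalence $\frakK_{F_1,F_2}$ in~\S\ref{ss:lkd}, comparing the dg-algebras $\calA_{F_1,F_2}$ and $\calA_{F_1,X}$ under the closed embedding, and checking that Grothendieck--Serre duality and the parity conjugation $\bfi$ are compatible with it up to the appropriate degree and sign factors. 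Granting these, a diagram chase using the injectivity of~\eqref{eqn:morphism-thm} reduces the theorem to the case $F_2=X$.

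\textbf{The base case and the main obstacle.} When $F_2=X$ we have $F_1\times_V F_2\cong X\times X$ (with the derived structure given by the Koszul complex of $p|_{F_1}$ over $0\in V$) and $F_1^\bot\times_{V^*}F_2^\bot=F_1^\bot\times_{V^*}E^*\cong F_1^\bot\times X$ (no derived correction, since $\check p\colon E^*\to V^*$ is smooth). Then $\Koszul_{F_1,X}$ on structure sheaves is an explicit complex whose $\Kth$-class can be written down, and one evaluates $\oRR_{F_1^\bot,E^*}$ and $\Fourier_{F_1,X}\circ\uRR_{F_1,X}$ on it using Proposition~\ref{prop:tau-O-smooth}, the Thom isomorphism, and the projection formula; commutativity becomes a finite identity in $\coHc^{G\times\Gm}_\bullet(F_1^\bot\times X)$ among products of (pullbacks of) Todd classes. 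I expect the genuinely delicate points to be (a) the compatibility of $\Koszul$, $\dual$, $\bfi$ with the closed embedding in Section~\ref{sec:compatibility-others}, which requires handling the dg-algebra models of the linear Koszul duality functor; and (b) the Todd-class bookkeeping: the cohomological shift $[-2\mathrm{rk}(F)]$ of Corollary~\ref{prop:Fourier-F}, the degree shift $2\dim(F_2^\bot)-2\dim(F_1)$ in $\Fourier$, the $\omega_X$-twist and duality built into $\frakK_{F_1,F_2}$, and the correction factors $\dual$ and $\bfi$ must all conspire to cancel, and verifying this demands a careful matching of conventions between the $\Kth$-homological and topological sides.
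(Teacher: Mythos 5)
Your first step coincides with the paper's: using the injectivity of \eqref{eqn:morphism-thm} to test the square after composing with $\pdiH$ along $F_1^\bot \times_{V^*} F_2^\bot \hookrightarrow F_1^\bot \times_{V^*} E^*$, and proving that each of the six maps is compatible with the pair of operations ($\resK/\resH$ for $X \subset F_2$ on the source, $\pdiK/\pdiH$ on the target) is exactly the content of Propositions \ref{prop:inclusion-Ginzburg-A}, \ref{prop:compatibility-fourier-A}, \ref{prop:compatibility-LKD-A}, \ref{prop:compatibility-duality-A}, \ref{prop:compatibility-uRR-A}, \ref{prop:compatibility-oRR-A}, assembled in Proposition \ref{prop:inclusion-A}. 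One caveat: the compatibility $\resH \circ \uRR_{F_1,F_2} = \uRR_{F_1,X} \circ \resK$ is \emph{not} proved in the paper by a ``version with supports'' of Proposition \ref{prop:RR-res} (the authors explicitly say they have no reference for such a statement); it is proved only under an auxiliary injectivity hypothesis, namely injectivity of $\coH^{G\times\Gm}_\bullet(F_1 \times_V X) \to \coH^{G\times\Gm}_\bullet(F_1 \times X)$, which in the application is supplied by the Euler-class argument of Lemma \ref{lem:Euler-class}. You should either supply that hypothesis the same way or actually prove the stronger Riemann--Roch statement you invoke.

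The genuine gap is in your base case. The paper does a \emph{second} reduction, in the variable $F_1$ (Setting (B) of \S\ref{ss:settings} and Proposition \ref{prop:inclusion-B}), using that $\resH^{F_1^\bot,E^*}_{X,E^*}$ is a Thom isomorphism, hence injective; this lands in the case $F_1=E$, $F_2=X$, where both derived fiber products are quasi-isomorphic to the honest variety $X \times X$, $\bfi \circ \dual \circ \Koszul_{E,X}$ is computed to be the automorphism of $\Kth^{G\times\Gm}(X\times X)$ induced by $(g,t)\mapsto(g,t^{-1})$, and $\Fourier_{E,X}$ is the same inversion on homology (Lemmas \ref{lem:fourier-id} and \ref{lem:thm-particular-case}). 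You instead stop at the pair $(F_1,X)$ and assert a direct computation there, but for $F_1 \neq E$ the intersection defining $F_1 \times_V X$ is not transverse, so the source is a genuine dg-scheme; $\Koszul_{F_1,X}$ must be evaluated on \emph{arbitrary} classes of $\Kth^{G\times\Gm}(X \times X)$, not just $[\calO]$, and $\Fourier_{F_1,X}$ is a (twisted) Thom-type isomorphism onto $\coHc^{G\times\Gm}_\bullet(F_1^\bot \times X)$, not an inversion. Since the Riemann--Roch image $\tau^{G\times\Gm}(\calF)$ of a general class is not expressible through Todd classes, the claim that ``commutativity becomes a finite identity among products of Todd classes'' does not capture the statement to be proved; carrying out your base case honestly would force you to reprove, in this special case, precisely the Setting (B) compatibilities of Propositions \ref{prop:compatibility-fourier-B}, \ref{prop:compatibility-LKD-B}, \ref{prop:compatibility-duality-B}, \ref{prop:compatibility-uRR-B}, \ref{prop:compatibility-oRR-B}. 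Adding the second reduction (and then the easy $(E,X)$ computation) closes this gap.
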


The proof of Theorem \ref{thm:LKDFourier} is given in \S\ref{ss:proof-thm}. It is based on compatibility (or functoriality) results for all the maps considered in the diagram, which are stated in Sections \ref{sec:compatibility-Fourier} and \ref{sec:compatibility-others}; some of these results might be of independent interest. Let us point out that our assumption is probably not needed for the result to hold.

\begin{remark}
\label{rmk:injectivity-faithful}
{\em (Injectivity assumption.)}
The fiber product $F_1^{\bot} \times_{V^*} E^*$ is isomorphic to $F_1^\bot \times X$, hence is a vector bundle over $X^2$. In particular, by the Thom isomorphism we have
\begin{equation}
\label{eqn:Thom}
\coH^{G \times \Gm}_\bullet(F_1^{\bot} \times_{V^*} E^*) \cong \coH^{G \times \Gm}_{\bullet-2\rk(F_1^\bot)}(X \times X).
\end{equation}
Moreover, by \cite[Lemma 5.4.35]{CG} the following diagram commutes:
\begin{equation}
\label{eqn:diagram-assumption}
\vcenter{
\xymatrix@C=1cm{
\coH^{G \times \Gm}_\bullet(F_1^{\bot} \times_{V^*} F_2^{\bot}) \ar[r] \ar[d] & \Hom_{\coH^\bullet_{G \times \Gm}(\mathrm{pt})} \bigl( \coH^{G \times \Gm}_\bullet(F_2^\bot), \coH^{G \times \Gm}_{\bullet-2\dim(F_2^\bot)}(F_1^\bot) \bigr) \ar[d]^-{\wr} \\
\coH^{G \times \Gm}_{\bullet-2\rk(F_1^\bot)}(X \times X) \ar[r] & \Hom_{\coH^\bullet_{G \times \Gm}(\mathrm{pt})} \bigl( \coH^{G \times \Gm}_{\bullet-2\rk(F_2^\bot)}(X), \coH^{G \times \Gm}_{\bullet-2\dim(F_2^\bot)-2\rk(F_1^\bot)}(X) \bigr).
}
}
\end{equation}
Here the horizontal arrows are induced by convolution, the left vertical arrow is the composition of~\eqref{eqn:morphism-thm} and the isomorphism~\eqref{eqn:Thom}, and the right vertical arrow is induced by the respective Thom isomorphisms.
Assume now that $\coH_c^{\mathrm{odd}}(X)=0$ (e.g.~that $X$ is paved by affine spaces). Then one can easily check that the lower horizontal arrow in diagram \eqref{eqn:diagram-assumption} is an isomorphism. Hence in this case our assumption is equivalent to injectivity of the upper horizontal arrow. If moreover $F_1=F_2=F$, then $\coH^{G \times \Gm}_\bullet(F^{\bot} \times_{V^*} F^{\bot})$ is an algebra and $\coH^{G \times \Gm}_\bullet(F^\bot)$ is a module over this algebra. In this case our assumption amounts to the condition that the action on this module is faithful.
\end{remark}

\subsection{An injectivity criterion for \eqref{eqn:morphism-thm}}

The following result gives an easy criterion which ensures that the assumption of Theorem \ref{thm:LKDFourier} is satisfied.

\begin{prop}
\label{prop:criterion-thm}
Assume that $\coH_c^{\mathrm{odd}}(F_1^\bot \times_{V^*} F_2^\bot)=0$. Then the proper direct image morphism
\[
\coH^{G \times \Gm}_\bullet(F_1^{\bot} \times_{V^*} F_2^{\bot}) \to \coH^{G \times \Gm}_\bullet(F_1^{\bot} \times_{V^*} E^*)
\]
induced by the inclusion $F_2^\bot \hookrightarrow E^*$ is injective.
\end{prop}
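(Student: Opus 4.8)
The plan is to exhibit an explicit one‑sided inverse of the morphism \eqref{eqn:morphism-thm}, up to multiplication by an equivariant Euler class, and then to deduce from the hypothesis that this Euler class acts injectively. Write $Z := F_1^{\bot} \times_{V^*} F_2^{\bot}$ and $W := F_1^{\bot} \times_{V^*} E^*$, so that the inclusion $F_2^{\bot} \hookrightarrow E^*$ induces a $G \times \Gm$-equivariant closed embedding $\iota \colon Z \hookrightarrow W$ and \eqref{eqn:morphism-thm} is $\pdiH_\iota$. Set $c := \dim(V) - \rk(F_2^{\bot})$, and let $q \colon Z \to X$ be the composition of the canonical projection $Z \to X \times X$ with the second projection; write $\mathcal{Q} := q^*(E^*/F_2^{\bot})$, a $G \times \Gm$-equivariant vector bundle of rank $c$ on which $\Gm$ acts by the (nonzero) weight $-2$ by which it acts on the fibres of $E^*$, and trivially on $Z^{\Gm}$.

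\emph{Step 1: a left inverse up to an Euler class.} The variety $W$ is a $G \times \Gm$-stable closed subvariety of the smooth variety $Y := F_1^{\bot} \times E^*$, and $Y' := F_1^{\bot} \times F_2^{\bot}$ is a smooth closed $G \times \Gm$-stable subvariety of $Y$ with $W \cap Y' = Z$. Restriction with supports (\S\ref{ss:homology-cohomology}) therefore provides a morphism $\resH \colon \coH^{G \times \Gm}_\bullet(W) \to \coH^{G \times \Gm}_{\bullet - 2c}(Z)$. Since for $\alpha \in \coH^{G \times \Gm}_\bullet(Z)$ the class $\pdiH_\iota(\alpha)$ is supported on $Z$, which lies inside the smooth subvariety $Y'$, the self‑intersection (excess‑intersection) formula gives
\[
\resH \circ \pdiH_\iota \ = \ \bigl( (-) \cap e \bigr), \qquad e := e_{G \times \Gm}(\mathcal{Q}) \in \coH^{2c}_{G \times \Gm}(Z),
\]
where $\mathcal{Q}$ is the restriction to $Z$ of the normal bundle $N_{Y'/Y}$ (an equivariant vector bundle identified with $q^*(E^*/F_2^{\bot})$). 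It thus suffices to prove that $(-) \cap e$ is injective on $\coH^{G \times \Gm}_\bullet(Z)$.

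\emph{Step 2: the Euler class acts injectively.} Let $u$ be the polynomial generator of $\coH^\bullet_{\Gm}(\mathrm{pt}) = \mathbb{C}[u]$. Poincaré–Verdier duality for $Z \to \mathrm{pt}$ identifies $\coH_i(Z;\mathbb{C})$ (ordinary Borel–Moore homology) with $\coH_c^i(Z;\mathbb{C})^\vee$, so the hypothesis says that $\coH_\bullet(Z;\mathbb{C})$ is concentrated in even degrees; as $\coH^\bullet_{G \times \Gm}(\mathrm{pt})$ is likewise concentrated in even degrees ($G$ being connected reductive), the spectral sequence of the fibration $Z \to E(G{\times}\Gm) \times^{G \times \Gm} Z \to B(G{\times}\Gm)$ has trivial coefficients and $E_2$-term $\coH^\bullet_{G \times \Gm}(\mathrm{pt}) \otimes \coH_\bullet(Z;\mathbb{C})$, hence collapses on parity grounds; therefore $\coH^{G \times \Gm}_\bullet(Z)$ is a free $\coH^\bullet_{G \times \Gm}(\mathrm{pt})$-module, in particular $u$-torsion-free, and embeds in its localization $\coH^{G \times \Gm}_\bullet(Z)[u^{-1}]$. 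By the localization theorem in equivariant Borel–Moore homology, pushforward along the inclusion of the fixed locus gives a $\coH^\bullet_{G \times \Gm}(Z)[u^{-1}]$-linear isomorphism $\coH^{G \times \Gm}_\bullet(Z^{\Gm})[u^{-1}] \simto \coH^{G \times \Gm}_\bullet(Z)[u^{-1}]$, where $Z^{\Gm}$ is the zero section, canonically $X \times X$; under this isomorphism $(-) \cap e$ corresponds to $(-) \cap e_{G \times \Gm}(\mathcal{Q}|_{X \times X})$. Because $\Gm$ acts on $\mathcal{Q}|_{X \times X}$ by a nonzero weight and trivially on $X \times X$, the class $e_{G \times \Gm}(\mathcal{Q}|_{X \times X})$ is a polynomial in $u$ of degree $c$ whose leading coefficient is a nonzero scalar and whose remaining coefficients lie in $\coH^\bullet_G(X \times X)$; it is therefore a non-zero-divisor in $\coH^\bullet_{G \times \Gm}(X \times X)[u^{-1}] = \coH^\bullet_G(X \times X)[u^{\pm 1}]$ (here $\Gm$ acts trivially on $X \times X$), so, via equivariant Poincaré duality on the smooth variety $X \times X$, it acts injectively on $\coH^{G \times \Gm}_\bullet(X \times X)[u^{-1}]$. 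Tracing back, $(-) \cap e$ is injective on $\coH^{G \times \Gm}_\bullet(Z)$, which completes the argument.

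\textbf{Main obstacle.} The point requiring genuine care is Step 1: identifying $\resH \circ \pdiH_\iota$ with $(-) \cap e_{G \times \Gm}(\mathcal{Q})$, that is, carrying out the excess‑intersection computation in the equivariant framework and checking that the relevant normal bundle is precisely $q^*(E^*/F_2^{\bot})$ with the asserted $\Gm$-weight. In Step 2 the only delicate bookkeeping is to verify that $e_{G \times \Gm}(\mathcal{Q}|_{X \times X})$ is honestly of degree $c$ in the localization parameter $u$ with invertible leading coefficient — it is this monicity that makes the Euler class a non-zero-divisor after localization, and hence makes \eqref{eqn:morphism-thm} injective. (Note that this route, unlike a direct long‑exact‑sequence argument, does not require knowing that $\coH^\bullet(X)$ is concentrated in even degrees.)
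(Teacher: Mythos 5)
Your argument is correct in outline, but it is a genuinely different route from the one the paper takes. The paper's proof never mentions Euler classes: it first reduces from $G \times \Gm$ to $T \times \Gm$ (injectivity of the forgetful map, via the freeness statement of~\cite[Proposition 7.2]{LuCus1}, which is where the odd-vanishing hypothesis enters), then tensors with the full fraction field of $\coH^\bullet_{T \times \Gm}(\pt)$ and observes that $F_1^\bot \times_{V^*} F_2^\bot$ and $F_1^\bot \times_{V^*} E^*$ have the \emph{same} $T \times \Gm$-fixed locus $(X \times X)^T$, so by the localization theorem (\cite[Proposition 4.4]{LuCus2} or \cite[Theorem B.2]{EM}) both pushforwards from the fixed locus become isomorphisms, hence so does the map between them. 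You instead build a one-sided inverse ``up to Euler class'': $\resH \circ \pdiH_\iota = (-) \cap e_{G \times \Gm}(q^*(E^*/F_2^\bot))$ by the self-intersection/excess formula (of which the paper's Lemma~\ref{lem:Euler-class} is the special case $W = Y$, $Y'$ the zero section), and then show this Euler class is a non-zero-divisor; your identification of the normal bundle and its nonzero $\Gm$-weight is right, and the monic-in-$u$ argument on the fixed locus is sound. What your approach buys is an explicit retraction formula and (as you note) it only uses the hypothesis through torsion-freeness over $\coH^\bullet_{G \times \Gm}(\pt)$ — though the paper's proof uses the hypothesis in exactly the same way, so this is not an extra saving. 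Two caveats on rigor: (i) the equivariant excess-intersection identity in Step 1 is standard but is not among the results the paper quotes, so it needs a proof or a precise reference (e.g.\ via approximation spaces as in Footnote~\ref{fn:equiv-homology} plus the non-equivariant self-intersection formula of~\cite{CG} or~\cite{fulton}); (ii) the ``localization theorem'' you invoke — inverting only $u$ for the $\Gm$-factor inside $G \times \Gm$-equivariant Borel--Moore homology — is not literally the statement in the available references, which treat torus actions localized at the full fraction field; the cleanest repair is to first pass to $T \times \Gm$ (injective by your freeness argument) and localize at the fraction field, where the restriction of your Euler class to $(X \times X)^T$ is a product of characters with nonzero $\Gm$-component, hence invertible, and the same conclusion follows.
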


\begin{proof}
Let $T$ be a maximal torus of $G$. Then we have a commutative diagram
\[
\xymatrix@C=2cm{
\coH^{G \times \Gm}_\bullet(F_1^\bot \times_{V^*} F_2^\bot) \ar[r] \ar[d] & \coH^{G \times \Gm}_\bullet(F_1^\bot \times_{V^*} E^*) \ar[d] \\
\coH^{T \times \Gm}_\bullet(F_1^\bot \times_{V^*} F_2^\bot) \ar[r] & \coH^{T \times \Gm}_\bullet(F_1^\bot \times_{V^*} E^*) \\
}
\]
where horizontal arrows are proper direct image morphisms, and vertical arrows are forgetful maps. The left vertical arrow is injective: indeed, by our assumption and \cite[Proposition 7.2]{LuCus1}, there exist (non-canonical) isomorphisms
\begin{align}
\coH^{G \times \Gm}_\bullet(F_1^\bot \times_{V^*} F_2^\bot) & \cong \coH_{G \times \Gm}^{-\bullet}(\pt) \otimes_{\C} \coH_{\bullet} (F_1^\bot \times_{V^*} F_2^\bot), \\
\label{eqn:isom-T-equ}
\coH^{T \times \Gm}_\bullet(F_1^\bot \times_{V^*} F_2^\bot) & \cong \coH_{T \times \Gm}^{-\bullet}(\pt) \otimes_{\C} \coH_{\bullet} (F_1^\bot \times_{V^*} F_2^\bot)
\end{align}
such that our forgetful morphism is induced by the natural morphism $\coH_{G \times \Gm}^\bullet(\pt) \to \coH_{T \times \Gm}^\bullet(\pt)$, which is well known to be injective. Hence, to prove that the upper horizontal arrow is injective it is sufficient to prove that the lower horizontal arrow is injective.

If $\mathsf{Q}$ denotes the fraction field of $\coH:=\coH_{T \times \Gm}^\bullet(\pt)$, then using again isomorphism \eqref{eqn:isom-T-equ}, the natural morphism
\[
\coH^{T \times \Gm}_\bullet(F_1^\bot \times_{V^*} F_2^\bot) \to \mathsf{Q} \otimes_\coH \coH^{T \times \Gm}_\bullet(F_1^\bot \times_{V^*} F_2^\bot)
\]
is injective. We deduce that to prove the proposition it suffices to prove that the induced morphism
\[
\mathsf{Q} \otimes_\coH \coH^{T \times \Gm}_\bullet(F_1^{\bot} \times_{V^*} F_2^{\bot}) \to \mathsf{Q} \otimes_\coH \coH^{T \times \Gm}_\bullet(F_1^{\bot} \times_{V^*} E^*)
\]
is injective. Let $Y:=(X \times X)^T$ denote the $T$-invariants in $X \times X$. Then we have
\[
Y=(F_1^{\bot} \times_{V^*} F_2^{\bot})^{T \times \Gm} = (F_1^{\bot} \times_{V^*} E^*)^{T \times \Gm}.
\]
Consider the commutative diagram
\[
\xymatrix{
\coH^{T \times \Gm}_\bullet(F_1^\bot \times_{V^*} F_2^\bot) \ar[rr]^-{\alpha} & & \coH^{T \times \Gm}_\bullet(F_1^\bot \times_{V^*} E^*) \\
& \coH^{T \times \Gm}_\bullet(Y) \ar[lu]^-{\beta} \ar[ru]_-{\gamma}  &
}
\]
where all morphisms are proper direct image morphisms in homology. Then by the localization theorem (see \cite[Proposition 4.4]{LuCus2} or \cite[Theorem B.2]{EM}) both $\beta$ and $\gamma$ become isomorphisms after applying $\mathsf{Q} \otimes_\coH (\cdot)$. Hence the same is true for $\alpha$; in particular $\mathrm{id}_\mathsf{Q} \otimes_\coH \alpha$ is injective, which finishes the proof.
\end{proof}

\begin{remark}
\label{rk:odd-vanishing}
Using a non-equivariant variant of isomorphism $\Fourier_{F_1,F_2}$,
one can check that the condition $\coH_c^{\mathrm{odd}}(F_1^\bot \times_{V^*} F_2^\bot)=0$ is equivalent to the condition $\coH_c^{\mathrm{odd}}(F_1 \times_{V} F_2)=0$.
\end{remark}

\section{The case of convolution algebras}
\label{sec:convolution-algebras}

In this subsection we study more closely the case $F_1=F_2$. In this case, as we will explain, all the objects appearing in the diagram of Theorem~\ref{thm:LKDFourier} are equipped with convolution products, and all the maps are compatible with these products. In a particular case, these algebras are related to affine Hecke algebras, and our diagram explains the relation between the categorifications of Iwahori--Matsumoto involutions obtained in~\cite{EM} and~\cite{MR3}, via maps introduced in~\cite{LuAff}.

None of the results of this section are used in the proof of Theorem~\ref{thm:LKDFourier}.

\subsection{Convolution}
\label{ss:convolution}


We set $F:=F_1=F_2$. As explained in~\cite[\S 5.2.20]{CG} or~\cite[\S 4.1]{MR3}, the group $\Kth^{G \times \Gm}(F \times_V F)$ can be endowed with a natural (associative and unital) convolution product $\star$. In fact, for $c,d \in \Kth^{G \times \Gm}(F \times_V F)$, with our notations this product satisfies\footnote{Note that our convention for the definition of the convolution product is opposite to the one adopted in~\cite{MR3}.\label{fn:convention-product}}
\[
c \star d = \pdiK_{p_{1,3}} \circ \resK(c \boxtimes d)
\]
where $c \boxtimes d \in \Kth^{G \times \Gm}\bigl( (F \times_V F) \times (F \times_V F) \bigr)$ is the exterior product of $c$ and $d$,
\[
\resK \colon \Kth^{G \times \Gm}\bigl( (F \times_V F) \times (F \times_V F) \bigr) \to \Kth^{G \times \Gm}(F \times_V F \times_V F)
\]
is the restriction with supports morphism associated with the inclusion $F^3 \hookrightarrow F^4$ sending $(x,y,z)$ to $(x,y,y,z)$, and $p_{1,3} \colon F \times_V F \times_V F \to F \times_V F$ is the (proper) projection on the first and third factors. (See~\cite[\S 4.2]{MR3} for a similar description at the categorical level.) The unit in this algebra is the structure sheaf $\calO_{\Delta F}$ of the diagonal $\Delta F \subset F \times_V F$. The same constructions provide a left, resp.~right, action of the algebra $\Kth^{G \times \Gm}(F \times_V F)$ on the group $\Kth^{G \times \Gm}(F)$ defined by
\[
c \star d = \pdiK_{p_{1}} \circ \resK_{\mathrm{l}}(c \boxtimes d), \qquad \text{resp.} \qquad d \star c = \pdiK_{p_{2}} \circ \resK_{\mathrm{r}}(d \boxtimes c)
\]
for $c \in \Kth^{G \times \Gm}(F \times_V F)$ and $d \in \Kth^{G \times \Gm}(F)$. Here $p_1,p_2 \colon F \times_V F \to F$ are the projections on the first and second factor respectively, the exterior products are defined in the obvious way, and
\begin{multline*}
\resK_{\mathrm{l}} \colon \Kth^{G \times \Gm}\bigl( (F \times_V F) \times F \bigr) \to \Kth^{G \times \Gm}(F \times_V F), \\
\text{resp.} \quad \resK_{\mathrm{r}} \colon \Kth^{G \times \Gm}\bigl( F \times (F \times_V F) \bigr) \to \Kth^{G \times \Gm}(F \times_V F),
\end{multline*}
is the restriction with supports morphism associated with the inclusion $F^2 \hookrightarrow F^3$ sending $(x,y)$ to $(x,y,y)$, resp.~to $(x,x,y)$.

Of course we have similar constructions for the subbundle $F^\bot \subset E^*$, and we will use the same notation in this context.


\begin{lem}
The morphisms $\Koszul_{F,F}$, $\dual_{F^{\bot}, F^{\bot}}$ and $\bfi_{F^\bot, F^\bot}$ are (unital) algebra isomorphisms.
\end{lem}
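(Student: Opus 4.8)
The plan is to treat each of the three isomorphisms separately, in increasing order of difficulty. For $\bfi_{F^\bot, F^\bot}$, the point is that the relevant grading on $(\pi_{F^\bot,F^\bot})_* \calO_{F^\bot \times_{V^*} F^\bot}$ is compatible with convolution in the sense that the parity of a graded piece is additive under the operations (exterior product, restriction with supports, proper direct image) used to define $\star$; concretely, the structure sheaf $\calO_{\Delta F^\bot}$ of the diagonal is concentrated in even degree (degree $0$), and the parity-twisting operation $[\calF] \mapsto [\calF^{\mathrm{even}}] - [\calF^{\mathrm{odd}}]$ is multiplicative for this product because tensoring graded modules adds degrees. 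So $\bfi_{F^\bot, F^\bot}$ is a unital algebra morphism; since it is an involution (hence its own inverse), it is an algebra isomorphism. First I would spell out this additivity-of-parity argument, possibly at the level of the categorical description of the product in \cite[\S 4.2]{MR3}.

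For $\dual_{F^\bot, F^\bot}$, the strategy is to recall from \cite{MR3} (and its categorical framework) that Grothendieck--Serre duality intertwines the convolution product with the \emph{opposite} convolution product, once one uses the correct dualizing complex and keeps track of the swap of factors in $F^\bot \times_{V^*} F^\bot$. In more detail: the duality functor $\mathrm{D}_{F_1^\bot,F_2^\bot}^{G\times\Gm}$ on a product is a box-product of dualities (up to the asymmetry built into the dualizing complex $\calO_{F_1^\bot} \boxtimes \omega_{F_2^\bot}[\dim F_2^\bot]$), it commutes with proper direct image (Grothendieck duality for proper morphisms) and is compatible with restriction with supports on the relevant smooth ambient spaces; combining these with the flip $(x,y,z) \mapsto (z,y,x)$ on $F^\bot \times_{V^*} F^\bot \times_{V^*} F^\bot$ shows $\dual_{F^\bot,F^\bot}(c \star d) = \dual_{F^\bot,F^\bot}(d) \star \dual_{F^\bot,F^\bot}(c)$. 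However, since we are working at the level of Grothendieck groups, one can equally compose with the algebra anti-automorphism of $\Kth^{G\times\Gm}(F^\bot \times_{V^*} F^\bot)$ induced by the flip of factors (which is visibly an anti-automorphism), and the resulting composite is an honest algebra morphism; alternatively one notes that on $\Kth$-classes, passing to the opposite algebra and then applying the flip anti-automorphism lands one back in the original algebra. Either way, $\dual_{F^\bot,F^\bot}$ (suitably understood) sends the unit $[\calO_{\Delta F^\bot}]$ to itself, because the diagonal is smooth and self-dual up to the canonical twist already incorporated into the definition. I would present this as: identify $\dual_{F^\bot,F^\bot}$ with the shadow on Grothendieck groups of a known categorical statement, then check the unit.

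For $\Koszul_{F,F}$, the cleanest route is to invoke the categorical result: in \cite[\S 4.2]{MR3} the equivalence $\frakK_{F,F}$ (or rather $\kappa_{F,F}$) is shown to be \emph{monoidal} for the convolution monoidal structures on the two sides of derived fiber products — this is exactly the statement that linear Koszul duality categorifies the Iwahori--Matsumoto-type automorphism. Passing to Grothendieck groups, and using the identification (recalled in \S\ref{ss:lkd}) of these Grothendieck groups with $\Kth^{G\times\Gm}(F\times_V F)$ and $\Kth^{G\times\Gm}(F^\bot\times_{V^*}F^\bot)$ via $(\pi_{F,F})_*$, we get that $\Koszul_{F,F}$ is an algebra isomorphism; one checks it is unital by verifying that linear Koszul duality sends the structure sheaf of the diagonal $\Delta F$ (the "Koszul dual" of which is computed in \cite{MR3}) to that of $\Delta F^\bot$, possibly up to a shift/twist that vanishes in $\Kth$-homology.

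The main obstacle I expect is bookkeeping rather than substance: the several cohomological and $\Gm$-weight shifts (the $[-2\mathrm{rk}(F)]$'s, the $\dim(F_2^\bot)$'s, the passage from the antidiagonal $\overline\Delta V^\diamond$ to the diagonal $\Delta V^*$, and the opposite-product convention flagged in Footnote~\ref{fn:convention-product}) must all be reconciled so that the products — and in particular the units — match on the nose rather than up to a scalar or a shift. In practice I would first state and use the monoidality/duality-compatibility results from \cite{MR3} at the categorical level as black boxes, then do the Grothendieck-group bookkeeping once, carefully, checking in each case that the class of the structure sheaf of the relevant diagonal is preserved; the three claims then follow uniformly.
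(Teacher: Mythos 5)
Your treatment of $\Koszul_{F,F}$ (black-boxing the categorical compatibility of linear Koszul duality with convolution from \cite[\S 4.2]{MR3} and then passing to Grothendieck groups) and of $\bfi_{F^\bot,F^\bot}$ (parity is additive under the operations defining $\star$, and the unit $[\calO_{\Delta F^\bot}]$ lives in even degree) is in substance what the paper does: it simply cites \cite[Propositions~4.3 \&~4.5]{MR3} for the first map and declares the third ``obvious.''

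The genuine gap is in your argument for $\dual_{F^\bot,F^\bot}$. You derive (sketchily) that Grothendieck--Serre duality intertwines $\star$ with the \emph{opposite} product, $\dual_{F^\bot,F^\bot}(c \star d) = \dual_{F^\bot,F^\bot}(d) \star \dual_{F^\bot,F^\bot}(c)$, and then propose to repair this by composing with the flip anti-automorphism of $\Kth^{G \times \Gm}(F^\bot \times_{V^*} F^\bot)$, saying that ``$\dual_{F^\bot,F^\bot}$ (suitably understood)'' is then multiplicative. That does not prove the lemma: the statement is about the map $\dual_{F^\bot,F^\bot}$ as defined in \S\ref{ss:duality}, not about its composite with a flip, and since the algebra in question is (in the motivating example) the affine Hecke algebra $\Haff$, which is noncommutative, an anti-homomorphism is not a homomorphism and the two maps genuinely differ. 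Nor does the flip obviously commute with $\dual_{F^\bot,F^\bot}$, because the dualizing complex $\calO_{F^\bot} \boxtimes \omega_{F^\bot}[\dim(F^\bot)]$ is deliberately \emph{not} symmetric in the two factors. This asymmetric normalization is in fact the whole point: it is chosen precisely so that duality is directly compatible with the convolution product (it absorbs the canonical-bundle twists that arise when duality is pushed through the proper direct image $\pdiK_{p_{1,3}}$ and the restriction with supports $\resK$ on the middle factor), with no flip and no passage to the opposite algebra; this is the content of the statement the paper points to, \cite[Lemma~9.5]{LUSBas}, where the analogous duality on the Steinberg variety is shown to be a ring homomorphism. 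So to complete your proof you should redo the computation keeping the asymmetry of the dualizing complex: duality on the middle factor contributes $\omega_{F^\bot}[\dim F^\bot]$ from the first argument and $\calO_{F^\bot}$ from the second, and these cancel against the relative dualizing complex of the inclusion $F^\bot \times F^\bot \times F^\bot \hookrightarrow F^\bot \times F^\bot \times F^\bot \times F^\bot$ used in $\resK$ together with Grothendieck duality for the proper map $p_{1,3}$, yielding multiplicativity on the nose (and unitality is then the statement that the diagonal $\Delta F^\bot$ is self-dual for this normalization, which follows since $\Delta F^\bot$ is smooth and the twist by $\omega_{F^\bot}[\dim F^\bot]$ on the second factor exactly compensates its Grothendieck--Serre dual).
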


\begin{proof}
The case of $\Koszul_{F,F}$ follows from~\cite[Propositions~4.3 \&~4.5]{MR3}.\footnote{In~\cite{MR3} we use the dualizing complex $\omega_X \boxtimes \calO_X [\dim(X)]$ instead of $\calO_X \boxtimes \omega_X[\dim(X)]$. But the results cited remain true (with an identical proof) with our present conventions. \label{fn:MR3}} The case of $\dual_{F^{\bot}, F^{\bot}}$ is not difficult, and left to the reader (see~\cite[Lemma~9.5]{LUSBas} for a similar statement, with slightly different conventions in the definition of Grothendieck--Serre duality). Finally, the case of $\bfi_{F^\bot, F^\bot}$ is obvious.
\end{proof}

This convolution construction has a natural analogue in equivariant Borel--Moore homology, see e.g.~\cite[\S 2.7]{CG} or~\cite[\S 2]{LuCus2}. In fact, the convolution product on
$\coH_\bullet^{G \times \Gm}(F \times_V F)$, which we will also denote $\star$, satisfies
\[
c \star d = \pdiH_{p_{1,3}} \circ \resH(c \boxtimes d),
\]
where $\resH$ is defined as for $\resK$ above (replacing $\Kth$-homology by Borel--Moore homology). The unit for this convolution product is the equivariant fundamental class $[\Delta F]$ of the diagonal $\Delta F \subset F \times_V F$.
We also have a left and a right module structure on $\coH_\bullet^{G \times \Gm}(F)$, defined via the formulas
\[
c \star d = \pdiH_{p_{1}} \circ \resH_{\mathrm{l}}(c \boxtimes d), \qquad \text{resp.} \qquad d \star c = \pdiH_{p_{2}} \circ \resH_{\mathrm{r}}(d \boxtimes c)
\]
for $c \in \coH^{G \times \Gm}_\bullet(F \times_V F)$ and $d \in \coH_\bullet^{G \times \Gm}(F)$. Finally we have similar structures for the subbundle $F^\bot \subset E^*$.

\begin{lem}
The morphism $\Fourier_{F,F}$ is a (unital) algebra isomorphism.
\end{lem}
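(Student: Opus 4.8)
The plan is to read the statement off the very definition of $\Fourier_{F,F}$. By~\S\ref{ss:Fourier-isomorphism}, specialized to $F_1=F_2=F$, the map $\Fourier_{F,F}$ is the composite of $\can_{F,F}$, of the isomorphism on $\Ext$-spaces induced by the Fourier--Sato equivalence $\calF_V$, and of the inverse of $\can_{F^\bot,F^\bot}$. So it suffices to check that each of these three is an algebra isomorphism, and then compose. Once this is known, unitality is automatic: an isomorphism of (a priori non-unital) algebras between two unital algebras necessarily sends unit to unit, and both convolution algebras $\coH^{G\times\Gm}_\bullet(F\times_V F)$ and $\coH^{G\times\Gm}_\bullet(F^\bot\times_{V^*}F^\bot)$, as well as the two $\Ext$-algebras under Yoneda composition, are unital (with units $[\Delta F]$, $[\Delta F^\bot]$, and the identity morphisms).

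The essential input is that $\can_{F,F}$ intertwines the convolution product $\star$ on $\coH^{G\times\Gm}_\bullet(F\times_V F)$ with Yoneda composition on $\Ext^\bullet_{\calD^{G\times\Gm}_{\mathrm{const}}(V)}(p_!\uC_F,p_!\uC_F)$; this is the content of the comparison of~\cite[\S 8.6]{CG} (see also~\cite[\S 2.4]{LuCus2}), applied to the very construction recalled in~\S\ref{ss:equiv}. The identical statement holds for $\can_{F^\bot,F^\bot}$, with $V$, $F$ replaced by $V^*$, $F^\bot$. The one point demanding care is bookkeeping -- checking that the degree shifts built into the definition of $\can$ and the sign/order conventions chosen for $\star$ agree with those of the cited references, so that one is not comparing with an opposite algebra on just one of the two sides. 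Any such discrepancy would affect $\can_{F,F}$ and $\can_{F^\bot,F^\bot}$ in the same way and hence be harmless; this is where I expect whatever small friction there is to lie.

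For the middle map, fix the canonical isomorphism $\varphi\colon \calF_V(p_!\uC_F)\simto {\check p}_!\uC_{F^\bot}[-2\rk(F)]$ provided by Corollary~\ref{prop:Fourier-F}. Since $\calF_V$ is an equivalence of triangulated categories, the rule $f\mapsto\varphi\circ\calF_V(f)\circ\varphi^{-1}$ defines an algebra isomorphism
\[
\Ext^\bullet_{\calD^{G\times\Gm}_{\mathrm{const}}(V)}(p_!\uC_F,p_!\uC_F)\;\simto\;\Ext^\bullet_{\calD^{G\times\Gm}_{\mathrm{const}}(V^*)}({\check p}_!\uC_{F^\bot},{\check p}_!\uC_{F^\bot}),
\]
because composition of morphisms is preserved both by any functor and by conjugation by a fixed isomorphism; moreover the cohomological shift by $[-2\rk(F)]$ is harmless for the algebra structure on self-extensions, since shifting is an autoequivalence. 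By construction this is exactly the map appearing in~\S\ref{ss:Fourier-isomorphism}, so composing the three isomorphisms shows that $\Fourier_{F,F}$ is an algebra isomorphism, and by the first paragraph a unital one.
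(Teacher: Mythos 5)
Your proposal is correct and follows essentially the same route as the paper: the paper's proof likewise reduces to the fact that $\can_{F,F}$ intertwines convolution with the Yoneda product (citing \cite[Theorem~8.6.7]{CG}, \cite[Lemma~2.5]{LuCus2}) and then observes that $\Fourier_{F,F}$ is induced by a functor, hence preserves composition. Your extra remarks (the surjectivity argument for unitality, the harmlessness of the shift $[-2\rk(F)]$) just make explicit points the paper leaves implicit.
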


\begin{proof}
One can also show that the isomorphism $\can_{F,F}$ is a (unital) algebra isomorphism, where the right-hand side is endowed with the Yoneda product; see~\cite[Theorem~8.6.7]{CG}, \cite[Lemma~2.5]{LuCus2} or~\cite[Theorem~4.5]{Ka} for similar statements. Then the claim follows from the fact that $\Fourier_{F,F}$ is induced by a functor.
\end{proof}

\subsection{Compatibility for the Riemann--Roch maps}
\label{ss:RR-convolution}

\begin{lem}
\label{lem:RR-convolution}
Assume\footnote{This assumption is probably unnecessary. However, to avoid it one would need a more general variant of Proposition~\ref{prop:RR-res} (as in~\cite[Theorem~5.8.14]{CG}, for instance) for which we could not find any reference or easy proof.} that
$\coH_c^{\mathrm{odd}}(F \times_{V} F)=0$. Then the morphisms $\uRR_{F,F}$ and $\oRR_{F^\bot, F^\bot}$ are unital algebra morphisms.
\end{lem}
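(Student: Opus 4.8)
The plan is to show that $\uRR_{F,F}$ (the argument for $\oRR_{F^\bot, F^\bot}$ being entirely analogous, working with $E^*$ in place of $E$) intertwines the two convolution products, i.e.\ that $\uRR_{F,F}(c \star d) = \uRR_{F,F}(c) \star \uRR_{F,F}(d)$ for $c,d \in \Kth^{G \times \Gm}(F \times_V F)$, and that it sends the unit $[\calO_{\Delta F}]$ to the unit $[\Delta F]$. Since the convolution product in both $\Kth$-homology and Borel--Moore homology is built out of the three elementary operations (exterior product, restriction with supports, and proper direct image), the key is to track how $\tau^{G \times \Gm}_{(\cdot)}$ interacts with each of these, and then to chase the Todd-class correction factors appearing in the definition of $\uRR$.

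First I would recall the explicit formula $c \star d = \pdiK_{p_{1,3}} \circ \resK(c \boxtimes d)$ (and the Borel--Moore analogue), where $\resK$ is restriction with supports for the regular embedding $F^3 \hookrightarrow F^4$, $(x,y,z) \mapsto (x,y,y,z)$, with normal bundle the pullback of the normal bundle $N$ of $\Delta F \hookrightarrow F \times_V F$ along the projection to the middle two factors (more precisely $N$ is the restriction to $F \times_V F$ of the normal bundle of $\Delta V \hookrightarrow V \times V$, which is the tangent bundle of $V$). Then I would apply in turn: (a) Theorem~\ref{thm:equivariantRR} to commute $\tau$ past $\pdiK_{p_{1,3}}$ (with the projection formula~\eqref{eqn:proj-formula-H} to move the pulled-back Todd factors out); (b) Proposition~\ref{prop:RR-res} together with Remark~\ref{rk:Todd} and formula~\eqref{eqn:res-cohomology} to commute $\tau$ past $\resK$, which introduces exactly a factor $\mathrm{td}^{G \times \Gm}$ of that normal bundle; and (c) the multiplicativity of $\tau$ on exterior products, $\tau^{G \times \Gm}_{Y \times Y'}(c \boxtimes d) = \tau^{G \times \Gm}_Y(c) \boxtimes \tau^{G \times \Gm}_{Y'}(d)$, which again follows from~\cite{EG} (this is where the hypothesis $\coH_c^{\mathrm{odd}}(F \times_V F) = 0$ is used, via the Künneth isomorphism, to make the completed Borel--Moore homology behave well under products). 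Combining these, $\tau^{G \times \Gm}_{F \times_V F}(c \star d)$ equals $\pdiH_{p_{1,3}} \resH\bigl( (\tau(c) \boxtimes \tau(d)) \cdot \mathrm{td}(\text{normal bundle}) \cdot (\text{pullbacks of Todd classes}) \bigr)$, and it remains to check that, after inserting the correction factors $1 \boxtimes (\mathrm{Td}_F^{G \times \Gm})^{-1}$ into each slot, all the Todd classes coming from the various tangent/normal bundles of $F$ and $V$ cancel in pairs, leaving precisely $\uRR_{F,F}(c) \star \uRR_{F,F}(d)$. This last bookkeeping is the heart of why the correction factor in the definition of $\uRR$ (and especially the more elaborate one in $\oRR$) was chosen the way it was, and verifying it amounts to a short exact-sequence computation relating $T(F \times_V F \times_V F)$, $T(F \times_V F)$, $TV$, and $TF$.

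For the unit, I would use Proposition~\ref{prop:tau-O-smooth}: since $\Delta F$ is smooth, $\tau^{G \times \Gm}_{F \times_V F}(\calO_{\Delta F}) = \pdiH_{\Delta}\bigl( [\Delta F] \cdot \mathrm{Td}^{G \times \Gm}_{\Delta F} \bigr)$ where $\Delta \colon \Delta F \hookrightarrow F \times_V F$; identifying $\Delta F \cong F$ and pushing forward, the correction factor $1 \boxtimes (\mathrm{Td}_F^{G \times \Gm})^{-1}$ is designed to cancel $\mathrm{Td}^{G \times \Gm}_{\Delta F}$ against the pulled-back Todd class of the second factor, so that $\uRR_{F,F}(\calO_{\Delta F}) = [\Delta F]$, the unit of the Borel--Moore convolution algebra. (Here one uses Remark~\ref{rk:Todd} for the embedding $\Delta F \hookrightarrow F \times F$ to split $\mathrm{Td}^{G \times \Gm}_{\Delta F}$ as $\mathrm{Td}^{G \times \Gm}_F$ times a Todd class of the normal bundle, which is $TV$ pulled back, matching the discrepancy $2\dim(F) - \dim(F \times_V F) = \dim(V)$ in dimensions.)

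The main obstacle I anticipate is the Todd-class bookkeeping in step (c)/(the cancellation): one must correctly handle that $\resH$ shifts degrees and that the normal bundle of $F^3 \hookrightarrow F^4$ in the convolution diagram is the pullback of $TV$ (restricted to $F \times_V F$) via the "middle" projection, and then verify that when this $\mathrm{td}(TV)$ factor is combined with the pullbacks of $(\mathrm{Td}_F)^{-1}$ along $p_1$, $p_2$, $p_{1,3}$, and with $\mathrm{Td}_{F \times_V F}$ computed via the normal-bundle exact sequence $0 \to T(F \times_V F) \to p_1^* TF \oplus p_2^* TF \to \mu^* TV \to 0$, everything collapses to the expected answer. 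A secondary subtlety is that these identities live in the \emph{completed} Borel--Moore homology $\coHc^{G \times \Gm}_\bullet$, where one must be slightly careful that multiplication by the (infinite-degree, but invertible) Todd classes is well defined and that the Künneth map used for exterior products is compatible with completion --- this is precisely what the oddness-vanishing hypothesis (and Footnote on Proposition~\ref{prop:RR-res}) is there to guarantee, by ensuring the relevant spaces are free modules over $\coH^\bullet_{G \times \Gm}(\pt)$ concentrated in even degrees.
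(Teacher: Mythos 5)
Your treatment of the unit is fine (it matches the paper: Theorem~\ref{thm:equivariantRR}, Proposition~\ref{prop:tau-O-smooth} and the projection formula), but the product-compatibility argument has a genuine gap at step (b). You apply Proposition~\ref{prop:RR-res} to commute $\tau^{G \times \Gm}$ past the restriction-with-supports map $\resK \colon \Kth^{G \times \Gm}\bigl((F \times_V F) \times (F \times_V F)\bigr) \to \Kth^{G \times \Gm}(F \times_V F \times_V F)$ occurring in the convolution product. However, Proposition~\ref{prop:RR-res} only covers classes living on a \emph{smooth} variety restricted to a \emph{smooth} subvariety; here the classes $c \boxtimes d$ live on $(F \times_V F)^{\times 2}$, and the target is $F \times_V F \times_V F$, both of which are in general singular (e.g.\ for the Steinberg variety). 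What your argument really needs is the general bivariant Riemann--Roch statement for restriction with supports of classes supported on arbitrary closed subvarieties of a smooth ambient variety, i.e.\ \cite[Theorem~5.8.14]{CG} --- and the footnote attached to the lemma says precisely that the authors could not find a reference or easy proof of such a statement, which is the whole reason the extra hypothesis is imposed. Relatedly, you misplace the role of the assumption $\coH_c^{\mathrm{odd}}(F \times_V F)=0$: it is not about K{\"u}nneth/completion issues for exterior products, but is used (via the localization argument of Proposition~\ref{prop:criterion-thm}) to show that the proper direct image $\imath_3 \colon \coH_\bullet^{G \times \Gm}(F \times_V F) \to \coH_\bullet^{G \times \Gm}(F \times F)$ is \emph{injective}. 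This injectivity is the paper's substitute for the missing bivariant theorem: one introduces the projective completions $\overline{V}=\mathbb{P}(V \oplus \C)$ and $\overline{F}$, pushes everything forward into the \emph{smooth} varieties $F \times \overline{F}$, $\overline{F} \times F$, $F \times F$, checks $\imath_3(c \star d) = \imath_1(c) \star \imath_2(d)$ there, and only then runs your Todd-class computation, where the relevant restriction with supports is now along the smooth embedding $F \times \overline{F} \times F \hookrightarrow F \times \overline{F}^{2} \times F$ and Proposition~\ref{prop:RR-res} genuinely applies. Your proof omits this reduction entirely, so as written it rests on an unproved strengthening of Proposition~\ref{prop:RR-res}.

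A secondary slip: the normal bundle of the embedding $F^3 \hookrightarrow F^4$, $(x,y,z) \mapsto (x,y,y,z)$, is the pullback of the tangent bundle $TF$ of the \emph{middle} factor (the normal bundle of the diagonal $F \hookrightarrow F \times F$), not a pullback of $TV$ or of a ``normal bundle of $\Delta F$ in $F \times_V F$'' (the latter is not even a bundle when $F \times_V F$ is singular). This matters for the Todd-class bookkeeping: in the paper's computation the correction factor that appears and cancels is $\mathrm{td}^{G \times \Gm}$ of the pullback of the tangent bundle of the middle copy of $\overline{F}$, which is exactly what the factor $1 \boxtimes (\mathrm{Td}_{\overline{F}}^{G \times \Gm})^{-1}$ in $\uRR_1$ is designed to absorb.
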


\begin{proof}
We only treat the case of $\uRR_{F,F}$; the case of $\oRR_{F^\bot, F^\bot}$ is similar. (Note that,
by Remark~\ref{rk:odd-vanishing}, our ``odd vanishing'' assumption implies that $\coH_c^{\mathrm{odd}}(F^\bot \times_{V^*} F^\bot)=0$ also.) The fact that our morphism sends the unit to the unit follows from Theorem~\ref{thm:equivariantRR} and Proposition~\ref{prop:tau-O-smooth}, using the projection formula~\eqref{eqn:proj-formula-H}. It remains to prove the compatibility with products.

To prove the lemma we use ``projective completions,'' namely we set $\overline{V}:=\mathbb{P}(V \oplus \C)$ and let $\overline{F}$ be the projective bundle associated with the vector bundle $F \times \C$ over $X$. Then we have a projection $\overline{F} \to \overline{V}$, and open embeddings $F \hookrightarrow \overline{F}$, $V \hookrightarrow \overline{V}$. Note that $F \times_V F = F \times_{\overline{V}} \overline{F}$, so that $F \times_V F$ is a closed subvariety in $F \times \overline{F}$. Similarly, one can identify $F \times_V F$ with a closed subvariety in $\overline{F} \times F$, so that we have proper direct image morphisms
\begin{gather*}
\imath_1 \colon \coH_\bullet^{G \times \Gm}(F \times_V F) \to \coH_\bullet^{G \times \Gm}(F \times \overline{F}), \qquad \imath_2 \colon \coH_\bullet^{G \times \Gm}(F \times_V F) \to \coH_\bullet^{G \times \Gm}(\overline{F} \times F), \\
\imath_3 \colon \coH_\bullet^{G \times \Gm}(F \times_V F) \to \coH_\bullet^{G \times \Gm}(F \times F).
\end{gather*}
Using the same arguments as in the proof of Proposition~\ref{prop:criterion-thm}, one can check that the morphism $\imath_3$ is injective under our assumption. There exists a natural convolution product
\[
\star \colon \coH_\bullet^{G \times \Gm}(F \times \overline{F}) \times \coH_\bullet^{G \times \Gm}(\overline{F} \times F) \to \coH_\bullet^{G \times \Gm}(F \times F)
\]
defined by 
\[
c \star d = \pdiH_{p'_{1,3}} \circ \resH'(c \boxtimes d),
\]
where $p'_{1,3} \colon F \times \overline{F} \times F \to F \times F$ is the (proper) projection on the first and third factors, and $\resH \colon \coH_\bullet^{G \times \Gm}(F \times \overline{F} \times \overline{F} \times F) \to  \coH_\bullet^{G \times \Gm}(F \times \overline{F} \times F)$ is the restriction with supports morphism associated with the inclusion sending $(x,y,z)$ to $(x,y,y,z)$. Moreover one can check (using in particular Lemma~\ref{lem:restriction-pushforward}) that for $c,d \in \coH_\bullet^{G \times \Gm}(F \times_V F)$ we have
\[
\imath_3(c \star d) = \imath_1(c) \star \imath_2(d).
\]

We have a similar construction of a convolution product in equivariant $\Kth$-homology, for which we will use similar notations.
Hence, using the injectivity of $\imath_3$, Theorem~\ref{thm:equivariantRR} and the projection formula~\eqref{eqn:proj-formula-H}, to prove the lemma it is enough to prove that
\begin{equation}
\label{eqn:convolution-RR}
\uRR_3(c \star d) = \uRR_1(c) \star \uRR_2(d)
\end{equation}
for $c \in \Kth^{G \times \Gm}(F \times \overline{F})$ and $d \in \Kth^{G \times \Gm}(\overline{F} \times F)$, where
\[
\uRR_1 \colon \Kth^{G \times \Gm}(F \times \overline{F}) \to \coHc_\bullet^{G \times \Gm}(F \times \overline{F})
\]
is defined by
\[
\uRR_1(d) = \tau^{G \times \Gm}_{F \times \overline{F}}(d) \cdot \big(1 \boxtimes (\mathrm{Td}_{\overline{F}}^{G \times \Gm})^{-1} \bigr),
\]
and $\uRR_2$ and $\uRR_3$ are defined similarly.

Now we have
\begin{align*}
\uRR_3(c \star d) & = \tau^{G \times \Gm}_{F \times F}(\pdiK_{p'_{1,3}} \circ \resK'(c \boxtimes d)) \cdot (1 \boxtimes (\mathrm{Td}_{F}^{G \times \Gm})^{-1}) \\
& = \pdiH_{p'_{1,3}} \bigl( \tau^{G \times \Gm}_{F \times \overline{F} \times F}(\resK'(c \boxtimes d)) \bigr) \cdot (1 \boxtimes (\mathrm{Td}_{F}^{G \times \Gm})^{-1}) \\
& = \pdiH_{p'_{1,3}} \bigl( \tau^{G \times \Gm}_{F \times \overline{F} \times F}(\resK'(c \boxtimes d)) \cdot (1 \boxtimes 1 \boxtimes (\mathrm{Td}_{F}^{G \times \Gm})^{-1}) \bigr) \\
& = \pdiH_{p'_{1,3}}  \bigl( \resH' \circ  \tau^{G \times \Gm}_{F \times \overline{F} \times \overline{F} \times F}(c \boxtimes d) \cdot \mathrm{td}^{G \times \Gm}(N) \cdot (1 \boxtimes 1 \boxtimes (\mathrm{Td}_{F}^{G \times \Gm})^{-1}) \bigr),
\end{align*}
where $N$ is the normal bundle to the embedding $F \times \overline{F} \times F \hookrightarrow F \times \overline{F}^2 \times F$.
(Here the second equality follows from Theorem~\ref{thm:equivariantRR}, the third one from the projection formula~\eqref{eqn:proj-formula-H}, and the last equality from Proposition~\ref{prop:RR-res}.) On the other hand we have
\[
\uRR_1(c) \star \uRR_2(d) = \pdiH_{p'_{1,3}}  \circ \resH' \bigl( \tau^{G \times \Gm}_{F \times \overline{F} \times \overline{F} \times F}(c \boxtimes d) \cdot (1 \boxtimes (\mathrm{Td}_{\overline{F}}^{G \times \Gm})^{-1}) \boxtimes 1 \boxtimes (\mathrm{Td}_{F}^{G \times \Gm})^{-1})\bigr).
\]
Now the normal bundle $N$ is canonically isomorphic to the restriction to $F \times \overline{F} \times F$ of the pullback of the tangent bundle of $\overline{F}$ under the projection $F \times \overline{F}^2 \times F \to \overline{F}$ on the second factor. Using~\eqref{eqn:res-cohomology} and comparing the formulas for $\uRR_3(c \star d)$ and for $\uRR_1(c) \star \uRR_2(d)$ obtained above, we deduce~\eqref{eqn:convolution-RR}.
\end{proof}

\subsection{Compatibility for the actions on the natural modules}

In~\S\ref{ss:convolution} we have defined (left and right) actions of the algebra $\Kth^{G \times \Gm}(F \times_V F)$, resp.~$\Kth^{G \times \Gm}(F^\bot \times_{V^*} F^\bot)$, resp.~$\coH^{G \times \Gm}(F \times_V F)$, resp.~$\coH^{G \times \Gm}(F^\bot \times_{V^*} F^\bot)$, on the module $\Kth^{G \times \Gm}(F)$, resp.~$\Kth^{G \times \Gm}(F^\bot)$, resp.~$\coH^{G \times \Gm}(F)$, resp.~$\coH^{G \times \Gm}(F^\bot)$. We now define ``bivariant Riemann--Roch maps''
\[
\uRR_F \colon \Kth^{G \times \Gm}(F) \to \coHc^{G \times \Gm}_\bullet(F), \qquad \oRR_{F^\bot} \colon \Kth^{G \times \Gm}(F^\bot) \to \coHc^{G \times \Gm}_\bullet(F^\bot)
\]
by the formulas
\[
\uRR_F = \tau^{G \times \Gm}_F, \qquad \oRR_{F^\bot}(c) = \tau^{G \times \Gm}_{F^\bot}(c) \cdot (\mathrm{Td}_X^{G \times \Gm})^{-1}
\]
(where use the same conventions as in~\S\ref{ss:RRmaps}).
The following technical lemma will be used to compute explicitly some Riemann--Roch maps in~\S\ref{ss:geom-gHaff}.

\begin{lem}
\label{lem:RR-action}
Assume that $\coH_c^{\mathrm{odd}}(F \times_{V} F)=0$. Then
the morphisms $\uRR$ and $\oRR$ are compatible with the module structures, in the sense that for $c \in \Kth^{G \times \Gm}(F \times_V F)$ and $d \in \Kth^{G \times \Gm}(F)$, resp.~for $c \in \Kth^{G \times \Gm}(F^\bot \times_{V^*} F^\bot)$ and $d \in \Kth^{G \times \Gm}(F^\bot)$, we have
\[
\uRR_F(c \star d) = \uRR_{F,F}(c) \star \uRR_F(d), \qquad \text{resp.} \qquad \oRR_{F^\bot}(d \star c) = \oRR_{F^\bot}(d) \star \oRR_{F^\bot, F^\bot}(c).
\]
\end{lem}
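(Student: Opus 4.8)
The plan is to follow the proof of Lemma~\ref{lem:RR-convolution} essentially verbatim; we treat the case of the left action (the right action is parallel, the only difference being the bookkeeping of the $\mathrm{Td}^{G \times \Gm}_X$-factors built into $\oRR_{F^\bot}$ and $\oRR_{F^\bot, F^\bot}$). First I would pass to projective completions: with $\overline{F}$ as in the proof of Lemma~\ref{lem:RR-convolution} (the projective bundle attached to $F \times \C$ over $X$), the variety $F \times_V F$ sits as a closed subvariety of the smooth variety $F \times \overline{F}$ (completing the factor along which the action ``contracts''), while the module $\Kth^{G \times \Gm}(F)$ is realized via a closed subvariety of a smooth variety of the same type. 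This lets one introduce ``completed'' versions of the left action, in $\Kth$-homology and in Borel--Moore homology, whose defining convolution formula now involves a \emph{proper} direct image morphism (the factor being dropped is a proper variety) and a restriction-with-supports morphism associated with a closed embedding of \emph{smooth} varieties of graph type. Using Lemma~\ref{lem:restriction-pushforward} (compatibility of restriction with supports and proper direct image) together with the compatibility of exterior products with proper direct image, one checks that these completed actions agree with the original ones after the evident proper direct image morphisms, and then --- exactly as in the proof of Lemma~\ref{lem:RR-convolution}, this being the only point at which the hypothesis $\coH_c^{\mathrm{odd}}(F \times_V F) = 0$ is used (via the localization argument of Proposition~\ref{prop:criterion-thm}, to obtain injectivity of a suitable proper direct image morphism) --- the statement of the lemma is reduced to the corresponding identity between the ``completed'' Riemann--Roch maps.

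These completed Riemann--Roch maps are defined by the same formulas as $\uRR_F$ and $\uRR_{F,F}$, with the Todd correction factors transported to the completed smooth varieties; that they intertwine with $\uRR_F$ and $\uRR_{F,F}$ under the relevant proper direct image morphisms follows from Theorem~\ref{thm:equivariantRR}, the projection formula~\eqref{eqn:proj-formula-H}, and the identity $T\overline{F}|_F \cong TF$ (so that $\mathrm{Td}^{G \times \Gm}_{\overline{F}}$ restricts to $\mathrm{Td}^{G \times \Gm}_F$). The remaining identity on smooth varieties is then obtained by the computation carried out at the end of the proof of Lemma~\ref{lem:RR-convolution}: one expands the left-hand side from the definition of the completed action, moves $\tau^{G \times \Gm}$ past the proper direct image by Theorem~\ref{thm:equivariantRR} and past the restriction with supports by Proposition~\ref{prop:RR-res} (applicable now that all the varieties involved are smooth), uses the projection formula~\eqref{eqn:proj-formula-H} and formula~\eqref{eqn:res-cohomology}, and observes that the normal bundle of the graph-type embedding that occurs is the restriction of the pullback of $T\overline{F}$ --- so that the Todd class it produces is precisely the factor distinguishing $\uRR_F$ from $\uRR_{F,F}$; comparison with the analogous expansion of the right-hand side gives the claimed equality.

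I expect the main point requiring care to be the correct set-up of the completion in the module situation: one first realizes $\Kth^{G \times \Gm}(F)$ as $\Kth^{G \times \Gm}(F \times_V V)$ (for the structure map $\mathrm{id} \colon V \to V$), which then possesses a factor that can be completed so as to make the direct image morphisms in the completed action proper; one must also verify that the proper direct image morphism used in the reduction step is injective under the running hypothesis, by the localization argument of Proposition~\ref{prop:criterion-thm}. The bookkeeping of the $\mathrm{Td}^{G \times \Gm}_X$-factors in the case of $\oRR_{F^\bot}$, though tedious, is entirely routine.
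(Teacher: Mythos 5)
The plan to copy the proof of Lemma~\ref{lem:RR-convolution} verbatim breaks down at its very first step, and the breakdown is not cosmetic. In the algebra case the completion $\overline{F}$ is inserted in the \emph{middle} factor, the one that gets integrated out, so that the ambient-level convolution $\coH_\bullet^{G \times \Gm}(F \times \overline{F}) \otimes \coH_\bullet^{G \times \Gm}(\overline{F} \times F) \to \coH_\bullet^{G \times \Gm}(F \times F)$ has a proper projection $F \times \overline{F} \times F \to F \times F$, and both copies of $F \times_V F$ sit as \emph{closed} subvarieties of $F \times \overline{F}$ and $\overline{F} \times F$ (because $F \times_V F = F \times_{\overline{V}} \overline{F}$), so the comparison maps $\imath_1,\imath_2$ are proper direct images. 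For the left module action the factor being integrated out is the copy of $F$ carrying $d$; a ``completed'' action therefore needs $d$ to live on a compactification, but there is no proper direct image morphism $\Kth^{G \times \Gm}(F) \to \Kth^{G \times \Gm}(\overline{F})$ (the inclusion $F \hookrightarrow \overline{F}$ is open, not closed), while the uncompleted variant $\coH_\bullet^{G \times \Gm}(F \times \overline{F}) \otimes \coH_\bullet^{G \times \Gm}(F) \to \coH_\bullet^{G \times \Gm}(F)$ is simply not defined by the convolution recipe, since the projection $F \times F \to F$ one would have to push along is not proper. Your proposed fix --- writing $\Kth^{G \times \Gm}(F)$ as $\Kth^{G \times \Gm}(F \times_V V)$ and ``completing a factor'' --- does not address this: compactifying the $V$-factor has no effect on the properness of the relevant projection, and the factor that must be compactified is exactly the copy of $F$ supporting $d$. (One could rescue a completion-based argument by embedding $F$ \emph{closedly} into $\overline{F} \times V$ via the graph of $F \to V$ and redoing the bookkeeping there, but nothing of this sort appears in your write-up, and it is precisely the missing idea.)

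The paper's proof shows that no completion is needed in the module case, because the pushforward step of the action, $p_1 \colon F \times_V F \to F$, is already proper ($X$ is proper), so Theorem~\ref{thm:equivariantRR} applies directly at the level of the possibly singular fiber product. The only delicate point is commuting $\tau^{G \times \Gm}$ with the restriction-with-supports map $\resK_{\mathrm{l}}$ on $(F \times_V F) \times F$, i.e.\ formula~\eqref{eqn:RR-modules}; this is where the hypothesis $\coH_c^{\mathrm{odd}}(F \times_V F)=0$ enters, giving injectivity of $\imath \colon \coH_\bullet^{G \times \Gm}(F \times_V F) \to \coH_\bullet^{G \times \Gm}(F \times F)$, after which one transports both sides to the smooth ambient setting $F^2 \hookrightarrow F^3$ using Lemma~\ref{lem:restriction-pushforward} (and its $\Kth$-theoretic analogue) and applies Proposition~\ref{prop:RR-res} there. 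The conclusion then follows from Theorem~\ref{thm:equivariantRR}, formula~\eqref{eqn:res-cohomology}, and the multiplicativity of $\tau^{G \times \Gm}$ on exterior products. Your intuition about which ingredients are relevant (Proposition~\ref{prop:RR-res}, Lemma~\ref{lem:restriction-pushforward}, the injectivity supplied by the odd-vanishing hypothesis, the identification of the normal bundle and the Todd factor) is correct, but the reduction scaffolding you build around them does not exist as stated, so the proof as proposed is incomplete.
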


\begin{proof}
We only prove the first equality; the second one can be proved by similar arguments. First, we claim that
\begin{equation}
\label{eqn:RR-modules}
\tau^{G \times \Gm}_{F \times_V F} \circ \resK_{\mathrm{l}}(c \boxtimes d) = \bigl( \resH_{\mathrm{l}} \circ \tau^{G \times \Gm}_{F \times_V F \times F} (c \boxtimes d) \bigr) \cdot \mathrm{td}^A(N)^{-1},
\end{equation}
where $N$ is the normal bundle to the inclusion $F \times F \hookrightarrow F \times F \times F$ considered in the definition of $\resK_{\mathrm{l}}$. Indeed, as in the proof of Lemma~\ref{lem:RR-convolution}, our assumption ensures that the proper direct image morphism
\[
\imath \colon \coH_\bullet^{G \times \Gm}(F \times_V F) \to \coH_\bullet^{G \times \Gm}(F \times F)
\]
is injective. Hence it is enough to prove that the image under $\imath$ of both sides in~\eqref{eqn:RR-modules} are equal. Now by the projection formula~\eqref{eqn:proj-formula-H}, Theorem~\ref{thm:equivariantRR} and Lemma~\ref{lem:restriction-pushforward} we have
\begin{multline*}
\imath \Bigl( \bigl( \resH_{\mathrm{l}} \circ \tau^{G \times \Gm}_{F \times_V F} (c \boxtimes d) \bigr) \cdot \mathrm{td}^A(N)^{-1} \Bigr) = \imath \bigl( \resH_{\mathrm{l}} \circ \tau^{G \times \Gm}_{F \times_V F \times F} (c \boxtimes d) \bigr) \cdot \mathrm{td}^A(N)^{-1} \\
= \bigl( \resH_{\mathrm{l}}' \circ \tau^{G \times \Gm}_{F \times F \times F} (\imath(c) \boxtimes d) \bigr) \cdot \mathrm{td}^A(N)^{-1},
\end{multline*}
where
\[
\resH_{\mathrm{l}}' \colon \coH_\bullet^{G \times \Gm}( F \times F \times F) \to \coH_\bullet^{G \times \Gm}(F \times F)
\]
is the restriction with supports morphism associated with the embedding $F^2 \hookrightarrow F^3$ considered in the definition of $\resH_{\mathrm{l}}$.

On the other hand, by Theorem~\ref{thm:equivariantRR} and the obvious $\Kth$-theoretic analogue of Lemma~\ref{lem:restriction-pushforward} we have
\[
\imath \bigl( \tau^{G \times \Gm}_{F \times_V F} \circ \resK_{\mathrm{l}}(c \boxtimes d) \bigr) = \tau^{G \times \Gm}_{F \times F} \circ \resK'_{\mathrm{l}}(\imath(c) \boxtimes d),
\]
where $\resK'_{\mathrm{l}}$ is defined as for $\resH_{\mathrm{l}}'$. Hence the desired equality follows from
Proposition~\ref{prop:RR-res}.

Now we have
\begin{align*}
\uRR_F(c \star d) &= \tau_F^{G \times \Gm}(\pdiK_{p_{1}} \circ \resK_{\mathrm{l}}(c \boxtimes d)) \\
&= \pdiH_{p_1} \circ \tau_{F \times_V F}^{G \times \Gm} \circ \resK_{\mathrm{l}}(c \boxtimes d) \\
&= \pdiH_{p_1} \Bigl( \bigl( \resH_{\mathrm{l}} \circ \tau^{G \times \Gm}_{F \times_V F \times F} (c \boxtimes d) \bigr) \cdot \mathrm{td}^A(N)^{-1} \Bigr) \\
&= \pdiH_{p_1} \Bigl( \resH_{\mathrm{l}} \bigl( (\tau^{G \times \Gm}_{F \times_V F}(c) \boxtimes \tau^{G \times \Gm}_F (d) ) \cdot (1 \boxtimes (\mathrm{Td}^A_F)^{-1} \boxtimes 1) \bigr) \Bigr) \\
&= \uRR_{F,F}(c) \star \uRR_F(d).
\end{align*}
(Here the second equality follows from Theorem~\ref{thm:equivariantRR}, the third one from~\eqref{eqn:RR-modules}, and the fourth one from~\eqref{eqn:res-cohomology}.) This concludes the proof.
\end{proof}

\subsection{Affine Hecke algebras and their graded versions}
\label{ss:Haff-gHaff}

From now on in this section we restrict to the case of the affine Hecke algebra and its graded version. Our notation mainly follows~\cite{LuAff}. Namely,
we fix a semisimple and simply connected complex algebraic group $G$, with fixed maximal torus $T$ and Borel subgroup $B$ with $T \subset B$. We denote by $W$ the Weyl group of $(G,T)$, and by $S \subset W$ the set of Coxeter generators determined by the choice of $B$. We also denote by $\mathbb{X}$ the lattice of characters of $T$, and by $R \subset \bbX$ the root system of $(G,T)$. We denote by $R^+ \subset R$ the system of positive roots consisting of the roots \emph{opposite} to the roots of $B$. Then the \emph{affine Hecke algebra} $\Haff$ (with equal parameters) attached to these data is the $\Z[v,v^{-1}]$-algebra generated by elements $T_s$ for $s \in S$ and $\theta_x$ for $x \in \bbX$, subject to the following relations (where $m_{s,t}$ is the order of $st$ in $W$):
\begin{enumerate}
\item
$(T_s+1)(T_s-v^2)=0$ for $s \in S$;
\item
$T_s T_t \cdots = T_t T_s \cdots$ for $s,t \in S$ (with $m_{s,t}$ factors on each side);
\item
\label{it:Haff-rel-2}
$\theta_x \theta_y = \theta_{x+y}$ for $x,y \in \bbX$;
\item
\label{it:Haff-rel-3}
$\theta_0=1$;
\item
\label{it:Haff-rel-4}
$T_s \cdot \theta_x - \theta_{sx} \cdot T_s = (v^2-1) \frac{\theta_x - \theta_{sx}}{1 - \theta_{-\alpha}}$ for $s \in S$, where $\alpha \in R$ is the corresponding simple root. 
\end{enumerate}

\begin{remark}
\label{rk:subalg-theta}
\begin{enumerate}
\item
Relations~\eqref{it:Haff-rel-2} and~\eqref{it:Haff-rel-3} imply that the subalgebra generated by the generators $\theta_x$ for $x \in \bbX$ is isomorphic to the group algebra $\Z[v,v^{-1}][\bbX]$; then the quotient in the right-hand side in~\eqref{it:Haff-rel-4} denotes the quotient in this integral ring.
\item
The present notation differs slightly from the notation in~\cite{MR3}. In fact the element denoted $T_s$ here coincides with the element denoted $t_\alpha$ in~\cite[\S 5.2]{MR3} (for $\alpha$ the corresponding simple root).
\end{enumerate}
\end{remark}

The following reformulation of relation~\eqref{it:Haff-rel-4} (see~\cite[Proposition~3.9]{LuAff}) will be useful:
\begin{equation}
\label{eqn:comm-relation-Haff}
(T_s+1) \cdot \theta_x - \theta_{sx} \cdot (T_s+1) = (\theta_x-\theta_{sx}) \cdot \mathscr{G}(\alpha) \qquad \text{with} \qquad \mathscr{G}(\alpha)=\frac{v^2 \theta_\alpha - 1}{\theta_\alpha-1}.
\end{equation}

The subalgebra of $\Haff$ generated by the elements $T_s$ ($s \in S$) can be identified with the Hecke algebra $\calH_W$ of the Coxeter group $(W,S)$. We will consider the left module $\mathrm{sgn}_{\mathrm{l}}$ of this subalgebra which is (canonically) free of rank one over $\Z[v,v^{-1}]$, and where $T_s$ acts by $-1$. The same recipe also defines a right module $\mathrm{sgn}_{\mathrm{r}}$ over $\calH_W$. Then we can define the ``antispherical'' left, resp.~right, module over $\Haff$ as
\[
\calM^{\mathrm{asph}}_{\mathrm{l}} := \Haff \otimes_{\calH_W} \mathrm{sgn}_{\mathrm{l}}, \qquad \calM^{\mathrm{asph}}_{\mathrm{r}} := \mathrm{sgn}_{\mathrm{r}} \otimes_{\calH_W} \Haff.
\]
For both modules, we will simply denote by $1$ the ``base point'' $1 \otimes 1$.

We will also consider the associated \emph{graded affine Hecke algebra} $\gHaff$ (again, with equal parameters). This algebra is the $\C[r]$-algebra generated by $\calO(\frakt)=\mathrm{S}(\frakt^*)$ (where $\frakt$ is the Lie algebra of $T$) and elements $t_w$ for $w \in W$, subject to the following relations:
\begin{enumerate}
\item
$t_1=1$;
\item 
$t_v t_w = t_{vw}$ for $v,w \in W$;
\item
\label{it:gHaff-rel-2}
$t_s \cdot \phi - s(\phi) t_s = (\phi - s(\phi)) \cdot (g(\alpha) - 1)$ for $s \in S$, where $\alpha \in R$ is the corresponding simple root.
\end{enumerate}
Here following~\cite{LuAff} we have used the notation
\[
g(\alpha) = \frac{\dot{\alpha}+ 2 r}{\dot{\alpha}},
\]
where $\dot{\alpha} \in \frakt^*$ is the differential of the root $\alpha$. In this case also, one can reformulate relation~\eqref{it:gHaff-rel-2} in the following form, see~\cite[4.6(c)]{LuAff}:
\begin{equation}
\label{eqn:comm-relation-gHaff}
(t_s+1) \cdot \phi - s(\phi) \cdot (t_s+1) = (\phi - s(\phi)) \cdot g(\alpha).
\end{equation}

The subalgebra of $\gHaff$ generated by the elements $t_w$ (for $w \in W$) identifies with the group algebra $\overline{\calH}_W=\C[r][W]$. As above one can define a ``sign'' left, resp.~right, module over this algebra (where $t_s$ acts by $-1$ for $s \in S$), which we will denote by $\overline{\mathrm{sgn}}_{\mathrm{l}}$, resp.~$\overline{\mathrm{sgn}}_{\mathrm{r}}$, and corresponding ``antispherical'' modules
\[
\overline{\calM}^{\mathrm{asph}}_{\mathrm{l}} := \gHaff \otimes_{\overline{\calH}_W} \overline{\mathrm{sgn}}_{\mathrm{l}}, \qquad \overline{\calM}^{\mathrm{asph}}_{\mathrm{r}} := \overline{\mathrm{sgn}}_{\mathrm{r}} \otimes_{\overline{\calH}_W} \gHaff.
\]

Let $\mathfrak{m} \subset \calO(\frakt)[r]=\calO(\frakt \times \mathbb{A}^1)$ denote the maximal ideal associated with the point $(0,0) \in \frakt \times \mathbb{A}^1$, and let $\widehat{\calO(\frakt)[r]}$ be the $\mathfrak{m}$-adic completion of $\calO(\frakt)[r]$. Then $\hgHaff:=\widehat{\calO(\frakt)[r]} \otimes_{\calO(\frakt)[r]} \gHaff$ has a natural algebra structure extending the structure on $\gHaff$. With this notation introduced,
the algebras $\Haff$ and $\gHaff$ are related by the Lusztig morphism
\[
\mathscr{L}_{\mathrm{r}} \colon \Haff \to \hgHaff
\]
defined in~\cite[\S 9]{LuAff}.\footnote{The setting considered in~\cite[\S 9]{LuAff} is much more general than the case considered in the present paper. With Lusztig's notation, we only consider the case $v_0=1$ (which is covered by~\cite[\S 9.7]{LuAff}), $r_0=0$, $t_0=1$, $\overline{\Sigma}=\{0\}$.
This case suffices (except in the case when $v$ is specialized to a non trivial root of unity) for the study of the representation theory of $\Haff$
via the (more accessible) study of the representation theory of $\gHaff$; see~\cite{LuAff} for details.\label{fn:setting-lusztig}}
Let us recall the definition of this morphism.
First, we denote by $\bbY:=X_*(T)$ the lattice of cocharacters of $T$, and consider the map
\[
e \colon \left\{
\begin{array}{ccc}
\frakt = \bbY \otimes_\Z \C & \to & T = \bbY \otimes_\Z \C^\times \\
\lambda^\vee \otimes a & \mapsto & \lambda^\vee \otimes \exp(a)
\end{array}
\right. .
\]
This map induces a map
\[
\Z[v,v^{-1}][\bbX] \to \widehat{\calO(\frakt)[r]}
\]
sending $x \in \bbX$ to (the power series expansion of) $x \circ e$ and $v$ to $\exp(r)$, which can be used to define $\mathscr{L}_{\mathrm{r}}$ on the subalgebra of $\Haff$ generated by the elements $\theta_x$ ($x \in \bbX$), see Remark~\ref{rk:subalg-theta}. Then the description of $\mathscr{L}_{\mathrm{r}}$ is completed by the formula
\[
\mathscr{L}_{\mathrm{r}}(T_s+1) = (t_s+1) \cdot g(\alpha)^{-1} \cdot \widetilde{\mathscr{G}}(\alpha), \qquad \text{where} \qquad \widetilde{\mathscr{G}}(\alpha) = \mathscr{L}_{\mathrm{r}}(\mathscr{G}(\alpha)).
\]
In more concrete terms, 
we have (see~\cite[Proof of Lemma~9.5]{LuAff}):
\[
g(\alpha)^{-1} \cdot \widetilde{\mathscr{G}}(\alpha) =\frac{\exp(\dot{\alpha}+2r)-1}{\dot{\alpha}+2r} \cdot \frac{\dot{\alpha}}{\exp(\dot{\alpha})-1}.
\]

From the defining relations of $\Haff$ (resp.~$\gHaff$) one can see that there exists an anti-involution of $\Haff$ (resp.~$\gHaff$) as a $\Z[v,v^{-1}]$-algebra (resp.~$\C[r]$-algebra), which fixes all generators $T_s$ for $s \in S$ and $\theta_x$ for $x \in \bbX$ (resp.~the generators $t_s$ for $s \in S$ and the elements of $\calO(\frakt)$). Conjugating the morphism $\mathscr{L}_{\mathrm{r}}$ with these anti-involutions we obtain a second Lusztig morphism
\[
\mathscr{L}_{\mathrm{l}} \colon \Haff \to \hgHaff
\]
which satisfies
\[
\mathscr{L}_{\mathrm{l}}(\theta_x) = \mathscr{L}_{\mathrm{r}}(\theta_x), \qquad \mathscr{L}_{\mathrm{l}}(v)=\mathscr{L}_{\mathrm{r}}(v), \qquad \mathscr{L}_{\mathrm{l}}(T_s+1) = g(\alpha)^{-1} \cdot \widetilde{\mathscr{G}}(\alpha) \cdot (t_s+1).
\]

\subsection{Geometric realization of $\Haff$ and its antispherical module(s)}
\label{ss:geom-Haff}

Let $\mathcal{B}:=G/B$ be the flag variety of $G$. Then we can consider the constructions of~\S\ref{ss:convolution} for the data $X=\mathcal{B}$, $V=\frakg^*$, and with $F$ being the subbundle
\[
\wcalN:=\{(\xi,gB) \in \frakg^* \times \calB \mid \xi_{|g \cdot \frakb}=0\},
\]
where $\frakb$ is the Lie algebras of $B$.
(This variety is isomorphic to the \emph{Springer resolution} of the nilpotent cone of $G$.) We will also consider
\[
\wfrakg:=\{(\xi,gB) \in \frakg^* \times \calB \mid \xi_{|g \cdot [\frakb,\frakb]}=0\}.
\]
(This variety is isomorphic to the \emph{Grothendieck simultaneous resolution.}) Note that the Killing form defines a $G$-equivariant isomorphism $(\frakg^*)^* \cong \frakg^*$, hence a $G \times \Gm$-equivariant isomorphism $E \cong E^*$. Via this isomorphism, $F^\bot$ identifies with $\wfrakg$.

The \emph{Steinberg variety} is the fiber product
\[
Z:=\wcalN \times_{\frakg^*} \wcalN.
\]
If $\alpha$ is a simple root, we denote by $P_\alpha \subset G$ the corresponding minimal standard parabolic subgroup, and by $\mathcal{P}_\alpha := G/P_\alpha$ the associated partial flag variety. Then as in~\cite{riche}\footnote{Due to a typo, the subscript ``$\mathcal{P}_\alpha$'' is missing in the fiber product in the description of $S_\alpha'$ in~\cite[\S 6.1]{riche}.} we set
\[
S_\alpha' := \{(X,g_1 B, g_2 B) \in \frakg^* \times (\mathcal{B} \times_{\mathcal{P}_\alpha} \mathcal{B}) \mid X_{g_1 \cdot \frakb + g_2 \cdot \frakb}=0 \}.
\]
In other words, $S_\alpha'$ is the inverse image of $\mathcal{B} \times_{\mathcal{P}_\alpha} \mathcal{B}$ under the projection $Z \to \mathcal{B} \times \mathcal{B}$. This scheme is reduced but not irreducible: its two irreducible components are the diagonal $\Delta \wcalN$ and
\[
Y_\alpha :=  \{(X,g_1 B, g_2 B) \in \frakg^* \times (\mathcal{B} \times_{\mathcal{P}_\alpha} \mathcal{B}) \mid X_{g_1 \cdot \frakp_\alpha}=0\},
\]
where $\frakp_\alpha$ is the Lie algebra of $P_\alpha$.

With these definitions,
we obtain algebras $\Kth^{G \times \Gm}(Z)$ and $\coH^{G \times \Gm}_\bullet(Z)$. It follows from work of Kazhdan--Lusztig~\cite{KL}, Ginzburg~\cite{CG} and Lusztig~\cite{LUSBas} that there exists an algebra isomorphism\footnote{Due to our change of convention in the definition of the convolution product (see Footnote~\ref{fn:convention-product}), the isomorphism~\eqref{eqn:Haff-K(Z)} is the composition of the isomorphism considered in~\cite[\S 5.2]{MR3} with the anti-involution considered at the end of~\S\ref{ss:Haff-gHaff}.}
\begin{equation}
\label{eqn:Haff-K(Z)}
\Haff \simto \Kth^{G \times \Gm}(Z)
\end{equation}
which satisfies
\[
v \mapsto [\calO_{\Delta \wcalN} \langle 1 \rangle], \qquad \theta_x \mapsto [\calO_{\Delta \wcalN}(x)], \qquad T_s \mapsto -[\calO_{Y_\alpha}(-\rho, \rho-\alpha)] - [\calO_{\Delta \wcalN}] = -[\calO_{S_\alpha'}].
\]
(In the middle term, $\calO_{\Delta \wcalN}(x)$ is (the direct image of) the line bundle on 
$\Delta \wcalN$
obtained by pullback of the line bundle on $\calB$ naturally associated with $x$. In the third term, $\alpha$ is the simple root associated with $s$, 
and $\rho$ is the half-sum of the positive roots;
the equality follows from~\cite[Lemma~6.1.1]{riche}.)


We also have isomorphisms of $\Z[v,v^{-1}]$-modules
\begin{equation}
\label{eqn:Masph-Kth}
\calM^{\mathrm{asph}}_{\mathrm{l}} \simto \Kth^{G \times \Gm}(\wcalN), \qquad \text{resp.} \qquad \calM^{\mathrm{asph}}_{\mathrm{l}} \simto \Kth^{G \times \Gm}(\wcalN),
\end{equation}
where $v^n \theta_x \cdot 1$, resp.~$1 \cdot v^n \theta_x$, corresponds to $[\calO_{\wcalN}(x) \langle n \rangle]$
(for $x \in \bbX$).


\begin{lem}
\label{lem:apsh-module-geom}
The
isomorphisms~\eqref{eqn:Masph-Kth} 
are
isomorphisms of left and right $\Haff$-modules respectively.
\end{lem}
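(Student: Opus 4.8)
The plan is to reduce the statement to the behaviour of the convolution action on the single class $[\calO_{\wcalN}]$. Since $\calM^{\mathrm{asph}}_{\mathrm{l}} = \Haff \otimes_{\calH_W} \mathrm{sgn}_{\mathrm{l}}$ is induced from $\calH_W$, a morphism of left $\Haff$-modules $\calM^{\mathrm{asph}}_{\mathrm{l}} \to \Kth^{G \times \Gm}(\wcalN)$ — with $\Haff$ acting on the target through~\eqref{eqn:Haff-K(Z)} by left convolution — is, by Frobenius reciprocity, the same datum as an element $m \in \Kth^{G \times \Gm}(\wcalN)$ with $T_s \star m = -m$ for all $s \in S$ (namely the image of $1 \otimes 1$). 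First I would check that $m = [\calO_{\wcalN}]$ has this property, which yields a left $\Haff$-module morphism $\phi_{\mathrm{l}}$ with $\phi_{\mathrm{l}}(1 \otimes 1) = [\calO_{\wcalN}]$. By the Bernstein presentation, $\Haff$ is free over the subalgebra $\Z[v,v^{-1}][\bbX]$ generated by the $\theta_x$, so $\calM^{\mathrm{asph}}_{\mathrm{l}}$ is free of rank one over $\Z[v,v^{-1}][\bbX]$ and hence spanned over $\Z$ by the elements $v^n \theta_x \cdot 1$. It then suffices to verify $\phi_{\mathrm{l}}(v^n \theta_x \cdot 1) = [\calO_{\wcalN}(x)\langle n \rangle]$: this identifies $\phi_{\mathrm{l}}$ with~\eqref{eqn:Masph-Kth}, and since the latter is bijective we conclude it is an isomorphism of left $\Haff$-modules. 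The case of the right module $\calM^{\mathrm{asph}}_{\mathrm{r}}$ is entirely parallel, using right convolution (so that $p_1$ below is replaced by $p_2$) and the fact that $S_\alpha'$ is stable under exchanging the two copies of $\wcalN$.

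The identity $\phi_{\mathrm{l}}(v^n \theta_x \cdot 1) = [\calO_{\wcalN}(x)\langle n \rangle]$ is routine: under~\eqref{eqn:Haff-K(Z)} the element $v^n \theta_x$ corresponds to $[\calO_{\Delta \wcalN}(x)\langle n \rangle]$, and as $[\calO_{\Delta \wcalN}]$ is the unit of the convolution algebra, the projection formula gives $[\calO_{\Delta \wcalN}(x)\langle n \rangle] \star [\calO_{\wcalN}] = [\calO_{\wcalN}(x)\langle n \rangle]$. The real content is therefore the identity $[\calO_{S_\alpha'}] \star [\calO_{\wcalN}] = [\calO_{\wcalN}]$, which is equivalent to $T_s \star [\calO_{\wcalN}] = -[\calO_{\wcalN}]$ via~\eqref{eqn:Haff-K(Z)}. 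To prove it I would use the decomposition $[\calO_{S_\alpha'}] = [\calO_{\Delta \wcalN}] + [\calO_{Y_\alpha}(-\rho, \rho-\alpha)]$ from~\cite[Lemma~6.1.1]{riche}: the first summand contributes $[\calO_{\Delta \wcalN}] \star [\calO_{\wcalN}] = [\calO_{\wcalN}]$ (unit), so one is left to show $[\calO_{Y_\alpha}(-\rho, \rho-\alpha)] \star [\calO_{\wcalN}] = 0$. Passing to the sheaf-theoretic description of convolution (\cite[Chap.~5]{CG}) and using that $\calO_{\wcalN}$ pulls back to the structure sheaf, this product equals $[R(p_1)_* \calO_{Y_\alpha}(-\rho, \rho-\alpha)]$, where $p_1 \colon Y_\alpha \to \wcalN$, $(X, g_1 B, g_2 B) \mapsto (X, g_1 B)$, factors through the $\mathbb{P}^1$-bundle $\pi \colon Y_\alpha \to \{(X,gB) \in \wcalN \mid X_{|g \cdot \frakp_\alpha} = 0\}$ obtained by pulling back $\calB \to \mathcal{P}_\alpha$, followed by a closed immersion into $\wcalN$. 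Since $\langle \rho - \alpha, \alpha^\vee \rangle = -1$, the line bundle $\calO_{Y_\alpha}(-\rho, \rho-\alpha)$ restricts to $\calO_{\mathbb{P}^1}(-1)$ on the fibres of $\pi$ (this is precisely the role of the twist by $(-\rho,\rho-\alpha)$; equivalently, it is the ideal sheaf in $Y_\alpha$ of the diagonal, a section of $\pi$), so $R \pi_* \calO_{Y_\alpha}(-\rho, \rho-\alpha) = 0$ and hence $[R(p_1)_* \calO_{Y_\alpha}(-\rho, \rho-\alpha)] = 0$, as wanted.

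I expect the main obstacle to be keeping the conventions and the derived structures under control in this last computation: one has to match the ``restriction with supports'' definition of the convolution product used in~\S\ref{ss:convolution} with the sheaf-theoretic one (a transversality check of the kind underlying the very definition of the product in~\cite{CG}), correctly identify $\calO_{Y_\alpha}(-\rho, \rho-\alpha)$ on the fibres of the $\mathbb{P}^1$-bundle $\pi$ (where a sign convention for line bundles on $G/B$ enters), and verify the factorisation of $p_1|_{Y_\alpha}$ through $\pi$. Once these geometric points are settled, the module-theoretic part — Frobenius reciprocity, the Bernstein presentation, and the projection-formula computation for the diagonal — is formal, and the argument for the right module is the mirror image of the one for the left module.
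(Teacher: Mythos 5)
Your argument is correct, and for the key geometric input it takes a genuinely different route from the paper. The paper also reduces everything to the single identity $[\calO_{S_\alpha'}] \star [\calO_{\wcalN}] = [\calO_{\wcalN}]$ (treating the right-module case by the swap symmetry of $S_\alpha'$, as you do), and likewise identifies the convolution with $[R p_{1*} \calO_{S_\alpha'}]$; but it then proves the identity by transporting the computation to $\wfrakg \times \wfrakg$: it uses $\calO_{\wcalN \times \wfrakg} \lotimes_{\calO_{\wfrakg \times \wfrakg}} \calO_{S_\alpha} \cong \calO_{S_\alpha'}$ (from~\cite[Lemma~4.1]{riche}) and non-flat base change to reduce to $Rq_{1*} \calO_{S_\alpha} \cong \calO_{\wfrakg}$, which is~\cite[Lemma~2.7.2]{BR}. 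You instead stay on $\wcalN$, split $[\calO_{S_\alpha'}] = [\calO_{\Delta \wcalN}] + [\calO_{Y_\alpha}(-\rho,\rho-\alpha)]$ via~\cite[Lemma~6.1.1]{riche} and kill the second term by the $\mathbb{P}^1$-bundle computation ($\calO(-1)$, equivalently the ideal sheaf of the diagonal section, on the fibres of $\pi$, hence vanishing $R\pi_*$). Your route is more self-contained and elementary on the geometric side, at the cost of the convention checks you flag; note that the characterization of the kernel in~\cite[Lemma~6.1.1]{riche} as the ideal sheaf of $\Delta\wcalN \cap Y_\alpha$, a section of $\pi$, already gives the fibrewise degree $-1$ independently of any sign convention for $\calO_{\calB}(\lambda)$, so that worry can be bypassed. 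The paper's route avoids the $\mathbb{P}^1$-fibration analysis entirely by citing existing pushforward results for $S_\alpha \subset \wfrakg \times \wfrakg$. Your explicit formal skeleton (Frobenius reciprocity plus the projection-formula computation for the $\theta_x$ and $v$ actions, then comparison with the bijection~\eqref{eqn:Masph-Kth} on the spanning set $v^n\theta_x \cdot 1$) is a cleaner justification of the reduction step that the paper leaves implicit.
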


\begin{proof}
It is enough to prove that for $\alpha$ a simple root we have
\[
[\calO_{S_\alpha'}] \star [\calO_{\wcalN}] = [\calO_{\wcalN}] = [\calO_{\wcalN}] \star [\calO_{S_\alpha'}].
\]
By symmetry the two equalities are equivalent, so we restrict to the first one.
By definition we have $[\calO_{S_\alpha'}] \star [\calO_{\wcalN}] = [Rp_{1*} (\calO_{S_\alpha'})]$. If $S_\alpha \subset \wfrakg \times \wfrakg$ is the subvariety defined in~\cite[\S 1.4]{riche}, in the derived category of (equivariant) coherent sheaves on $\wfrakg \times \wfrakg$, by~\cite[Lemma~4.1]{riche} we have
\[
\calO_{\wcalN \times \wfrakg} \, \lotimes_{\calO_{\wfrakg \times \wfrakg}} \, \calO_{S_\alpha} \cong \calO_{S_\alpha'}.
\]
Then, by the (non flat) base change theorem (e.g.~in the form of~\cite[Proposition~3.7.1]{BR}), to prove our equality it is enough to prove that
\[
Rq_{1*} \calO_{S_\alpha} \cong \calO_{\wfrakg},
\]
where $q_1 \colon \wfrakg \times \wfrakg \to \wfrakg$ is the projection on the first factor. This is proved in~\cite[Lemma~2.7.2]{BR}.\footnote{The subvariety $S_\alpha$ is denoted $Z_s$ in~\cite{BR}, where $s$ is the corresponding simple reflection. Also, in~\cite[\S 2]{BR} the base field is assumed to be of positive characteristic; but the proof of the cited lemma works over any algebraically closed field of coefficients.}
\end{proof}

One also has a similar geometric realization using $\wfrakg$ instead of $\wcalN$. In fact, if we set
\[
\calZ:=\wfrakg \times_{\frakg^*} \wfrakg,
\]
as explained in~\cite[Lemma~5.2]{MR3}, restriction with supports associated with the inclusion $\wcalN \times \wfrakg \hookrightarrow \wfrakg \times \wfrakg$ induces an algebra isomorphism $\Kth^{G \times \Gm}(\calZ) \simto \Kth^{G \times \Gm}(Z)$. Therefore, we have an algebra isomorphism
\begin{equation}
\label{eqn:Haff-K(calZ)}
\Haff \simto \Kth^{G \times \Gm}(\calZ)
\end{equation}
which satisfies
\[
v \mapsto [\calO_{\Delta \wfrakg} \langle 1 \rangle], \qquad \theta_x \mapsto [\calO_{\Delta \wfrakg}(x)], \qquad T_s \mapsto -[\calO_{S_\alpha}].
\]
(Here we use conventions similar to those for $\wcalN$, and $S_\alpha$ is defined in~\cite[\S 1.4]{riche}.) As in Lemma~\ref{lem:apsh-module-geom}, we also have isomorphisms of left, resp.~right, $\Haff$-modules
\[
\calM^{\mathrm{asph}}_{\mathrm{l}} \simto \Kth^{G \times \Gm}(\wfrakg), \qquad \mathrm{resp.} \qquad \calM^{\mathrm{asph}}_{\mathrm{r}} \simto \Kth^{G \times \Gm}(\wfrakg).
\]

\subsection{Geometric realization of $\gHaff$ and its antispherical module(s)}
\label{ss:geom-gHaff}

Following~\cite{LuCus1, LuCus2}, replacing $\Kth$-theory by Borel--Moore homology in the constructions of~\S\ref{ss:geom-Haff} one obtains a geometric realization of $\gHaff$; in fact, applying~\cite[Theorem~8.11]{LuCus2} in our situation (i.e.~for the Levi subgroup $T$, its nilpotent orbit $\{0\}$, and the cuspidal local system $\underline{\C}_{\{0\}}$ on $\{0\}$), we obtain an algebra isomorphism
\begin{equation}
\label{eqn:gHaff-H(calZ)}
\gHaff \simto \coH_\bullet^{G \times \Gm}(\calZ)
\end{equation}
such that the subalgebra $\calO(\frakt)[r]$ is obtained as the image (under proper direct image) of
\begin{equation}
\label{eqn:gHaff-geom-diag}
\coH^{G \times \Gm}_\bullet(\Delta \wfrakg) \cong \coH^{G \times \Gm}_\bullet(\calB) \cong \coH^{B \times \Gm}_\bullet(\mathrm{pt}) \cong \coH^{T \times \Gm}_\bullet(\mathrm{pt}) \cong \calO(\frakt)[r].
\end{equation}
(More concretely, if $x \in \bbX$, then $\dot{x} \in \frakt^*$ corresponds to $[\Delta \wfrakg] \cdot c_1^{G \times \Gm}(\calO_{\wfrakg}(x))$, where $c_1^{G \times \Gm}(-)$ is the first equivariant Chern class.) The image of $\C[W]$ is obtained via the ``Springer isomorphism''
\[
\C[W] \simto \Hom_{D^{G \times \Gm}_{\mathrm{const}}(\frakg^*)}(p_! \underline{\C}_{\wfrakg}, p_! \underline{\C}_{\wfrakg}) \hookrightarrow \coH^{G \times \Gm}_\bullet(\calZ).
\]
(Here the inclusion is induced by the isomorphism $\can_{\wfrakg,\wfrakg}$ of~\S\ref{ss:equiv}.) As in~\eqref{eqn:gHaff-geom-diag} we also have a
natural isomorphism of $\C[r]$-modules
\begin{equation}
\label{eqn:gMasph-geom}
\overline{\calM}^{\mathrm{asph}}_{\mathrm{l}} \simto \coH^{G \times \Gm}_\bullet(\wfrakg), \qquad \text{resp.} \qquad \overline{\calM}^{\mathrm{asph}}_{\mathrm{r}} \simto \coH^{G \times \Gm}_\bullet(\wfrakg),
\end{equation}
where $\dot{x} \cdot 1$, resp.~$1 \cdot \dot{x}$, corresponds to $[\wfrakg] \cdot c_1^{G \times \Gm}(\calO_{\wfrakg}(x))$.

\begin{lem}
The
isomorphisms~\eqref{eqn:gMasph-geom} 
are
isomorphisms of left and right $\gHaff$-modules respectively.
\end{lem}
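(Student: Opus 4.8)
The plan is to reduce the statement to the single identity $(t_s+1)\star[\wfrakg]=0$ in $\coH^{G\times\Gm}_\bullet(\wfrakg)$, and then to prove that identity by mimicking the proof of Lemma~\ref{lem:apsh-module-geom}. For the reduction, observe first that the morphism in \eqref{eqn:gMasph-geom} is, by construction, the unique isomorphism of $\calO(\frakt)[r]$-modules sending $1$ to $[\wfrakg]$, where $\calO(\frakt)[r]\subset\gHaff$ acts on $\coH^{G\times\Gm}_\bullet(\wfrakg)$ through \eqref{eqn:gHaff-H(calZ)}, \eqref{eqn:gHaff-geom-diag} and convolution; that the action of $\dot{x}$ is then cap product with $c_1^{G\times\Gm}(\calO_{\wfrakg}(x))$ follows, as in the proof of Lemma~\ref{lem:RR-action}, by combining the projection formula \eqref{eqn:proj-formula-H} with \eqref{eqn:res-cohomology}. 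Since $\gHaff$ is generated as an algebra by $\calO(\frakt)[r]$ together with the elements $t_s$ ($s\in S$), and since \eqref{eqn:gHaff-H(calZ)} is an algebra isomorphism, relation \eqref{eqn:comm-relation-gHaff} (i.e.\ $t_s\phi=s(\phi)\,t_s+(\phi-s(\phi))(g(\alpha)-1)$, with $(\phi-s(\phi))(g(\alpha)-1)\in\calO(\frakt)[r]$) holds in $\coH^{G\times\Gm}_\bullet(\calZ)$; writing an arbitrary element of $\overline{\calM}^{\mathrm{asph}}_{\mathrm{l}}$ as $f\cdot 1$ with $f\in\calO(\frakt)[r]$ and expanding $\phi(t_s\cdot(f\cdot 1))$ and $t_s\cdot\phi(f\cdot 1)$ with the help of this relation, one finds that \eqref{eqn:gMasph-geom} is $\gHaff$-linear if and only if $(t_s+1)\star[\wfrakg]=0$ for every $s\in S$, equivalently $t_s\star[\wfrakg]=-[\wfrakg]$, which matches the action of $t_s$ on the base point of $\overline{\mathrm{sgn}}_{\mathrm{l}}$. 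The right-module case reduces in the same way (or is deduced from the left one by transport along the anti-automorphisms of \S\ref{ss:Haff-gHaff} and \S\ref{ss:geom-gHaff}).

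It remains to prove the vanishing $(t_s+1)\star[\wfrakg]=0$. Here I would argue exactly as in the proof of Lemma~\ref{lem:apsh-module-geom}: under \eqref{eqn:gHaff-H(calZ)} the element $t_s$ is sent to $-[S_\alpha]$, where $S_\alpha\subset\wfrakg\times\wfrakg$ is the irreducible component of $\calZ$ attached to $s$, defined in \cite[\S 1.4]{riche}. The first projection $p_1\colon S_\alpha\to\wfrakg$ is proper and birational (its generic fibre is a single point; in fact $Rp_{1*}\calO_{S_\alpha}\cong\calO_{\wfrakg}$ by \cite[Lemma~2.7.2]{BR}), so, using that the intersection underlying $\resH_{\mathrm{l}}$ is clean — the Borel--Moore homology shadow of the derived-intersection statement used in Lemma~\ref{lem:apsh-module-geom} — one obtains $[S_\alpha]\star[\wfrakg]=\pdiH_{p_1}\bigl(\resH_{\mathrm{l}}([S_\alpha]\boxtimes[\wfrakg])\bigr)=\pdiH_{p_1}([S_\alpha])=[\wfrakg]$; hence $(t_s+1)\star[\wfrakg]=(-[S_\alpha]+[\Delta\wfrakg])\star[\wfrakg]=0$, since $[\Delta\wfrakg]$ is the unit of $\coH^{G\times\Gm}_\bullet(\calZ)$. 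One may also argue with the Springer sheaf directly: under the canonical identification $\coH^{G\times\Gm}_\bullet(\wfrakg)\cong\Hom^{-\bullet}_{D^{G\times\Gm}_{\mathrm{const}}(\frakg^*)}(p_!\uC_{\wfrakg},\uD_{\frakg^*})$ (obtained from $\uD_{\wfrakg}\cong p^!\uD_{\frakg^*}$ and the adjunction $p_!\dashv p^!$), with module structure induced by composition of morphisms, the class $[\wfrakg]$ factors through the sign-isotypic summand of the semisimple perverse sheaf $p_!\uC_{\wfrakg}[\dim\wfrakg]$, on which $t_s+1$ acts by $0$.

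The main obstacle in either approach is the bookkeeping of normalizations. One must check, through the normalization of the Springer $W$-action adopted in \cite{LuCus2} (the one compatible with the Fourier--Sato transform — precisely the normalization that this paper is about), that $[\wfrakg]$ generates the \emph{sign}, rather than the trivial, isotypic part of the Springer module, equivalently that $t_s$ is sent to $-[S_\alpha]$ and not to $+[S_\alpha]$; with the opposite convention one would obtain $(t_s+1)\star[\wfrakg]=\pm 2[\wfrakg]$ instead of $0$. Once this sign is pinned down, the vanishing — and hence the lemma — follows, and the right-module statement is obtained by the same argument.
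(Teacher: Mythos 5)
Your first paragraph (the reduction of the lemma to the single identity $t_s\star[\wfrakg]=-[\wfrakg]$ for $s\in S$, and the reduction of the right-module case to the left one) is sound, and it parallels what the paper does: the paper likewise reduces, via the Thom isomorphism $\coH_\bullet^{G\times\Gm}(\wfrakg)\simeq\coH_\bullet^{G\times\Gm}(\calB)$ and its equivariance for the convolution action, to the statement $t_s\cdot[\calB]=-[\calB]$. The problem is with the second half: both of your proposed proofs of the key identity rest on a fact you do not establish, namely that under the isomorphism \eqref{eqn:gHaff-H(calZ)} the element $t_s$ is sent to $-[S_\alpha]$ (equivalently, that $[\wfrakg]$ lives in the sign-isotypic part of the Springer module). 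Unlike the $\Kth$-theoretic situation of Lemma~\ref{lem:apsh-module-geom}, where $T_s\mapsto-[\calO_{S_\alpha}]$ is part of the quoted description of \eqref{eqn:Haff-K(calZ)} taken from the literature, the isomorphism \eqref{eqn:gHaff-H(calZ)} only specifies the image of $\C[W]$ through the abstract Springer isomorphism, so the formula $t_s\mapsto-[S_\alpha]$ is not available and is essentially of the same depth as the lemma itself; note also that a priori the image of $t_s$ in $\coH^{G\times\Gm}_{2\dim(\calZ)}(\calZ)$ could be any combination $a[\Delta\wfrakg]+b[S_\alpha]$, so the issue is not only the sign $\pm[S_\alpha]$ that you single out. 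You explicitly flag this normalization question as ``the main obstacle'' and leave it open — but it is precisely the content of the statement (with the other normalization of the Springer action the module would be spherical rather than antispherical), so the proposal has a genuine gap at its crucial step. (Deducing $t_s=-[S_\alpha]$ from Proposition~\ref{prop:RR-Haff} would not help either, since the proof of that proposition uses the present lemma.)

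For comparison, the paper closes exactly this gap as follows: after reducing to $t_s\cdot[\calB]=-[\calB]$, it observes that the forgetful maps $\coH^{G\times\Gm}_{2\dim(\calB)}(\calB)\to\coH_{2\dim(\calB)}(\calB)$ and $\coH^{G\times\Gm}_{2\dim(\calZ)}(\calZ)\to\coH_{2\dim(\calZ)}(\calZ)$ are isomorphisms, so the question becomes non-equivariant; then, via \cite[Proposition~8.6.16]{CG}, it identifies $\coH_\bullet(\calB)$ with $\coH^{2\dim(\frakg)-\bullet}(i_0^!p_!\underline{\C}_{\wfrakg})$ compatibly with the $W$-action coming from $\can_{\wfrakg,\wfrakg}$, and invokes \cite[Lemmas~4.5 \& 4.6]{AHJR} for the fact that the top degree is the \emph{sign} representation in Lusztig's normalization. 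Your second sketch (``$[\wfrakg]$ factors through the sign-isotypic summand'') is in the same spirit, but again simply asserts the normalization-dependent fact rather than proving or citing it. To repair your argument you would need either to carry out this Springer-theoretic sign computation (or cite a reference such as \cite{AHJR} that does it), or to prove $t_s=-[S_\alpha]$ directly; the convolution computation $[S_\alpha]\star[\wfrakg]=[\wfrakg]$ via birationality of $p_1\colon S_\alpha\to\wfrakg$ is fine once that identification is in place, though the transversality claim behind $\resH_{\mathrm{l}}([S_\alpha]\boxtimes[\wfrakg])=[S_\alpha]$ should also be justified rather than asserted.
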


\begin{proof}
As in Lemma~\ref{lem:apsh-module-geom}, by symmetry it is enough to prove the equivariance in the first case.
Using similar constructions
as for $\coH_{\bullet}^{G \times \Gm}(\wfrakg)$, one can construct an action by convolution of $\coH^{G \times \Gm}_\bullet(\calZ)$ on $\coH^{G \times \Gm}_\bullet(\calB)$, where $\calB$ is seen as the zero section of $\wfrakg$; see~\cite[Corollary~2.7.41]{CG} in the non-equivariant setting. Moreover, the Thom isomorphism $\coH_{\bullet}^{G \times \Gm}(\wfrakg) \simto \coH^{G \times \Gm}_\bullet(\calB)$ is equivariant for this action. Therefore, it is enough to prove that the natural isomorphism $\calO(\frakt)[r] \simto \coH^{G \times \Gm}_\bullet(\calB)$ induces an isomorphism of left $\gHaff$-modules $\overline{\calM}^{\mathrm{asph}}_{\mathrm{l}} \simto \coH_{\bullet}^{G \times \Gm}(\calB)$. And for this it is enough to prove that $t_s \cdot [\calB] = -[\calB]$ for $s \in S$. Now the forgetful morphism $\coH^{G \times \Gm}_{2\dim(\calB)}(\calB) \to \coH_{2\dim(\calB)}(\calB)$ is an isomorphism, and so is the morphism $\coH^{G \times \Gm}_{2\dim(\calZ)}(\calZ) \to \coH_{2\dim(\calZ)}(\calZ)$. Hence we have reduced our question to a claim about non-equivariant Borel--Moore homology, which can be solved using Springer theory.

By~\cite[Proposition~8.6.16]{CG}, if $i_0 \colon \{0\} \hookrightarrow \wfrakg$ denotes the inclusion, there exists a canonical isomorphism $\coH_{\bullet}(\calB) \simto \coH^{2\dim(\frakg)-\bullet}(i_0^! p_! \underline{\C}_{\wfrakg})$, which identifies the action of $\coH_\bullet(\calZ)$ with the natural action of $\Hom^\bullet_{D^b_{\mathrm{const}}(\frakg^*)}(p_! \underline{\C}_{\wfrakg}, p_! \underline{\C}_{\wfrakg})$ via the non-equivariant analogue of the isomorphism $\can_{\wfrakg, \wfrakg}$. Hence what we have to show is that the $1$-dimensional $W$-module
\[
\coH_{2\dim(\calB)}(\calB) \cong \coH^{2\dim(\frakg)-2\dim(\calB)}(i_0^! p_! \underline{\C}_{\wfrakg})
\]
is the sign representation. This fact is well known, see e.g.~\cite[Lemmas 4.5 \& 4.6]{AHJR}.
\end{proof}

As in~\S\ref{ss:geom-Haff}, we have a similar story when $\wfrakg$ is replaced by $\wcalN$. In fact, constructions similar to those in~\cite[Lemma~5.2]{MR3} show that restriction with supports induces an algebra isomorphism $\coH^{G \times \Gm}_\bullet(\calZ) \simto \coH^{G \times \Gm}_\bullet(Z)$. (This property can also be extracted from~\cite{LuCus1, LuCus2}; it is used implictly in~\cite{EM}.) Therefore, we obtain isomorphisms of algebras and modules over these algebras
\begin{equation}
\label{eqn:gHaff-H(Z)}
\gHaff \simto \coH_\bullet^{G \times \Gm}(Z), \qquad \overline{\calM}^{\mathrm{asph}}_{\mathrm{l}} \simto \coH_{\bullet}^{G \times \Gm}(\wcalN), \qquad \overline{\calM}^{\mathrm{asph}}_{\mathrm{r}} \simto \coH_\bullet^{G \times \Gm}(\wcalN).
\end{equation}

\begin{prop}
\label{prop:RR-Haff}
\begin{enumerate}
\item
\label{it:RR-Haff-l}
Under the isomorphisms~\eqref{eqn:Haff-K(Z)} and~\eqref{eqn:gHaff-H(Z)}, the morphism $\uRR_{\wcalN,\wcalN}$ identifies with $\mathscr{L}_{\mathrm{l}}$.
\item
\label{it:RR-Haff-r}
Under the isomorphisms~\eqref{eqn:Haff-K(calZ)} and~\eqref{eqn:gHaff-H(calZ)}, the morphism $\oRR_{\wfrakg,\wfrakg}$ identifies with the morphism $c \mapsto e_{\calB} \cdot \mathscr{L}_{\mathrm{r}}(c) \cdot e_{\calB}^{-1}$, where
\[
e_{\calB} := \prod_{\alpha \in R^+} \frac{\dot{\alpha}}{1 - \exp(-\dot{\alpha})}.
\]
\end{enumerate}
\end{prop}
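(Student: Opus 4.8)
The plan is to reduce both assertions to a comparison on the algebra generators $v$, $\theta_x$ ($x\in\bbX$) and $T_s$ ($s\in S$) of $\Haff$, to settle the first two kinds of generators by a Riemann--Roch computation on the diagonal, and to treat the generators $T_s$ by a localized computation built on the geometry of the varieties $S_\alpha'$, $S_\alpha$. First I would note that all the maps to be compared are unital algebra morphisms: $\uRR_{\wcalN,\wcalN}$ and $\oRR_{\wfrakg,\wfrakg}$ by Lemma~\ref{lem:RR-convolution}, whose hypothesis is satisfied here because the Steinberg variety $Z=\wcalN\times_{\frakg^*}\wcalN$ admits a paving by affine spaces, so that $\coH_c^{\mathrm{odd}}(Z)=0$; while $\mathscr{L}_{\mathrm{l}}$ and $\mathscr{L}_{\mathrm{r}}$ are algebra morphisms by~\cite{LuAff}, and conjugation by the unit $e_\calB$ preserves this. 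One also needs that the isomorphisms~\eqref{eqn:gHaff-H(Z)} and~\eqref{eqn:gHaff-H(calZ)} extend to isomorphisms $\coHc^{G\times\Gm}_\bullet(Z)\simto\hgHaff$ and $\coHc^{G\times\Gm}_\bullet(\calZ)\simto\hgHaff$: this follows since $\coH^{G\times\Gm}_\bullet(\calZ)$ is finitely generated over $\coH^\bullet_{G\times\Gm}(\pt)$ and the product-completion $\coHc_\bullet=\prod_i\coH_i$ then corresponds to the $\mathfrak{m}$-adic completion defining $\hgHaff$. Since $\Haff$ is generated by $v$, the $\theta_x$ and the $T_s$, it suffices to compare the two maps on these elements.

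For $v=[\calO_{\Delta\wcalN}\langle 1\rangle]$ and $\theta_x=[\calO_{\Delta\wcalN}(x)]$ (and their analogues $[\calO_{\Delta\wfrakg}\langle 1\rangle]$, $[\calO_{\Delta\wfrakg}(x)]$), both classes are supported on the \emph{smooth} closed subvariety $\Delta\wcalN\subset Z$, resp.\ $\Delta\wfrakg\subset\calZ$. Applying Theorem~\ref{thm:equivariantRR} to the corresponding closed embedding, Proposition~\ref{prop:tau-O-smooth}, multiplicativity of $\tau$ under twisting by a line bundle (so that $\tau$ of the $(x)$-twist is multiplied by $\exp(\dot{x})$ after the identification from~\eqref{eqn:gHaff-geom-diag}, and $\tau$ of the $\langle 1\rangle$-twist by $\exp(r)$, consistently with our $\Gm$-conventions), together with the projection formula~\eqref{eqn:proj-formula-H}, one checks that the correction factor in $\uRR_{\wcalN,\wcalN}$, resp.\ in $\oRR_{\wfrakg,\wfrakg}$, pulls back along the diagonal to $(\mathrm{Td}^{G\times\Gm}_{\wcalN})^{-1}$, resp.\ to $(\mathrm{Td}^{G\times\Gm}_{\wfrakg})^{-1}$ (the two copies of $\mathrm{Td}^{G\times\Gm}_\calB$ in the $\oRR$ correction cancelling one another on the diagonal), and this exactly cancels the Todd class produced by Proposition~\ref{prop:tau-O-smooth}. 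Hence $\uRR_{\wcalN,\wcalN}(\theta_x)=\oRR_{\wfrakg,\wfrakg}(\theta_x)=\exp(\dot{x})$ and $\uRR_{\wcalN,\wcalN}(v)=\oRR_{\wfrakg,\wfrakg}(v)=\exp(r)$, which equal $\mathscr{L}_{\mathrm{l}}(\theta_x)$, $\mathscr{L}_{\mathrm{l}}(v)$ and $\mathscr{L}_{\mathrm{r}}(\theta_x)$, $\mathscr{L}_{\mathrm{r}}(v)$; and since $e_\calB$ lies in the commutative subalgebra generated by the image of $\calO(\frakt)$, conjugation by $e_\calB$ fixes $\exp(\dot{x})$ and $\exp(r)$. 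So both assertions hold on these generators.

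The main work is the case of $T_s$, where $T_s$ corresponds to $-[\calO_{S_\alpha'}]$ in $\Kth^{G\times\Gm}(Z)$ and to $-[\calO_{S_\alpha}]$ in $\Kth^{G\times\Gm}(\calZ)$. Here I would use the fiber-product descriptions of~\cite{riche,BR}: for the Grothendieck resolution $S_\alpha=\wfrakg\times_{\wfrakg_\alpha}\wfrakg$, where the projection $\wfrakg\to\wfrakg_\alpha$ is smooth (a $\P^1$-bundle), so $S_\alpha$ is smooth and the relative-tangent short exact sequences give $\mathrm{Td}^{G\times\Gm}_{S_\alpha}=q_j^*\bigl(\mathrm{td}^{G\times\Gm}(T_{\wfrakg/\wfrakg_\alpha})\bigr)\cdot q_i^*\mathrm{Td}^{G\times\Gm}_{\wfrakg}$ for either choice $\{i,j\}=\{1,2\}$, the first factor being the Todd class of a line bundle; for $\wcalN$ one uses $Y_\alpha$ (a vector bundle over the smooth $\P^1$-bundle $\calB\times_{\calP_\alpha}\calB$) together with $[\calO_{S_\alpha'}]=[\calO_{Y_\alpha}(-\rho,\rho-\alpha)]+[\calO_{\Delta\wcalN}]$, or the base-change identity $\calO_{\wcalN\times\wfrakg}\lotimes_{\calO_{\wfrakg\times\wfrakg}}\calO_{S_\alpha}\cong\calO_{S_\alpha'}$ of~\cite[Lemma~4.1]{riche} (as in the proof of Lemma~\ref{lem:apsh-module-geom}). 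Applying Theorem~\ref{thm:equivariantRR} to the embedding $S_\alpha\hookrightarrow\calZ$ (resp.\ $Y_\alpha\hookrightarrow Z$), Proposition~\ref{prop:tau-O-smooth} and the projection formula, $\oRR_{\wfrakg,\wfrakg}(-[\calO_{S_\alpha}])$ becomes the pushforward of $[S_\alpha]$ times an explicit product of Todd and Chern classes; I would simplify this using the formula for $\mathrm{Td}^{G\times\Gm}_{S_\alpha}$, then reduce by equivariant localization — restriction to a $T$-fixed point of $\calB$, equivalently passing to the rank-one subgroup attached to $\alpha$ — to a computation with line bundles on a $\P^1$-bundle, and match the resulting power series with Lusztig's normalization $\mathscr{L}_{\mathrm{r}}(T_s+1)=(t_s+1)\cdot g(\alpha)^{-1}\widetilde{\mathscr{G}}(\alpha)$ together with $g(\alpha)^{-1}\widetilde{\mathscr{G}}(\alpha)=\frac{\exp(\dot{\alpha}+2r)-1}{\dot{\alpha}+2r}\cdot\frac{\dot{\alpha}}{\exp(\dot{\alpha})-1}$. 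The conjugation by $e_\calB=\prod_{\alpha\in R^+}\frac{\dot{\alpha}}{1-\exp(-\dot{\alpha})}$ (which is the image of $\mathrm{Td}^{G\times\Gm}_\calB$ under the identification of~\eqref{eqn:gHaff-geom-diag}) appears in the $\oRR$ case precisely because its correction factor involves $\mathrm{Td}^{G\times\Gm}_\calB$ asymmetrically and the two projections $S_\alpha\to\wfrakg\to\calB$ differ, so that $q_1^*\mathrm{Td}^{G\times\Gm}_\calB$ and $q_2^*\mathrm{Td}^{G\times\Gm}_\calB$ no longer cancel and yield, after pushforward, multiplication by $e_\calB$ on one side and $e_\calB^{-1}$ on the other; in the $\uRR$ case the correction factor touches only the second factor, producing no net conjugation and leaving $t_s+1$ on the left, which gives $\mathscr{L}_{\mathrm{l}}$ rather than $\mathscr{L}_{\mathrm{r}}$. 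One can cut down the geometry needed here by first applying the two algebra maps to~\eqref{eqn:comm-relation-Haff} and to $(T_s+1)^2=(v^2+1)(T_s+1)$: introducing the intertwiner $\widetilde{t}_s:=t_s-2r/\dot{\alpha}$ (which satisfies $\widetilde{t}_s\phi=s(\phi)\widetilde{t}_s$ and $t_s+1=\widetilde{t}_s+g(\alpha)$), these relations force the difference between $\uRR_{\wcalN,\wcalN}(T_s)+1$, resp.\ $\oRR_{\wfrakg,\wfrakg}(T_s)+1$, and the conjectured value to have the form $f\cdot\widetilde{t}_s$ with $f$ subject to a norm equation, so that only one additional scalar coming from the geometry of $S_\alpha$ (for instance the action of $T_s$ on the module generator $[\calO_{\wfrakg}]$, via Lemma~\ref{lem:RR-action}) needs to be computed.

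The hard part will be carrying out this rank-one Riemann--Roch computation while keeping every sign, $\rho$-shift and $\Gm$-weight consistent simultaneously with our conventions (the $t^{-2}$-action on fibres, the sign convention for $R^+$) and with Lusztig's definition of $\mathscr{L}_{\mathrm{r}}$ in~\cite[\S 9]{LuAff}. Alternatively, once the first assertion is established, one may deduce the second from Theorem~\ref{thm:LKDFourier} together with the explicit descriptions of $\Koszul_{\wcalN,\wcalN}$, $\dual_{\wfrakg,\wfrakg}$, $\bfi_{\wfrakg,\wfrakg}$ and $\Fourier_{\wcalN,\wcalN}$ on $\Haff$ and $\gHaff$.
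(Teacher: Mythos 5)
Your overall skeleton (both maps are unital algebra morphisms by Lemma~\ref{lem:RR-convolution} thanks to the affine paving of $Z$ and $\calZ$, so it suffices to compare on the generators $v$, $\theta_x$, $T_s$; the diagonal classes are handled by Theorem~\ref{thm:equivariantRR} and Proposition~\ref{prop:tau-O-smooth}) is exactly the paper's, and your identification of $e_\calB$ with $\mathrm{Td}^{G \times \Gm}_{\calB}$ is also the ingredient the paper uses for part~(2). Where you diverge is the generator $T_s$: your primary route is a direct equivariant Riemann--Roch/localization computation of $\tau^{G\times\Gm}$ on $S_\alpha$ (resp.\ $Y_\alpha$, $S_\alpha'$), which you yourself leave as ``the hard part''; the paper avoids any such Todd-class computation. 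Its device is to observe that, by Remark~\ref{rmk:injectivity-faithful} and Proposition~\ref{prop:criterion-thm}, $\coHc_\bullet^{G \times \Gm}(\wcalN)$ is a \emph{faithful} module over $\coHc_\bullet^{G \times \Gm}(Z)$, and that Lemma~\ref{lem:RR-action} together with Lemma~\ref{lem:apsh-module-geom} and the purely algebraic relation~\eqref{eqn:comm-relation-Haff} already determines the action of $\uRR_{\wcalN,\wcalN}(1+T_s)$ on the topological basis $r^n \exp(\dot{x}) \cdot 1$; comparing with~\eqref{eqn:comm-relation-gHaff} then forces $\uRR_{\wcalN,\wcalN}(1+T_s) = g(\alpha)^{-1} \widetilde{\mathscr{G}}(\alpha)(t_s+1)$, and part~(2) is proved the same way with the right module over $\coHc_\bullet^{G\times\Gm}(\calZ)$, the asymmetric Todd correction accounting for the $e_\calB$-conjugation. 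Your ``shortcut'' (intertwiner decomposition plus the quadratic relation, pinning down one scalar via the action on the module generator through Lemma~\ref{lem:RR-action}) is close in spirit and could be made to work, but as written it has two soft spots: the claim that the relations force the discrepancy to be of the form $f \cdot \widetilde{t}_s$ requires passing to the fraction field of $\widehat{\calO(\frakt)[r]}$ and the intertwiner basis of the localized algebra, and concluding from the module computation back to an identity in $\coHc_\bullet^{G\times\Gm}(Z)$ still needs either that localization argument or the faithfulness statement you never invoke. So: your plan is viable, but the executed paper proof replaces your heavy (and convention-sensitive) rank-one computation by the faithfulness-plus-module-action argument, which is both shorter and sign-proof; your final remark that (2) could alternatively be deduced from (1), Theorem~\ref{thm:LKDFourier} and the explicit formulas is legitimate and not circular, since the paper proves the theorem independently of Section~\ref{sec:convolution-algebras}.
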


\begin{proof}
First, we note that $Z$ and $\calZ$ are paved by affine spaces, so that the ``parity vanishing'' assumptions in some of our statements above are satisfied in these cases.

\eqref{it:RR-Haff-l}
Both of our maps are algebra morphisms (see Lemma~\ref{lem:RR-convolution}), so it is enough to check that they coincide on the generators of $\Haff$. The case of $v$ is obvious (see~\cite[\S 3.3]{EG}), and the case of $\theta_x$ follows from Proposition~\ref{prop:tau-O-smooth}. It remains to consider the case of $T_s$; in fact it will be simpler (but equivalent) to prove that 
\begin{equation}
\label{eqn:RR-Haff-s}
\uRR_{\wcalN,\wcalN}(1+T_s)=\mathscr{L}_{\mathrm{l}}(1+T_s) = g(\alpha)^{-1} \cdot \widetilde{\mathscr{G}}(\alpha) \cdot (t_s+1).
\end{equation}

By Remark~\ref{rmk:injectivity-faithful} and Proposition~\ref{prop:criterion-thm}, $\coH_\bullet^{G \times \Gm}(\wcalN)$ is faithful as a module over $\coH_\bullet^{G \times \Gm}(Z)$. Therefore, the same is true for the completions, and to prove~\eqref{eqn:RR-Haff-s} it is enough to prove that both sides act similarly on $\coHc_\bullet^{G \times \Gm}(\wcalN)$. However, by Lemma~\ref{lem:apsh-module-geom} and~\eqref{eqn:comm-relation-Haff}, for $x \in \bbX$ we have $(1+T_s) \cdot (\theta_x \cdot 1) = \bigl( (\theta_x - \theta_{sx}) \cdot \mathscr{G}(\alpha) \bigr) \cdot 1$. By Lemma~\ref{lem:RR-action}, this implies that in $\overline{\calM}^{\mathrm{asph}}_{\mathrm{l}}$ we have
\[
\uRR_{\wcalN,\wcalN}(1+T_s) \cdot (\exp(\dot{x}) \cdot 1) = \bigl( (\exp(\dot{x})-\exp(s\dot{x})) \cdot \widetilde{\mathscr{G}}(\alpha) \bigr) \cdot 1.
\]
Using~\eqref{eqn:comm-relation-gHaff}, this coincides with the action of $g(\alpha)^{-1} \cdot \widetilde{\mathscr{G}}(\alpha) \cdot (t_s+1)$. Since the elements of the form $r^n \exp(\dot{x}) \cdot 1$ form a topological basis of $\coHc_\bullet^{G \times \Gm}(\wcalN)$, we deduce the equality in~\eqref{eqn:RR-Haff-s}.

\eqref{it:RR-Haff-r}
The proof is similar to the proof of~\eqref{it:RR-Haff-l}, using the \emph{right} action on $\coHc_\bullet^{G \times \Gm}(\wfrakg)$, and using the fact that
\[
\mathrm{Td}_{\calB}^{G \times \Gm} = \prod_{\alpha \in R^+} \frac{\dot{\alpha}}{1 - \exp(-\dot{\alpha})} \qquad \text{in} \qquad \coHc_{G \times \Gm}^\bullet(\wfrakg) = \widehat{\calO(\frakt)[r]}
\]
(as follows from~\cite[\S 3.3]{EG}, since the tangent bundle on $\calB$ has a filtration with associated graded the sum of the line bundles $\calO_{\calB}(\alpha)$ for $\alpha \in R^+$).
\end{proof}

\begin{remark}
In~\cite[\S 0.3]{LuAff}, Lusztig explains that his morphism $\mathscr{L}_{\mathrm{r}}$ ``is of the same nature as the Chern character from $\Kth$-theory to homology.'' Proposition~\ref{prop:RR-Haff} is a concrete justification of this claim.
\end{remark}

\subsection{Commutative diagram for affine Hecke algebras}

Finally we can consider the diagram of Theorem~\ref{thm:LKDFourier} in the geometric setting of \S\S\ref{ss:geom-Haff}--\ref{ss:geom-gHaff}:
\begin{equation}
\label{eqn:diagram-Haff}
\vcenter{
\xymatrix@C=3cm{
\Kth^{G \times \Gm}(Z) \ar[r]^-{\Koszul_{\wcalN, \wcalN}} \ar[d]_-{\underline{\mathrm{RR}}_{\wcalN, \wcalN}} & \Kth^{G \times \Gm}(\calZ) \ar[r]^-{\bfi_{\wfrakg, \wfrakg} \circ \dual_{\wfrakg, \wfrakg}} & \Kth^{G \times \Gm}(\calZ) \ar[d]^-{\overline{\mathrm{RR}}_{\wfrakg, \wfrakg}} \\
\coHc^{G \times \Gm}_\bullet(Z) \ar[rr]^-{\Fourier_{\wcalN,\wcalN}} & & \coHc^{G \times \Gm}_\bullet(\calZ).
}
}
\end{equation}
Note that Proposition \ref{prop:criterion-thm} ensures that the assumption of Theorem~\ref{thm:LKDFourier} is satisfied in this case, since $\calZ$ is paved by affine spaces, and that the results of~\S\ref{ss:convolution}--\ref{ss:RR-convolution} ensure that all the maps in this diagram are unital algebra morphisms. Using Proposition~\ref{prop:RR-Haff} and the results of~\cite{EM} and~\cite{MR3} we can describe explicitly all the maps in this diagram, and hence illustrate the content of Theorem~\ref{thm:LKDFourier} in this particular situation.

The morphism $\Koszul_{\wcalN,\wcalN}$ was studied in~\cite[\S 5.3]{MR3}. In particular,~\cite[Theorem~5.4]{MR3} describes this automorphism algebraically, and shows that it is closely related to the \emph{Iwahori--Matsumoto} involution of $\calH_\aff$.
Using the identifications~\eqref{eqn:Haff-K(Z)} and~\eqref{eqn:Haff-K(calZ)}, we have
\[
\Koszul_{\wcalN,\wcalN}(T_s)= \theta_\rho(-v^2 T_s^{-1}) \theta_{-\rho}, \qquad \Koszul_{\wcalN,\wcalN}(\theta_x)=\theta_{-x}, \qquad \Koszul_{\wcalN,\wcalN}(v)=-v
\]
for $s \in S$ a simple root and $x \in \bbX$.\footnote{As noted in Footnote~\ref{fn:MR3}, the conventions in the definition of $\frakK_{\wcalN,\wcalN}$ used in the present paper differ slightly from the conventions used in~\cite{MR3}. Our identification of $\Kth^{G \times \Gm}(\calZ)$ is also slightly different, see~\cite[Comments at the end of~\S5.2]{MR3}. This explains the differences with the formulas in~\cite{MR3}.}

Concerning the map $\dual_{\wfrakg,\wfrakg}$, one can check that, with the 
identification~\eqref{eqn:Haff-K(calZ)}, it satisfies
\[
\dual_{\wfrakg,\wfrakg}(T_s)=T_s^{-1}, \qquad \dual_{\wfrakg,\wfrakg}(\theta_x)=\theta_{-x}, \qquad \dual_{\wfrakg,\wfrakg}(v)=v^{-1}.
\]
(See~\cite[Lemma~9.7]{LUSBas} for a similar computation, with different conventions.) 
Finally, the morphism 
$\bfi_{\wfrakg,\wfrakg}$ 
is the same as the involution $\iota$ of~\cite[\S 5.3]{MR3}; it satisfies
\[
\bfi_{\wfrakg,\wfrakg}(T_s)=T_s, \qquad \bfi_{\wfrakg,\wfrakg}(\theta_x)=\theta_{x}, \qquad \bfi_{\wfrakg,\wfrakg}(v)=-v.
\]


On the Borel--Moore homology side,
the map $\Fourier_{\wcalN,\wcalN}$ was studied in~\cite{EM}. In 
that paper it was shown to be closely related to the Iwahori--Matsumoto involution of $\gHaff$; more precisely it satisfies
\[
\Fourier_{\wcalN,\wcalN} (t_w) = (-1)^{\ell(w)} t_w, \qquad \Fourier_{\wcalN,\wcalN}(\phi)=\phi, \qquad \Fourier_{\wcalN,\wcalN}(r)=-r
\]
for $w \in W$ and $\phi \in \calO(\frakt)$.

Using these formulas one can check the commutativity of~\eqref{eqn:diagram-Haff} by hand. For instance, for the element $1+T_s$, the commutativity of the diagram amounts to the following equality in $\gHaff$:
\begin{multline*}
\frac{\exp(\dot{\alpha}-2r)-1}{\dot{\alpha}-2r}\frac{\dot{\alpha}}{\exp(\dot{\alpha})-1}(-t_s+1) = \\
1 - \exp(-\dot{\rho}-2r) e_\calB \Bigl( (t_s+1) \frac{\exp(\dot{\alpha}+2r)-1}{\dot{\alpha}+2r}\frac{\dot{\alpha}}{\exp(\dot{\alpha})-1} - 1 \Bigr) e_\calB^{-1} \exp(\dot{\rho}).
\end{multline*}

\section{Compatibility of the $\Fourier$ isomorphism with inclusions}
\label{sec:compatibility-Fourier}

In this section and the next one we will consider compatibility properties of our morphisms in two geometric situations. We use the same setting and notation as in \S\S\ref{ss:equiv}--\ref{ss:statement}.

\subsection{Further notation}
\label{ss:settings}

First we will consider a situation which we will refer to as Setting (A): here we are given an additional subbundle $F_2' \subset E$ containing $F_2$ and such that $F_2$, $F_2'$ and $E$ can be locally simultaneously trivialized. Then we have ``restriction with supports'' morphisms associated with the embedding $F_2 \hookrightarrow F_2'$, both in $\Kth$-homology and in Borel--Moore homology, which we denote as follows:
\begin{align*}
\resK^{F_1, F_2'}_{F_1,F_2} \colon \Kth^{G \times \Gm} (F_1 \times_V F_2') \ & \to \ \Kth^{G \times \Gm}(F_1 \times_V F_2); \\
\resH^{F_1, F_2'}_{F_1,F_2} \colon \coH^{G \times \Gm}_\bullet (F_1 \times_V F_2') \ & \to \ \coH_{\bullet-2\mathrm{rk}(F_2')+2\mathrm{rk}(F_2)}^{G \times \Gm}(F_1 \times_V F_2).
\end{align*}
We also have proper direct image morphisms associated with the embedding $(F_2')^\bot \hookrightarrow F_2^\bot$, again both in $\Kth$-homology and in Borel--Moore homology, which we denote as follows:
\begin{align*}
\pdiK^{F_1^\bot, (F_2')^\bot}_{F_1^\bot, F_2^\bot} \colon \Kth^{G \times \Gm}(F_1^\bot \times_{V^*} (F_2')^\bot) \ &\to \ \Kth^{G \times \Gm}(F_1^\bot \times_{V^*} F_2^\bot); \\
\pdiH^{F_1^\bot, (F_2')^\bot}_{F_1^\bot, F_2^\bot} \colon \coH^{G \times \Gm}_\bullet(F_1^\bot \times_{V^*} (F_2')^\bot) \ &\to \ \coH_\bullet^{G \times \Gm}(F_1^\bot \times_{V^*} F_2^\bot).
\end{align*}


Secondly,
we will consider a situation which we will refer to as Setting (B): here we are given an additional subbundle $F_1' \subset E$ containing $F_1$ and such that $F_1$, $F_1'$ and $E$ can be locally simultaneously trivialized. Then we have proper direct image morphisms associated with the embedding $F_1 \hookrightarrow F_1'$, both in $\Kth$-homology and in Borel--Moore homology, which we denote as follows:
\begin{align*}
\pdiK^{F_1,F_2}_{F_1',F_2} \colon \Kth^{G \times \Gm}(F_1 \times_V F_2) \ & \to \ \Kth^{G \times \Gm}(F_1' \times_V F_2); \\
\pdiH^{F_1,F_2}_{F_1',F_2} \colon \coH_\bullet^{G \times \Gm}(F_1 \times_V F_2) \ & \to \ \coH_\bullet^{G \times \Gm}(F_1' \times_V F_2).
\end{align*}
We also have ``restriction with supports'' morphisms associated with the embedding $(F_1')^\bot \hookrightarrow F_1^\bot$, again both in $\Kth$-homology and in Borel--Moore homology, which we denote as follows:
\begin{align*}
\resK^{F_1^\bot, F_2^\bot}_{(F_1')^\bot, F_2^\bot} \colon \Kth^{G \times \Gm}(F_1^\bot \times_{V^*} F_2^\bot) \ & \to \ \Kth^{G \times \Gm}((F_1')^\bot \times_{V^*} F_2^\bot); \\
\resH^{F_1^\bot, F_2^\bot}_{(F_1')^\bot, F_2^\bot} \colon \coH_\bullet^{G \times \Gm}(F_1^\bot \times_{V^*} F_2^\bot) \ &\to \ \coH_{\bullet-2\mathrm{rk}(F_1^\bot)+2\mathrm{rk}((F_1')^\bot)}^{G \times \Gm}((F_1')^\bot \times_{V^*} F_2^\bot).
\end{align*}


\subsection{Convolution algebras and inclusion of subbundles}
\label{ss:subbundle}

Consider Setting (A) of \S \ref{ss:settings}. Then we have natural morphisms induced by adjunction
\[
\mathrm{adj}_{F_2,F_2'}^* \colon \uC_{F_2'} \to \uC_{F_2} \quad \text{ and } \quad \mathrm{adj}_{(F_2')^\bot,F_2^\bot}^! \colon \uC_{(F_2')^\bot} \to \uC_{F_2^\bot}[2\mathrm{rk}(F_2^\bot)-2\mathrm{rk}((F_2')^\bot)].
\]
The proof of the following result being rather technical (and the details not needed), it is postponed to the appendix (see \S\S\ref{ss:proof-inclusion-Ginzburg-1}--\ref{ss:proof-inclusion-Ginzburg-2}).

\begin{prop}
\label{prop:inclusion-Ginzburg-A}
\begin{enumerate}
\item
The following diagram commutes:
\[
\xymatrix@C=2.5cm{
\coH^{G \times \Gm}_\bullet(F_1 \times_V F_2') \ar[r]^-{\can_{F_1, F_2'}}_-{\sim} \ar[d]_-{\resH^{F_1,F_2'}_{F_1,F_2}} & \Ext^{2\dim(F_2')-\bullet}_{\calD^{G \times \Gm}_{\mathrm{const}}(V)}(p_! \uC_{F_1}, p_! \uC_{F_2'}) \ar[d]^-{(p_! \mathrm{adj}_{F_2,F_2'}^*) \circ (\cdot)} \\
\coH^{G \times \Gm}_{\bullet-2\mathrm{rk}(F_2')+2\mathrm{rk}(F_2)}(F_1 \times_V F_2) \ar[r]^-{\can_{F_1,F_2}}_-{\sim} & \Ext^{2\dim(F_2')-\bullet}_{\calD^{G \times \Gm}_{\mathrm{const}}(V)}(p_! \uC_{F_1}, p_! \uC_{F_2}).
}
\]
\item
The following diagram commutes:
\[
\xymatrix@C=2.5cm{
\coH^{G \times \Gm}_\bullet(F_1^\bot \times_{V^*} (F_2')^\bot) \ar[r]^-{\can_{F_1^\bot, (F_2')^\bot}}_-{\sim} \ar[d]_-{\pdiH^{F_1^\bot, (F_2')^\bot}_{F_1^\bot, F_2^\bot}} & \Ext^{2\dim((F_2')^\bot)-\bullet}_{\calD^{G \times \Gm}_{\mathrm{const}}(V^*)}({\check p}_! \uC_{F_1^\bot}, {\check p}_! \uC_{(F_2')^\bot}) \ar[d]^-{({\check p}_! \mathrm{adj}_{(F_2')^\bot,F_2^\bot}^!) \circ (\cdot)} \\
\coH^{G \times \Gm}_\bullet(F_1^\bot \times_{V^*} F_2^\bot) \ar[r]^-{\can_{F_1^\bot, F_2^\bot}}_-{\sim} & \Ext^{2\dim(F_2^\bot)-\bullet}_{\calD^{G \times \Gm}_{\mathrm{const}}(V^*)}({\check p}_! \uC_{F_1^\bot}, {\check p}_! \uC_{F_2^\bot}).
}
\]
\end{enumerate}

\end{prop}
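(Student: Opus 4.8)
The plan is to unwind the explicit construction of $\can_{F_1,F_2}$ recalled in \S\ref{ss:equiv} and to check compatibility with $\mathrm{adj}^*_{F_2,F_2'}$ (resp.\ $\mathrm{adj}^!_{(F_2')^\bot,F_2^\bot}$) step by step. Recall that $\can_{F_1,F_2}$ is the composition of four isomorphisms: first, the Ginzburg--Lusztig isomorphism \eqref{eqn:morphisms-cohomology} identifying $\Ext^\bullet(p_!\uC_{F_1},p_!\uC_{F_2})$ with $\coH^\bullet_{G\times\Gm}(E\times_V E, j^!(\uD_{F_1}\boxtimes\uC_{F_2}))$; second, the rewriting using $\uD_{F_1}=a_*(\uD_{F_1}\boxtimes\uC_{F_2})$ and the base-change isomorphism for the cartesian square with corners $F_1\times_V F_2$, $E\times_V E$, $F_1\times F_2$, $E\times E$, giving $\coH^\bullet_{G\times\Gm}(F_1\times_V F_2, k^!(\uD_{F_1}\boxtimes\uC_{F_2}))$; third, the smoothness isomorphism $\uC_{F_2}\cong\uD_{F_2}[-2\dim F_2]$; fourth, the identification $k^!(\uD_{F_1}\boxtimes\uD_{F_2})\cong\uD_{F_1\times_V F_2}$. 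The claim is that each of these four isomorphisms is compatible (up to the evident shifts) with the maps induced by $\mathrm{adj}^*_{F_2,F_2'}$ on the $\Ext$ side and by $\resH^{F_1,F_2'}_{F_1,F_2}$ on the homology side.

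First I would treat the $\Ext$-to-cohomology step. The map $(p_!\mathrm{adj}^*_{F_2,F_2'})\circ(\cdot)$ corresponds, under the Ginzburg--Lusztig identification of $R\sheafHom_\C(p_!A_1,p_!A_2)$ with $\mu_* j^!(\mathbb{D}_E(A_1)\boxtimes A_2)$, to the morphism $\mu_* j^!(\mathbb{D}_E(\uC_{F_1})\boxtimes \mathrm{adj}^*_{F_2,F_2'})$; this is exactly where one uses the \emph{bifunctoriality} of the isomorphism of \cite[(8.6.4)]{CG}. On cohomology this becomes the map induced by $j^!(\uD_{F_1}\boxtimes\mathrm{adj}^*_{F_2,F_2'})$, i.e.\ $j^!(\uD_{F_1}\boxtimes(\uC_{F_2'}\to\uC_{F_2}))$. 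Next, passing through the base-change step, the morphism $\uC_{F_2'}\to\uC_{F_2}$ (both viewed as sheaves on $E$ via pushforward) gives rise on $F_1\times_V F_2'$ to the comparison between $k_{F_2'}^!(\uD_{F_1}\boxtimes\uC_{F_2'})$ and the pushforward-then-restriction to $F_1\times_V F_2$, and after the smoothness identifications one is left precisely with the adjunction/Gysin morphism $\uD_{F_1\times_V F_2'}\to (\text{pushforward of }\uD_{F_1\times_V F_2})[\text{shift}]$ defining $\resH^{F_1,F_2'}_{F_1,F_2}$. Here I would invoke the concrete description of $\resH$ recalled in \S\ref{ss:restriction-with-supports} (the refined Gysin map attached to the regular embedding $F_1\times_V F_2\hookrightarrow F_1\times_V F_2'$, using that $F_2\hookrightarrow F_2'$ is a subbundle) and simply match it with the adjunction morphism $i_!i^!\to\mathrm{id}$ (equivalently $\mathrm{id}\to i_*i^*$) under Verdier duality. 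For part (2), the argument is formally dual: $({\check p}_!\mathrm{adj}^!_{(F_2')^\bot,F_2^\bot})\circ(\cdot)$ becomes, via the same bifunctorial isomorphism, the map induced by $\mathrm{adj}^!_{(F_2')^\bot,F_2^\bot}\colon\uC_{(F_2')^\bot}\to\uC_{F_2^\bot}[2\mathrm{rk}(F_2^\bot)-2\mathrm{rk}((F_2')^\bot)]$ in the second slot, and under the same chain of identifications this matches the proper-pushforward morphism $\pdiH$ for the closed (in fact proper, being a bundle inclusion over $X$) embedding $(F_2')^\bot\hookrightarrow F_2^\bot$; one uses that for a closed inclusion $i$ the adjunction $i_*i^!\uD\to\uD$ induces exactly $\pdiH_i$ on Borel--Moore homology.

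The main obstacle I expect is bookkeeping of the cohomological shifts and of the various Verdier-duality conventions so that the square commutes \emph{on the nose} and not merely up to sign. Specifically, the smoothness isomorphisms $\uC_{F_2}\cong\uD_{F_2}[-2\dim F_2]$, $\uC_{F_2'}\cong\uD_{F_2'}[-2\dim F_2']$, and $\uC_{(F_2')^\bot}\cong\uD_{(F_2')^\bot}[-2\dim(F_2')^\bot]$ interact with $\mathrm{adj}^*$ and $\mathrm{adj}^!$ in a way that produces the precise degree shifts $-2\mathrm{rk}(F_2')+2\mathrm{rk}(F_2)$ appearing in $\resH^{F_1,F_2'}_{F_1,F_2}$; one must check that the Gysin-type map $\uC_{F_2'}\to\uC_{F_2}$ in the derived category agrees, after applying $\bD$, with $\mathrm{adj}^!_{(F_2')^\bot,F_2^\bot}$ under Fourier--Sato (this is essentially Lemma~\ref{lem:fourier-F} applied in families, or rather its functoriality in $F$) — but since this Proposition is stated only for the $\can$ maps, not for the Fourier transform, I would avoid Fourier--Sato entirely and instead verify the shift compatibility directly from the definition of refined Gysin maps for subbundles, where the normal bundle is $(F_2'/F_2)$ pulled back, of rank $\mathrm{rk}(F_2')-\mathrm{rk}(F_2)$. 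Once the shifts are reconciled, commutativity is just the functoriality of base change and of the Ginzburg isomorphism, so no genuinely new idea is needed. Since the details are admittedly routine but lengthy, I would, as the authors indicate, relegate the verification to the appendix; here I only record that each of the four constituent isomorphisms of $\can_{F_1,F_2}$ is natural in the relevant sheaf arguments, and that $\resH$ (resp.\ $\pdiH$) is by construction the map induced by the adjunction morphism between the corresponding dualizing complexes.
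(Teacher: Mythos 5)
Your proposal follows essentially the same route as the paper's proof in the appendix: use bifunctoriality of the isomorphism \eqref{eqn:morphisms-cohomology} in the second argument, unwind the remaining identifications in $\can$, and recognize the resulting map as the adjunction $\mathrm{id}\to d_*d^*$ (resp.\ $d_!d^!\to\mathrm{id}$) applied to dualizing complexes, which is exactly the sheaf-theoretic definition of $\resH^{F_1,F_2'}_{F_1,F_2}$ (resp.\ of $\pdiH^{F_1^\bot,(F_2')^\bot}_{F_1^\bot,F_2^\bot}$). The "routine" verifications you defer are precisely what the paper's appendix supplies (Lemmas \ref{lem:ahr}, \ref{lem:ahr2}, \ref{lem:BC-adjunction} and \ref{lem:adjunction-!-1}), namely the compatibility of the base-change isomorphisms with the adjunction morphisms, so the approaches coincide.
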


Consider now Setting (B) of \S\ref{ss:settings}. We have natural morphisms induced by adjunction
\[
\mathrm{adj}_{F_1,F_1'}^* \colon \uC_{F_1'} \to \uC_{F_1} \quad \text{ and } \quad \mathrm{adj}_{(F_1')^\bot,F_1^\bot}^! \colon \uC_{(F_1')^\bot} \to \uC_{F_1^\bot}[2\mathrm{rk}(F_1^\bot)-2\mathrm{rk}((F_1')^\bot)].
\]
The proof of the following proposition is similar to that of Proposition \ref{prop:inclusion-Ginzburg-A}, and is therefore omitted.

\begin{prop}
\label{prop:inclusion-Ginzburg-B}
\begin{enumerate}
\item
The following diagram commutes:
\[
\xymatrix@C=2.5cm{
\coH^{G \times \Gm}_\bullet(F_1 \times_V F_2) \ar[r]^-{\can_{F_1, F_2}}_-{\sim} \ar[d]_-{\pdiH^{F_1, F_2}_{F_1', F_2}} & \Ext^{2\dim(F_2)-\bullet}_{\calD^{G \times \Gm}_{\mathrm{const}}(V)}(p_! \uC_{F_1}, p_! \uC_{F_2}) \ar[d]^-{(\cdot) \circ (p_! \mathrm{adj}_{F_1,F_1'}^*)} \\
\coH^{G \times \Gm}_\bullet(F_1' \times_V F_2) \ar[r]^-{\can_{F_1',F_2}}_-{\sim} & \Ext^{2\dim(F_2)-\bullet}_{\calD^{G \times \Gm}_{\mathrm{const}}(V)}(p_! \uC_{F_1'}, p_! \uC_{F_2}).
}
\]
\item
The following diagram commutes:
{\small
\[
\xymatrix@C=2cm{
\coH^{G \times \Gm}_\bullet(F_1^\bot \times_{V^*} F_2^\bot) \ar[r]^-{\can_{F_1^\bot, F_2^\bot}}_-{\sim} \ar[d]_-{\resH^{F_1^\bot, F_2^\bot}_{(F_1')^\bot, F_2^\bot}} & \Ext^{2\dim(F_2^\bot)-\bullet}_{\calD^{G \times \Gm}_{\mathrm{const}}(V^*)}({\check p}_! \uC_{F_1^\bot}, {\check p}_! \uC_{F_2^\bot}) \ar[d]^-{(\cdot) \circ ({\check p}_! \mathrm{adj}_{(F_1')^\bot,F_1^\bot}^!)} \\
\coH^{G \times \Gm}_{\bullet-2\mathrm{rk}(F_1^\bot)+2\mathrm{rk}((F_1')^\bot)}((F_1')^\bot \times_{V^*} F_2^\bot) \ar[r]^-{\can_{(F_1')^\bot, F_2^\bot}}_-{\sim} & \Ext^{2\dim(F_2^\bot)+2\mathrm{rk}(F_1^\bot)-2\mathrm{rk}((F_1')^\bot)-\bullet}_{\calD^{G \times \Gm}_{\mathrm{const}}(V^*)}({\check p}_! \uC_{(F_1')^\bot}, {\check p}_! \uC_{F_2^\bot}).
}
\]
}
\end{enumerate}

\end{prop}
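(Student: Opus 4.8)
The plan is to follow, step by step, the proof of Proposition~\ref{prop:inclusion-Ginzburg-A}: I will trace the homological morphism on the left-hand side of each square through the successive canonical isomorphisms used to construct the maps $\can$ in~\S\ref{ss:equiv}, and check at each stage that it matches the morphism of $\Ext$-groups on the right-hand side induced by the relevant adjunction morphism. As always (see Footnote~\ref{fn:equiv-homology}), the $G \times \Gm$-equivariant statements will follow from their non-equivariant analogues obtained by replacing $X$, $V$, $E$ by approximation spaces, so it suffices to treat the non-equivariant case.

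For part~(1), write $\iota \colon F_1 \hookrightarrow F_1'$ for the closed embedding and $\lambda \colon F_1 \times_V F_2 \hookrightarrow F_1' \times_V F_2$ for the induced proper closed immersion, so that $\pdiH^{F_1,F_2}_{F_1',F_2} = \pdiH_\lambda$. Applying $\mathbb{D}_E$ to $\mathrm{adj}_{F_1,F_1'}^* \colon \uC_{F_1'} \to \uC_{F_1}$ produces the natural ``adjunction'' morphism $\uD_{F_1} \to \uD_{F_1'}$ attached to $\iota$. The verification then proceeds in three steps, parallel to the three stages of the construction of $\can$ in~\S\ref{ss:equiv}. First, the bifunctorial isomorphism of~\cite[Equation~(8.6.4)]{CG}, and hence~\eqref{eqn:morphisms-cohomology}, intertwines, for $A_1 = \uC_{F_1}$ or $\uC_{F_1'}$ and $A_2 = \uC_{F_2}$, the map induced by $\uD_{F_1} \to \uD_{F_1'}$ with precomposition by $p_! \mathrm{adj}_{F_1,F_1'}^*$, and this remains so after taking equivariant cohomology. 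Secondly, the base-change identifications $j^! a_*(\uD_{F_1} \boxtimes \uC_{F_2}) \cong b_* k^!(\uD_{F_1} \boxtimes \uC_{F_2})$ (and their analogues for $F_1'$) are natural in $F_1$; since the relevant squares of closed immersions over $E \times E$ are cartesian, the morphism $\uD_{F_1} \to \uD_{F_1'}$ transports under these identifications to the natural morphism $\lambda_* \uD_{F_1 \times_V F_2} \to \uD_{F_1' \times_V F_2}$, using the canonical identifications $\uC_{F_2} \cong \uD_{F_2}[-2\dim(F_2)]$, $\uD_{F_1}\boxtimes\uD_{F_2}\cong\uD_{F_1\times F_2}$ and $k^! \uD_{F_1\times F_2} \cong \uD_{F_1 \times_V F_2}$ (all of which are canonical and compatible with $\iota$). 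Finally, passing to equivariant cohomology, the morphism $\lambda_* \uD_{F_1 \times_V F_2} \to \uD_{F_1' \times_V F_2}$ induces by definition the proper direct image $\pdiH_\lambda$ on Borel--Moore homology, which gives the commutativity of the square in part~(1).

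For part~(2) one argues in the same way after passing to $E^*$, ${\check p}$ and the subbundles $(F_1')^\bot \subset F_1^\bot$. Here $\mathrm{adj}_{(F_1')^\bot,F_1^\bot}^!$ is the $!$-adjunction morphism for ${\check\iota} \colon (F_1')^\bot \hookrightarrow F_1^\bot$; applying $\mathbb{D}_{E^*}$ turns it into the natural morphism $\uD_{F_1^\bot}[-2c] \to \uD_{(F_1')^\bot}$ with $c = \mathrm{rk}(F_1^\bot) - \mathrm{rk}((F_1')^\bot)$, and running it through the same two compatibility steps as above produces, on Borel--Moore homology, the Gysin (``specialization'') morphism defining $\resH^{F_1^\bot, F_2^\bot}_{(F_1')^\bot, F_2^\bot}$ as recalled in~\S\ref{ss:restriction-with-supports}. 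The one point requiring genuine work — and the place where this part is heavier than part~(1) — is to check that this sheaf-theoretic adjunction morphism really computes the restriction-with-supports map; this is the analogue of the $\resH$-compatibility verification in the proof of Proposition~\ref{prop:inclusion-Ginzburg-A}, and it is here that the hypothesis that $F_1$, $F_1'$ and $E$ be locally simultaneously trivializable is used, to reduce the compatibility of $\resH$ with $k^!$ to a computation in a local product situation. Granting that verification, the remaining steps go through verbatim as in part~(1).
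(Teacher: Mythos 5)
Your strategy is the paper's own: Proposition~\ref{prop:inclusion-Ginzburg-B} is proved by mirroring the appendix proof of Proposition~\ref{prop:inclusion-Ginzburg-A} (functoriality of~\eqref{eqn:morphisms-cohomology}, compatibility of the base-change identifications, and identification of the resulting morphism of dualizing complexes with $\pdiH$, resp.~$\resH$), and your tracing of $\mathbb{D}_E(\mathrm{adj}^*_{F_1,F_1'})$, resp.~$\mathbb{D}_{E^*}(\mathrm{adj}^!_{(F_1')^\bot,F_1^\bot})$, through the first slot of the construction of $\can$ is exactly the right computation; part~(1) as you describe it is fine.

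The one point to repair is your account of part~(2). The identification of the dualized adjunction with $\resH^{F_1^\bot,F_2^\bot}_{(F_1')^\bot,F_2^\bot}$ requires neither the local simultaneous trivializability hypothesis nor any local computation: in this paper $\resH$ is \emph{defined} sheaf-theoretically (\S\ref{ss:restriction-with-supports}) by the unit $\mathrm{id} \to f_*f^*$ for the smooth closed embedding $f \colon (F_1')^\bot \times F_2^\bot \hookrightarrow F_1^\bot \times F_2^\bot$ applied to the dualizing complex, and Verdier duality turns $\mathrm{adj}^!_{(F_1')^\bot,F_1^\bot}$ into precisely this unit (via the smoothness identifications $\uD_{F_1^\bot} \cong \uC_{F_1^\bot}[2\dim(F_1^\bot)]$, etc.), so the comparison is a purely formal diagram chase using Lemmas~\ref{lem:ahr}, \ref{lem:ahr2}, \ref{lem:adjunction-!-1} and \ref{lem:adjunction-!-2}, parallel to \S\S\ref{ss:proof-inclusion-Ginzburg-1}--\ref{ss:proof-inclusion-Ginzburg-2}. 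Moreover the shortcut you propose instead is delicate as stated: an equality of morphisms of \emph{complexes} cannot in general be checked locally, and the paper only argues locally (in Lemma~\ref{lem:fourier-adj}) after first reducing to a statement about honest sheaves; the trivializability hypothesis is used there and to ensure that the orthogonals form subbundles, not in the present proposition. With that step replaced by the formal verification just indicated, your argument coincides with the intended one.
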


\subsection{Fourier transform and inclusion of subbundles}

In the next lemma $G$ can be replaced by any linear algebraic group, $X$ by any smooth $G$-variety, and $E$ by any $G$-equivariant vector bundle over $X$.
We consider subbundles $F \subset F' \subset E$ which can be locally simultaneously trivialized. (In practice, $E$ and $X$ will be as above, and we will take $F=F_i$, $F'=F'_i$ for $i \in \{1,2\}$.)
Adjunction induces morphisms
\[
\mathrm{adj}^*_{F,F'} \colon \uC_{F'} \to \uC_F \quad \text{and} \quad \mathrm{adj}^!_{(F')^\bot,F^\bot} \colon \uC_{(F')^\bot} \to \uC_{F^\bot}[2 \mathrm{rk}(F^\bot)-2\mathrm{rk}((F')^\bot)].
\]

\begin{lem}
\label{lem:fourier-adj}
The following diagram is commutative:
\[
\xymatrix@C=2.5cm{
\calF_E(\uC_{F'}) \ar[d]^-{\wr} \ar[r]^-{\calF_E ( \mathrm{adj}^*_{F,F'} )} & \calF_E(\uC_F) \ar[d]_-{\wr} \\
\uC_{(F')^\bot}[-2\mathrm{rk}(F')] \ar[r]^-{\mathrm{adj}^!_{(F')^\bot,F^\bot}} & \uC_{F^\bot}[-2\mathrm{rk}(F)],
}
\]
where vertical isomorphisms are provided by Lemma~{\rm \ref{lem:fourier-F}}.
\end{lem}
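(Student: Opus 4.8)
The plan is to reduce the statement to the verification of a single normalization constant, and then to compute that constant. Since the equivalence~\eqref{eqn:fourier2}, being inverse image along an isomorphism, is exact and commutes with the formation of constant sheaves and with adjunction morphisms, it suffices (exactly as in the proof of Lemma~\ref{lem:fourier-F}) to prove the analogous statement with $\calF_E$ replaced by $\mathfrak{F}_E$. Set $c:=\rk(F')-\rk(F)=\rk(F^\bot)-\rk((F')^\bot)$ and let $k\colon (F')^\bot \hookrightarrow F^\bot$ denote the inclusion. Since $k^!\uC_{F^\bot}\cong\uC_{(F')^\bot}[-2c]$, adjunction identifies
\[
\Hom_{\calD^{G \times \Gm}_{\mathrm{const}}(E^*)}\bigl(\uC_{(F')^\bot}[-2\rk(F')],\, \uC_{F^\bot}[-2\rk(F)]\bigr) \;\cong\; \Hom_{\calD^{G \times \Gm}_{\mathrm{const}}((F')^\bot)}\bigl(\uC_{(F')^\bot},\uC_{(F')^\bot}\bigr),
\]
which is one-dimensional once we assume $(F')^\bot$ connected (treating connected components separately), and has $\mathrm{adj}^!_{(F')^\bot,F^\bot}$ as a basis vector. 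Hence, after the identifications of Lemma~\ref{lem:fourier-F}, $\mathfrak{F}_E(\mathrm{adj}^*_{F,F'})$ agrees with $\mathrm{adj}^!_{(F')^\bot,F^\bot}$ up to a scalar $\lambda\in\C$, and the whole problem is to show $\lambda=1$.

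Next I would exploit that all the data entering $\lambda$ — the transform $\mathfrak{F}_E$, the isomorphisms of Lemma~\ref{lem:fourier-F}, and the morphisms $\mathrm{adj}^*$, $\mathrm{adj}^!$ — are compatible with pullback of the base variety along a morphism covered by a morphism of vector bundles, and with external products, $\mathfrak{F}_{E_1\times_X E_2}\cong\mathfrak{F}_{E_1}\boxtimes_X\mathfrak{F}_{E_2}$. Pulling back to the fibre of $E$ over a point of $X$ (harmless because $\coH^0((F')^\bot)$ is unchanged) reduces to $X=\pt$, so that $E$ is a vector space $V$ and $F\subset F'\subset V$ are linear subspaces. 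Choosing a decomposition $V=V_0\oplus U\oplus U'$ with $F=V_0$ and $F'=V_0\oplus U$, multiplicativity of $\lambda$ under $\boxtimes$ (together with the trivial cases $F=F'$, where $\lambda=1$) reduces to the case $F=0\subset F'=E=U$, and one further application of $\boxtimes$ reduces to $\dim U=1$.

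In the remaining case $V$ is a line, $F=\{0\}$, $F'=V$, and $\mathfrak{F}_V=\check q_!q^*$ with $Q=\{(x,y)\in V\times V^\diamond\mid \mathrm{Re}\langle x,y\rangle\leq0\}$, $q(x,y)=x$, $\check q(x,y)=y$. By base change ($q^{-1}(\{0\})=\{0\}\times V^\diamond$) the morphism $\mathrm{adj}^*_{\{0\},V}\colon\uC_V\to\uC_{\{0\}}$ pulls back under $q$ to the $*$-restriction $\uC_Q\to\uC_{\{0\}\times V^\diamond}$; applying $\check q_!$ and using $\check q_!\uC_{\{0\}\times V^\diamond}=\uC_{V^\diamond}$ yields $\mathfrak{F}_V(\mathrm{adj}^*_{\{0\},V})$. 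On the other hand, the $F=E$ instance of (the proof of) Lemma~\ref{lem:fourier-F} identifies $\check q_!\uC_Q$ with $\uC_{\{0\}}[-2]$ via the $*$-restriction $\uC_Q\to\uC_{V\times\{0\}}$ (an isomorphism after $\check q_!$, because $\check q_!\uC_{Q\setminus(V\times\{0\})}=0$ by the vanishing $\coH_c^\bullet(\mathbb{R}_{\geq0};\C)=0$) followed by the standard trivialization $\coH_c^\bullet(V)\cong\C[-2]$. I would then unwind these two identifications — both governed by the same $\coH_c$-vanishing — to check directly that $\mathfrak{F}_V(\mathrm{adj}^*_{\{0\},V})$ becomes exactly $\mathrm{adj}^!_{\{0\},V^\diamond}$, whence $\lambda=1$; reversing the reductions of the previous paragraph then gives the lemma.

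The hard part is precisely this last explicit computation: it is elementary, but it hinges on a choice of orientation of $\coH_c^{2}(\C;\C)$ (equivalently, on the sign in the self-intersection formula for $k$), so that a priori the argument only yields $\lambda=\pm1$ and one must verify that the sign is $+1$. If one prefers to circumvent this bookkeeping altogether, the statement can instead be read off from the general compatibility of the Fourier--Sato transform with adjunction morphisms for closed inclusions, as recorded in~\cite{KS}.
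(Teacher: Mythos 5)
Your reduction of the lemma to the computation of a single scalar $\lambda$ (using the one-dimensionality of $\Hom(\uC_{(F')^\bot}[-2\rk(F')],\uC_{F^\bot}[-2\rk(F)])$, with $\mathrm{adj}^!_{(F')^\bot,F^\bot}$ as generator) is sound, and the localization to $X=\pt$ is in the same spirit as what the paper does. But your argument stops exactly where the content of the lemma lies: you never establish $\lambda=1$. The decisive step is announced (``I would then unwind these two identifications \dots'') rather than carried out, and you concede yourself that the unwinding only yields $\lambda=\pm 1$ until one fixes orientations and checks the sign. That sign check \emph{is} the statement being proved -- the danger is precisely that $\mathfrak{F}_E$ could send $\mathrm{adj}^*_{F,F'}$ to $-\mathrm{adj}^!_{(F')^\bot,F^\bot}$ -- so deferring it leaves a genuine gap. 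The fallback citation does not repair it: \cite{KS} records compatibilities of $(\cdot)^\wedge$ with the six operations and computes transforms of constant sheaves on cones, but it does not record the normalized compatibility with adjunction morphisms for inclusions of subbundles; that is exactly this lemma. Moreover, the auxiliary compatibilities your reduction relies on (compatibility of the isomorphism of Lemma~\ref{lem:fourier-F} with base change and, especially, with external products -- note that the incidence set $Q$ for $E_1\oplus E_2$ is \emph{not} the product of the $Q$'s of the factors) are themselves normalization statements of the same nature, so the passage to the rank-one case quietly hides further sign checks of the very kind you are trying to isolate.

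For comparison, the paper closes this gap without any orientation bookkeeping. It first shows that applying ${\check r}_!$ (the projection $E^\diamond\to X$) is injective on the relevant $\Hom$ space and sends $\mathrm{adj}^!_{(F')^\bot,F^\bot}$ to the identity of $\uC_X[-2\rk(E)]$; the claim thus becomes a statement about sheaves, which may be checked non-equivariantly and locally over $X$, hence for $X=\pt$. It then reduces to showing that the composite of the two restriction isomorphisms $\coH_c^{2\dim(E)}(F'\times(F')^\bot)\xleftarrow{\sim}\coH_c^{2\dim(E)}(Q)\xrightarrow{\sim}\coH_c^{2\dim(E)}(F\times F^\bot)$, after identifying both ends with $\C$, is the identity; and this is proved by observing that the orthogonal group $H$ of the real quadratic form $\mathrm{Re}(\langle\xi,x\rangle)$ on $E\times E^*$ preserves $Q$, acts on $\coH_c^{2\dim(E)}(Q)$ through $H/H^\circ$, and conjugates $F\times F^\bot$ to $F'\times(F')^\bot$ by an element of $H^\circ$. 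Some argument of this kind (or an honest, convention-tracking computation in your rank-one case) is what your proposal is missing.
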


\begin{proof}
It is equivalent to prove a similar isomorphism for $\mathfrak{F}_E$; for simplicity we still denote by $F^\bot, (F')^\bot$ the orthogonals viewed in $E^\diamond$, and by ${\check r} \colon E^\diamond \to X$ the projection. By the construction in the proof of Lemma~\ref{lem:fourier-F} we have natural isomorphisms 
\[
\mathfrak{F}_E(\uC_{F'}) \cong {\check q}_! \uC_{Q_{F'}} \qquad \text{and} \qquad \mathfrak{F}_E(\uC_F) \cong {\check q}_! \uC_{Q_F},
\]
where $Q_{F'}:=q^{-1}(F')$, $Q_F:=q^{-1}(F)$.
It follows from the definitions that the morphism $\calF_E ( \mathrm{adj}^*_{F,F'} )$ is the image under ${\check q}_!$ of the morphism $\uC_{Q_{F'}} \to \uC_{Q_F}$ induced by adjunction (for the inclusion $Q_F \hookrightarrow Q_{F'}$). Hence what we have to show is that the morphism $\varphi$ in the following diagram coincides with $\mathrm{adj}^!_{(F')^\bot,F^\bot}$,
where the upper arrow is induced by adjunction as above, and the vertical isomorphisms are as in the proof of Lemma \ref{lem:fourier-F}:
\[
\xymatrix@C=1.5cm{
{\check q}_! \uC_{Q_{F'}} \ar[r] \ar[d]_-{\wr} & {\check q}_! \uC_{Q_F} \ar[d]^-{\wr} \\
{\check q}_! \uC_{F' \times_X (F')^\bot} \ar[d]_-{\wr} & {\check q}_! \uC_{F \times_X F^\bot} \ar[d]^-{\wr} \\
\uC_{(F')^\bot}[-2\rk(F')] \ar[r]^-{\varphi} & \uC_{F^\bot}[-2\rk(F)].
}
\]
Now we have canonical isomorphisms
\[
{\check r}_! \bigl( \uC_{(F')^\bot}[-2\rk(F')] \bigr) \cong \uC_X[-2\rk(E)], \quad {\check r}_! \bigl( \uC_{F^\bot}[-2\rk(F)] \bigr) \cong \uC_X[-2\rk(E)],
\]
and
one can check that the functor ${\check r}_!$ induces an isomorphism
\begin{multline*}
\Hom_{\calD^{G\times \Gm}_{\mathrm{const}} (E^\diamond)} \bigl( \uC_{(F')^\bot}[-2\rk(F')] , \uC_{F^\bot}[-2\rk(F)] \bigr) \simto \\
\Hom_{\calD^{G\times \Gm}_{\mathrm{const}}(X)} \bigl( \uC_X[-2\rk(E)], \uC_X[-2\rk(E)] \bigr)
\end{multline*}
sending $\mathrm{adj}^!_{(F')^\bot,F^\bot}[-2\rk(F')]$ to the identity morphism of $\uC_X[-2\rk(E)]$. Hence it is enough to prove that ${\check r}_! \varphi[2\rk(E)]$ is the identity of $\uC_X$ (through the canonical isomorphisms above).
The latter statement is about sheaves (and not complexes), so that we can forget about equivariance and check the claim locally over $X$. (This is allowed by combining \cite[Proposition 2.5.3]{BL} and \cite[Proposition 4.2.7]{letellier}.) By local triviality, one can then assume that $X=\mathrm{pt}$ (i.e.~that $E$ is a vector space and that $F,F' \subset E$ are subspaces).

In this case the claim boils down to the fact that the dotted arrow in the following diagram is the identity:
\[
\xymatrix@C=1.5cm{
 \coH_c^{2\dim(E)}(F' \times (F')^\bot) \ar[d]_-{\wr} & \coH_c^{2 \dim(E)}(Q) \ar[r]^-{\sim} \ar[l]_-{\sim} & \coH_c^{2\dim(E)}(F \times F^\bot) \ar[d]^-{\wr} \\
\C \ar@{.>}[rr] & & \C.
}
\]
To prove this fact we regard $E \times E^*$ as a real vector space, endowed with the non-degenerate quadratic form given by $q(x,\xi):=\mathrm{Re}(\langle \xi, x \rangle)$. The orthogonal group $H$ of this form stabilizes $Q$, hence acts on $\coH_c^{2 \dim(E)}(Q)$, and this action factors through the group of components $H/H^\circ$. Now $F \times F^\bot$ and $F' \times (F')^\bot$ are conjugate under the action of $H^\circ$, with finishes the proof.
\end{proof}

In the following proposition we get back to the assumption that $E=V \times X$, and we let $p \colon E \to V$ be the projection.
The following result is an immediate consequence of Lemma~\ref{lem:fourier-adj} and the isomorphism of functors $\calF_V \circ p_! \cong {\check p}_! \circ \calF_E$, see the proof of Corollary \ref{prop:Fourier-F}.

\begin{prop}
\label{prop:inclusion-Fourier}
The following diagram is commutative:
\[
\xymatrix@C=3cm{
\calF_V(p_! \uC_{F'}) \ar[d]_-{\eqref{eqn:isom-Fourier-F}}^-{\wr} \ar[r]^-{\calF_V ( p_!(\mathrm{adj}^*_{F,F'}) )} & \calF_V(p_! \uC_F) \ar[d]^-{\eqref{eqn:isom-Fourier-F}}_-{\wr} \\
{\check p}_! \uC_{(F')^\bot}[-2\mathrm{rk}(F')] \ar[r]^-{{\check p}_!(\mathrm{adj}^!_{(F')^\bot,F^\bot})} & {\check p}_! \uC_{F^\bot}[-2\mathrm{rk}(F)].
}
\]
\end{prop}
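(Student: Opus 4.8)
The plan is to deduce the proposition directly from Lemma~\ref{lem:fourier-adj} by applying the functor ${\check p}_!$ and invoking the base-change isomorphism of functors $\calF_V \circ p_! \xrightarrow{\sim} {\check p}_! \circ \calF_E$ recalled in the proof of Corollary~\ref{prop:Fourier-F} (which comes from \cite[Proposition~3.7.13]{KS}). First I would apply ${\check p}_!$ to the commutative square of Lemma~\ref{lem:fourier-adj}. Functoriality yields a commutative square whose top edge is ${\check p}_! \calF_E(\mathrm{adj}^*_{F,F'}) \colon {\check p}_! \calF_E(\uC_{F'}) \to {\check p}_! \calF_E(\uC_F)$, whose bottom edge is ${\check p}_!(\mathrm{adj}^!_{(F')^\bot,F^\bot}) \colon {\check p}_! \uC_{(F')^\bot}[-2\mathrm{rk}(F')] \to {\check p}_! \uC_{F^\bot}[-2\mathrm{rk}(F)]$, and whose two vertical isomorphisms are the images under ${\check p}_!$ of the isomorphisms provided by Lemma~\ref{lem:fourier-F} (for $F'$, resp.~for $F$).

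Next I would transport the top edge across the natural isomorphism $\calF_V \circ p_! \xrightarrow{\sim} {\check p}_! \circ \calF_E$: naturality applied to the morphism $\mathrm{adj}^*_{F,F'} \colon \uC_{F'} \to \uC_F$ identifies ${\check p}_! \calF_E(\mathrm{adj}^*_{F,F'})$ with $\calF_V(p_!(\mathrm{adj}^*_{F,F'}))$, compatibly with the identifications ${\check p}_! \calF_E(\uC_{F'}) \cong \calF_V(p_!\uC_{F'})$ and ${\check p}_! \calF_E(\uC_F) \cong \calF_V(p_!\uC_F)$. Finally I would observe that, by the very definition given in the proof of Corollary~\ref{prop:Fourier-F}, the isomorphism \eqref{eqn:isom-Fourier-F} (for $F$, resp.~for $F'$) is the composite of this last identification with ${\check p}_!$ applied to the Lemma~\ref{lem:fourier-F} isomorphism $\calF_E(\uC_F) \cong \uC_{F^\bot}[-2\mathrm{rk}(F)]$; hence, after the rewriting above, the two vertical maps of the square coincide with the instances of \eqref{eqn:isom-Fourier-F} appearing in the statement, and the resulting commutative square is exactly the one to be proved.

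The only delicate point is this last bookkeeping step: one must unwind the construction of \eqref{eqn:isom-Fourier-F} to confirm that the vertical arrows obtained by applying ${\check p}_!$ are precisely the maps labelled \eqref{eqn:isom-Fourier-F}, and (since the statement is phrased in terms of $\calF$ rather than $\mathfrak{F}$) keep track of the harmless passage from $E^\diamond$ to $E^*$ via the automorphism $t \mapsto t^{-1}$ of $\Gm$, which is applied uniformly to $F$ and $F'$ and therefore commutes with every map in sight. Since Lemma~\ref{lem:fourier-adj} already absorbs all the geometric content, no further obstacle arises, and the proof is indeed immediate once these identifications are made explicit.
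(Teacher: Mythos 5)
Your proposal is correct and coincides with the paper's own argument: the paper also deduces the statement immediately from Lemma~\ref{lem:fourier-adj} together with the isomorphism of functors $\calF_V \circ p_! \cong {\check p}_! \circ \calF_E$ recalled in the proof of Corollary~\ref{prop:Fourier-F}. Your write-up merely makes explicit the bookkeeping (applying ${\check p}_!$, naturality, and the passage $E^\diamond \to E^*$) that the paper leaves to the reader.
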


\subsection{The $\Fourier$ isomorphism and inclusion of subbundles}

We come back to Setting (A) of \S\ref{ss:settings}.

\begin{prop}
\label{prop:compatibility-fourier-A}
We have an equality
\[
\Fourier_{F_1,F_2} \circ \resH^{F_1, F_2'}_{F_1,F_2} = \pdiH^{F_1^\bot, (F_2')^\bot}_{F_1^\bot, F_2^\bot} \circ \Fourier_{F_1,F_2'}
\]
of morphisms $\coH^{G \times \Gm}_\bullet(F_1 \times_V F_2') \to \coH^{G \times \Gm}_{\bullet+2\dim((F_2')^\bot)-2\dim(F_1)} ( F_1^\bot \times_{V^*} F_2^\bot)$.
\end{prop}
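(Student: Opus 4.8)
The plan is to reduce the asserted equality to a statement about $\Ext$-groups, using the isomorphisms $\can$ of~\S\ref{ss:equiv}, and then to read it off from the compatibility of the Fourier functor with adjunction morphisms proved in Proposition~\ref{prop:inclusion-Fourier}. First I would unwind the definitions: by construction (see~\S\ref{ss:Fourier-isomorphism}), $\Fourier_{F_1,F_2}$ is the composition of $\can_{F_1,F_2}$, the isomorphism on $\Ext$-groups induced by $\calF_V$ through the identifications~\eqref{eqn:isom-Fourier-F}, and $\can_{F_1^\bot,F_2^\bot}^{-1}$; likewise for $\Fourier_{F_1,F_2'}$ with $F_2'$ in place of $F_2$. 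Composing the desired identity on the left with $\can_{F_1^\bot,F_2^\bot}$ and on the right with $\can_{F_1,F_2'}^{-1}$, it becomes an equality between two morphisms
\[
\Ext^{\bullet}_{\calD^{G \times \Gm}_{\mathrm{const}}(V)}(p_! \uC_{F_1}, p_! \uC_{F_2'}) \longrightarrow \Ext^{\bullet-2\mathrm{rk}(F_2)+2\mathrm{rk}(F_1)}_{\calD^{G \times \Gm}_{\mathrm{const}}(V^*)}({\check p}_! \uC_{F_1^\bot}, {\check p}_! \uC_{F_2^\bot}).
\]

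Next I would feed in the two diagrams of Proposition~\ref{prop:inclusion-Ginzburg-A}. Part~(1) says that $\can_{F_1,F_2} \circ \resH^{F_1,F_2'}_{F_1,F_2}$ equals the composition of $\can_{F_1,F_2'}$ with post-composition by $p_! \mathrm{adj}^*_{F_2,F_2'}$; dually, part~(2) says that $\can_{F_1^\bot,F_2^\bot} \circ \pdiH^{F_1^\bot,(F_2')^\bot}_{F_1^\bot,F_2^\bot}$ equals the composition of $\can_{F_1^\bot,(F_2')^\bot}$ with post-composition by ${\check p}_! \mathrm{adj}^!_{(F_2')^\bot,F_2^\bot}$. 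After substituting these, the $\can$'s attached to $F_2'$ cancel on both sides, and the statement reduces to the assertion that the isomorphism $\Ext^\bullet(p_! \uC_{F_1}, p_! \uC_{F_2'}) \to \Ext^{\bullet+\text{shift}}({\check p}_! \uC_{F_1^\bot}, {\check p}_! \uC_{(F_2')^\bot})$ induced by $\calF_V$ (via~\eqref{eqn:isom-Fourier-F} for $F_1$ and $F_2'$) intertwines post-composition with $p_! \mathrm{adj}^*_{F_2,F_2'}$ and post-composition with ${\check p}_! \mathrm{adj}^!_{(F_2')^\bot,F_2^\bot}$ (via~\eqref{eqn:isom-Fourier-F} for $F_1$ and $F_2$).

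That last assertion I would then settle using two facts: $\calF_V$ is a functor, so $\calF_V(\psi \circ \phi) = \calF_V(\psi) \circ \calF_V(\phi)$ for composable morphisms; and Proposition~\ref{prop:inclusion-Fourier}, applied with $F=F_2$ and $F'=F_2'$, which says exactly that under~\eqref{eqn:isom-Fourier-F} the morphism $\calF_V(p_! \mathrm{adj}^*_{F_2,F_2'})$ is identified with ${\check p}_!(\mathrm{adj}^!_{(F_2')^\bot,F_2^\bot})$. Composing, for $\phi \in \Ext^\bullet(p_!\uC_{F_1},p_!\uC_{F_2'})$ the image of $p_!\mathrm{adj}^*_{F_2,F_2'} \circ \phi$ under $\calF_V$ is ${\check p}_!\mathrm{adj}^!_{(F_2')^\bot,F_2^\bot} \circ \calF_V(\phi)$, which is precisely the required intertwining, and the proposition follows.

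The one point that needs care is the degree bookkeeping: one must track the shift $2\mathrm{rk}(F_2')-2\mathrm{rk}(F_2)$, which appears both in $\resH^{F_1,F_2'}_{F_1,F_2}$ and in the target shift of $\mathrm{adj}^!_{(F_2')^\bot,F_2^\bot}$ (using $\mathrm{rk}(F_2^\bot)-\mathrm{rk}((F_2')^\bot)=\mathrm{rk}(F_2')-\mathrm{rk}(F_2)$), and to confirm that the copy of~\eqref{eqn:isom-Fourier-F} used in the definition of $\Fourier$ coincides with the one appearing in Proposition~\ref{prop:inclusion-Fourier}. Once the conventions are pinned down this is purely formal, and no new geometric input is required; I expect this bookkeeping to be the only (minor) obstacle.
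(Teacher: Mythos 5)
Your argument is correct and follows the same route as the paper's own proof: unwind $\Fourier$ via the $\can$ isomorphisms, convert $\resH^{F_1,F_2'}_{F_1,F_2}$ and $\pdiH^{F_1^\bot,(F_2')^\bot}_{F_1^\bot,F_2^\bot}$ into post-composition with the adjunction morphisms using Proposition~\ref{prop:inclusion-Ginzburg-A}, and conclude by functoriality of $\calF_V$ together with Proposition~\ref{prop:inclusion-Fourier}. The paper's proof is precisely this pasting of the functoriality square, the square from Proposition~\ref{prop:inclusion-Fourier}, and the diagrams of Proposition~\ref{prop:inclusion-Ginzburg-A}, so no further comment is needed.
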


\begin{proof}
By functoriality the following diagram commutes, where horizontal maps are induced by the functor $\calF_V$:
\[
\xymatrix@C=2cm{
\Ext^{2\dim(F_2')-\bullet}_{\calD^{G \times \Gm}_{\mathrm{const}}(V)}(p_! \uC_{F_1}, p_! \uC_{F_2'}) \ar[d]_-{(p_! \mathrm{adj}_{F_2,F_2'}^*) \circ (\cdot)} \ar[r]_-{\sim} & \Ext^{2\dim(F_2')-\bullet}_{\calD^{G \times \Gm}_{\mathrm{const}}(V^*)}(\calF_V(p_! \uC_{F_1}), \calF_V(p_! \uC_{F_2'})) \ar[d]^-{\calF_V(p_! \mathrm{adj}_{F_2,F_2'}^*) \circ (\cdot)} \\
\Ext^{2\dim(F_2')-\bullet}_{\calD^{G \times \Gm}_{\mathrm{const}}(V)}(p_! \uC_{F_1}, p_! \uC_{F_2}) \ar[r]_-{\sim} & \Ext^{2\dim(F_2')-\bullet}_{\calD^{G \times \Gm}_{\mathrm{const}}(V^*)}(\calF_V(p_! \uC_{F_1}), \calF_V(p_! \uC_{F_2})).
}
\]
Now by Proposition \ref{prop:inclusion-Fourier} the following diagram commutes, where vertical maps are induced by the isomorphisms $\calF_V(p_! \uC_{F}) \cong {\check p}_! \uC_{F^\bot}[-2\rk(F)]$ for $F=F_1,F_2$ or $F_2'$ (see \eqref{eqn:isom-Fourier-F}):
\[
\xymatrix@C=1cm{
\Ext^{2\dim(F_2')-\bullet}_{\calD^{G \times \Gm}_{\mathrm{const}}(V^*)}(\calF_V(p_! \uC_{F_1}), \calF_V(p_! \uC_{F_2'})) \ar[d]_-{\calF_V(p_! \mathrm{adj}_{F_2,F_2'}^*) \circ (\cdot)} \ar[r]_-{\sim} & \Ext^{2\dim(F_1)-\bullet}_{\calD^{G \times \Gm}_{\mathrm{const}}(V^*)}({\check p}_! \uC_{F_1^\bot}, {\check p}_! \uC_{(F_2')^\bot}) \ar[d]^-{({\check p}_! \mathrm{adj}_{(F_2')^\bot,F_2^\bot}^!) \circ (\cdot)} \\
\Ext^{2\dim(F_2')-\bullet}_{\calD^{G \times \Gm}_{\mathrm{const}}(V^*)}(\calF_V(p_! \uC_{F_1}), \calF_V(p_! \uC_{F_2})) \ar[r]_-{\sim} & \Ext^{2\dim(F_1)+2\dim(F_2')-2\dim(F_2)-\bullet}_{\calD^{G \times \Gm}_{\mathrm{const}}(V^*)}({\check p}_! \uC_{F_1^\bot}, {\check p}_! \uC_{F_2^\bot}).
}
\]
Pasting these diagrams with the ones of Proposition~\ref{prop:inclusion-Ginzburg-A} we obtain the desired equality.
\end{proof}

Now we consider Setting (B) of \S\ref{ss:settings}. The proof of the following proposition is similar to that of Proposition \ref{prop:compatibility-fourier-A} (replacing Proposition \ref{prop:inclusion-Ginzburg-A} by Proposition \ref{prop:inclusion-Ginzburg-B}), and is therefore omitted.

\begin{prop}
\label{prop:compatibility-fourier-B}
We have an equality
\[
\Fourier_{F_1',F_2} \circ \pdiH^{F_1,F_2}_{F_1',F_2} = \resH^{F_1^\bot, F_2^\bot}_{(F_1')^\bot, F_2^\bot} \circ \Fourier_{F_1,F_2}
\]
of morphisms $\coH^{G \times \Gm}_\bullet(F_1 \times_V F_2) \to \coH^{G \times \Gm}_{\bullet+2\dim(F_2^\bot)-2\dim(F_1')} \bigl( (F_1')^\bot \times_{V^*} F_2^\bot \bigr)$.
\end{prop}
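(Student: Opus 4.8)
The plan is to copy the proof of Proposition~\ref{prop:compatibility-fourier-A} \emph{mutatis mutandis}, transporting both sides of the claimed identity into the $\Ext$-picture via the $\can$ isomorphisms of \S\ref{ss:equiv}, where the identity splits into three commuting squares coming from functoriality of $\calF_V$, from Proposition~\ref{prop:inclusion-Fourier}, and from Proposition~\ref{prop:inclusion-Ginzburg-B}.

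First I would recall that, by construction, $\Fourier_{F_1,F_2} = \can_{F_1^\bot,F_2^\bot}^{-1} \circ \Phi_{F_1,F_2} \circ \can_{F_1,F_2}$, where $\Phi_{F_1,F_2}$ denotes the isomorphism of $\Ext$-spaces obtained by applying the functor $\calF_V$ and then using the identifications~\eqref{eqn:isom-Fourier-F} for $\calF_V(p_! \uC_{F_1})$ and $\calF_V(p_! \uC_{F_2})$; likewise $\Fourier_{F_1',F_2} = \can_{(F_1')^\bot,F_2^\bot}^{-1} \circ \Phi_{F_1',F_2} \circ \can_{F_1',F_2}$. Using Proposition~\ref{prop:inclusion-Ginzburg-B}(1) to rewrite $\can_{F_1',F_2} \circ \pdiH^{F_1,F_2}_{F_1',F_2}$ as $\bigl( (\cdot) \circ (p_! \mathrm{adj}^*_{F_1,F_1'}) \bigr) \circ \can_{F_1,F_2}$, and Proposition~\ref{prop:inclusion-Ginzburg-B}(2) to rewrite $\can_{(F_1')^\bot,F_2^\bot} \circ \resH^{F_1^\bot,F_2^\bot}_{(F_1')^\bot,F_2^\bot}$ as $\bigl( (\cdot) \circ ({\check p}_! \mathrm{adj}^!_{(F_1')^\bot,F_1^\bot}) \bigr) \circ \can_{F_1^\bot,F_2^\bot}$, the claim is reduced to the identity
\[
\Phi_{F_1',F_2} \circ \bigl( (\cdot) \circ (p_! \mathrm{adj}^*_{F_1,F_1'}) \bigr) = \bigl( (\cdot) \circ ({\check p}_! \mathrm{adj}^!_{(F_1')^\bot,F_1^\bot}) \bigr) \circ \Phi_{F_1,F_2}
\]
of maps between $\Ext$-spaces.

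Then I would establish this last identity by factoring $\Phi$ through $\calF_V$ and~\eqref{eqn:isom-Fourier-F} and checking two commuting squares. The first square is pure functoriality: since $\calF_V$ is a functor, it carries precomposition with $p_! \mathrm{adj}^*_{F_1,F_1'}$ on the relevant $\Ext_{\calD^{G \times \Gm}_{\mathrm{const}}(V)}$-groups to precomposition with $\calF_V(p_! \mathrm{adj}^*_{F_1,F_1'})$ on the corresponding $\Ext_{\calD^{G \times \Gm}_{\mathrm{const}}(V^*)}$-groups. The second square is Proposition~\ref{prop:inclusion-Fourier} applied with $F = F_1$, $F' = F_1'$: under~\eqref{eqn:isom-Fourier-F} the morphism $\calF_V(p_! \mathrm{adj}^*_{F_1,F_1'})$ is identified with ${\check p}_!(\mathrm{adj}^!_{(F_1')^\bot,F_1^\bot})$, so precomposition with the former corresponds to precomposition with the latter. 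Composing the two squares yields the displayed identity, hence the proposition; this is exactly the computation carried out in the proof of Proposition~\ref{prop:compatibility-fourier-A}, with Proposition~\ref{prop:inclusion-Ginzburg-A} replaced by Proposition~\ref{prop:inclusion-Ginzburg-B}.

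The only point demanding care is the bookkeeping of degree shifts: $\pdiH^{F_1,F_2}_{F_1',F_2}$ preserves homological degree whereas $\resH^{F_1^\bot,F_2^\bot}_{(F_1')^\bot,F_2^\bot}$ shifts it by $2\mathrm{rk}((F_1')^\bot) - 2\mathrm{rk}(F_1^\bot) = 2\mathrm{rk}(F_1') - 2\mathrm{rk}(F_1)$, and on the $\Ext$-side this has to be reconciled with the shifts $[-2\mathrm{rk}(F_1)]$ versus $[-2\mathrm{rk}(F_1')]$ appearing in~\eqref{eqn:isom-Fourier-F} and with the difference $2\dim(F_1') - 2\dim(F_1)$ in the $\Ext$-degrees of Proposition~\ref{prop:inclusion-Ginzburg-B}. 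Since $\dim(F_i) = \dim(X) + \mathrm{rk}(F_i)$, these are all consistent, and the target degree comes out as $\bullet + 2\dim(F_2^\bot) - 2\dim(F_1')$, as asserted. No new idea is required beyond what appears in the proof of Proposition~\ref{prop:compatibility-fourier-A}; the mild obstacle is simply keeping the many indices straight while pasting the diagrams.
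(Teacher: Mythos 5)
Your argument is exactly the paper's: the proof there is omitted with the remark that it is the proof of Proposition~\ref{prop:compatibility-fourier-A} with Proposition~\ref{prop:inclusion-Ginzburg-A} replaced by Proposition~\ref{prop:inclusion-Ginzburg-B}, which is precisely the pasting of the $\can$-conjugation, the functoriality square for $\calF_V$, and Proposition~\ref{prop:inclusion-Fourier} (with $F=F_1$, $F'=F_1'$) that you describe. Only a cosmetic slip in your bookkeeping aside: the homological shift of $\resH^{F_1^\bot,F_2^\bot}_{(F_1')^\bot,F_2^\bot}$ equals $2\rk(F_1)-2\rk(F_1')$, not $2\rk(F_1')-2\rk(F_1)$; your final degree count is nonetheless correct.
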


\section{Compatibility of the remaining constructions with inclusions}
\label{sec:compatibility-others}

\subsection{Compatibilities for linear Koszul duality}

Consider Setting (A) of \S\ref{ss:settings}. Then we have equivalences of triangulated categories $\frakK_{F_1,F_2}$ and $\frakK_{F_1,F_2'}$ constructed as in \S\ref{ss:lkd}.
We also have natural morphisms of dg-schemes
\begin{multline*}
f \colon (\Delta V \times X \times X) \rcap_{E \times E} (F_1 \times F_2) \to (\Delta V \times X \times X) \rcap_{E \times E} (F_1 \times F_2'), \\
g \colon (\Delta V^* \times X \times X) \rcap_{E^* \times E^*} (F_1^\bot \times (F_2')^\bot) \to (\Delta V^* \times X \times X) \rcap_{E^* \times E^*} (F_1^\bot \times F_2^\bot)
\end{multline*}
associated with the inclusions $F_2 \hookrightarrow F_2'$ and $(F_2')^\bot \hookrightarrow F_2^\bot$ respectively,
and associated functors
\[
Lf^* \colon \calD^c_{G \times \Gm} \bigl( (\Delta V \times X \times X) \rcap_{E \times E} (F_1 \times F_2') \bigr) \to \calD^c_{G \times \Gm} \bigl( (\Delta V \times X \times X) \rcap_{E \times E} (F_1 \times F_2) \bigr),
\]
\begin{multline*}
Rg_* \colon \calD^c_{G \times \Gm} \bigl( (\Delta V^* \times X \times X) \rcap_{E^* \times E^*} (F_1^\bot \times (F_2')^\bot) \bigr) \to \\
\calD^c_{G \times \Gm} \bigl( (\Delta V^* \times X \times X) \rcap_{E^* \times E^*} (F_1^\bot \times F_2^\bot) \bigr)
\end{multline*}
(see \cite[\S\S 3.2--3.3]{MR3} for details).
By \cite[Proposition 3.5]{MR3} there exists an isomorphism of functors
\[
\frakK_{F_1,F_2} \circ Lf^* \cong Rg_* \circ \frakK_{F_1,F_2'}.
\]

It easily follows from definitions that the following diagram commutes:
\[
\xymatrix@C=2cm{
\calD^c_{G \times \Gm} \bigl( (\Delta V \times X \times X) \rcap_{E \times E} (F_1 \times F_2') \bigr) \ar[r]^{Lf^*} \ar[d] & \calD^c_{G \times \Gm} \bigl( (\Delta V \times X \times X) \rcap_{E \times E} (F_1 \times F_2) \bigr) \ar[d] \\
\calD^b \Coh^{G \times \Gm}(F_1 \times F_2') \ar[r] & \calD^b \Coh^{G \times \Gm}(F_1 \times F_2).
}
\]
(Here the lower horizontal arrow is the usual pullback functor associated with the embedding $F_1 \times F_2 \hookrightarrow F_1 \times F_2'$. The right vertical arrow is induced by the ``restriction of scalars'' functor associated with the embedding $\calA_{F_1, F_2}^0 \hookrightarrow \calA_{F_1, F_2}$, where the dg-algebra $\calA_{F_1, F_2}$ is defined in \S\ref{ss:lkd}; note that $\calA_{F_1, F_2}^0$ is the direct image of the structure sheaf under the affine morphism $F_1 \times F_2 \to X \times X$. The left vertical arrow is defined similarly.)
We deduce that the morphism induced by $Lf^*$ in $\Kth$-homology is $\resK^{F_1, F_2'}_{F_1,F_2}$.
Similarly, the morphism induced by $Rg_*$ in $\Kth$-homology is $\pdiK^{F_1^\bot, (F_2')^\bot}_{F_1^\bot, F_2^\bot}$ (see the proof of~\cite[Lemma~3.3]{MR3}).
We deduce the following result.

\begin{prop}
\label{prop:compatibility-LKD-A}
We have an equality
\[
\Koszul_{F_1,F_2} \circ \resK^{F_1, F_2'}_{F_1,F_2} = \pdiK^{F_1^\bot, (F_2')^\bot}_{F_1^\bot, F_2^\bot} \circ \Koszul_{F_1,F_2'}
\]
of morphisms $\Kth^{G \times \Gm}(F_1 \times_V F_2') \to \Kth^{G \times \Gm}( F_1^\bot \times_{V^*} F_2^\bot)$.
\end{prop}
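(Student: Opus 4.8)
The plan is to deduce the equality by passing to Grothendieck groups in the isomorphism of functors
\[
\frakK_{F_1,F_2} \circ Lf^* \cong Rg_* \circ \frakK_{F_1,F_2'}
\]
recalled just above (which is~\cite[Proposition~3.5]{MR3}). First I would observe that all four functors occurring here are triangulated functors between the relevant categories $\calD^c_{G \times \Gm}(\cdot)$ of~\S\ref{ss:lkd}; for $Lf^*$, resp.~$Rg_*$, this uses the fact that pullback, resp.~direct image, preserves the subcategory $\calD^c$, as established in~\cite[\S\S 3.2--3.3]{MR3}. Hence each of them induces a homomorphism between the associated Grothendieck groups, and an isomorphism of triangulated functors induces the same homomorphism. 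Moreover the natural identification $\Kth_0(\calC)=\Kth_0(\calC^\op)$ intertwines the homomorphism induced on $\Kth_0$ by a functor with the one induced by the same functor viewed between the opposite categories, which is relevant here since linear Koszul duality is contravariant.

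Next I would identify the four induced homomorphisms, using the canonical identification of the Grothendieck group of each category $\calD^c_{G \times \Gm}(\cdot)$ with the corresponding equivariant $\Kth$-homology group (via~\cite[Lemma~5.1]{MR3} and the usual facts about affine morphisms, as in~\S\ref{ss:lkd}). By the very definition of $\Koszul$, the equivalences $\frakK_{F_1,F_2}$ and $\frakK_{F_1,F_2'}$ induce $\Koszul_{F_1,F_2}$ and $\Koszul_{F_1,F_2'}$ respectively. The remaining two identifications are already spelled out in the discussion preceding the statement: the commutative square relating $Lf^*$ to the ordinary pullback functor along the closed embedding $F_1 \times F_2 \hookrightarrow F_1 \times F_2'$ (through the ``restriction of scalars'' functors for the inclusions $\calA_{F_1,F_2}^0 \hookrightarrow \calA_{F_1,F_2}$) shows that $Lf^*$ induces $\resK^{F_1, F_2'}_{F_1,F_2}$, and $Rg_*$ induces $\pdiK^{F_1^\bot, (F_2')^\bot}_{F_1^\bot, F_2^\bot}$ as in the proof of~\cite[Lemma~3.3]{MR3}. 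Applying $\Kth_0(\cdot)$ to the isomorphism of functors displayed above and inserting these identifications then yields precisely
\[
\Koszul_{F_1,F_2} \circ \resK^{F_1, F_2'}_{F_1,F_2} = \pdiK^{F_1^\bot, (F_2')^\bot}_{F_1^\bot, F_2^\bot} \circ \Koszul_{F_1,F_2'}.
\]

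The main (and essentially only) obstacle is bookkeeping rather than conceptual: one must check that the $\Kth$-homology descriptions of the functors $Lf^*$ and $Rg_*$ are stated for, and compatible with, the specific identification of the Grothendieck groups of the dg-categories with $\Kth$-homology that enters the definition of $\Koszul$, and that the appearance of opposite categories (forced by the contravariance of linear Koszul duality) does not affect the maps induced on Grothendieck groups. Both points are handled by the commutative diagrams and the citations to~\cite{MR3} already assembled before the statement, so no additional work is needed and the argument reduces to a one-line application of $\Kth_0(\cdot)$ to the functorial isomorphism.
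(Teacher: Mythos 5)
Your proposal is correct and is essentially the paper's own argument: the paper also deduces the proposition by applying $\Kth_0$ to the isomorphism $\frakK_{F_1,F_2} \circ Lf^* \cong Rg_* \circ \frakK_{F_1,F_2'}$ of \cite[Proposition~3.5]{MR3}, after identifying the maps induced by $Lf^*$ and $Rg_*$ with $\resK^{F_1,F_2'}_{F_1,F_2}$ and $\pdiK^{F_1^\bot,(F_2')^\bot}_{F_1^\bot,F_2^\bot}$ exactly as in the discussion you cite. The extra bookkeeping you mention (compatibility of the identifications and the harmlessness of passing to opposite categories) is handled the same way in the paper.
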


Now, consider Setting (B) of \S\ref{ss:settings}. The same considerations as above allow to prove the following result.

\begin{prop}
\label{prop:compatibility-LKD-B}
We have an equality
\[
\Koszul_{F_1',F_2} \circ \pdiK^{F_1, F_2}_{F_1', F_2} = \resK^{F_1^\bot, F_2^\bot}_{(F_1')^\bot, F_2^\bot} \circ \Koszul_{F_1,F_2}
\]
of morphisms $\Kth^{G \times \Gm}(F_1 \times_V F_2) \to \Kth^{G \times \Gm}\bigl( (F_1')^\bot \times_{V^*} F_2^\bot \bigr)$.
\end{prop}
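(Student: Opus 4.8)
The argument runs parallel to the proof of Proposition~\ref{prop:compatibility-LKD-A}, with the roles of the two factors exchanged. The plan is as follows. Associated with the inclusions $F_1 \hookrightarrow F_1'$ and $(F_1')^\bot \hookrightarrow F_1^\bot$ one has natural morphisms of dg-schemes
\[
f \colon (\Delta V \times X \times X) \rcap_{E \times E} (F_1 \times F_2) \to (\Delta V \times X \times X) \rcap_{E \times E} (F_1' \times F_2),
\]
\[
g \colon (\Delta V^* \times X \times X) \rcap_{E^* \times E^*} ((F_1')^\bot \times F_2^\bot) \to (\Delta V^* \times X \times X) \rcap_{E^* \times E^*} (F_1^\bot \times F_2^\bot),
\]
together with associated functors
\[
Rf_* \colon \calD^c_{G \times \Gm}\bigl( (\Delta V \times X \times X) \rcap_{E \times E} (F_1 \times F_2) \bigr) \to \calD^c_{G \times \Gm}\bigl( (\Delta V \times X \times X) \rcap_{E \times E} (F_1' \times F_2) \bigr),
\]
\[
Lg^* \colon \calD^c_{G \times \Gm}\bigl( (\Delta V^* \times X \times X) \rcap_{E^* \times E^*} (F_1^\bot \times F_2^\bot) \bigr) \to \calD^c_{G \times \Gm}\bigl( (\Delta V^* \times X \times X) \rcap_{E^* \times E^*} ((F_1')^\bot \times F_2^\bot) \bigr)
\]
defined as in~\cite[\S\S 3.2--3.3]{MR3}. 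First I would establish, as an instance of (the argument of)~\cite[Proposition~3.5]{MR3}, an isomorphism of functors $\frakK_{F_1',F_2} \circ Rf_* \cong Lg^* \circ \frakK_{F_1,F_2}$.

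Then I would pass to Grothendieck groups. As in the discussion preceding Proposition~\ref{prop:compatibility-LKD-A}, the functor $Lg^*$ fits into a commutative square with the usual pullback functor $\calD^b \Coh^{G \times \Gm}(F_1^\bot \times F_2^\bot) \to \calD^b \Coh^{G \times \Gm}((F_1')^\bot \times F_2^\bot)$ via the restriction-of-scalars functors attached to the relevant degree-zero subalgebras (the ones corresponding to the affine projections to $X \times X$); hence the morphism it induces in $\Kth$-homology is $\resK^{F_1^\bot, F_2^\bot}_{(F_1')^\bot, F_2^\bot}$. Similarly, arguing as in the proof of~\cite[Lemma~3.3]{MR3}, the morphism induced by $Rf_*$ in $\Kth$-homology is the proper direct image $\pdiK^{F_1, F_2}_{F_1', F_2}$. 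Combining these two identifications with the isomorphism of functors above then yields the claimed equality.

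The main obstacle is the first step. The functoriality statement of~\cite[Proposition~3.5]{MR3} is formulated for a shrinking of the \emph{second} factor, whereas here one shrinks the \emph{first} factor, and the construction of $\frakK_{F_1,F_2}$ recalled in \S\ref{ss:lkd} is not symmetric in the two factors (the underlying dualizing object being $\calO_X \boxtimes \omega_X[\dim(X)]$). One therefore has to run the functoriality argument of \emph{loc.\ cit.}\ through the Koszul duality equivalence $\kappa_{F_1,F_2}$ and the two auxiliary equivalences of \S\ref{ss:lkd}, checking at each stage that the relevant restriction and corestriction of scalars functors are intertwined as required. This verification is routine, but is where all the bookkeeping lies.
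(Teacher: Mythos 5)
Your outline is essentially the paper's (the paper proves Proposition~\ref{prop:compatibility-LKD-B} by exactly the considerations used for Proposition~\ref{prop:compatibility-LKD-A}), and your identification of the induced maps on Grothendieck groups is the right one: $Rf_*$ induces $\pdiK^{F_1,F_2}_{F_1',F_2}$ as in the proof of \cite[Lemma~3.3]{MR3}, and $Lg^*$ induces $\resK^{F_1^\bot,F_2^\bot}_{(F_1')^\bot,F_2^\bot}$ via the restriction-of-scalars square. However, your ``main obstacle'' is mislocated. The functoriality invoked in Setting~(A) concerns an inclusion of subbundles of $E \times E$ over the fixed base $X \times X$, with the dualizing object $\calO_X \boxtimes \omega_X[\dim(X)]$ living on that base and left untouched; the inclusion $F_1 \times F_2 \subset F_1' \times F_2$ is such an inclusion exactly as $F_1 \times F_2 \subset F_1 \times F_2'$ is, so the asymmetry of $\calE$ in the two $X$-factors is immaterial and no factor-by-factor bookkeeping through $\kappa_{F_1,F_2}$ is required. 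What does change relative to Setting~(A) is the direction of the functors: Proposition~\ref{prop:compatibility-LKD-B} needs the compatibility for the pair $(Rf_*,Lg^*)$, i.e.\ precisely the isomorphism $\frakK_{F_1',F_2} \circ Rf_* \cong Lg^* \circ \frakK_{F_1,F_2}$ you wrote down, which is not literally an instance of the statement quoted in the proof of Proposition~\ref{prop:compatibility-LKD-A}. Unless \cite{MR3} already records this adjoint form, it follows formally: applying \cite[Proposition~3.5]{MR3} to the inclusion $F_1 \times F_2 \subset F_1' \times F_2$ gives $\frakK_{F_1,F_2} \circ Lf^* \cong Rg_* \circ \frakK_{F_1',F_2}$, and passing to right adjoints (the pairs $(Lf^*,Rf_*)$ and $(Lg^*,Rg_*)$ are adjoint, the $\frakK$'s are equivalences, and on opposite categories the roles of left and right adjoints are exchanged, so the right adjoint of $(Rg_*)^{\op}$ is $(Lg^*)^{\op}$) yields your isomorphism by uniqueness of adjoints, with no re-running of the argument of \emph{loc.\ cit.}
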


\subsection{Compatibilities for the other maps in $\Kth$-homology}

Consider Setting (A) of \S\ref{ss:settings}.

\begin{prop}
\label{prop:compatibility-duality-A}
We have equalities
\begin{gather*}
\dual_{F_1^\bot, F_2^\bot} \circ \pdiK^{F_1^\bot, (F_2')^\bot}_{F_1^\bot, F_2^\bot} = \pdiK^{F_1^\bot, (F_2')^\bot}_{F_1^\bot, F_2^\bot} \circ \dual_{F_1^\bot,(F_2')^\bot}, \\
\bfi_{F_1^\bot, F_2^\bot} \circ \pdiK^{F_1^\bot, (F_2')^\bot}_{F_1^\bot, F_2^\bot} = \pdiK^{F_1^\bot, (F_2')^\bot}_{F_1^\bot, F_2^\bot} \circ \bfi_{F_1^\bot,(F_2')^\bot}
\end{gather*}
of morphisms $\Kth^{G \times \Gm}(F_1^\bot \times_{V^*} (F_2')^\bot) \to \Kth^{G \times \Gm}(F_1^\bot \times_{V^*} F_2^\bot)$.
%
\end{prop}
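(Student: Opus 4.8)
The plan is to lift each of the two equalities to a natural isomorphism of functors between the relevant triangulated categories of equivariant coherent sheaves, and then to pass to Grothendieck groups.

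For the first equality, let $i\colon (F_2')^\bot\hookrightarrow F_2^\bot$ be the closed embedding of (smooth) subbundles of $E^*$, and set $j:=\mathrm{id}_{F_1^\bot}\times i$, a closed embedding $F_1^\bot\times(F_2')^\bot\hookrightarrow F_1^\bot\times F_2^\bot$ whose restriction to fiber products over $V^*$ is the closed embedding with which $\pdiK^{F_1^\bot,(F_2')^\bot}_{F_1^\bot,F_2^\bot}$ is associated; hence on Grothendieck groups that morphism is induced by $j_*$, which carries $\calD^b\Coh^{G\times\Gm}_{F_1^\bot\times_{V^*}(F_2')^\bot}(F_1^\bot\times(F_2')^\bot)$ into $\calD^b\Coh^{G\times\Gm}_{F_1^\bot\times_{V^*}F_2^\bot}(F_1^\bot\times F_2^\bot)$, the categories on which the two duality functors act. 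I would then invoke Grothendieck--Serre duality for the proper (indeed closed) morphism $j$: for $\calG$ in the source there is a canonical, functorial, and $G\times\Gm$-equivariant isomorphism
\[
j_*\,R\sheafHom_{\calO_{F_1^\bot\times(F_2')^\bot}}(\calG,\,j^!\calM)\ \cong\ R\sheafHom_{\calO_{F_1^\bot\times F_2^\bot}}(j_*\calG,\,\calM),
\]
where $\calM:=\calO_{F_1^\bot}\boxtimes\omega_{F_2^\bot}[\dim(F_2^\bot)]$ is the complex defining $\mathrm{D}^{G\times\Gm}_{F_1^\bot,F_2^\bot}$ (see \cite[\S 2.1]{MR3} and the references therein). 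The one computation to carry out is $j^!\calM\cong\calO_{F_1^\bot}\boxtimes\omega_{(F_2')^\bot}[\dim((F_2')^\bot)]$: since $j=\mathrm{id}_{F_1^\bot}\times i$ one has $j^!(\calG_1\boxtimes\calG_2)\cong\calG_1\boxtimes i^!\calG_2$ for $\calG_1$ on $F_1^\bot$, while for the closed embedding $i$ of smooth varieties, with normal bundle $N$, one has $i^!(\omega_{F_2^\bot}[\dim(F_2^\bot)])\cong\omega_{(F_2')^\bot}[\dim((F_2')^\bot)]$, combining $i^!\calO\cong\det(N)[-\mathrm{codim}]$ with the adjunction formula $\omega_{(F_2')^\bot}\cong i^*\omega_{F_2^\bot}\otimes\det(N)$. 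Since $\calO_{F_1^\bot}\boxtimes\omega_{(F_2')^\bot}[\dim((F_2')^\bot)]$ is precisely the complex defining $\mathrm{D}^{G\times\Gm}_{F_1^\bot,(F_2')^\bot}$, the displayed isomorphism reads $j_*\circ\mathrm{D}^{G\times\Gm}_{F_1^\bot,(F_2')^\bot}\cong\mathrm{D}^{G\times\Gm}_{F_1^\bot,F_2^\bot}\circ j_*$, and passing to $K_0$ yields the first equality.

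For the second equality I would use the descriptions of the two sides as categories of graded equivariant modules that enter the definition of $\bfi$. Direct image under the affine projection $\pi_{F_1^\bot,F_2^\bot}\colon F_1^\bot\times_{V^*}F_2^\bot\to X\times X$ identifies $\Coh^{G\times\Gm}(F_1^\bot\times_{V^*}F_2^\bot)$ with the category of locally finitely generated $G$-equivariant graded modules over the graded $\calO_{X\times X}$-algebra $\calR:=(\pi_{F_1^\bot,F_2^\bot})_*\calO_{F_1^\bot\times_{V^*}F_2^\bot}$, whose grading is concentrated in even degrees; similarly with $\calR':=(\pi_{F_1^\bot,(F_2')^\bot})_*\calO_{F_1^\bot\times_{V^*}(F_2')^\bot}$ for the primed variety. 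The closed embedding $j$ commutes with the projections to $X\times X$ and induces a surjective morphism of graded $\calO_{X\times X}$-algebras $\calR\twoheadrightarrow\calR'$; under the above identifications $\pdiK^{F_1^\bot,(F_2')^\bot}_{F_1^\bot,F_2^\bot}$ becomes restriction of scalars along this morphism. Since restriction of scalars along a morphism of $\Z$-graded algebras does not alter the underlying graded module, it commutes with the decomposition $\calF=\calF^{\mathrm{even}}\oplus\calF^{\mathrm{odd}}$ into even and odd parts, whence the second equality follows. Both parts are essentially formal; the one point deserving care is matching the \emph{relative} dualizing complex $\calO_{F_1^\bot}\boxtimes\omega_{F_2^\bot}[\dim(F_2^\bot)]$ (rather than an absolute one) with the output of Grothendieck--Serre duality, and checking compatibility with the $G\times\Gm$-equivariant structures --- but this is standard.
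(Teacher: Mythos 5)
Your proposal is correct and follows essentially the same route as the paper: the first equality is obtained from Grothendieck--Serre duality for the closed embedding $F_1^\bot \times (F_2')^\bot \hookrightarrow F_1^\bot \times F_2^\bot$ (the paper cites \cite[Theorem VII.3.3]{H} plus the equivariant upgrade of \cite[\S 2.1]{MR3}, while you additionally spell out the check that $j^!$ of $\calO_{F_1^\bot} \boxtimes \omega_{F_2^\bot}[\dim(F_2^\bot)]$ is the dualizing complex defining $\mathrm{D}^{G \times \Gm}_{F_1^\bot,(F_2')^\bot}$), and your restriction-of-scalars argument for the second equality is the straightforward verification the paper leaves to the reader.
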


\begin{proof}
The second equality is easy, and left to the reader. Let us consider the first one.
We denote the inclusion morphism by
\[
h_A \colon F_1^\bot \times (F_2')^\bot \hookrightarrow F_1^\bot \times F_2^\bot, 
\]
and consider the duality functor
\[
\mathrm{D}^{G \times \Gm}_{F_1^\bot,F_2^\bot} \colon \calD^b \Coh^{G \times \Gm}_{F_1^\bot \times_{V^*} F_2^\bot}(F_1^\bot \times F_2^\bot) \to \calD^b \Coh^{G \times \Gm}_{F_1^\bot \times_{V^*} F_2^\bot}(F_1^\bot \times F_2^\bot)^\op
\]
defined as in \S \ref{ss:duality}, and similarly for $\mathrm{D}^{G \times \Gm}_{F_1^\bot,(F_2')^\bot}$.
Then the result follows from the natural isomorphism
\[
R(h_A)_* \circ \mathrm{D}^{G \times \Gm}_{F_1^\bot,(F_2')^\bot} \cong \mathrm{D}^{G \times \Gm}_{F_1^\bot,F_2^\bot} \circ R(h_A)_*
\]
provided by the duality theorem \cite[Theorem VII.3.3]{H}. More precisely we need an equivariant version of the duality theorem, which can be derived from the non-equivariant version by the arguments of~\cite[\S 2.1]{MR3}.
\end{proof}

Consider now Setting (B) of \S\ref{ss:settings}.

\begin{prop}
\label{prop:compatibility-duality-B}
We have equalities
\begin{gather*}
\dual_{(F_1')^\bot,F_2^\bot} \circ \resK^{F_1^\bot, F_2^\bot}_{(F_1')^\bot, F_2^\bot} = \resK^{F_1^\bot, F_2^\bot}_{(F_1')^\bot, F_2^\bot} \circ \dual_{F_1^\bot,F_2^\bot}, \\
\bfi_{(F_1')^\bot,F_2^\bot} \circ \resK^{F_1^\bot, F_2^\bot}_{(F_1')^\bot, F_2^\bot} = \resK^{F_1^\bot, F_2^\bot}_{(F_1')^\bot, F_2^\bot} \circ \bfi_{F_1^\bot,F_2^\bot}
\end{gather*}
of morphisms $\Kth^{G \times \Gm}(F_1^\bot \times_{V^*} F_2^\bot) \to \Kth^{G \times \Gm}((F_1')^\bot \times_{V^*} F_2^\bot)$.
%
%
\end{prop}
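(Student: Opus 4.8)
The plan is to reproduce the structure of the proof of Proposition~\ref{prop:compatibility-duality-A}, replacing the derived pushforward used there by a derived pullback. The starting observation is that, exactly as recalled just before Proposition~\ref{prop:compatibility-LKD-B}, the morphism $\resK^{F_1^\bot, F_2^\bot}_{(F_1')^\bot, F_2^\bot}$ is the map on Grothendieck groups induced by the derived pullback functor
\[
Lh^* \colon \calD^b \Coh^{G \times \Gm}_{F_1^\bot \times_{V^*} F_2^\bot}(F_1^\bot \times F_2^\bot) \to \calD^b \Coh^{G \times \Gm}_{(F_1')^\bot \times_{V^*} F_2^\bot}\bigl((F_1')^\bot \times F_2^\bot\bigr)
\]
attached to the $G \times \Gm$-equivariant regular closed embedding $h \colon (F_1')^\bot \times F_2^\bot \hookrightarrow F_1^\bot \times F_2^\bot$ induced by $(F_1')^\bot \hookrightarrow F_1^\bot$. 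Hence, for the first equality it is enough to produce a natural isomorphism of functors
\[
Lh^* \circ \mathrm{D}^{G \times \Gm}_{F_1^\bot, F_2^\bot} \ \cong \ \mathrm{D}^{G \times \Gm}_{(F_1')^\bot, F_2^\bot} \circ Lh^*
\]
on these supported subcategories, and then pass to $\Kth$-homology.

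To build this isomorphism I would combine three standard facts. First, since $h$ is proper, the duality theorem \cite[Theorem~VII.3.3]{H} gives formally a natural isomorphism $Lh^* \circ \mathrm{D}_{F_1^\bot \times F_2^\bot} \cong \mathrm{D}_{(F_1')^\bot \times F_2^\bot} \circ h^!$ between the untwisted Grothendieck--Serre duality functors. Second, for the regular embedding $h$, of codimension $c := \mathrm{rk}(F_1^\bot) - \mathrm{rk}((F_1')^\bot)$ and with normal bundle $N$, one has $h^!(-) \cong Lh^*(-) \otimes_{\calO} \det(N)[-c]$. Third, the adjunction formula yields $\omega_{(F_1')^\bot} \cong h_1^* \omega_{F_1^\bot} \otimes \det(N)$, where $h_1 \colon (F_1')^\bot \hookrightarrow F_1^\bot$ and $N$ is identified with the pullback of the normal bundle of $h_1$. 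Since $\mathrm{D}^{G \times \Gm}_{F_1^\bot, F_2^\bot}$ is the untwisted duality functor of $F_1^\bot \times F_2^\bot$ tensored with the shifted line bundle $(\omega_{F_1^\bot}^{-1} \boxtimes \calO_{F_2^\bot})[-\dim(F_1^\bot)]$, and similarly for $(F_1')^\bot$, substituting the second and third facts into the first shows that the twist $\det(N)[-c]$ produced by $h^!$ is cancelled exactly — using $c = \dim(F_1^\bot) - \dim((F_1')^\bot)$ and the adjunction formula — by the discrepancy between the two twisted dualizing objects, so that the desired isomorphism drops out with no correction term. As in the proof of Proposition~\ref{prop:compatibility-duality-A}, the equivariant versions of these three facts are deduced from the non-equivariant statements by the arguments of~\cite[\S 2.1]{MR3}.

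The second equality I would prove directly. Since $F_1^\bot$ and $(F_1')^\bot$ are subbundles of $E^*$, whose fibres carry a single even $\Gm$-weight, the conormal bundle of $h$ — and hence all of its exterior powers — has an even $\Gm$-weight as well. As these exterior powers compute $Lh^*$ (via the Koszul resolution of $\calO_{(F_1')^\bot \times F_2^\bot}$ over $\calO_{F_1^\bot \times F_2^\bot}$), the functor $Lh^*$ shifts the internal $\Gm$-grading of a module over the relevant $\calO_{X \times X}$-algebras only by even amounts, so it preserves the parity of that grading and therefore commutes with the involution $\calF \mapsto [\calF^{\mathrm{even}}] - [\calF^{\mathrm{odd}}]$ defining $\bfi$.

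The step I expect to be the main obstacle is the bookkeeping underlying the first equality: one must check that the normal-bundle twist $\det(N)[-c]$ forced by $h^!$ is absorbed \emph{on the nose} by the difference between the two \emph{twisted} dualizing complexes $\calO_{F_1^\bot} \boxtimes \omega_{F_2^\bot}[\dim(F_2^\bot)]$ and $\calO_{(F_1')^\bot} \boxtimes \omega_{F_2^\bot}[\dim(F_2^\bot)]$ — equivalently, that $Lh^*$ carries the dualizing object defining $\mathrm{D}^{G \times \Gm}_{F_1^\bot, F_2^\bot}$ to the one defining $\mathrm{D}^{G \times \Gm}_{(F_1')^\bot, F_2^\bot}$ — so that no correction factor appears in the statement. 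Once this compatibility of the twisted dualizing data is isolated, the remaining inputs are routine.
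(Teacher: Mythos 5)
Your reduction is the same as the paper's: identify $\resK^{F_1^\bot,F_2^\bot}_{(F_1')^\bot,F_2^\bot}$ with the map induced on Grothendieck groups of the supported categories by $Lh^*$ for the regular closed embedding $h$, and then prove a functor isomorphism $Lh^* \circ \mathrm{D}^{G \times \Gm}_{F_1^\bot,F_2^\bot} \cong \mathrm{D}^{G \times \Gm}_{(F_1')^\bot,F_2^\bot} \circ Lh^*$, with equivariance handled by the arguments of \cite[\S 2.1]{MR3}. Where you diverge is in how you prove this isomorphism: you route it through the canonical (untwisted) duality, the formula $h^! \cong Lh^* \otimes \det(N)[-c]$ for a regular immersion, and the adjunction formula, and then check that the twist cancels; your bookkeeping is correct (the shift works out because $c=\dim(F_1^\bot)-\dim((F_1')^\bot)$, and the line bundles match by adjunction applied to $(F_1')^\bot \hookrightarrow F_1^\bot$). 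The paper's proof is shorter and avoids $h^!$ altogether: since only the $F_1^\bot$-factor changes and the twisted dualizing object is $\calO \boxtimes \omega_{F_2^\bot}[\dim(F_2^\bot)]$, its pullback under $h$ \emph{is} literally the twisted dualizing object of $(F_1')^\bot \times F_2^\bot$, so the commutation reduces to $Lh^* R\sheafHom(\calF,\calK) \cong R\sheafHom(Lh^*\calF,Lh^*\calK)$ for bounded coherent complexes, proved by locally free resolutions (smoothness of $X$) as in \cite[Proposition II.5.8]{H}. So the ``main obstacle'' you flag is in fact immediate, and your detour buys nothing except extra standard inputs to justify.

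One caveat on those inputs: your first fact is not what \cite[Theorem VII.3.3]{H} says. That theorem is the duality isomorphism for \emph{proper pushforward} (it is what the paper uses for Setting (A) in Proposition \ref{prop:compatibility-duality-A}); the statement you need is the compatibility $h^! R\sheafHom(\calF,\calG) \cong R\sheafHom(Lh^*\calF, h^!\calG)$ (equivalently $\mathrm{D} \circ Lh^* \cong h^! \circ \mathrm{D}$, whence your $Lh^* \circ \mathrm{D} \cong \mathrm{D} \circ h^!$ by biduality), which does not follow ``formally'' from VII.3.3 and, in the smooth setting, is most easily proved by exactly the locally-free-resolution argument above. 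This is a citation/justification slip rather than a mathematical gap, since the statement is true here. Your proof of the second equality (even $\Gm$-weights on the conormal bundle, Koszul resolution, hence $Lh^*$ preserves the parity of the internal grading) is fine; the paper leaves that part to the reader.
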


\begin{proof}
The second equality is easy, and left to the reader. Let us consider the first one.
We denote the inclusion morphism by
\[
h_B \colon (F_1')^\bot \times F_2^\bot \hookrightarrow F_1^\bot \times F_2^\bot,
\]
and consider the duality functors $\mathrm{D}^{G \times \Gm}_{F_1^\bot,F_2^\bot}$ and $\mathrm{D}^{G \times \Gm}_{(F_1')^\bot,F_2^\bot}$
defined as in \S \ref{ss:duality}. 
The claim follows from an isomorphism of functors
\[
L(h_B)^* \circ \mathrm{D}^{G \times \Gm}_{F_1^\bot,F_2^\bot} \cong \mathrm{D}^{G \times \Gm}_{(F_1')^\bot,F_2^\bot} \circ L(h_B)^*
\]
which can be proved by arguments similar to those of \cite[Proposition II.5.8]{H}, taking into account our assumption that $X$ is a smooth variety (so that $F_1^\bot \times F_2^\bot$ and $(F_1')^\bot \times F_2^\bot$ are also smooth), which implies that every object of the bounded derived category of coherent sheaves is isomorphic to a bounded complex of locally free sheaves.
\end{proof}

\subsection{Compatibilities for $\uRR$}
\label{ss:compatibility-RR}

First, consider Setting (A) of \S\ref{ss:settings}.

\begin{prop}
\label{prop:compatibility-uRR-A}
Assume that the proper direct image morphism
\[
\coH^{G \times \Gm}_\bullet(F_1 \times_V F_2) \to \coH^{G \times \Gm}_\bullet(F_1 \times F_2)
\]
is injective.
Then we have an equality
\[
\uRR_{F_1,F_2} \circ \resK^{F_1, F_2'}_{F_1, F_2} = \resH^{F_1, F_2'}_{F_1, F_2} \circ \uRR_{F_1,F_2'}
\]
of morphisms $\Kth^{G \times \Gm}(F_1 \times_V F_2') \to \coHc^{G \times \Gm}_\bullet(F_1 \times_V F_2)$.
%
\end{prop}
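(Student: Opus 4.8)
The plan is to use the injectivity assumption to transport the identity onto the \emph{smooth} ambient varieties $F_1 \times F_2 \subset F_1 \times F_2'$, where Proposition~\ref{prop:RR-res} and Remark~\ref{rk:Todd} apply directly (they cannot be applied to $F_1 \times_V F_2 \hookrightarrow F_1 \times_V F_2'$, which is not an embedding of smooth varieties). Write $\imath \colon \coH^{G \times \Gm}_\bullet(F_1 \times_V F_2) \to \coH^{G \times \Gm}_\bullet(F_1 \times F_2)$ for the proper direct image morphism, injective by hypothesis, and let $\imath'$ denote the proper direct image (in $\Kth$-homology or in Borel--Moore homology, according to the context) associated with the closed embedding $F_1 \times_V F_2' \hookrightarrow F_1 \times F_2'$. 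Let $\iota \colon F_1 \times F_2 \hookrightarrow F_1 \times F_2'$ be the regular closed embedding induced by $F_2 \hookrightarrow F_2'$; its normal bundle $N$ is the pullback, under the second projection $F_1 \times F_2 \to F_2$, of the normal bundle $N_{F_2/F_2'}$ of $F_2$ in $F_2'$. Write $\iota^* \colon \Kth^{G \times \Gm}(F_1 \times F_2') \to \Kth^{G \times \Gm}(F_1 \times F_2)$ and $\iota^! \colon \coH^{G \times \Gm}_\bullet(F_1 \times F_2') \to \coH^{G \times \Gm}_{\bullet - 2\mathrm{rk}(F_2')+2\mathrm{rk}(F_2)}(F_1 \times F_2)$ for the restriction-with-supports morphisms attached to the pair $F_1 \times F_2 \subset F_1 \times F_2'$. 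The square relating $F_1 \times_V F_2 \subset F_1 \times_V F_2'$ with $F_1 \times F_2 \subset F_1 \times F_2'$ is cartesian, so the compatibility of restriction with supports and proper direct image (Lemma~\ref{lem:restriction-pushforward} and its $\Kth$-theoretic analogue) gives $\imath \circ \resK^{F_1,F_2'}_{F_1,F_2} = \iota^* \circ \imath'$ and $\imath \circ \resH^{F_1,F_2'}_{F_1,F_2} = \iota^! \circ \imath'$; hence by injectivity of $\imath$ it suffices to prove
\[
\imath \circ \uRR_{F_1,F_2} \circ \resK^{F_1,F_2'}_{F_1,F_2} = \imath \circ \resH^{F_1,F_2'}_{F_1,F_2} \circ \uRR_{F_1,F_2'}.
\]

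Fix $c \in \Kth^{G \times \Gm}(F_1 \times_V F_2')$. For the left-hand side: by the projection formula~\eqref{eqn:proj-formula-H} (the class $1 \boxtimes (\mathrm{Td}^{G \times \Gm}_{F_2})^{-1}$ being pulled back from $F_1 \times F_2$), then Theorem~\ref{thm:equivariantRR} and the $\Kth$-theoretic analogue of Lemma~\ref{lem:restriction-pushforward}, one gets
\[
\imath\bigl( \uRR_{F_1,F_2}(\resK^{F_1,F_2'}_{F_1,F_2}(c)) \bigr) = \tau^{G \times \Gm}_{F_1 \times F_2}\bigl(\iota^*(\imath'(c))\bigr) \cdot \bigl(1 \boxtimes (\mathrm{Td}^{G \times \Gm}_{F_2})^{-1}\bigr),
\]
and then Proposition~\ref{prop:RR-res} applied to $\iota$ turns $\tau^{G \times \Gm}_{F_1 \times F_2}(\iota^*(\imath'(c)))$ into $\iota^!\bigl(\tau^{G \times \Gm}_{F_1 \times F_2'}(\imath'(c))\bigr) \cdot \mathrm{td}^{G \times \Gm}(N)^{-1}$. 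For the right-hand side: the projection formula~\eqref{eqn:proj-formula-H}, Theorem~\ref{thm:equivariantRR} and formula~\eqref{eqn:res-cohomology} give
\[
\imath\bigl( \resH^{F_1,F_2'}_{F_1,F_2}(\uRR_{F_1,F_2'}(c)) \bigr) = \iota^!\bigl(\tau^{G \times \Gm}_{F_1 \times F_2'}(\imath'(c))\bigr) \cdot \iota^*\bigl(1 \boxtimes (\mathrm{Td}^{G \times \Gm}_{F_2'})^{-1}\bigr).
\]
Comparing these two expressions, the proposition reduces to the identity $\iota^*\bigl(1 \boxtimes \mathrm{Td}^{G \times \Gm}_{F_2'}\bigr) = \bigl(1 \boxtimes \mathrm{Td}^{G \times \Gm}_{F_2}\bigr) \cdot \mathrm{td}^{G \times \Gm}(N)$, which is Remark~\ref{rk:Todd} for the embedding $F_2 \hookrightarrow F_2'$, pulled back along the second projection (using $N = \mathrm{pr}_2^* N_{F_2/F_2'}$).

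The only genuine difficulty is the one already signalled: $F_1 \times_V F_2$ and $F_1 \times_V F_2'$ are singular, so Proposition~\ref{prop:RR-res} is not available for the embedding between them, and the injectivity hypothesis is precisely what lets one replace these by the smooth ambient spaces. Everything else — checking that the relevant square is cartesian so that Lemma~\ref{lem:restriction-pushforward} applies, and tracking the ``$1 \boxtimes (\cdot)$'' conventions of~\S\ref{ss:RRmaps} through the projection and restriction formulas — is routine bookkeeping.
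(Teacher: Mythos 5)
Your argument is correct and is essentially the paper's own proof: the paper packages exactly the same ingredients (injectivity to transfer the problem to the smooth ambient spaces $F_1 \times F_2 \subset F_1 \times F_2'$, Theorem~\ref{thm:equivariantRR} with the projection formula~\eqref{eqn:proj-formula-H}, Lemma~\ref{lem:restriction-pushforward} and its $\Kth$-theoretic analogue, and Proposition~\ref{prop:RR-res} together with Remark~\ref{rk:Todd} and~\eqref{eqn:res-cohomology}) into a commutative cube whose back face is the statement, whereas you carry out the same verification element-wise.
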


\begin{proof}
Consider the following cube:
\[
\xymatrix{
\Kth^{G \times \Gm}(F_1 \times_V F_2') \ar[dd]_-{\resK^{F_1, F_2'}_{F_1, F_2}} \ar[rd]^-{\pdiK} \ar[rr]^-{\uRR_{F_1,F_2'}} & & \coHc^{G \times \Gm}_\bullet(F_1 \times_V F_2') \ar'[d][dd]^-{\resH^{F_1, F_2'}_{F_1, F_2}} \ar[rd]^-{\pdiH} & \\
& \Kth^{G \times \Gm}(F_1 \times F_2') \ar[dd]_<<<<<<{\resK} \ar[rr]^<<<<<<<<<<{(1)} & & \coHc^{G \times \Gm}_\bullet(F_1 \times F_2') \ar[dd]^-{\resH} \\
\Kth^{G \times \Gm}(F_1 \times_V F_2) \ar[rd]^-{\pdiK} \ar'[r][rr]^-{\uRR_{F_1,F_2}} && \coHc^{G \times \Gm}_\bullet(F_1 \times_V F_2) \ar[rd]^-{\pdiH} & \\
& \Kth^{G \times \Gm}(F_1 \times F_2) \ar[rr]^-{(2)} && \coHc^{G \times \Gm}_\bullet(F_1 \times F_2).
}
\]
Here the labels $\resK$ and $\resH$, resp. $\pdiK$ and $\pdiH$, indicate restriction with supports (always with respect to the morphism induced by $F_2 \hookrightarrow F_2'$), resp.~proper direct image, the arrow labelled by $(1)$ is given by $\tau^{G \times \Gm}_{F_1 \times F_2'} \cdot \bigl(1 \boxtimes (\mathrm{Td}_{F_2'}^{G \times \Gm})^{-1} \bigr)$, and the arrow labelled by $(2)$ by $\tau^{G \times \Gm}_{F_1 \times F_2} \cdot \bigl(1 \boxtimes (\mathrm{Td}_{F_2}^{G \times \Gm})^{-1} \bigr)$. The upper and lower faces of this cube commute by Theorem~\ref{thm:equivariantRR}
and the projection formula~\eqref{eqn:proj-formula-H}.
The left face commutes by definition, and the right one by Lemma \ref{lem:restriction-pushforward}. The front face commutes by Proposition~\ref{prop:RR-res}, Remark~\ref{rk:Todd} and formula~\eqref{eqn:res-cohomology}.
Using our assumption, we deduce the commutativity of the back face, which finishes the proof.
\end{proof}

Now, consider Setting (B) of \S\ref{ss:settings}. The following proposition follows from Theorem~\ref{thm:equivariantRR} and the projection formula~\eqref{eqn:proj-formula-H}.

\begin{prop}
\label{prop:compatibility-uRR-B}
We have an equality
\[
\uRR_{F_1',F_2} \circ \pdiK^{F_1, F_2}_{F_1', F_2} = \pdiH^{F_1, F_2}_{F_1', F_2} \circ \uRR_{F_1,F_2}
\]
of morphisms $\Kth^{G \times \Gm}(F_1 \times_V F_2) \to \coHc^{G \times \Gm}_\bullet(F_1' \times_V F_2)$.
%
\end{prop}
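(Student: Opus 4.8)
The plan is to unwind the definitions of the two Riemann--Roch maps and reduce the statement to a direct application of the equivariant Riemann--Roch theorem (Theorem~\ref{thm:equivariantRR}) together with the projection formula~\eqref{eqn:proj-formula-H}, along the lines of Proposition~\ref{prop:compatibility-uRR-A} but \emph{without} the extra layer of restriction-with-supports (so that no Todd class of a normal bundle, and in particular no injectivity hypothesis, intervenes). Write $\iota \colon F_1 \times_V F_2 \hookrightarrow F_1' \times_V F_2$ for the closed embedding obtained by base change from $F_1 \hookrightarrow F_1'$; it is proper, and by definition $\pdiK^{F_1, F_2}_{F_1', F_2} = \pdiK_\iota$ and $\pdiH^{F_1, F_2}_{F_1', F_2} = \pdiH_\iota$. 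For $c \in \Kth^{G \times \Gm}(F_1 \times_V F_2)$, the first step is to expand, straight from the definition of $\uRR_{F_1', F_2}$,
\[
\uRR_{F_1', F_2}\bigl( \pdiK_\iota(c) \bigr) = \tau^{G \times \Gm}_{F_1' \times_V F_2}\bigl( \pdiK_\iota(c) \bigr) \cdot \bigl( 1 \boxtimes (\mathrm{Td}_{F_2}^{G \times \Gm})^{-1} \bigr),
\]
and to rewrite the first factor, using Theorem~\ref{thm:equivariantRR}, as $\pdiH_\iota\bigl( \tau^{G \times \Gm}_{F_1 \times_V F_2}(c) \bigr)$.

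The one point that genuinely needs checking is that $\iota^*$ sends the correction class $1 \boxtimes (\mathrm{Td}_{F_2}^{G \times \Gm})^{-1}$ on $F_1' \times_V F_2$ to the class of the same name on $F_1 \times_V F_2$. This follows from the construction recalled in \S\ref{ss:RRmaps}: for both $F_1$ and $F_1'$ this class is obtained by taking $1 \boxtimes (\mathrm{Td}_{F_2}^{G \times \Gm})^{-1}$ in the $(G \times \Gm)^2$-equivariant cohomology of $F_1 \times F_2$ (resp.\ $F_1' \times F_2$), restricting to the diagonal copy of $G \times \Gm$, and pulling back along the natural morphism $F_1 \times_V F_2 \to F_1 \times F_2$ (resp.\ $F_1' \times_V F_2 \to F_1' \times F_2$). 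Since the square relating these two natural morphisms through $\iota$ and $\iota \times \mathrm{id}_{F_2}$ commutes, and since $\iota^*$ carries the unit class on $F_1'$ to the unit class on $F_1$, the asserted compatibility is immediate. Granting this, the projection formula~\eqref{eqn:proj-formula-H} gives
\[
\pdiH_\iota\bigl( \tau^{G \times \Gm}_{F_1 \times_V F_2}(c) \bigr) \cdot \bigl( 1 \boxtimes (\mathrm{Td}_{F_2}^{G \times \Gm})^{-1} \bigr) = \pdiH_\iota\Bigl( \tau^{G \times \Gm}_{F_1 \times_V F_2}(c) \cdot \iota^*\bigl( 1 \boxtimes (\mathrm{Td}_{F_2}^{G \times \Gm})^{-1} \bigr) \Bigr) = \pdiH_\iota\bigl( \uRR_{F_1, F_2}(c) \bigr),
\]
which is exactly $\pdiH^{F_1, F_2}_{F_1', F_2} \circ \uRR_{F_1, F_2}(c)$, as desired.

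I do not expect any real obstacle: once the definitions are written out, the argument is a two-step diagram chase. The only place where care is required is the bookkeeping for the Todd correction class above, namely verifying that it is genuinely pulled back along $\iota$ so that the projection formula applies without a correction term — in contrast to Propositions~\ref{prop:compatibility-uRR-A} and~\ref{lem:RR-convolution}, where the presence of restriction with supports forces the appearance of $\mathrm{td}^{G \times \Gm}$ of a normal bundle and hence the auxiliary injectivity assumptions.
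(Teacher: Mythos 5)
Your argument is correct and is exactly the paper's intended proof: the paper dispatches this proposition in one line as a consequence of Theorem~\ref{thm:equivariantRR} and the projection formula~\eqref{eqn:proj-formula-H}, and your write-up simply makes explicit the needed bookkeeping that the correction class $1 \boxtimes (\mathrm{Td}_{F_2}^{G \times \Gm})^{-1}$ is pulled back along the closed embedding $\iota$. No gap to report.
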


\subsection{Compatibilities for $\overline{\mathrm{RR}}$}

The proofs in this subsection are analogous to those of the corresponding statements in \S\ref{ss:compatibility-RR}; they are therefore omitted.

First, consider Setting (A) of \S\ref{ss:settings}.

\begin{prop}
\label{prop:compatibility-oRR-A}
We have an equality
\[
\oRR_{F_1^\bot,F_2^\bot} \circ \pdiK^{F_1^\bot, (F_2')^\bot}_{F_1^\bot, F_2^\bot} = \pdiH^{F_1^\bot, (F_2')^\bot}_{F_1^\bot, F_2^\bot} \circ \oRR_{F_1^\bot,(F_2')^\bot}
\]
of morphisms $\Kth^{G \times \Gm}(F_1^\bot \times_{V^*} (F_2')^\bot) \to \coHc^{G \times \Gm}_\bullet(F_1^\bot \times_{V^*} F_2^\bot)$.
%
\end{prop}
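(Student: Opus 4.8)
The statement is the ``$\oRR$-analogue'' of Proposition~\ref{prop:compatibility-uRR-A}, and the most efficient strategy is to mimic that proof essentially verbatim, replacing the roles of the four varieties by their ``dual'' counterparts and keeping track of the extra Todd correction factors that enter the definition of $\oRR$. So the plan is to build a commutative cube whose vertices are
\[
\Kth^{G \times \Gm}(F_1^\bot \times_{V^*} (F_2')^\bot),\quad \Kth^{G \times \Gm}(F_1^\bot \times_{V^*} F_2^\bot),\quad \Kth^{G \times \Gm}(F_1^\bot \times (F_2')^\bot),\quad \Kth^{G \times \Gm}(F_1^\bot \times F_2^\bot)
\]
on the $\Kth$-side and the corresponding $\coHc^{G \times \Gm}_\bullet$-spaces on the homology side, with vertical maps the proper direct images $\pdiK^{F_1^\bot, (F_2')^\bot}_{F_1^\bot, F_2^\bot}$ (along $(F_2')^\bot \hookrightarrow F_2^\bot$) and the ``ambient'' proper direct images $\Kth^{G \times \Gm}(F_1^\bot \times_{V^*} G) \to \Kth^{G \times \Gm}(F_1^\bot \times G)$ for $G \in \{(F_2')^\bot, F_2^\bot\}$, and the front/back faces given by the $\oRR$-maps.

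First I would record that, in contrast to Setting~(A) for $\uRR$, here \emph{no injectivity assumption is needed}: the maps in question are proper direct images on both the $\Kth$- and the homology sides, and proper direct image commutes with $\tau^{G \times \Gm}$ by Theorem~\ref{thm:equivariantRR}, so the two ``horizontal'' naturality squares (the top and bottom faces of the cube) commute directly, and there is no need to pass to an ambient smooth space at all. Concretely I would dispense with the cube and argue in one line: by definition $\oRR_{F_1^\bot,F_2^\bot}(x) = \tau^{G \times \Gm}_{F_1^\bot \times_{V^*} F_2^\bot}(x) \cdot \theta_{F_2^\bot}$ where $\theta_{F_2^\bot} = (\mathrm{Td}_{F_1^\bot}^{G \times \Gm})^{-1} \cdot \mathrm{Td}^{G \times \Gm}_X \boxtimes (\mathrm{Td}_X^{G \times \Gm})^{-1}$, and similarly $\oRR_{F_1^\bot,(F_2')^\bot}(x) = \tau^{G \times \Gm}_{F_1^\bot \times_{V^*} (F_2')^\bot}(x) \cdot \theta_{(F_2')^\bot}$ with the \emph{same} formula for $\theta_{(F_2')^\bot}$ (the correction factor only involves $F_1^\bot$ and $X$, not $F_2^\bot$). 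Writing $\pi$ for the proper morphism $F_1^\bot \times_{V^*} (F_2')^\bot \to F_1^\bot \times_{V^*} F_2^\bot$, the projection formula~\eqref{eqn:proj-formula-H} together with the fact that $\pi^*$ of $\theta_{F_2^\bot}$ is $\theta_{(F_2')^\bot}$ (since $\pi$ lies over $\mathrm{id}_{X \times X}$, so the pullbacks of the relevant equivariant cohomology classes of $X \times X$ and of $F_1^\bot$ agree) gives
\[
\pdiH^{F_1^\bot, (F_2')^\bot}_{F_1^\bot, F_2^\bot}\bigl(\tau^{G \times \Gm}_{F_1^\bot \times_{V^*} (F_2')^\bot}(x) \cdot \theta_{(F_2')^\bot}\bigr)
= \pdiH^{F_1^\bot, (F_2')^\bot}_{F_1^\bot, F_2^\bot}\bigl(\tau^{G \times \Gm}_{F_1^\bot \times_{V^*} (F_2')^\bot}(x)\bigr) \cdot \theta_{F_2^\bot},
\]
and by Theorem~\ref{thm:equivariantRR} the right-hand side equals $\tau^{G \times \Gm}_{F_1^\bot \times_{V^*} F_2^\bot}\bigl(\pdiK^{F_1^\bot, (F_2')^\bot}_{F_1^\bot, F_2^\bot}(x)\bigr) \cdot \theta_{F_2^\bot} = \oRR_{F_1^\bot,F_2^\bot} \circ \pdiK^{F_1^\bot, (F_2')^\bot}_{F_1^\bot, F_2^\bot}(x)$, which is exactly the claimed identity.

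The only point requiring a moment's care — and the one I would spell out carefully — is the identification of pullbacks of Todd-type correction factors under $\pi$: one must check that $1 \boxtimes \mathrm{Td}_X^{G \times \Gm}$ on $F_1^\bot \times_{V^*} F_2^\bot$ pulls back to $1 \boxtimes \mathrm{Td}_X^{G \times \Gm}$ on $F_1^\bot \times_{V^*} (F_2')^\bot$ (both via the Thom isomorphism along the ambient $E^*$, as in~\S\ref{ss:RRmaps}), and likewise for the factor supported on the first $F_1^\bot$-factor; this is immediate because $\pi$ is the identity on the $F_1^\bot$-component and covers the identity of $X \times X$, so the relevant cohomology classes are pulled back from spaces on which $\pi$ acts as the identity. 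I do not expect any genuine obstacle here; the whole proposition is a formal consequence of Theorem~\ref{thm:equivariantRR} and the projection formula, which is why the paper states that the proofs in this subsection are omitted.
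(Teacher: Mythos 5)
Your final argument is correct and is exactly the (omitted) proof the paper intends: since the Todd correction factor in $\oRR$ involves only $F_1^\bot$ and $X$, the identity follows from Theorem~\ref{thm:equivariantRR} together with the projection formula~\eqref{eqn:proj-formula-H}, i.e.\ the analogue of the argument for Proposition~\ref{prop:compatibility-uRR-B}, with no injectivity hypothesis and no cube needed. Your opening suggestion to mimic Proposition~\ref{prop:compatibility-uRR-A} is a detour you rightly abandon; the one-line projection-formula argument you give, including the check that the correction factor pulls back correctly because the inclusion is the identity on the $F_1^\bot$-component and covers $\mathrm{id}_{X \times X}$, is the intended proof.
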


Now, consider Setting (B) of \S\ref{ss:settings}. 

\begin{prop}
\label{prop:compatibility-oRR-B}
Assume that the proper direct image morphism
\[
\coH^{G \times \Gm}_\bullet((F_1')^\bot \times_{V^*} F_2^\bot) \to \coH^{G \times \Gm}_\bullet((F_1')^\bot \times F_2^\bot)
\]
is injective.
Then we have an equality
\[
\oRR_{(F_1')^\bot,F_2^\bot} \circ \resK^{F_1^\bot, F_2^\bot}_{(F_1')^\bot, F_2^\bot} = \resH^{F_1^\bot, F_2^\bot}_{(F_1')^\bot, F_2^\bot} \circ \oRR_{F_1^\bot,F_2^\bot}
\]
of morphisms $\Kth^{G \times \Gm}(F_1^\bot \times_{V^*} F_2^\bot) \to \coHc^{G \times \Gm}_\bullet((F_1')^\bot \times_{V^*} F_2^\bot)$.
%
\end{prop}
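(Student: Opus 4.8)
The plan is to imitate closely the proof of Proposition~\ref{prop:compatibility-uRR-A}, with $\uRR$ replaced by $\oRR$ and the closed embedding used there replaced by the ones appropriate to Setting~(B). Explicitly, I would consider the cube whose eight vertices are $\Kth^{G\times\Gm}(F_1^\bot\times_{V^*}F_2^\bot)$, $\coHc^{G\times\Gm}_\bullet(F_1^\bot\times_{V^*}F_2^\bot)$, $\Kth^{G\times\Gm}(F_1^\bot\times F_2^\bot)$, $\coHc^{G\times\Gm}_\bullet(F_1^\bot\times F_2^\bot)$ together with the analogous four spaces obtained by replacing $F_1^\bot$ with $(F_1')^\bot$. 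On the back face the two horizontal arrows are $\oRR_{F_1^\bot,F_2^\bot}$ and $\oRR_{(F_1')^\bot,F_2^\bot}$; on the front face they are the maps $x\mapsto\tau^{G\times\Gm}_{F_1^\bot\times F_2^\bot}(x)\cdot\bigl((\mathrm{Td}_{F_1^\bot}^{G\times\Gm})^{-1}\cdot\mathrm{Td}_X^{G\times\Gm}\boxtimes(\mathrm{Td}_X^{G\times\Gm})^{-1}\bigr)$ and its evident analogue with $(F_1')^\bot$ in place of $F_1^\bot$, the correction factors now being read on the honest products. The ``down'' arrows are the restriction-with-supports morphisms $\resK$ and $\resH$ attached to the smooth closed embedding $(F_1')^\bot\hookrightarrow F_1^\bot$ (i.e.\ to the inclusion $F_1\hookrightarrow F_1'$), and the ``back-to-front'' arrows are the proper direct image morphisms $\pdiK$ and $\pdiH$ attached to the closed embeddings of fibre products into products $(\cdot)\times_{V^*}F_2^\bot\hookrightarrow(\cdot)\times F_2^\bot$. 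The statement to be proved is exactly the commutativity of the back face.

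As in the cited proof, the top and bottom faces commute by the equivariant Riemann--Roch theorem (Theorem~\ref{thm:equivariantRR}) together with the projection formula~\eqref{eqn:proj-formula-H} (using that the relevant cohomological correction factor on a fibre product is the pullback of the one on the ambient product); the left face commutes by the very definition of the restriction-with-supports morphisms (a base change argument), and the right face by Lemma~\ref{lem:restriction-pushforward}.

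The only step requiring genuine computation is the commutativity of the front face, and this is where the precise shape of $\oRR$ enters. Writing $i\colon(F_1')^\bot\times F_2^\bot\hookrightarrow F_1^\bot\times F_2^\bot$ for the embedding and $N$ for its normal bundle, Proposition~\ref{prop:RR-res} together with formula~\eqref{eqn:res-cohomology} reduces the claim to the identity (the common tensor factor $\boxtimes(\mathrm{Td}_X^{G\times\Gm})^{-1}$ being unaffected by $i^*$)
\[
\mathrm{td}^{G\times\Gm}(N)\cdot i^*\bigl((\mathrm{Td}_{F_1^\bot}^{G\times\Gm})^{-1}\cdot\mathrm{Td}_X^{G\times\Gm}\bigr)=(\mathrm{Td}_{(F_1')^\bot}^{G\times\Gm})^{-1}\cdot\mathrm{Td}_X^{G\times\Gm}.
\]
Since $i$ lies over $\mathrm{id}_X$ we have $i^*\mathrm{Td}_X^{G\times\Gm}=\mathrm{Td}_X^{G\times\Gm}$, and since $N$ is the pullback, under the projection to the first factor, of the normal bundle of $(F_1')^\bot$ in $F_1^\bot$, Remark~\ref{rk:Todd} applied to $(F_1')^\bot\hookrightarrow F_1^\bot$ over $X$ gives $i^*\mathrm{Td}_{F_1^\bot}^{G\times\Gm}=\mathrm{Td}_{(F_1')^\bot}^{G\times\Gm}\cdot\mathrm{td}^{G\times\Gm}(N)$; the displayed identity follows after cancelling $\mathrm{td}^{G\times\Gm}(N)$. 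I expect this Todd-class bookkeeping --- keeping track of which tensor factor each class sits on, and which one gets twisted by $N$ --- to be the main (though still routine) obstacle; everything else is formal.

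To conclude I would run the diagram chase of Proposition~\ref{prop:compatibility-uRR-A}: for $x\in\Kth^{G\times\Gm}(F_1^\bot\times_{V^*}F_2^\bot)$, apply $\pdiH$ to the two sides of the asserted equality; the top, bottom, left and right faces rewrite both sides so that the front-face identity, applied to $\pdiK(x)$, makes them agree. The hypothesis that $\coH^{G\times\Gm}_\bullet((F_1')^\bot\times_{V^*}F_2^\bot)\to\coH^{G\times\Gm}_\bullet((F_1')^\bot\times F_2^\bot)$ is injective --- which, being homogeneous, gives injectivity of the induced map of product groups $\coHc^{G\times\Gm}_\bullet$ as well --- then lets us cancel $\pdiH$, yielding the commutativity of the back face.
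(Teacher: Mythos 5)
Your proposal is correct and is exactly the argument the paper intends: the paper omits this proof, declaring it analogous to the cube argument of Proposition~\ref{prop:compatibility-uRR-A}, and your cube (with the roles of restriction-with-supports and proper direct image attached to $(F_1')^\bot \hookrightarrow F_1^\bot$ and to the inclusions of fibre products into products, the front face handled via Proposition~\ref{prop:RR-res}, Remark~\ref{rk:Todd} and~\eqref{eqn:res-cohomology}, and the injectivity hypothesis used to cancel $\pdiH$) is precisely that analogue, with the Todd-class bookkeeping for $\oRR$ carried out correctly.
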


\section{Proof of Theorem \ref{thm:LKDFourier}}
\label{sec:proof}

\subsection{A particular case}
\label{ss:particular-case}

In this subsection we study the case when $F_1=E$ and $F_2=X$ (considered as the zero-section of $E$) so that $F_1^\bot=X$, $F_2^\bot=E^*$. In this case, the assumption of Theorem \ref{thm:LKDFourier} is trivially satisfied.

\begin{lem}
\label{lem:fourier-id}
Under the identifications $E \times_V X = X \times X=X \times_{V^*} E^*$,
the isomorphism
\[
\Fourier_{E,X} \colon \coH^{G \times \Gm}_\bullet(E \times_V X) \xrightarrow{\sim} \coH_\bullet^{G \times \Gm}(X \times_{V^*} E^*)
\]
coincides with the automorphism of $\coH^{G \times \Gm}_\bullet(X \times X)$ induced by the involution of $G \times \Gm$ sending $(g,t)$ to $(g,t^{-1})$.
\end{lem}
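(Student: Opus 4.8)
The plan is to unwind the definition of $\Fourier_{E,X}$ directly. Recall from \S\ref{ss:Fourier-isomorphism} that $\Fourier_{E,X}$ is the composite $\can_{F_1^\bot,F_2^\bot}^{-1}\circ(\calF_V\text{-induced map})\circ\can_{F_1,F_2}$ with $F_1=E$, $F_2=X$, and from \S\ref{ss:Fourier-transform} that $\calF_V$ is the composite of the Fourier--Sato transform $\mathfrak{F}_V\colon\calD^{G\times\Gm}_{\mathrm{const}}(V)\to\calD^{G\times\Gm}_{\mathrm{const}}(V^\diamond)$ with the equivalence \eqref{eqn:fourier2} induced by the inverse-image functor $\psi^*$ for the automorphism $\psi\colon(g,t)\mapsto(g,t^{-1})$ of $G\times\Gm$. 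Since $F_2$ is the zero section, the homological shift $2\dim(F_2^\bot)-2\dim(F_1)=2\dim(E^*)-2\dim(E)$ of $\Fourier_{E,X}$ vanishes, and all of $F_1\times_V F_2$, $F_1^\bot\times_{V^\diamond}F_2^\bot$ and $F_1^\bot\times_{V^*}F_2^\bot$ get identified with $X\times X$ carrying the trivial $\Gm$-action; on $\coH^{G\times\Gm}_\bullet(X\times X)$ the functor $\psi^*$ induces exactly the automorphism of the statement (nontrivial only in the $B\Gm$-direction, where it reverses orientation). Because the construction of $\can$ in \S\ref{ss:equiv} is assembled entirely from base-change isomorphisms, the adjunction $(i_0)_!\dashv i_0^!$, the isomorphisms $\uC_F\cong\uD_F[-2\dim(F)]$ for $F$ smooth, and Thom isomorphisms --- each manifestly compatible with $\psi^*$ --- the isomorphism $\can_{X,E^*}$ is obtained from the analogous isomorphism $\can_{X,E^\diamond}$ over $V^\diamond$ by applying $\psi^*$. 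It therefore suffices to prove that the \emph{untwisted} composite $\can_{X,E^\diamond}^{-1}\circ(\mathfrak{F}_V\text{-induced map})\circ\can_{E,X}$ is the identity of $\coH^{G\times\Gm}_\bullet(X\times X)$.

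For this I would pass to the level of sheaves. Here $p_!\uC_{F_1}=p_!\uC_E$ is the constant complex on $V$ with stalk $R\Gamma(X)$ (as $p\colon E=V\times X\to V$ is proper with fiber $X$), while $p_!\uC_{F_2}=p_!\uC_X$ is the skyscraper $(i_0)_*R\Gamma(X)$ at $0\in V$. Applying Corollary~\ref{prop:Fourier-F} (or rather Lemma~\ref{lem:fourier-F}, on which it rests) in the two extreme cases $F=E$, where $F^\bot=X$ and $\rk(F)=\dim V$, and $F=X$, where $F^\bot=E^\diamond$ and $\rk(F)=0$, shows that $\mathfrak{F}_V$ carries $p_!\uC_E$ to the skyscraper at $0\in V^\diamond$ shifted by $[-2\dim V]$, and $p_!\uC_X$ to the constant complex on $V^\diamond$. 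Thus $\mathfrak{F}_V$ exchanges the constant-sheaf and skyscraper arguments, and the shift $[-2\dim V]$ is exactly what is needed for the degree normalisations of $\can_{E,X}$ (shift $2\dim X$) and $\can_{X,E^\diamond}$ (shift $2\dim E^\diamond=2\dim V+2\dim X$) to match, so that the composite lands back in the right homological degree. The compatibility of the two role-reversed instances of $\can$ under this exchange is governed by the standard compatibility of the Fourier--Sato transform with Grothendieck--Verdier duality (see~\cite{KS}).

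Finally I would unwind both copies of $\can$ explicitly, using the fact that in these extreme cases the diagram chase of \S\ref{ss:equiv} collapses: $E\times_V E\cong V\times X\times X$ is a vector bundle over $X\times X$ with zero section $E\times_V X\cong X\times X$, all the base-change steps reduce to identifications of bundles with their zero sections, and the last step invokes $i_0^!\uC_V\cong\uC_{\pt}[-2\dim V]$. One finds that each of $\can_{E,X}$ and $\can_{X,E^\diamond}$ is the tautological isomorphism presenting $\coH^{G\times\Gm}_\bullet(X\times X)$ as a $\Hom$-space in $\calD^{G\times\Gm}_{\mathrm{const}}(\pt)$, and since $\mathfrak{F}_V$ is built from the very same base-change and adjunction isomorphisms (indeed, the proof of Lemma~\ref{lem:fourier-F} produces them), it intertwines the two tautological identifications; hence the untwisted composite is the identity and $\Fourier_{E,X}=\psi^*$ as claimed. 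The main obstacle throughout is the bookkeeping of cohomological shifts and of $\Gm$-weight (Tate) twists across Corollary~\ref{prop:Fourier-F}, the duality-compatibility of $\mathfrak{F}_V$, and the construction of $\can$: one must check that the numerology (here $\rk(F_1)=\dim V$, $\rk(F_2)=0$, $\dim(F_2)=\dim X$, $\dim(F_2^\bot)=\dim V+\dim X$) makes all shifts and twists cancel, leaving only the $\Gm$-reflection coming from \eqref{eqn:fourier2}.
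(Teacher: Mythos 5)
Your overall route is the same as the paper's: reduce (via the equivalence \eqref{eqn:fourier2}) to showing that $\mathfrak{F}_V$ alone induces the identity on $\coH^{G\times\Gm}_\bullet(X\times X)$, then unwind the two degenerate instances of $\can$, noting that $p_!\uC_E=\uC_V\boxtimes (p_0)_!\uC_X$ and $p_!\uC_X=\uC_{\{0\}}\boxtimes (p_0)_!\uC_X$, so that every class $c$ corresponds to a morphism of the form $\varphi\boxtimes\alpha(c)$ with $\varphi\colon\uC_V\to\uC_{\{0\}}$ the adjunction map and $\alpha(c)$ a ``spectator'' living over the point (and dually $\psi\boxtimes\alpha(c)$ with $\psi\colon\uC_{\{0\}}\to\uC_{V^\diamond}[2\dim V]$).

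The gap is at the decisive step. Lemma~\ref{lem:fourier-F} and Corollary~\ref{prop:Fourier-F} only tell you what $\mathfrak{F}_V$ does to the two \emph{objects} $p_!\uC_E$ and $p_!\uC_X$; the statement to be proved concerns the induced map on the $\Ext$-group, so you must show that, under those identifications, $\mathfrak{F}_V$ sends $\varphi\boxtimes\alpha(c)$ exactly to $\psi\boxtimes\alpha(c)$ --- with no intervening scalar or automorphism. Your two justifications do not deliver this: the compatibility of the Fourier--Sato transform with Grothendieck--Verdier duality relates $\mathfrak{F}$ and $\bD$ and says nothing about normalizing its effect on these particular adjunction morphisms, and the assertion that $\mathfrak{F}_V$ ``is built from the very same base-change and adjunction isomorphisms, hence intertwines the two tautological identifications'' is precisely the claim in question, not an argument for it. In the paper this normalization is exactly Lemma~\ref{lem:fourier-adj} (applied to $\{0\}\subset V$, the spectator factor being handled by the projection formula), and its proof is genuinely non-formal: after reducing to the non-equivariant local case one has to use the action of the \emph{connected} orthogonal group of the real form $q(x,\xi)=\mathrm{Re}(\langle\xi,x\rangle)$ on $\coH_c^{2\dim E}(Q)$ to see that the resulting endomorphism is the identity rather than some other scalar. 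To complete your proof you should either invoke Lemma~\ref{lem:fourier-adj} at this point or reproduce an argument of that kind; as written, the conclusion ``hence the untwisted composite is the identity'' is unsupported.
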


\begin{proof}
The lemma is equivalent to the statement that the isomorphism $\coH^{G \times \Gm}_\bullet(E \times_V X) \xrightarrow{\sim} \coH_\bullet^{G \times \Gm}(X \times_{V^\diamond} E^\diamond)$ induced by the equivalence $\mathfrak{F}_V$ of~\S\ref{ss:Fourier-transform} is the identity morphism of $\coH^{G \times \Gm}_\bullet(X \times X)$.

Using the canonical isomorphism of \S\ref{ss:equiv} in the case $V=\{0\}$, $F_1=F_2=X$ we obtain an isomorphism
\[
\alpha \colon \coH^{G \times \Gm}_\bullet(X \times X) \xrightarrow{\sim} \Ext^{2\dim(X)-\bullet}_{\calD_{\mathrm{const}}^{G \times \Gm}(\pt)} \bigl( (p_0)_! \uC_X, (p_0)_! \uC_X \bigr),
\]
where $p_0 \colon X \to \pt$ is the projection. Then the composition 
\[
\coH^{G \times \Gm}_\bullet(X \times X) = \coH^{G \times \Gm}_\bullet(E \times_V X) \cong \Ext^{2\dim(X)-\bullet}_{\calD_{\mathrm{const}}^{G \times \Gm}(V)}(p_! \uC_E,p_! \uC_X)
\]
sends each $c \in \coH^{G \times \Gm}_i(X \times X)$ to the morphism
\[
p_! \uC_E = \uC_V \boxtimes (p_0)_! \uC_X \xrightarrow{\varphi \boxtimes \alpha(c)} \uC_{\{0\}} \boxtimes (p_0)_! \uC_X [2\dim(X) - i] = p_! \uC_X [2\dim(X) - i]
\]
where $\varphi \colon \uC_V \to \uC_{\{0\}}$ is the $({}^*,{}_*)$-adjunction morphism for the inclusion $\{0\} \hookrightarrow V$, and we use the identification $V=V \times \pt$. Similarly, the composition
\[
\coH^{G \times \Gm}_\bullet(X \times X) = \coH^{G \times \Gm}_\bullet(X \times_{V^\diamond} E^\diamond) \cong \Ext^{2\dim(E^*)-\bullet}_{\calD_{\mathrm{const}}^{G \times \Gm}(V^\diamond)}({\check p}_! \uC_X,{\check p}_! \uC_{E^\diamond})
\]
sends each $c \in \coH^{G \times \Gm}_i(X \times X)$ to the morphism
\[
{\check p}_! \uC_X = \uC_{\{0\}} \boxtimes (p_0)_! \uC_X \xrightarrow{\psi \boxtimes \alpha(c)} \uC_{V^\diamond} \boxtimes (p_0)_! \uC_X [2\dim(E^*) - i]
\]
where $\psi \colon \uC_{\{0\}} \to \uC_{V^\diamond}[2\dim(V^*)]$ is the $({}_!,{}^!)$-adjunction morphism for the inclusion $\{0\} \hookrightarrow V^\diamond$, and we use the identification $V^\diamond=V^\diamond \times \pt$. Now using Lemma \ref{lem:fourier-adj} we obtain that $\mathfrak{F}_V$ sends $\varphi \boxtimes \alpha(c)$ to $\psi \boxtimes \alpha(c)$, and the lemma follows.
\end{proof}

With this result in hand we can prove Theorem \ref{thm:LKDFourier} in our particular case.

\begin{lem}
\label{lem:thm-particular-case}
Theorem {\rm \ref{thm:LKDFourier}} holds in the case $F_1=E$, $F_2=X$.
\end{lem}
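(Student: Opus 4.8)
The plan is to make all four maps in the diagram of Theorem~\ref{thm:LKDFourier} explicit in this very degenerate situation, where everything reduces to the variety $X \times X$ and all the ``interesting'' geometry disappears. First I would unwind the identifications: since $F_1 = E$, $F_2 = X$ (the zero section), we have $F_1 \times_V F_2 \cong X \times X$, and dually $F_1^\bot = X$, $F_2^\bot = E^*$, so $F_1^\bot \times_{V^*} F_2^\bot \cong X \times X$ as well. Under these identifications the map $\Fourier_{E,X}$ was already computed in Lemma~\ref{lem:fourier-id}: it is simply the automorphism of $\coHc^{G \times \Gm}_\bullet(X \times X)$ induced by the involution $(g,t) \mapsto (g,t^{-1})$ of $G \times \Gm$. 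So the left and bottom edges of the square are under control, and it remains to understand the composite map along the top and right edges.

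Next I would analyze $\Koszul_{E,X}$. In the degenerate case $F_1 = E$, $F_2 = X$, the derived fiber product $F_1 \,{\stackrel{_R}{\times}}_V\, F_2$ is already the honest fiber product $X \times X$ (no higher Tor), and similarly on the dual side; so $\frakK_{E,X}$ is an equivalence between (equivariant) derived categories of coherent sheaves on $X \times X$ where the dg-algebras $\calA_{E,X}$ and $\calA_{X,E^*}$ are concentrated in degree $0$. I expect that, with the choice $\calE = \calO_X \boxtimes \omega_X[\dim X]$ of dualizing object made in~\S\ref{ss:lkd}, the functor $\kappa_{E,X}$ reduces essentially to Grothendieck--Serre duality on $X \times X$ (tensored with the appropriate line bundle and shifted), composed with the $t \mapsto t^{-1}$ twist coming from the passage $V^\diamond \rightsquigarrow V^*$. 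Consequently $\Koszul_{E,X}$ on $\Kth^{G \times \Gm}(X \times X)$ should be, up to the duality and the parity/grading corrections, again essentially a duality map. The point of introducing $\dual_{F_1^\bot, F_2^\bot}$ and $\bfi_{F_1^\bot, F_2^\bot}$ in~\S\ref{ss:duality} is precisely to cancel these corrections: the composite $\bfi_{X, E^*} \circ \dual_{X, E^*} \circ \Koszul_{E,X}$ should collapse to the automorphism of $\Kth^{G \times \Gm}(X \times X)$ induced by the involution $t \mapsto t^{-1}$ of $G \times \Gm$ (possibly up to a sign bookkeeping, which the $\bfi$ factor is designed to handle), i.e.\ the $\Kth$-theoretic shadow of $\Fourier_{E,X}$.

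Then the only remaining task is to compare $\uRR_{E,X}$ and $\oRR_{X, E^*}$ under the $t \mapsto t^{-1}$ twist. Here I would use the explicit formulas of~\S\ref{ss:RRmaps}: $\uRR_{E,X}(c) = \tau^{G \times \Gm}_{X \times X}(c) \cdot (1 \boxtimes (\mathrm{Td}^{G \times \Gm}_X)^{-1})$ (since $F_2 = X$), while $\oRR_{X, E^*}(d) = \tau^{G \times \Gm}_{X \times X}(d) \cdot ((\mathrm{Td}^{G \times \Gm}_X)^{-1} \cdot \mathrm{Td}^{G \times \Gm}_X \boxtimes (\mathrm{Td}^{G \times \Gm}_X)^{-1}) = \tau^{G \times \Gm}_{X \times X}(d) \cdot (1 \boxtimes (\mathrm{Td}^{G \times \Gm}_X)^{-1})$ (since $F_1^\bot = X$, the factor $(\mathrm{Td}^{G \times \Gm}_{F_1^\bot})^{-1} \cdot \mathrm{Td}^{G \times \Gm}_X$ is just $1$). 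So the two bivariant Riemann--Roch maps are given by \emph{literally the same formula} on $X \times X$. It then remains only to check that the equivariant Riemann--Roch morphism $\tau^{G \times \Gm}_{X \times X}$ and the Todd-class correction are compatible with the automorphism of $\coHc^{G \times \Gm}_\bullet(X \times X)$ (resp.\ of $\Kth^{G \times \Gm}(X \times X)$) induced by $t \mapsto t^{-1}$ — this is essentially the naturality of $\tau$ under pullback along an automorphism of the acting group, together with the fact that $\mathrm{Td}^{G \times \Gm}_X$ involves the tangent bundle of $X$, on which $\Gm$ acts trivially, so the Todd class is fixed by the twist. Putting the three pieces together — $\Fourier_{E,X}$ is the $t \mapsto t^{-1}$ twist on Borel--Moore homology, the $\Kth$-side composite is the $t \mapsto t^{-1}$ twist on $\Kth$-homology, and the two $\mathrm{RR}$ maps are the same formula, compatible with these twists — gives the commutativity of the square, which is the assertion of Lemma~\ref{lem:thm-particular-case}.

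The main obstacle I anticipate is the second step: pinning down exactly what $\Koszul_{E,X}$ (and hence $\bfi \circ \dual \circ \Koszul$) does on $\Kth$-homology in this degenerate case. This requires tracing through the construction of $\kappa_{F_1,F_2}$ from~\cite[Theorem~3.1]{MR3}, the equivalence associated with the antidiagonal $\overline{\Delta}V^\diamond$, and the $t \mapsto t^{-1}$ twist, and keeping careful track of all grading shifts, line-bundle twists by $\omega_X$, and parity signs so that the $\dual$ and $\bfi$ corrections really do cancel everything except the group-automorphism twist. I would likely organize this by first proving the statement at the level of the triangulated equivalence $\frakK_{E,X}$ (identifying it, up to the relevant twist, with a composition of honest Grothendieck--Serre duality and scalar twist on $\calD^b\Coh^{G \times \Gm}(X \times X)$), and only then descending to Grothendieck groups, where the effect of $\dual$ and $\bfi$ is transparent.
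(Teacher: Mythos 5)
Your proposal is correct and follows essentially the same route as the paper's proof: identify $\Fourier_{E,X}$ with the automorphism induced by $(g,t) \mapsto (g,t^{-1})$ via Lemma~\ref{lem:fourier-id}, show that $\bfi_{X,E^*} \circ \dual_{X,E^*} \circ \Koszul_{E,X}$ is the corresponding automorphism of $\Kth^{G \times \Gm}(X \times X)$, and observe that $\uRR_{E,X}$ and $\oRR_{X,E^*}$ are given by the same formula and are compatible with the twist (the Todd class being fixed since $\Gm$ acts trivially on $X$). The step you flag as the main anticipated obstacle is exactly what the paper carries out: it invokes \cite[Proposition~3.4]{MR3}, together with the fact that the morphisms of dg-schemes induced by $p \times p$ and its dual are quasi-isomorphisms inducing the identity on $\Kth$-homology, to reduce $\frakK_{E,X}$ to $\frakK_{X,X}$ and thereby obtain the explicit formula sending $[\calG \langle m \rangle]$ to $[\calG \langle -m \rangle [-m]]$ for $\dual_{X,E^*} \circ \Koszul_{E,X}$, after which $\bfi_{X,E^*}$ absorbs the sign $(-1)^m$ exactly as you anticipate.
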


\begin{proof}
We have $F_1 \times_V F_2=X \times X$, and also $F_1^\bot \times_{V^*} F_2^\bot=X \times X$. There exists a natural morphism of dg-schemes
\[
(\Delta V \times X \times X) \rcap_{E \times E} (E \times X) \to (X \times X) \rcap_{X \times X} (X \times X)
\]
associated with the morphism of vector bundles $p \times p \colon E \times E \to X \times X$, see \cite[\S 3.2]{MR3}. In our case it is easily checked that this morphism is a quasi-isomorphism, hence it induces an equivalence of triangulated categories
\[
L\Phi^* \colon \calD^c_{G \times \Gm}((X \times X) \rcap_{X \times X} (X \times X)) \xrightarrow{\sim} \calD^c_{G \times \Gm}((\Delta V \times X \times X) \rcap_{E \times E} (E \times X)),
\]
see~\cite[Proposition~1.3.2]{MR}. Moreover by definition the left-hand side coincides with the category $\calD^b \Coh^{G \times \Gm}(X \times X)$, so that $L\Phi^*$ can (and will) be considered as an equivalence from $\calD^b \Coh^{G \times \Gm}(X \times X)$ to $\calD^c_{G \times \Gm}((\Delta V \times X \times X) \rcap_{E \times E} (E \times X))$. It is easily checked that the induced automorphism of $\Kth^{G \times \Gm}(X \times X)$ is the identity.

Similarly, the morphism dual to $p \times p$ induces a quasi-isomorphism
\[
(X \times X) \rcap_{X \times X} (X \times X) \to (\Delta V^* \times X \times X) \rcap_{E^* \times E^*} (X \times E^*),
\]
hence an equivalence of triangulated categories
\[
R\Psi_* \colon \calD^b \Coh^{G \times \Gm}(X \times X) \xrightarrow{\sim} \calD^c_{G \times \Gm}((\Delta V^* \times X \times X) \rcap_{E^* \times E^*} (X \times E^*)),
\]
which induces the identity morphism in $\Kth$-homology.

If $\frakK_{X,X}$ denotes the linear Koszul duality equivalence defined as in~\S\ref{ss:lkd} (in the case $V=\{0\}$, $F_1=F_2=E=X$), by \cite[Proposition 3.4]{MR3} there exists an isomorphism
\[
\frakK_{E,X} \circ L\Phi^* \cong R\Psi_* \circ \frakK_{X,X}.
\]
Using the remarks above and the definition of the equivalence $\frakK_{X,X}$, we deduce that, if $\calG$ is in $\calD^b \Coh^G(X \times X)$ (considered as an object of $\calD^b \Coh^{G \times \Gm}(X \times X)$ with trivial $\Gm$-action), the morphism $\Koszul_{E,X}$ sends the class of $\calG \langle m \rangle$ to the class of
\[
R\sheafHom_{\calO_{X \times X}}(\calG,\calO_X \boxtimes \omega_X)\langle m \rangle[\dim(X)+m].
\]


Using the compatibility of Grothendieck--Serre duality with proper direct images (as in the proof of Proposition~\ref{prop:compatibility-duality-A}) one easily checks that, with similar notation, $\dual_{X,E^*}$ sends the class of $\calG \langle m \rangle$ to the class of
\[
R\sheafHom_{\calO_{X \times X}}(\calG,\calO_X \boxtimes \omega_X)\langle -m \rangle[\dim(X)].
\]
We deduce that $\dual_{X,E^*} \circ \Koszul_{E,X}$ sends the class of $\calG \langle m \rangle$ to the class of $\calG \langle -m \rangle [-m]$, and then that $\bfi_{X,E^*} \circ \dual_{X,E^*} \circ \Koszul_{E,X}$ identifies with the automorphism of $\Kth^{G \times \Gm}(X \times X)$ induced by the involution of $G \times \Gm$ sending $(g,t)$ to $(g,t^{-1})$.

%

The statement in the lemma follows from this description, Lemma~\ref{lem:fourier-id}, and the compatibility of the Riemann--Roch maps with inverse image (in $\Kth$-homology and Borel--Moore homology) under an automorphism of $G \times \Gm$.
\end{proof}

\subsection{Compatibility with inclusion}

Consider first Setting (A) of \S\ref{ss:settings}. 

\begin{prop}
\label{prop:inclusion-A}
\begin{enumerate}
\item
We have an equality
\begin{multline*}
\oRR_{F_1^\bot, F_2^\bot} \circ \bfi_{F_1^\bot, F_2^\bot} \circ \dual_{F_1^\bot, F_2^\bot} \circ \Koszul_{F_1,F_2} \circ \resK^{F_1, F_2'}_{F_1, F_2} = \\
\pdiH^{F_1^\bot, (F_2')^\bot}_{F_1^\bot, F_2^\bot} \circ \oRR_{F_1^\bot, (F_2')^\bot} \circ \bfi_{F_1^\bot, (F_2')^\bot} \circ \dual_{F_1^\bot, (F_2')^\bot} \circ \Koszul_{F_1,F_2'}
\end{multline*}
of morphisms $\Kth^{G \times \Gm}(F_1 \times_V F_2') \to \coHc_\bullet^{G \times \Gm}(F_1^\bot \times_{V^*} F_2^\bot)$.
%
\item
Assume that the proper direct image morphism
\[
\coH^{G \times \Gm}_\bullet(F_1 \times_V F_2) \to \coH^{G \times \Gm}_\bullet(F_1 \times F_2)
\]
is injective.
Then we have an equality
\[
\Fourier_{F_1,F_2} \circ \uRR_{F_1,F_2} \circ \resK^{F_1, F_2'}_{F_1, F_2} = \pdiH^{F_1^\bot, (F_2')^\bot}_{F_1^\bot, F_2^\bot} \circ \Fourier_{F_1,F_2'} \circ \uRR_{F_1,F_2'}
\]
of morphisms $\Kth^{G \times \Gm}(F_1 \times_V F_2') \to \coHc_\bullet^{G \times \Gm}(F_1^\bot \times_{V^*} F_2^\bot)$.
%
%
\end{enumerate}
\end{prop}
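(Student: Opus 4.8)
The plan is to deduce both equalities by concatenating the compatibility statements already established for each of the maps appearing in the composites. For part~(1), I would rewrite the left-hand side one factor at a time, working from the inside out. First, Proposition~\ref{prop:compatibility-LKD-A} replaces $\Koszul_{F_1,F_2} \circ \resK^{F_1, F_2'}_{F_1, F_2}$ by $\pdiK^{F_1^\bot, (F_2')^\bot}_{F_1^\bot, F_2^\bot} \circ \Koszul_{F_1,F_2'}$. Next, the two equalities in Proposition~\ref{prop:compatibility-duality-A} let one commute $\dual_{F_1^\bot, F_2^\bot}$ and afterwards $\bfi_{F_1^\bot, F_2^\bot}$ past $\pdiK^{F_1^\bot, (F_2')^\bot}_{F_1^\bot, F_2^\bot}$, converting them into $\dual_{F_1^\bot, (F_2')^\bot}$ and $\bfi_{F_1^\bot, (F_2')^\bot}$ respectively. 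Finally, Proposition~\ref{prop:compatibility-oRR-A} commutes $\oRR_{F_1^\bot, F_2^\bot}$ past the same $\pdiK$, producing the factor $\pdiH^{F_1^\bot, (F_2')^\bot}_{F_1^\bot, F_2^\bot} \circ \oRR_{F_1^\bot, (F_2')^\bot}$ on the left. Stringing these four rewrites together gives exactly the right-hand side, so part~(1) needs no extra hypothesis.

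For part~(2) the same idea applies on the Borel--Moore side. Under the injectivity hypothesis on the morphism $\coH^{G \times \Gm}_\bullet(F_1 \times_V F_2) \to \coH^{G \times \Gm}_\bullet(F_1 \times F_2)$, Proposition~\ref{prop:compatibility-uRR-A} gives $\uRR_{F_1,F_2} \circ \resK^{F_1, F_2'}_{F_1, F_2} = \resH^{F_1, F_2'}_{F_1, F_2} \circ \uRR_{F_1,F_2'}$; this is the only place the assumption enters. It then remains to push $\Fourier_{F_1,F_2}$ past the restriction-with-supports morphism, which is precisely Proposition~\ref{prop:compatibility-fourier-A}, namely $\Fourier_{F_1,F_2} \circ \resH^{F_1, F_2'}_{F_1, F_2} = \pdiH^{F_1^\bot, (F_2')^\bot}_{F_1^\bot, F_2^\bot} \circ \Fourier_{F_1,F_2'}$. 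Composing these two identities yields the claim.

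Since every step is a direct invocation of an already proved proposition, I do not anticipate a genuine obstacle here: the substantive work has been carried out in Sections~\ref{sec:compatibility-Fourier} and~\ref{sec:compatibility-others}. The only point to watch is purely a matter of bookkeeping, namely verifying that the source and target of each intermediate composite (the relevant homology groups together with their degree shifts) agree with those recorded in the cited statements, which is immediate once one tracks the shifts already noted there.
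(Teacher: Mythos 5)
Your proposal is correct and is exactly the paper's argument: part (1) is obtained by chaining Propositions \ref{prop:compatibility-LKD-A}, \ref{prop:compatibility-duality-A} and \ref{prop:compatibility-oRR-A}, and part (2) by combining Propositions \ref{prop:compatibility-uRR-A} (where the injectivity hypothesis enters) and \ref{prop:compatibility-fourier-A}. The paper states this in one line; your step-by-step rewriting is just the explicit version of the same composition.
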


\begin{proof}
(1) follows from Propositions \ref{prop:compatibility-LKD-A}, \ref{prop:compatibility-duality-A} and \ref{prop:compatibility-oRR-A}. (2) follows from Propositions \ref{prop:compatibility-uRR-A} and \ref{prop:compatibility-fourier-A}.
\end{proof}

Consider now Setting (B) of \S\ref{ss:settings}.

\begin{prop}
\label{prop:inclusion-B}
\begin{enumerate}
\item
Assume that the proper direct image morphism
\[
\coH^{G \times \Gm}_\bullet((F_1')^\bot \times_{V^*} F_2^\bot) \to \coH^{G \times \Gm}_\bullet((F_1')^\bot \times F_2^\bot)
\]
is injective.
Then we have an equality
\begin{multline*}
\oRR_{(F_1')^\bot, F_2^\bot} \circ \bfi_{(F_1')^\bot, F_2^\bot} \circ \dual_{(F_1')^\bot, F_2^\bot} \circ \Koszul_{F_1',F_2} \circ \pdiK^{F_1, F_2}_{F_1', F_2} \\
= \resH^{F_1^\bot, F_2^\bot}_{(F_1')^\bot, F_2^\bot} \circ \oRR_{F_1^\bot, F_2^\bot} \circ \bfi_{F_1^\bot, F_2^\bot} \circ \dual_{F_1^\bot, F_2^\bot} \circ \Koszul_{F_1,F_2}
\end{multline*}
of morphisms $\Kth^{G \times \Gm}(F_1 \times_V F_2) \to \coHc_\bullet^{G \times \Gm}((F_1')^\bot \times_{V^*} F_2^\bot)$.
%
\item
We have an equality
\[
\Fourier_{F_1',F_2} \circ \underline{\mathrm{RR}}_{F_1',F_2} \circ \pdiK^{F_1, F_2}_{F_1', F_2} = \resH^{F_1^\bot, F_2^\bot}_{(F_1')^\bot, F_2^\bot} \circ \Fourier_{F_1,F_2} \circ \underline{\mathrm{RR}}_{F_1,F_2}
\]
of morphisms $\Kth^{G \times \Gm}(F_1 \times_V F_2) \to \coHc_\bullet^{G \times \Gm}((F_1')^\bot \times_{V^*} F_2^\bot)$.
%
\end{enumerate}
\end{prop}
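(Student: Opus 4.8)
The plan is to derive Proposition~\ref{prop:inclusion-B} as a purely formal consequence of the ``Setting~(B)'' compatibility statements established in Sections~\ref{sec:compatibility-Fourier} and~\ref{sec:compatibility-others}, exactly in the way Proposition~\ref{prop:inclusion-A} was deduced from their ``Setting~(A)'' counterparts. The underlying point is that under the duality $F_1 \leftrightarrow F_1^\bot$ the operation ``proper direct image along $F_1 \hookrightarrow F_1'$'' becomes ``restriction with supports along $(F_1')^\bot \hookrightarrow F_1^\bot$'', so each factor of the composition $\oRR \circ \bfi \circ \dual \circ \Koszul$ (resp.\ of $\Fourier \circ \uRR$) has a known commutation with the relevant transition morphism; the proof then consists in moving the transition morphism from the right of the composition to the left, one factor at a time.

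For part~(1), start from the left-hand side. First apply Proposition~\ref{prop:compatibility-LKD-B} to replace $\Koszul_{F_1',F_2} \circ \pdiK^{F_1,F_2}_{F_1',F_2}$ by $\resK^{F_1^\bot,F_2^\bot}_{(F_1')^\bot,F_2^\bot} \circ \Koszul_{F_1,F_2}$. Then use the two equalities of Proposition~\ref{prop:compatibility-duality-B} to commute $\resK^{F_1^\bot,F_2^\bot}_{(F_1')^\bot,F_2^\bot}$ first past $\dual_{(F_1')^\bot,F_2^\bot}$ and then past $\bfi_{(F_1')^\bot,F_2^\bot}$, turning these into $\dual_{F_1^\bot,F_2^\bot}$ and $\bfi_{F_1^\bot,F_2^\bot}$. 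Finally invoke Proposition~\ref{prop:compatibility-oRR-B} — this is the only step using the injectivity hypothesis — to replace $\oRR_{(F_1')^\bot,F_2^\bot} \circ \resK^{F_1^\bot,F_2^\bot}_{(F_1')^\bot,F_2^\bot}$ by $\resH^{F_1^\bot,F_2^\bot}_{(F_1')^\bot,F_2^\bot} \circ \oRR_{F_1^\bot,F_2^\bot}$. The resulting composition is precisely the right-hand side.

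For part~(2), the argument is shorter and needs no hypothesis: apply Proposition~\ref{prop:compatibility-uRR-B} to rewrite $\uRR_{F_1',F_2} \circ \pdiK^{F_1,F_2}_{F_1',F_2}$ as $\pdiH^{F_1,F_2}_{F_1',F_2} \circ \uRR_{F_1,F_2}$, and then Proposition~\ref{prop:compatibility-fourier-B} to rewrite $\Fourier_{F_1',F_2} \circ \pdiH^{F_1,F_2}_{F_1',F_2}$ as $\resH^{F_1^\bot,F_2^\bot}_{(F_1')^\bot,F_2^\bot} \circ \Fourier_{F_1,F_2}$, yielding the desired equality.

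There is no genuine obstacle remaining at this stage: all the geometric content has been absorbed into the cited propositions, and what is left is bookkeeping. The only points requiring a little care are that the degree shifts match along the chain (they do, since each cited identity is between morphisms with prescribed degrees, and these add up consistently) and that the injectivity assumption in part~(1) is exactly the hypothesis of Proposition~\ref{prop:compatibility-oRR-B}, so by this route it cannot be removed — though, as with the analogous hypotheses elsewhere in the paper, it is presumably unnecessary for the statement itself.
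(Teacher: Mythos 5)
Your proposal is correct and is exactly the paper's own argument: part (1) is obtained by chaining Propositions \ref{prop:compatibility-LKD-B}, \ref{prop:compatibility-duality-B} and \ref{prop:compatibility-oRR-B} (the latter being the only place the injectivity hypothesis enters), and part (2) by combining Propositions \ref{prop:compatibility-uRR-B} and \ref{prop:compatibility-fourier-B}. The compositions and degree bookkeeping check out as you describe.
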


\begin{proof}
(1) follows from Propositions \ref{prop:compatibility-LKD-B}, \ref{prop:compatibility-duality-B} and \ref{prop:compatibility-oRR-B}. (2) follows from Propositions \ref{prop:compatibility-uRR-B} and \ref{prop:compatibility-fourier-B}.
\end{proof}

\subsection{Proof of Theorem \ref{thm:LKDFourier}}
\label{ss:proof-thm}

By assumption, the proper direct image morphism 
\[
\coH^{G \times \Gm}_\bullet(F_1^{\bot} \times_{V^*} F_2^{\bot}) \to \coH^{G \times \Gm}_\bullet(F_1^{\bot} \times_{V^*} E^*)
\]
is injective. Hence the same is true for the induced morphism
\[
\coHc_\bullet^{G \times \Gm}(F_1^{\bot} \times_{V^*} F_2^{\bot}) \to \coHc_\bullet^{G \times \Gm}(F_1^{\bot} \times_{V^*} E^*).
\]
By Proposition \ref{prop:inclusion-A} applied to the inclusion $X \subset F_2$, we deduce that it suffices to prove the theorem in the case $F_2=X$. (Note that the inclusion $F_1 \times_V X \hookrightarrow F_1 \times X$ is the inclusion of the zero section in the vector bundle $F_1 \times X$ over $X \times X$. Hence the injectivity assumption in Proposition \ref{prop:inclusion-A}(2) holds by Lemma \ref{lem:Euler-class}.)

Now consider the inclusion of vector subbundles $F_1 \subset E$ (again with $F_2=X$). In this case, the morphism
\[
\resH^{F_1^\bot, E^*}_{X, E^*} \colon 
\coH^{G \times \Gm}_\bullet(F_1^\bot \times_{V^*} E^*) \to \coH^{G \times \Gm}_{\bullet-2\rk(F_1^\bot)}(X \times_{V^*} E^*) = \coH^{G \times \Gm}_{\bullet-2\rk(F_1^\bot)}(X \times X)
\]
is the Thom isomorphism for the vector bundle $F_1^\bot \times_{V^*} E^* \cong F_1^\bot \times X$ over $X \times X$; in particular it is injective. Using Proposition \ref{prop:inclusion-B} we deduce that it suffices to prove the theorem in the case $F_1=E$, $F_2=X$. (Note that in our situation the inclusion $E^\bot \times_{V^*} X^\bot \hookrightarrow E^\bot \times X^\bot$ is the inclusion of the zero section in the vector bundle $X \times E^*$ over $X \times X$, so that the injectivity assumption in Proposition \ref{prop:inclusion-B}(1) holds by Lemma \ref{lem:Euler-class}.) In this case the theorem holds by Lemma~\ref{lem:thm-particular-case}, hence our proof is complete.


\appendix

\section{Proofs of some technical results}
\label{sec:appendix}

\subsection{Conventions}

In \S\S\ref{ss:commutative-diagrams}--\ref{ss:adjunction} we work in the $A$-equivariant constructible derived category of some complex algebraic $A$-varieties (for some arbitrary complex linear algebraic group $A$). If $X,Y,Z$ are $A$-varieties and $f\colon X \to Y$, $g\colon Y \to Z$ are $A$-equivariant morphisms, then there exist canonical ``composition'' isomorphisms
\[
g_* f_* \cong (g \circ f)_*, \quad g_! f_! \cong (g \circ f)_!, \quad f^* g^* \cong (g \circ f)^*, \quad f^! g^! \cong (g \circ f)^!,
\]
which we will all indicate by $\Co$. Similarly, given a cartesian square
\[
\xymatrix{
Y' \ar[d]_-{g'} \ar[r]^-{f'} \ar@{}[rd]|{\square} &Z' \ar[d]^-{g} \\
Y \ar[r]^-{f} & Z
}
\]
of $A$-equivariant morphisms, there exist canonical ``base change'' isomorphisms
\[
f^* g_! \cong (g')_! (f')^*, \quad f^! g_* \cong (g')_* (f')^!,
\]
which we will indicate by $\BC$.

\subsection{Some commutative diagrams}
\label{ss:commutative-diagrams}

Consider a commutative diagram of $A$-varieties and $A$-equivariant morphisms
\[
\xymatrix@R=0.3cm{
Y \ar[rr]^-{g} \ar[rd]^-{a} \ar[dd]_-{c} & & Z \ar[rd]^-{d} \ar'[d][dd]_<<<<{f} & \\
& Y' \ar[rr]^<<<<{g'} \ar[ld]_-{b} & & Z' \ar[ld]^-{e} \\
Y'' \ar[rr]^-{g''} & & Z''
}
\]
where all squares are cartesian. The following lemma is a restatement of \cite[Lemma B.7(d)]{AHR}.

\begin{lem}
\label{lem:ahr}
The following diagram of isomorphisms of functors commutes:
\[
\xymatrix@C=2cm{
(g'')^! f_* \ar[r]^{\Co}_-{\sim} \ar[d]_-{\BC}^-{\wr} & (g'')^! e_* d_* \ar[r]^-{\BC}_-{\sim} & b_* (g')^! d_* \ar[d]^-{\BC}_-{\wr} \\
c_* g^! \ar[rr]^{\Co}_-{\sim} & & b_* a_* g^!.
}
\]
\end{lem}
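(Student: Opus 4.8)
The statement of Lemma~\ref{lem:ahr} is purely formal: it records how the pseudofunctoriality (``composition'') isomorphisms $\Co$ interact with the base change isomorphisms $\BC$ in the six-operation formalism, for a diagram built by stacking two cartesian squares so that their vertical composite is again cartesian. As already noted in the excerpt, it is \cite[Lemma~B.7(d)]{AHR} applied to the present labelling of objects and arrows, so the plan is either to invoke that result directly after matching notation, or to reprove it by unwinding the definitions; I will describe the latter.

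First I would recall how $\BC$ is constructed. For a cartesian square there is a canonical ``elementary'' base change isomorphism $g^* h_! \simeq h'_! (g')^*$, which holds for formal reasons and is itself compatible with horizontal and vertical pasting of cartesian squares. The isomorphism $\BC \colon h^! g_* \simeq g'_* (h')^!$ appearing in the lemma is, by definition (see \cite[Appendix~B]{AHR}, or \cite{KS}), the mate of the elementary one under the adjunctions $(h_!,h^!)$ and $((g')^*, g'_*)$; concretely it is assembled from the elementary base change, the units and counits of those adjunctions, and the $\Co$ isomorphisms. Substituting this description for each of the three occurrences of $\BC$ in the hexagon, and keeping the two occurrences of $\Co$, rewrites both paths as long composites of units, counits, $\Co$'s and elementary base change maps between the same pair of functors $\calD^A_{\mathrm{const}}(Z) \to \calD^A_{\mathrm{const}}(Y'')$.

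The key step is then to compare these two composites. After cancelling adjacent unit--counit pairs via the triangle identities, the comparison reduces to two standard coherences: the associativity/pseudofunctoriality of $(\cdot)_*$ and $(\cdot)_!$ (i.e.\ compatibility of $\Co$ with triple composites), and the compatibility of the \emph{elementary} base change with pasting of cartesian squares --- for our diagram, that the elementary base change attached to the outer rectangle equals the composite of those attached to the two constituent squares, conjugated by the relevant $\Co$'s. This last compatibility is the analogue of the present statement for the pair $((\cdot)^*,(\cdot)_!)$ and is part of the standard package established in \cite[Appendix~B]{AHR}; feeding it into the chase completes the argument.

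I expect the only real difficulty to be bookkeeping: one must unwind the mate construction of $\BC$ without orientation errors and be careful that the pasting compatibilities invoked are exactly the ones proved in the references, so that no circularity arises. In practice, since all of this is carried out once and for all in \cite[Appendix~B]{AHR}, the cleanest route is to check the notational dictionary between our diagram and the hypotheses of \cite[Lemma~B.7(d)]{AHR} and quote it; the passage to the equivariant setting causes no additional trouble, as the six operations on $\calD^A_{\mathrm{const}}(-)$ and all their coherences are inherited from the non-equivariant ones through the simplicial description of \cite[Ch.~2]{BL}, exactly as in the arguments used elsewhere in the paper.
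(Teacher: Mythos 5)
Your proposal is correct and matches the paper's treatment: the paper gives no argument beyond observing that the lemma is a restatement of \cite[Lemma B.7(d)]{AHR}, which is exactly the route you recommend as the cleanest one. Your additional sketch of reproving it by unwinding the mate construction of $\BC$ and using pasting compatibilities is sound but not needed for the paper's purposes.
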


Now, consider $A$-equivariant morphisms
\[
\xymatrix{
W \ar[r]^-{f} & X \ar[r]^-{g} & Y \ar[r]^-{h} & Z.
}
\]
The following lemma is a restatement of \cite[Lemma B.4(a) \& Lemma B.4(d)]{AHR}.

\begin{lem}
\label{lem:ahr2}
The following diagrams of isomorphisms of functors commute:
\[
\xymatrix@C=2cm{
h_* g_* f_* \ar[r]^-{\Co}_-{\sim} \ar[d]_-{\Co}^-{\wr} & h_* (g \circ f)_* \ar[d]^-{\Co}_-{\wr} \\
(h \circ g)_* f_* \ar[r]^-{\Co}_-{\sim} & (h \circ g \circ f)_*,
}
\qquad
\xymatrix@C=2cm{
f^! g^! h^! \ar[r]^-{\Co}_-{\sim} \ar[d]_-{\Co}^-{\wr} & f^! (h \circ g)^! \ar[d]^-{\Co}_-{\wr} \\
(g \circ f)^! h^! \ar[r]^-{\Co}_-{\sim} & (h \circ g \circ f)^!.
}
\]
\end{lem}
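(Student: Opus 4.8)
The plan is simply to invoke \cite[Lemma~B.4(a) \& Lemma~B.4(d)]{AHR}, as the statement of the lemma already indicates; the only point deserving comment is the passage from the framework of \cite{AHR} to the $A$-equivariant constructible derived category used here. This is harmless: the categories $\calD^A_{\mathrm{const}}(-)$, equipped with the four functors $(-)_*,(-)_!,(-)^*,(-)^!$ and the canonical isomorphisms $\Co$ and $\BC$, form a six-functor formalism satisfying the usual coherence axioms. One may, for instance, pass through the ``approximation'' spaces $U \times^A Y$ as in \S\ref{ss:homology-cohomology} and reduce the two displayed commutativities to their non-equivariant counterparts, which are exactly \cite[Lemma~B.4(a) \& (d)]{AHR}; so the two squares commute for precisely the same formal reasons as in the non-equivariant case.

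If a self-contained argument is preferred, one proceeds formally. The assignment $f \mapsto f_*$ is a pseudofunctor, with structure constraints given by the isomorphisms $\Co$; the commutativity of the first square in the lemma is then the pentagon (associativity) axiom for this pseudofunctor applied to the composable triple $(h,g,f)$, and the same holds verbatim for $f \mapsto f_!$. For the second square, recall that $f^!$ is right adjoint to $f_!$ and that the composition isomorphism $f^! g^! \cong (g\circ f)^!$ is, by construction, the \emph{mate} of $(g\circ f)_! \cong g_! f_!$ with respect to the adjunctions $(f_!,f^!)$, $(g_!,g^!)$ and $((g\circ f)_!,(g\circ f)^!)$. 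Since formation of mates is compatible with pasting of $2$-cells, the second square is obtained from the corresponding square for $(-)_!$ by passing to mates; hence it commutes as well.

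The only thing to verify, and it is routine, is that every instance of $\Co$ occurring in the two squares is indeed the canonical pseudofunctorial isomorphism — equivalently, the one forced by uniqueness of adjoints — which is immediate from the conventions fixed at the start of this appendix. I do not expect any genuine obstacle here: the lemma is purely formal, and the only real care needed is bookkeeping, namely keeping track of which adjunction each mate is taken with respect to.
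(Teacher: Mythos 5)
Your proposal matches the paper exactly: the paper gives no argument beyond declaring the lemma a restatement of \cite[Lemma~B.4(a) \&~(d)]{AHR}, which is precisely your main step. The extra remarks on equivariance and the pseudofunctor/mate formalism are correct but not needed, since the cited result already applies in the setting at hand.
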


\subsection{Base change and adjunction}

Consider a cartesian diagram
\begin{equation}
\label{eqn:cartesian-diagram}
\vcenter{
\xymatrix{
Y' \ar[d]_-{g'} \ar[r]^-{f'} \ar@{}[rd]|{\square} & Z' \ar[d]^-{g} \\
Y \ar[r]^-{f} & Z
}
}
\end{equation}
of $A$-varieties and $A$-equivariant morphisms.
Then there exists a canonical morphism of functors
\begin{equation}
\label{eqn:morphism-adjunction-2}
(f')_! (g')^! \to g^! f_!
\end{equation}
which can be defined equivalently as the composition
\[
(f')_! (g')^! \to (f')_! (g')^! f^! f_! \xrightarrow[\sim]{\Co} (f')_! (f \circ g')^! f_! \xrightarrow[\sim]{\Co} (f')_! (f')^! g^! f_!  \to g^! f_!
\]
or as the composition
\[
(f')_! (g')^! \to g^! g_! (f')_! (g')^! \xrightarrow[\sim]{\Co} g^! (g \circ f')_! (g')^! \xrightarrow[\sim]{\Co} g^! f_! (g')_! (g')^! \to g^! f_!
\]
where the unlabelled arrows are induced by the appropriate adjunction morphisms. (We leave it to the reader to check that these compositions coincide.)

As stated in \cite[Exercise III.9]{KS}, the following diagram is commutative, where vertical arrows are induced by the canonical morphisms $f_! \to f_*$ and $(f')_! \to (f')_*$:
\[
\xymatrix@C=1.5cm{
(f')_! (g')^! \ar[r]^-{\eqref{eqn:morphism-adjunction-2}} \ar[d] & g^! f_! \ar[d] \\
(f')_* (g')^! \ar[r]^-{\BC}_-{\sim} & g^! f_*.
}
\]
We deduce the following.

\begin{lem}
\label{lem:BC-adjunction}
If $f$ (hence also $f'$) is proper, then the base change isomorphism 
$(f')_* (g')^! \cong g^! f_*$ coincides, under the natural identifications $f_! = f_*$ and $(f')_! = (f')_*$, with morphism 
\eqref{eqn:morphism-adjunction-2}.
\end{lem}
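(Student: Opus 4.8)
The plan is to unwind both of the stated descriptions of the morphism~\eqref{eqn:morphism-adjunction-2} and compare them directly with the commutative diagram from \cite[Exercise III.9]{KS} that was quoted just above the lemma. First I would observe that the statement is, in fact, essentially immediate from that diagram: the diagram asserts that composing \eqref{eqn:morphism-adjunction-2} with the vertical arrow $g^! f_! \to g^! f_*$ equals composing the vertical arrow $(f')_! (g')^! \to (f')_* (g')^!$ with the base change isomorphism $\BC \colon (f')_* (g')^! \simto g^! f_*$. So I would begin by writing down that square explicitly, with the hypothesis that $f$ (hence $f'$, since \eqref{eqn:cartesian-diagram} is cartesian) is proper.

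Next I would invoke properness: when $f$ is proper, the canonical natural transformation $f_! \to f_*$ is an \emph{isomorphism} (and likewise $(f')_! \to (f')_*$), this being the defining property of proper pushforward in the constructible formalism (see~\cite{KS,BL}). Therefore both vertical arrows in the square are isomorphisms. The square then says precisely that, after identifying $f_! = f_*$ and $(f')_! = (f')_*$ via these isomorphisms, the top horizontal arrow \eqref{eqn:morphism-adjunction-2} is carried to the bottom horizontal arrow $\BC$. That is the assertion of the lemma, so the proof is complete. The only point requiring a word of care is that the identifications $f_! = f_*$ used in the statement of the lemma are exactly the canonical ones $f_! \to f_*$ appearing as the vertical arrows of the quoted square; this is a matter of unraveling conventions rather than a real argument, but I would state it explicitly for the reader.

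I do not anticipate a genuine obstacle here: the content is entirely bookkeeping, and all the substance has been pushed into the cited commutative diagram of \cite{KS} and the standard fact that proper morphisms have $f_! \cong f_*$. The mildest subtlety — and the thing I would double-check — is the equivariant setting: the square from \cite{KS} is stated non-equivariantly, but, as is used repeatedly elsewhere in the paper (e.g.\ via the ``approximations of $EA$'' device of~\cite{BL, EG1} invoked in Footnote~\ref{fn:equiv-homology}), all of these canonical morphisms, base-change isomorphisms, and adjunctions, together with their compatibilities, descend to the $A$-equivariant derived category. So the proof in the equivariant case is deduced from the non-equivariant one with no extra work, and I would simply remark on this. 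In short, the write-up is two or three sentences: cite the diagram from \cite{KS}, note that properness makes the two vertical arrows isomorphisms, and conclude.

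\begin{proof}
When $f$ is proper, so is $f'$ (since~\eqref{eqn:cartesian-diagram} is cartesian), and the canonical morphisms $f_! \to f_*$ and $(f')_! \to (f')_*$ are isomorphisms. Hence in the commutative square quoted from \cite[Exercise III.9]{KS} above, namely
\[
\xymatrix@C=1.5cm{
(f')_! (g')^! \ar[r]^-{\eqref{eqn:morphism-adjunction-2}} \ar[d] & g^! f_! \ar[d] \\
(f')_* (g')^! \ar[r]^-{\BC}_-{\sim} & g^! f_*,
}
\]
both vertical arrows are isomorphisms. Under the identifications $f_! = f_*$ and $(f')_! = (f')_*$ induced by these vertical isomorphisms, the square says precisely that the top arrow~\eqref{eqn:morphism-adjunction-2} is identified with the base change isomorphism $\BC \colon (f')_* (g')^! \simto g^! f_*$, which is the desired claim. (As elsewhere in the paper, the equivariant statement follows from the non-equivariant one via the arguments of Footnote~\ref{fn:equiv-homology}.)
\end{proof}
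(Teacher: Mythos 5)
Your proof is correct and follows exactly the paper's route: the paper also deduces the lemma directly from the commutative square of \cite[Exercise III.9]{KS}, using that properness of $f$ (hence of $f'$) makes the canonical arrows $f_! \to f_*$ and $(f')_! \to (f')_*$ isomorphisms. Nothing further is needed.
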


\subsection{Some consequences}
\label{ss:adjunction}

Consider again a cartesian diagram
\eqref{eqn:cartesian-diagram},
and assume that $f$ (hence also $f'$) is proper. 

First, one can consider the diagram of morphisms of functors
\begin{equation}
\label{eqn:diagram-adjunction-!-1}
\vcenter{
\xymatrix@C=1.5cm{
(f')_* (g')^! f^! \ar[d]^-{\Co}_-{\wr} \ar[r]_-{\sim}^-{\BC} & g^! f_* f^! \ar[dd] \\
(f')_* (f \circ g')^! \ar[d]^-{\Co}_-{\wr} & \\
(f')_* (f')^! g^! \ar[r] & g^!
}
}
\end{equation}
where the right vertical arrow is induced by the adjunction morphism $f_* f^! = f_! f^! \to \mathrm{id}$ and the lower horizontal arrow is induced by the adjunction morphism $(f')_* (f')^! = (f')_! (f')^! \to \mathrm{id}$.

\begin{lem}
\label{lem:adjunction-!-1}
Diagram \eqref{eqn:diagram-adjunction-!-1} is commutative.
\end{lem}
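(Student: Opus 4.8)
The plan is to reduce the commutativity of \eqref{eqn:diagram-adjunction-!-1} to the triangle identities for the adjunction $(f_!, f^!)$ together with the interchange law for the composition isomorphisms $\Co$. The first step will be to rewrite the top arrow. Since $f$, and hence $f'$, is proper, Lemma~\ref{lem:BC-adjunction} identifies the base change isomorphism $\BC \colon (f')_*(g')^! \simto g^! f_*$ with the morphism~\eqref{eqn:morphism-adjunction-2}, under the identifications $f_! = f_*$ and $(f')_! = (f')_*$. Whiskering on the right by $f^!$, the top arrow of \eqref{eqn:diagram-adjunction-!-1} becomes~\eqref{eqn:morphism-adjunction-2} whiskered by $f^!$, while the right-hand arrow is $g^!$ applied to the counit $\varepsilon \colon f_! f^! \to \mathrm{id}$.

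Next I would expand~\eqref{eqn:morphism-adjunction-2} using its \emph{first} description (the one routed through $f^! f_!$). After whiskering by $f^!$, the ``top then right'' composite of~\eqref{eqn:diagram-adjunction-!-1} unwinds to
\[
(f')_!(g')^! f^! \xrightarrow{\ \eta\ } (f')_!(g')^! f^! f_! f^! \xrightarrow{\ \Co\ } (f')_!(f\circ g')^! f_! f^! \xrightarrow{\ \Co\ } (f')_!(f')^! g^! f_! f^! \xrightarrow{\ \varepsilon'\ } g^! f_! f^! \xrightarrow{\ \varepsilon\ } g^!,
\]
where $\eta\colon \mathrm{id}\to f^! f_!$ is the unit (inserted immediately to the left of the outer $f^!$), $\varepsilon'\colon (f')_!(f')^!\to\mathrm{id}$ and $\varepsilon\colon f_! f^!\to\mathrm{id}$ are the counits, the first $\Co$ is the composition isomorphism $(g')^! f^! \cong (f\circ g')^!$, and the second is $(f\circ g')^! = (g\circ f')^! \cong (f')^! g^!$. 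The counit $\varepsilon$ acts on the two rightmost slots of this functor word, which are disjoint from the slots on which $\varepsilon'$ and both $\Co$'s act; so by the interchange law I can slide the application of $\varepsilon$ all the way to the front. The composite $(f')_!(g')^! f^! \xrightarrow{\eta} (f')_!(g')^! f^! f_! f^! \xrightarrow{\varepsilon} (f')_!(g')^! f^!$ then equals the identity, by the triangle identity $f^! \xrightarrow{\eta f^!} f^! f_! f^! \xrightarrow{f^!\varepsilon} f^!$ (whiskered by $(f')_!(g')^!$).

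After this cancellation the composite reduces to
\[
(f')_!(g')^! f^! \xrightarrow{\ \Co\ } (f')_!(f\circ g')^! \xrightarrow{\ \Co\ } (f')_!(f')^! g^! \xrightarrow{\ \varepsilon'\ } g^!,
\]
which is exactly the ``left then bottom'' composite of~\eqref{eqn:diagram-adjunction-!-1} (the two $\Co$'s being the left-hand vertical isomorphisms, and $\varepsilon'$ the bottom arrow), so the diagram commutes. The one point I expect to require care is the bookkeeping of whiskering positions: one must verify that $\eta$ and $\varepsilon$ genuinely occupy adjacent slots so that the triangle identity applies verbatim, and that the $2$-morphisms being permuted do act on disjoint slots. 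These are formal consequences of the interchange law and of the coherence statements (in the spirit of Lemma~\ref{lem:ahr2}), but they should be written out explicitly rather than waved through.
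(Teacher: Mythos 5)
Your argument is correct and is essentially the paper's own proof: both reduce the square to Lemma~\ref{lem:BC-adjunction} applied with the first description of the morphism~\eqref{eqn:morphism-adjunction-2}, and then cancel the inserted unit against the counit via the triangle identity $f^! \to f^! f_! f^! \to f^!$ being the identity. Your additional bookkeeping with the interchange law only spells out details the paper leaves implicit.
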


\begin{proof}
The claim follows from Lemma \ref{lem:BC-adjunction} (using the first description of morphism \eqref{eqn:morphism-adjunction-2}) and the fact that the composition of adjunction morphisms
\[
f^! \to f^! f_! f^! \to f^!
\]
is the identity.
\end{proof}

One can also consider the diagram of morphisms of functors
\begin{equation}
\label{eqn:diagram-adjunction-!-2}
\vcenter{
\xymatrix@C=1.5cm{
g_! (f')_* (g')^! \ar[d]^-{\Co}_-{\wr} \ar[r]_-{\sim}^-{\BC} & g_! g^! f_* \ar[dd] \\
(g \circ f')_! (g')^! \ar[d]^-{\Co}_-{\wr} & \\
f_* (g')_! (g')^! \ar[r] & f_*
}
}
\end{equation}
where unlabelled arrows are induced by adjunction, and in the left-hand side we use the identifications $f_!=f_*$ and $(f')_! = (f')_*$.

\begin{lem}
\label{lem:adjunction-!-2}
Diagram \eqref{eqn:diagram-adjunction-!-2} is commutative.
\end{lem}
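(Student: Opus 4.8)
The plan is to proceed exactly as in the proof of Lemma~\ref{lem:adjunction-!-1}, but using the \emph{second} description of the morphism \eqref{eqn:morphism-adjunction-2} in place of the first. The first step is to apply Lemma~\ref{lem:BC-adjunction}: under the identifications $f_! = f_*$, $(f')_! = (f')_*$ and $(g \circ f')_! = (g \circ f')_*$, the base change isomorphism $\BC$ along the top edge of \eqref{eqn:diagram-adjunction-!-2} becomes $g_!$ applied to the morphism \eqref{eqn:morphism-adjunction-2}. Hence the composite along the top and right edges of \eqref{eqn:diagram-adjunction-!-2} is identified with $g_!$ applied to the (second description of the) morphism \eqref{eqn:morphism-adjunction-2}, post-composed with the counit $\varepsilon \colon g_! g^! \to \mathrm{id}$.

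Writing $N := (f')_!(g')^!$, and denoting by $\eta \colon \mathrm{id} \to g^! g_!$ and $\varepsilon \colon g_! g^! \to \mathrm{id}$ the unit and counit of the adjunction $(g_!, g^!)$, and by $\varepsilon' \colon (g')_!(g')^! \to \mathrm{id}$ the counit of $((g')_!, (g')^!)$, the composite to be analysed is
\[
g_! N \xrightarrow{g_! \eta_N} g_! g^! g_! N \xrightarrow[\sim]{\Co} g_! g^! (g \circ f')_!(g')^! \xrightarrow[\sim]{\Co} g_! g^! f_!(g')_!(g')^! \xrightarrow{g_! g^! f_! \varepsilon'} g_! g^! f_! \xrightarrow{\varepsilon f_!} f_!.
\]
Next I would use naturality of $\varepsilon$ twice: first with respect to the morphism $(g \circ f')_!(g')^! \to f_!$ obtained by composing the relevant $\Co$ isomorphism with $f_! \varepsilon'$, and then with respect to the $\Co$ isomorphism $g_!(f')_! \cong (g \circ f')_!$. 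This pushes both $\Co$'s and $\varepsilon'$ to the left of $\varepsilon$, rewriting the composite above as
\[
g_! N \xrightarrow{g_! \eta_N} g_! g^! g_! N \xrightarrow{\varepsilon_{g_! N}} g_! N \xrightarrow[\sim]{\Co} (g \circ f')_!(g')^! \xrightarrow[\sim]{\Co} f_!(g')_!(g')^! \xrightarrow{f_! \varepsilon'} f_!.
\]
By the triangle identity $\varepsilon_{g_! N} \circ g_!(\eta_N) = \mathrm{id}_{g_! N}$ the first two arrows cancel, and what is left is precisely the composite along the left and bottom edges of \eqref{eqn:diagram-adjunction-!-2} (the bottom arrow being $f_! \varepsilon'$ under $f_! = f_*$). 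This gives the desired commutativity.

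As in Lemma~\ref{lem:adjunction-!-1}, the only real difficulty is notational: one must keep track of which adjunction each unit and counit belongs to, and make sure each naturality square is applied to the correct morphism. If one wants to be completely rigorous about the interaction (in particular the associativity) of the two ``composition'' isomorphisms $\Co$ appearing above, this can be handled with Lemma~\ref{lem:ahr2}. The main obstacle is thus bookkeeping rather than mathematical content.
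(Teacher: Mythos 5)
Your proof is correct and follows exactly the route the paper takes: identify the top arrow with $g_!$ applied to \eqref{eqn:morphism-adjunction-2} via Lemma~\ref{lem:BC-adjunction} (second description), then cancel via naturality of the counit and the triangle identity $g_! \to g_! g^! g_! \to g_!$ being the identity; you merely spell out the bookkeeping that the paper leaves implicit. One cosmetic remark: the identification $(g \circ f')_! = (g \circ f')_*$ you mention is neither available (since $g \circ f'$ need not be proper) nor actually used in your argument, so it should simply be dropped.
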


\begin{proof}
The claim follows from Lemma \ref{lem:BC-adjunction} (using the second description of morphism \eqref{eqn:morphism-adjunction-2}) and the fact that the composition of adjunction morphisms
\[
g_! \to g_! g^! g_! \to g_!
\]
is the identity.
\end{proof}

%
%

\subsection{Restriction with supports in Borel--Moore homology}
\label{ss:restriction-with-supports}

As in \S\ref{ss:homology-cohomology},
let $A$ be a complex linear algebraic group, let $Y$ be a smooth complex $A$-variety, and let $Y' \subset Y$ be a smooth $A$-stable closed subvariety. Consider another $A$-stable closed subvariety $Z \subset Y$, not necessarily smooth, and set $Z':=Z \cap Y'$. Then we have a cartesian diagram of closed inclusions
\[
\xymatrix{
Z' \ar@{^{(}->}[r]^-{i'} \ar@{^{(}->}[d]_-{g} & Y' \ar@{^{(}->}[d]^-{f} \\
Z \ar@{^{(}->}[r]^-{i} & Y.
}
\]
Set $N:=2\dim(Y)-2\dim(Y')$.
The ``restriction with supports'' morphism
\begin{equation*}
\resH^Z_{Z'} \colon \coH^{A}_{\bullet}(Z) \to \coH^A_{\bullet-N}(Z')
\end{equation*}
associated with the inclusion $Y' \hookrightarrow Y$ is defined as follows. Consider the composition
\[
i^! \to i^! f_* f^* \xrightarrow[\sim]{\BC} g_* (i')^! f^*
\]
where the first morphism is induced by the adjunction morphism $\mathrm{id} \to f_* f^*$. Then applying this composition to $\uD_Y$ and using the isomorphisms
\[
i^! \uD_Y \cong \uD_Z, \quad f^* \uD_Y \cong f^* \uC_Y[2\dim(Y)] \cong \uC_{Y'}[2\dim(Y)] \cong \uD_{Y'}[N], \quad \text{and} \quad (i')^! \uD_{Y'} \cong \uD_{Z'}
\]
we obtain a morphism
\[
\uD_Z \to g_* \uD_{Z'}[N].
\]
Taking (equivariant) cohomology provides our morphism $\resH^Z_{Z'}$.

The same construction, applied to the subvariety $Y' \subset Y$ instead of $Z$, provides another morphism
\[
\resH^Y_{Y'} \colon \coH^{A}_{\bullet}(Y) \to \coH^A_{\bullet-N}(Y')
\]

\begin{lem}
\label{lem:restriction-pushforward}
The following diagram is commutative:
\[
\xymatrix@C=2cm{
\coH^{A}_{\bullet}(Z) \ar[r]^-{\resH^Z_{Z'}} \ar[d]_-{\pdiH_i} & \coH^A_{\bullet-N}(Z') \ar[d]^-{\pdiH_{i'}} \\
\coH^{A}_{\bullet}(Y) \ar[r]^-{\resH^Y_{Y'}} & \coH^A_{\bullet-N}(Y').
}
\]
\end{lem}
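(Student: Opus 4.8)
The plan is to rewrite both composites at the level of sheaves on $Y$ and $Y'$, to reduce the assertion to a single commutative square of morphisms of complexes, and then to check that square using the formal compatibilities collected in \S\S\ref{ss:commutative-diagrams}--\ref{ss:adjunction}.

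First I would unwind the two ``restriction with supports'' morphisms. By the construction recalled in \S\ref{ss:restriction-with-supports}, $\resH^Z_{Z'}$ is obtained by applying $\Ext^{-\bullet}_{\calD^A_{\mathrm{const}}(Z)}(\uC_Z,-)$ to the morphism
\[
\rho_Z\colon \uD_Z \overset{\Co}{\cong} i^!\uD_Y \longrightarrow i^! f_* f^*\uD_Y \overset{\BC}{\cong} g_*(i')^! f^*\uD_Y \cong g_*(i')^!\uD_{Y'}[N] \overset{\Co}{\cong} g_*\uD_{Z'}[N],
\]
where the non-isomorphism arrow is induced by the unit $\mathrm{id}\to f_* f^*$, and the middle isomorphism uses the canonical identification $f^*\uD_Y\cong\uD_{Y'}[N]$ (valid because $Y$ and $Y'$ are smooth); similarly $\resH^Y_{Y'}$ is obtained from $\rho_Y\colon\uD_Y\to f_* f^*\uD_Y\overset{\sim}{\to}f_*\uD_{Y'}[N]$. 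Consequently, after using the $(g^*,g_*)$-adjunction, $\resH^Z_{Z'}$ sends a class $c\colon\uC_Z\to\uD_Z[-\bullet]$ to the $(g^*,g_*)$-adjoint of $\rho_Z[-\bullet]\circ c\colon\uC_Z\to g_*\uD_{Z'}[N-\bullet]$, and likewise $\resH^Y_{Y'}$ sends $\phi\colon\uC_Y\to\uD_Y[-\bullet]$ to the $(f^*,f_*)$-adjoint of $\rho_Y[-\bullet]\circ\phi$. On the other hand, $\pdiH_i$ and $\pdiH_{i'}$ are, on the level of these $\mathrm{Hom}$-groups (identifying $\coH^A_\bullet(Z)\cong\Ext^{-\bullet}_{\calD^A_{\mathrm{const}}(Y)}(i_*\uC_Z,\uD_Y)$ via $\uD_Z\cong i^!\uD_Y$ and the $(i_!,i^!)$-adjunction, and similarly for $Y'$), precomposition with the canonical maps $\uC_Y\to i_*\uC_Z$, resp.\ $\uC_{Y'}\to i'_*\uC_{Z'}$, the adjunction counits $i_*\uD_Z=i_*i^!\uD_Y\to\uD_Y$ and $i'_*\uD_{Z'}=i'_*(i')^!\uD_{Y'}\to\uD_{Y'}$ being absorbed into those identifications.

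Granting these descriptions, a short diagram chase -- using only the naturality of the composition isomorphism $i_* g_*\cong(i\circ g)_*=(f\circ i')_*\cong f_* i'_*$, the compatibility of composition isomorphisms with units and counits (Lemma \ref{lem:ahr2}), and the adjunction relation -- reduces the lemma to the commutativity of the following square in $\calD^A_{\mathrm{const}}(Y)$:
\[
\xymatrix@C=2.3cm{
i_*\uD_Z \ar[r]^-{i_*(\rho_Z)} \ar[d]_-{\epsilon} & i_* g_*\uD_{Z'}[N] \ar[r]^-{\Co}_-{\sim} & f_* i'_*\uD_{Z'}[N] \ar[d]^-{f_*(\epsilon')[N]} \\
\uD_Y \ar[rr]^-{\rho_Y} & & f_*\uD_{Y'}[N],
}
\]
where $\epsilon\colon i_*\uD_Z=i_*i^!\uD_Y\to\uD_Y$ and $\epsilon'\colon i'_*\uD_{Z'}=i'_*(i')^!\uD_{Y'}\to\uD_{Y'}$ are the adjunction counits.

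Finally I would prove that this square commutes. Substituting the definitions of $\rho_Z$ and $\rho_Y$ and carrying along the canonical isomorphisms $\uD_Z\cong i^!\uD_Y$, $\uD_{Z'}\cong(i')^!\uD_{Y'}$, $f^*\uD_Y\cong\uD_{Y'}[N]$ (which are instances of the composition isomorphisms, hence compatible with everything in sight and harmless), the square becomes an identity about the interaction of the base-change isomorphism $\BC\colon i^! f_*\cong g_*(i')^!$ with the unit $\mathrm{id}\to f_* f^*$ and the counits $i_*i^!\to\mathrm{id}$, $i'_*(i')^!\to\mathrm{id}$. Since in the present situation all four morphisms of the cartesian square \eqref{eqn:cartesian-diagram} are closed immersions, hence proper, Lemma \ref{lem:BC-adjunction} lets us replace $\BC$ by the adjunction morphism \eqref{eqn:morphism-adjunction-2}; after this substitution the desired commutativity follows by fitting together the base-change cube of Lemma \ref{lem:ahr} with the unit/counit pentagons of Lemmas \ref{lem:adjunction-!-1} and \ref{lem:adjunction-!-2}, applied to \eqref{eqn:cartesian-diagram} with $\uD_Y$, resp.\ $f^*\uD_Y$, as test object. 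The main obstacle is precisely this last step: arranging the single base change appearing in $\rho_Z$ so that it factors through the two adjunction pentagons while keeping track of all composition isomorphisms; once the diagram is drawn correctly each face commutes by one of the cited lemmas, and the remainder of the argument is bookkeeping.
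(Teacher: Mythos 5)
Your proposal is correct and follows essentially the same route as the paper: both reduce the statement to a single square of morphisms over $Y$ obtained by applying the counits $i_*i^!\to\mathrm{id}$, $i'_*(i')^!\to\mathrm{id}$ to the maps defining $\resH$, and both settle that square via the compatibility of base change with adjunction (Lemma \ref{lem:BC-adjunction}, packaged in the paper as Lemma \ref{lem:adjunction-!-2}, plus the bookkeeping of Lemma \ref{lem:ahr2}). The only cosmetic difference is that the paper works with a diagram of functors (applied to $\uD_Y$ at the end) and needs only Lemma \ref{lem:adjunction-!-2}, whereas you also invoke Lemmas \ref{lem:ahr} and \ref{lem:adjunction-!-1}, which are not actually required here.
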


\begin{proof}
Consider the following diagram:
\[
\xymatrix{
i_!i^! \ar[r] \ar[dd] & i_! i^! f_* f^* \ar[r]^-{\BC}_-{\sim} \ar[rdd] & i_! g_* (i')^! f^* \ar[d]^-{(\ddag)}_-{\wr} \\
&& f_* (i')_! (i')^! f^* \ar[d] \\
\mathrm{id} \ar[rr] & & f_* f^*.
}
\]
Here the unlabelled arrows are induced by the appropriate adjunction morphisms, and the arrow labelled with $(\ddag)$ is induced by the composition of natural isomorphisms
\[
i_! g_* \cong i_! g_! \xrightarrow[\sim]{\Co} (i \circ g)_!  \xrightarrow[\sim]{\Co} f_! (i')_! \cong f_* (i')_!.
\]
The left part of the diagram is clearly commutative, and the right part is commutative by Lemma \ref{lem:adjunction-!-2}. Hence the diagram as a whole is commutative. Now, when applied to $\uD_Y$ and after taking equivariant cohomology, this diagram induces the diagram of the lemma, hence these remarks finish the proof. (In this argument we also use the left diagram in Lemma \ref{lem:ahr2}, which allows to forget about the ``$\Co$'' isomorphisms in the right-hand side of the diagram once equivariant cohomology is taken.)
\end{proof}

\subsection{Proof of Proposition \ref{prop:inclusion-Ginzburg-A}(1)}
\label{ss:proof-inclusion-Ginzburg-1}

By functoriality of isomorphism \eqref{eqn:morphisms-cohomology} the following diagram commutes, where the right vertical morphism is induced by $\mathrm{adj}^*_{F_2,F_2'}$:
\begin{equation}
\label{eqn:appendix-diagram-1}
\vcenter{
\xymatrix@C=2cm{
\Ext^{\bullet}_{\calD^{G \times \Gm}_{\mathrm{const}}(V)}(p_* \uC_{F_1}, p_* \uC_{F_2'}) \ar[d]_-{(p_* \mathrm{adj}^*_{F_2,F_2'}) \circ (\cdot)} \ar[r]^-{\eqref{eqn:morphisms-cohomology}}_-{\sim} & \coH^\bullet_{G \times \Gm}(E \times_V E, j^!(\uD_{F_1} \boxtimes \uC_{F_2'})) \ar[d] \\
\Ext^{\bullet}_{\calD^{G \times \Gm}_{\mathrm{const}}(V)}(p_* \uC_{F_1}, p_* \uC_{F_2}) \ar[r]^-{\eqref{eqn:morphisms-cohomology}}_-{\sim} & \coH^\bullet_{G \times \Gm}(E \times_V E, j^!(\uD_{F_1} \boxtimes \uC_{F_2})).
}
}
\end{equation}

Now, consider the following diagram, where all squares are cartesian and all morphisms are closed inclusions:
\[
\xymatrix{
F_1 \times_V F_2 \ar[r]^-{c} \ar[d]_-{k} \ar@/^20pt/[rr]^-{b} & F_1 \times_V F_2' \ar[d]^-{k'} \ar[r]^-{b'} & E \times_V E \ar[d]^-{j} \\
F_1 \times F_2 \ar[r]^-{d} \ar@/_20pt/[rr]_-{a} & F_1 \times F_2' \ar[r]^-{a'} & E \times E. \\
}
\]
Then under the natural identifications 
\begin{align*}
\coH^\bullet_{G \times \Gm}(E \times_V E, j^!(\uD_{F_1} \boxtimes \uC_{F_2})) &\cong \coH^\bullet_{G \times \Gm}(E \times_V E, j^! a_* (\uD_{F_1} \boxtimes \uC_{F_2})), \\
\coH^\bullet_{G \times \Gm}(E \times_V E, j^!(\uD_{F_1} \boxtimes \uC_{F_2'})) &\cong \coH^\bullet_{G \times \Gm}(E \times_V E, j^! (a')_* (\uD_{F_1} \boxtimes \uC_{F_2'})),
\end{align*}
the right vertical morphism in \eqref{eqn:appendix-diagram-1} identifies with the morphism
\begin{equation}
\label{eqn:appendix-morphism-1}
\coH^\bullet_{G \times \Gm} \bigl(E \times_V E,  j^!(a')_* (\uD_{F_1} \boxtimes \uC_{F_2'}) \bigr) \to \coH^\bullet_{G \times \Gm} \bigl(E \times_V E, j^! a_* (\uD_{F_1} \boxtimes \uC_{F_2}) \bigr)
\end{equation}
induced by the adjunction morphism $\mathrm{id} \to d_* d^*$ (through the ``composition'' isomorphism $(a')_* d_* \cong a_*$).

Consider the following diagram of morphisms of functors:
\[
\xymatrix@C=1.5cm{
j^! (a')_* \ar[rr]^-{\BC}_-{\sim} \ar[d] \ar@{.>}@/_50pt/[dd]_-{(\star)} & & (b')_* (k')^! \ar[d] \ar@{.>}@/^40pt/[dd]^-{(\dag)} \\
j^! (a')_* d_* d^* \ar[rr]^-{\BC}_-{\sim} \ar[d]_-{\Co}^-{\wr} & & (b')_* (k')^! d_* d^* \ar[d]^-{\BC}_-{\wr} \\
j^! a_* d^* \ar[r]^-{\BC}_-{\sim} & b_* k^! d^* \ar[r]^-{\Co}_-{\sim} & (b')_* c_* k^! d^*.
}
\]
Here the upper vertical arrows are induced by the adjunction morphism $\mathrm{id} \to d_* d^*$, and other arrows are either base change or composition isomorphisms as indicated. The upper square is clearly commutative, and the lower square is commutative by Lemma \ref{lem:ahr}. Hence the whole diagram is commutative, which allows to define the dotted arrows uniquely. The arrow labelled with $(\star)$ is the morphism which defines \eqref{eqn:appendix-morphism-1}, and the arrow labelled with $(\dag)$ is the morphism used in the definition of restriction with supports $\resH^{F_1, F_2'}_{F_1, F_2}$,
see 
\S\ref{ss:restriction-with-supports}.
Applying this diagram to $\uD_{F_1} \boxtimes \uC_{F_2'}$ and taking equivariant cohomology allows to finish the proof of Proposition \ref{prop:inclusion-Ginzburg-A}(1). (In this argument we also use the left diagram in Lemma \ref{lem:ahr2}, which allows e.g.~to forget about the ``$\Co$'' isomorphism on the lower line once equivariant cohomology is taken.)

\subsection{Proof of Proposition \ref{prop:inclusion-Ginzburg-A}(2)}
\label{ss:proof-inclusion-Ginzburg-2}

Consider the following diagram, where all squares are cartesian and all morphisms are closed inclusions:
\[
\xymatrix{
F_1^\bot \times_{V^*} (F_2')^\bot \ar[r]^-{\tilde{c}} \ar[d]_-{\tilde{k}} \ar@/^20pt/[rr]^-{\tilde{b}} & F_1^\bot \times_{V^*} F_2^\bot \ar[d]^-{\tilde{k}'} \ar[r]^-{\tilde{b}'} & E^* \times_{V^*} E^* \ar[d]^-{\tilde{j}} \\
F_1^\bot \times (F_2')^\bot \ar[r]^-{\tilde{d}} \ar@/_20pt/[rr]_-{\tilde{a}} & F_1^\bot \times F_2^\bot \ar[r]^-{\tilde{a}'} & E^* \times E^*. \\
}
\]
Then by functoriality of isomorphism \eqref{eqn:morphisms-cohomology} we have a commutative diagram
\begin{equation}
\label{eqn:appendix-diagram-2}
\vcenter{
{\footnotesize
\xymatrix@C=0.35cm{
\Ext^\bullet_{\calD^{G \times \Gm}_{\mathrm{const}}(V^*)}({\check p}_* \uC_{F_1^\bot},{\check p}_* \uC_{(F_2')^\bot}) \ar[r]^-{\sim} \ar[d]_-{({\check p}_* \mathrm{adj}^!_{(F_2')^\bot,F_2^\bot}) \circ (\cdot)} & \coH_{G \times \Gm}^\bullet\bigl(E^* \times_{V^*} E^*, \tilde{j}^! \tilde{a}_*(\uD_{F_1^\bot} \boxtimes \uC_{(F_2')^\bot} ) \bigr) \ar[d] \\
\Ext^{\bullet+2\rk(F_2^\bot)-2\rk((F_2')^\bot)}_{\calD^{G \times \Gm}_{\mathrm{const}}(V^*)}({\check p}_* \uC_{F_1^\bot},{\check p}_* \uC_{F_2^\bot}) \ar[r]^-{\sim} & \coH_{G \times \Gm}^{\bullet+2\rk(F_2^\bot)-2\rk((F_2')^\bot)}\bigl(E^* \times_{V^*} E^*, \tilde{j}^! (\tilde{a}')_*(\uD_{F_1^\bot} \boxtimes \uC_{F_2^\bot} ) \bigr),
}
}
}
\end{equation}
where horizontal arrows are induced by isomorphism \eqref{eqn:morphisms-cohomology} and the right vertical morphism is induced by the adjunction morphism $\tilde{d}_! \tilde{d}^! \to \mathrm{id}$ (through the isomorphisms $(\tilde{a}')_* \tilde{d}_! \cong (\tilde{a}')_* \tilde{d}_* \cong \tilde{a}_*$ and $\uC_{(F_2')^\bot} \cong \uD_{(F_2')^\bot}[-2\dim((F_2')^\bot)]$, $\uC_{F_2^\bot} \cong \uD_{F_2^\bot}[-2\dim(F_2^\bot)]$).

Consider the following diagram of morphisms of functors:
\[
\xymatrix@C=1.5cm{
\tilde{j}^! \tilde{a}_* \tilde{d}^! \ar[r]^-{\BC}_-{\sim} \ar[dd]_-{\Co}^-{\wr} \ar@{.>}@/_45pt/[ddd]_-{(\#)} & \tilde{b}_* \tilde{k}^! \tilde{d}^! \ar[d]^-{\Co}_-{\wr} \ar@{.>}@/^190pt/[ddd]^-{(\flat)} & \\
& (\tilde{b}')_* \tilde{c}_* \tilde{k}^! \tilde{d}^! \ar[d]^-{\BC}_-{\wr} \ar[r]^-{\Co}_-{\sim} & (\tilde{b}')_* \tilde{c}_* (\tilde{d} \circ \tilde{k})^! \ar[d]^-{\Co}_-{\wr} \\
\tilde{j}^! (\tilde{a}')_* \tilde{d}_* \tilde{d}^! \ar[r]^-{\BC}_-{\sim} \ar[d] & (\tilde{b}')_* (\tilde{k}')^! \tilde{d}_* \tilde{d}^! \ar[d] & (\tilde{b}')_* \tilde{c}_* \tilde{c}^! (\tilde{k}')^! \ar[ld] \\
\tilde{j}^! (\tilde{a}')_* \ar[r]^-{\BC}_-{\sim} & (\tilde{b}')_* (\tilde{k}')^! & 
}
\]
Here all the unlabelled arrows are induced by the appropriate adjunction morphisms (using the identifications $\tilde{c}_*=\tilde{c}_!$ and $\tilde{d}_*=\tilde{d}_!$). The upper square is commutative by Lemma \ref{lem:ahr}, the lower square is obviously commutative, and the right square is commutative by Lemma \ref{lem:adjunction-!-1}. Hence the diagram as a whole is commutative, which allows to define the dotted arrows uniquely. The arrow labelled with $(\#)$ is the one which induces the right arrow in diagram \eqref{eqn:appendix-diagram-2} (when applied to $\uD_{F_1^\bot \times F_2^\bot}$), while the arrow labelled with $(\flat)$ is the one which induces the proper direct image morphism $\pdiH^{F_1^\bot, (F_2')^\bot}_{F_1^\bot, F_2^\bot}$
(again when applied to $\uD_{F_1^\bot \times F_2^\bot}$), see \cite[\S 8.3.19]{CG}. 
The result
follows. (As in \S\ref{ss:proof-inclusion-Ginzburg-1}, in this argument we also use the diagrams of Lemma \ref{lem:ahr2}.)

\subsection{A lemma on Euler classes}

Let $A$ be a complex linear algebraic group acting on a smooth complex algebraic variety $Y$, and let $F \to Y$ be an $A$-equivariant vector bundle of rank $r$. We consider $F$ (hence also its zero-section $Y$) as an $A \times \Gm$-variety with the $\Gm$-action defined as in \S\ref{ss:Fourier-transform}. Note that, as $\Gm$ acts trivially on $Y$, there exists a canonical isomorphism of graded algebras
\begin{equation}
\label{eqn:cohomology-Gm}
\coH_{A \times \Gm}^\bullet(Y) \ \cong \ \coH_A^\bullet(Y) \otimes_{\C} \coH_{\Gm}^\bullet(\pt).
\end{equation}

\begin{lem}
\label{lem:Euler-class}
The proper direct image morphism
\[
\coH^{A \times \Gm}_\bullet(Y) \to \coH^{A \times \Gm}_{\bullet}(F)
\]
associated with the inclusion $Y \hookrightarrow F$ is injective.
\end{lem}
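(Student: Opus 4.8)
The plan is to reduce the statement to the injectivity of cap product with the $(A\times\Gm)$-equivariant Euler class of $F$ on $\coH^{A\times\Gm}_\bullet(Y)$, and then to observe that, under the identification~\eqref{eqn:cohomology-Gm}, this Euler class is a polynomial in the generator of $\coH^\bullet_{\Gm}(\pt)$ whose leading coefficient is a nonzero scalar, hence a non-zero-divisor.

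First I would recall that, under the Thom isomorphism $\coH^{A\times\Gm}_\bullet(F) \cong \coH^{A\times\Gm}_{\bullet-2\rk(F)}(Y)$, the proper direct image morphism $\pdiH_i$ associated with the zero-section inclusion $i\colon Y\hookrightarrow F$ corresponds to cap product with the equivariant Euler (top Chern) class $\mathrm{e}:=c_{\rk(F)}^{A\times\Gm}(F)\in\coH^{2\rk(F)}_{A\times\Gm}(Y)$; this is the self-intersection formula, whose non-equivariant form is classical (see e.g.~\cite[Chap.~5]{CG}), the equivariant case following as in Footnote~\ref{fn:equiv-homology}. It therefore suffices to prove that cap product with $\mathrm{e}$ is injective on $\coH^{A\times\Gm}_\bullet(Y)$. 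Since $Y$ is smooth, equivariant Poincaré duality identifies $\coH^{A\times\Gm}_\bullet(Y)$, as a module over the ring $\coH^\bullet_{A\times\Gm}(Y)$, with $\coH^\bullet_{A\times\Gm}(Y)$ itself up to a degree shift (working on each connected component of $Y$ if $Y$ is not connected), so we are reduced to showing that $\mathrm{e}$ is a non-zero-divisor in the ring $\coH^\bullet_{A\times\Gm}(Y)$.

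Next I would compute $\mathrm{e}$ via the splitting principle together with~\eqref{eqn:cohomology-Gm}, writing $\coH^\bullet_{A\times\Gm}(Y)\cong R[u]$ with $R:=\coH^\bullet_A(Y)$ and $u$ the degree-$2$ generator of $\coH^\bullet_{\Gm}(\pt)=\C[u]$. Because $\Gm$ acts on the fibers of $F$ through the single nonzero weight $-2$, the $(A\times\Gm)$-equivariant Chern roots of $F$ are $\xi_1+\lambda,\dots,\xi_{\rk(F)}+\lambda$, where $\xi_1,\dots,\xi_{\rk(F)}$ are $A$-equivariant Chern roots of $F$ and $\lambda\in\coH^2_{\Gm}(\pt)$ is $-2$ times $u$ (up to the usual sign conventions); hence $\mathrm{e}=\prod_{j=1}^{\rk(F)}(\xi_j+\lambda)$, viewed in $R[u]$, is a polynomial of degree $\rk(F)$ in $u$ whose leading coefficient $(-2)^{\rk(F)}$ is a unit in $R$. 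The conclusion is then formal: for any nonzero $f\in R[u]$ with top coefficient $b\neq0$ in $u$-degree $m$, the coefficient of $u^{m+\rk(F)}$ in $\mathrm{e}\cdot f$ is $(-2)^{\rk(F)}b\neq 0$, so $\mathrm{e}\cdot f\neq 0$; thus $\mathrm{e}$ is a non-zero-divisor. The argument involves no serious obstacle; the only points requiring care are the bookkeeping ones — fixing sign and weight conventions so that the leading coefficient is genuinely a nonzero scalar, and invoking the equivariant Thom/self-intersection formula in a form consistent with the conventions used elsewhere in the paper.
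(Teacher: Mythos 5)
Your proof is correct and follows essentially the same route as the paper: reduce via the Thom isomorphism to multiplication by the equivariant Euler class, use that $\coH^{A\times\Gm}_\bullet(Y)$ is free of rank one over $\coH^\bullet_{A\times\Gm}(Y)$ since $Y$ is smooth, and observe that under~\eqref{eqn:cohomology-Gm} the Euler class has leading $u$-coefficient $(-2)^{\rk(F)}$, hence is a non-zero-divisor. You merely make explicit (via the splitting principle and the leading-coefficient argument) what the paper leaves as ``one can check.''
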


\begin{proof}
It is well known that the composition of our morphism with the Thom isomorphism $ \coH^{A \times \Gm}_{\bullet}(F) \cong \coH^{A \times \Gm}_{\bullet - 2r}(Y)$ identifies with the action of the equivariant Euler class $\mathrm{Eu}(F) \in \coH_{A \times \Gm}^{2r}(Y)$ of $F$, see e.g.~\cite[\S 1.19]{LuCus2}. Since $Y$ is smooth, the equivariant homology $\coH^{A \times \Gm}_\bullet(Y)$ is a free module of rank one over $\coH_{A \times \Gm}^\bullet(Y)$, hence it is enough to prove that $\mathrm{Eu}(F)$ is not a zero-divisor in $\coH_{A \times \Gm}^\bullet(Y)$. However one can check that (due to our choice of $\Gm$-action) this Euler class can be written, using isomorphism \eqref{eqn:cohomology-Gm}, as
\[
\mathrm{Eu}(F) = 1 \otimes (-2u)^r + x
\]
where $1 \in \coH^0_{A}(Y)$ is the unit, $u \in \coH^2_{\Gm}(\pt)$ is the canonical generator and $x \in \bigoplus_{i \geq 2} \coH_{A}^i(Y) \otimes \coH_{\Gm}^{2r-i}(\pt)$. It follows that this element is indeed not a zero-divisor.
\end{proof}


\begin{thebibliography}{777}	}

\nc{\ebib}{	\end{thebibliography}

\begin{thebibliography}{AHJR}

\bibitem[AHR]{AHR}
P.~Achar, A.~Henderson, S.~Riche, \emph{Geometric Satake, Springer correspondence, and small representations II}, Represent. Theory.~\textbf{19} (2015), 94--166.

\bibitem[AHJR]{AHJR}
P.~Achar, A.~Henderson, D.~Juteau, S.~Riche, \emph{Weyl group actions on the Springer sheaf}, Proc.\ Lond.\ Math.\ Soc.~\textbf{108} (2014), 1501--1528.

\bibitem[BL]{BL}
J.~Bernstein, V.~Lunts, \emph{Equivariant sheaves and
functors}, Lecture Notes in Math. 1578, Springer, 1994.

\bibitem[BR]{BR}
R.~Bezrukavnikov, S.~Riche, \emph{Affine braid group actions on Springer resolutions}, Ann. Sci. {\'E}cole Norm. Sup. \textbf{45} (2012), 535--599.

\bibitem[BZ]{BZ}
J.-L.~Brylinski, B.~Zhang, \emph{Equivariant Todd classes for toric varieties}, preprint arXiv:0311318.

\bibitem[CG]{CG}
N.~Chriss, V.~Ginzburg, \emph{Representation theory
    and complex geometry}, Bir\-kh{\"a}user, 1997.
    
\bibitem[EG1]{EG1}
D.~Edidin, W.~Graham, \emph{Equivariant intersection theory}, Invent.~Math.~\textbf{131} (1998), 595--634.

\bibitem[EG2]{EG}
D.~Edidin, W.~Graham, \emph{Riemann--Roch for equivariant Chow groups}, Duke Math. J. \textbf{102} (2000), 567--594.

\bibitem[EM]{EM}
S.~Evens, I.~Mirkovi{\'c}, \emph{Fourier transform
    and the Iwahori--Matsumoto involution}, Duke Math. J. \textbf{86} (1997), 435--464.
    
\bibitem[Fu]{fulton}
W.~Fulton, \emph{Intersection theory (second edition)}, Springer, 1998.

\bibitem[Ha]{H}
R.~Hartshorne, \emph{Residues and duality},
Lecture Notes in Math. 20, Springer, 1966.

\bibitem[Kas]{Kas}
M.~Kashiwara,
\emph{Character, character cycle, fixed point theorem and group representations}, 
RIMS-569 preprint (1987), available on the author's web page. 

\bibitem[KS]{KS}
M.~Kashiwara, P.~Schapira, \emph{Sheaves on manifolds}, Springer, 1990.


\bibitem[Kat]{Ka}
S.~Kato, \emph{On the combinatorics of unramified admissible modules}, Publ. Res. Inst. Math. Sci. \textbf{42} (2006), no. 2, 589--603.

\bibitem[KL]{KL}
D.~Kazhdan, G.~Lusztig, \emph{Proof of the Deligne--Langlands conjecture for Hecke algebras}, Invent. Math. \textbf{87} (1987), 153--215.

\bibitem[Le]{letellier}
E.~Letellier, {\em Fourier transforms of invariant functions on finite
  reductive Lie algebras}, Lecture Notes in Math.~1859,
  Springer, 2005.

\bibitem[L1]{LuAff}
G.~Lusztig, \emph{Affine Hecke algebras and their graded version}, J. Amer. Math. Soc. \textbf{2} (1989), 599--635.

\bibitem[L2]{LuCus1}
G.~Lusztig, \emph{Cuspidal local systems and graded Hecke algebras. I.} Inst. Hautes {\'E}tudes Sci. Publ. Math. No. 67 (1988), 145--202. 

\bibitem[L3]{LuCus2}
G.~Lusztig, \emph{Cuspidal local systems and graded Hecke algebras. II.} CMS Conf. Proc.~16, Representations of groups (Banff, AB, 1994), 217--275, Amer. Math. Soc., 1995.

\bibitem[L4]{LUSBas}
G.~Lusztig, \emph{Bases in equivariant $K$-theory},
  Represent. Theory \textbf{2} (1998), 298--369.

\bibitem[MR1]{MR}
I.~Mirkovi{\'c}, S.~Riche, \emph{Linear Koszul
    duality}, Compos. Math. \textbf{146} (2010), 233--258.

\bibitem[MR2]{MR2}
I.~Mirkovi{\'c}, S.~Riche, \emph{Linear Koszul duality II -- Coherent sheaves on perfect sheaves}, preprint arXiv:1301.3924.

\bibitem[MR3]{MR3}
I.~Mirkovi{\'c}, S.~Riche, \emph{Iwahori--Matsumoto involution and linear Koszul duality}, Int. Math. Res. Not.~\textbf{2015} (2015), 150--196.

\bibitem[Ri]{riche}
S.~Riche, \emph{Geometric braid group action on derived categories of coherent sheaves}, 
with an appendix joint  with R.~Bezrukavnikov, 
Represent. Theory \textbf{12} (2008), 131--169.

\end{thebibliography}
\end{document}